\newcommand{\A}{\mathbb{A}}
\newcommand{\C}{\mathbb{C}}
\newcommand{\D}{\mathbb{D}}
\newcommand{\N}{\mathbb{N}}
\newcommand{\Q}{\mathbb{Q}}
\newcommand{\R}{\mathbb{R}}
\newcommand{\Z}{\mathbb{Z}}
\newcommand{\M}{\cal{M}}
\newcommand{\rig}{\textup{rig}}
\newcommand{\an}{\textup{an}}
\renewcommand{\H}{\Cohom}
\newcommand{\too}{\longrightarrow}
\renewcommand{\phi}{\varphi}
\renewcommand{\epsilon}{\varepsilon}
\renewcommand{\ker}{\Ker}
\newcommand{\cal}{\mathcal}
\renewcommand{\Im}{\parteimmaginaria}
\newcommand{\khat}{\mathcal{H}}
\DeclareMathOperator{\parteimmaginaria}{\textup{Im}}
\DeclareMathOperator{\pr}{pr}
\DeclareMathOperator{\FHom}{\mathcal{H}\textit{om}}
\DeclareMathOperator{\im}{Im}
\DeclareMathOperator{\Ker}{Ker}
\DeclareMathOperator{\coker}{Coker}
\DeclareMathOperator{\red}{red}
\DeclareMathOperator{\BB}{B}
\DeclareMathOperator{\CC}{C}
\DeclareMathOperator{\EE}{E}
\DeclareMathOperator{\ZZ}{Z}
\DeclareMathOperator{\Supp}{Supp}
\DeclareMathOperator{\Cohom}{H}
\DeclareMathOperator{\ev}{ev}
\DeclareMathOperator{\id}{id}
\DeclareMathOperator{\Max}{Max}
\DeclareMathOperator{\hotimes}{\hat\otimes}
\renewcommand{\le}{\leqslant}
\renewcommand{\ge}{\geqslant}
\newcommand{\Fc}{F}
\newcommand{\Gc}{G}
\newcommand{\Oc}{\mathcal{O}}
\newcommand\wc{{\mkern 2mu\cdot\mkern 2mu}}
\newcommand\va{|\wc|}
\newcommand\nm{\|\wc\|}
\theoremstyle{plain}
\newtheorem{theorem}{Theorem}[section]
\newtheorem{lemma}[theorem]{Lemma}
\newtheorem{proposition}[theorem]{Proposition}
\newtheorem{corollary}[theorem]{Corollary}
\newtheorem{claim}[theorem]{Claim}
\theoremstyle{definition}
\newtheorem{definition}[theorem]{Definition}
\theoremstyle{remark}
\newtheorem{remark}[theorem]{Remark}
\numberwithin{equation}{section}
\begin{document}
\title{Notions of Stein spaces in non-archimedean geometry}

\author{Marco Maculan}
\email{marco.maculan@imj-prg.fr}
\address{Institut de Math\'ematiques de Jussieu, Universit\'e Pierre et Marie Curie, 4 place Jussieu, F-75252 Paris}

\author{J\'er\^ome Poineau}
\email{jerome.poineau@unicaen.fr}
\address{Laboratoire de math\'ematiques Nicolas Oresme, Universit\'e de Caen Normandie, BP 5186, F-14032 Caen Cedex}

\begin{abstract} 
Let $k$ be a non-archimedean complete valued field and $X$ be a $k$-analytic space in the sense of Berkovich. In this note, we prove the equivalence between three properties: 1) for every complete valued extension~$k'$ of~$k$, every coherent sheaf on~$X \times_{k} k'$ is acyclic; 2) $X$ is Stein in the sense of complex geometry (holomorphically separated, holomorphically convex) and higher cohomology groups of the structure sheaf vanish (this latter hypothesis is crucial if, for instance, $X$ is compact); 3) $X$ admits a suitable exhaustion by compact analytic domains considered by Liu in his counter-example to the cohomological criterion for affinoidicity.

When~$X$ has no boundary the characterization is simpler: in~2) the vanishing of higher cohomology groups of the structure sheaf is no longer needed, so that we recover the usual notion of Stein space in complex geometry; in 3) the domains considered by Liu can be replaced by affinoid domains, which leads us back to Kiehl's definition of Stein space.
\end{abstract}

\maketitle

\section{Introduction}

\subsection{Background} A complex analytic space $X$ is said to be \emph{Stein} if it is
\begin{itemize}
\item \emph{holomorphically separated}: for all points $x, y \in X$, there is a global holomorphic function $f \colon X \to \C$ such that $f(x) \neq f(y)$;
\item \emph{holomorphically convex}: for every compact $K \subset X$, the holomorphically convex hull of~$K$:
$$ \hat{K}_X = \{ x \in X : |f(x)| \le \| f\|_K \textup{ for all } f \in \Oc(X)\},$$
where $\| f\|_K = \sup_{K} |f|$, is compact.
\end{itemize}

The so-called Theorem B of Cartan (\cite{CartanThmABStatement}, \cite{CartanThmABProof}) states that for a coherent sheaf $F$ on a Stein space $X$ the cohomology group $\H^q(X, F)$ vanishes for all $q \ge 1$. Conversely, a complex analytic space $X$, countable at infinity, on which the higher cohomology of every coherent sheaf vanishes is Stein. 

As for non-archimedean analysis, Stein spaces have been investigated in the framework of rigid geometry by Kiehl (\cite{KiehlStein}), L\"utkebohmert (\cite{LutkebohmertStein}) and later on by Liu (\cite{LiuCRAS}, \cite{LiuBordeaux}, \cite{LiuTohoku}). The lack of local compactness of rigid spaces makes the notion of holomorphically convex hard to handle. Instead Kiehl considers a different property called \emph{quasi-Stein} (renamed here \emph{being W-exhausted by affinoid domains}) reminiscent of exhaustion by analytic blocks in complex analysis (\cite[Chapter IV, \S 4]{GrauertRemmertSteinSpaces}).

In the present note, non-archimedean Stein spaces are studied in the context of Berkovich analytic spaces over a complete non-archimedean field $k$ possibly trivially valued.

\begin{definition}[{\cite[Definition 2.3]{KiehlStein}}] 
A $k$-analytic space $X$ is said to be \emph{W-exhausted by affinoid domains} if it admits an affinoid cover for the G-topology $\{ D_i \}_{i \in \N}$ such that, for $i \ge 0$, $D_i$ is contained in $D_{i+1}$ and the restriction map $$\Oc_X(D_{i+1}) \too \Oc_X(D_i),$$ has dense image.
\end{definition}

\begin{remark} \
\begin{enumerate}

\item If $X$ is compact, the previous definition is equivalent to being affinoid.

\item 
The affinoid domain $D_i$ can always be supposed to be contained in the topological interior of $D_{i + 1}$. If $X$ is without boundary the topological interior of $D_{i}$ in $X$ coincides with the  interior of $D_i$ relative to $\cal{M}(k)$ in the sense of Berkovich (\emph{cf.} Proposition 2.5.8 (iii) and Corollary 2.5.13 (ii) \cite{Berkovich90}). We then recover the property called \emph{Stein} by Kiehl.
 
 \end{enumerate}

\end{remark}

Kiehl proved the following version of Cartan's Theorem B:

\begin{theorem}[{\cite[Satz 2.4]{KiehlStein}}] \label{Thm:SpacesExhaustedByWeierstrassAreStein} Let $X$ be $k$-analytic space W-exhausted by affinoid domains $\{ D_i\}_{i \in \N}$ and let $F$ be a coherent sheaf on $X$. Then,
\begin{enumerate}
\item $F(X)$ is dense in $F(D_i)$ for all $i \ge 0$;
\item $\H^q(X, F) = 0$ for all $q \ge 1$.
\end{enumerate}
\end{theorem}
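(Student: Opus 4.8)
The plan is to reduce everything to the affinoid case via Kiehl's acyclicity theorem, and then to run a Mittag--Leffler argument along the exhaustion. Write $A_i = \Oc_X(D_i)$ and $M_i = F(D_i)$. Since $D_i$ is affinoid, $A_i$ is a Noetherian affinoid algebra and $M_i$ is a finitely generated $A_i$-module; it therefore carries a canonical Banach topology, independent of the chosen presentation, for which the restriction maps $M_{i+1} \too M_i$ are continuous and $A_{i+1}$-linear. Two facts will be used repeatedly: first, $\H^q(D_i, F) = 0$ for $q \ge 1$ (Theorem B on affinoids); second, because $\{ D_i \}$ is an admissible cover and the $D_i$ are nested, the sheaf axiom gives $F(X) = \varprojlim_i M_i$, the equalizer collapsing since $D_i \cap D_j = D_{\min(i,j)}$.

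First I would prove the density statement~(1), the key intermediate claim being that each transition map $M_{i+1} \too M_i$ already has dense image. Indeed, by Theorem A on the affinoid $D_{i+1}$ one may pick global sections $s_1, \dots, s_n \in M_{i+1}$ generating $F$ over $D_{i+1}$, hence over $D_i$. Given $t \in M_i$, write $t = \sum_j a_j\, (s_j|_{D_i})$ with $a_j \in A_i$; approximating each $a_j$ by some $b_j \in A_{i+1}$ (possible since $A_{i+1} \too A_i$ has dense image by hypothesis) and using the continuity of $(c_j) \mapsto \sum_j c_j\, (s_j|_{D_i})$ from $A_i^n$ to $M_i$, the element $\sum_j b_j s_j \in M_{i+1}$ restricts to something arbitrarily close to $t$. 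To pass from consecutive to global approximation, fix $t \in M_i$ and $\eps > 0$ and build inductively sections $t_{i+m} \in M_{i+m}$ with $t_i = t$ and $\| t_{i+m}|_{D_{i+m-1}} - t_{i+m-1}\| < \eps\, 2^{-m}$, using the density just established. For each fixed $\ell$ the sequence $(t_{i+m}|_{D_{i+\ell}})_{m \ge \ell}$ is Cauchy, so by completeness of $M_{i+\ell}$ it converges; the limits are compatible under restriction and thus define $s \in \varprojlim_j M_j = F(X)$ with $\|s|_{D_i} - t\| < \eps$. Hence $F(X) \too M_i$ has dense image.

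Next I would deduce the vanishing~(2). Since each $D_i$ is acyclic for $F$ and the $D_i$ form a nested admissible exhaustion, $\H^\bullet(X, F)$ is computed by the derived inverse limit of the system $(M_i)$; concretely, by the two-term ``telescope'' complex
$$\prod_i M_i \too \prod_i M_i, \qquad (s_i)_i \longmapsto (s_i - s_{i+1}|_{D_i})_i,$$
placed in degrees $0$ and $1$. Its kernel is $\varprojlim_i M_i = F(X)$, giving $\H^0$; its cokernel is $\varprojlim^1_i M_i$; and, the index set being $\N$, there are no higher terms, so $\H^q(X,F) = 0$ for $q \ge 2$ and $\H^1(X,F) = \varprojlim^1_i M_i$. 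Finally, each $M_i$ is a Banach, hence complete metrizable, $k$-vector space, and the transition maps have dense image by the intermediate claim above; the topological Mittag--Leffler theorem then gives $\varprojlim^1_i M_i = 0$, whence $\H^1(X,F) = 0$ as well.

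The main obstacle, and the place where the non-archimedean setting must be handled with care, is twofold. On the analytic side, one must set up the intrinsic Banach topology on a finite module over an affinoid algebra, together with the completeness of $M_i$ and the continuity of the module operations --- these are precisely what make both the approximation argument and the topological Mittag--Leffler step work. On the cohomological side, one must justify that sheaf cohomology for the G-topology along the nested exhaustion is computed by the telescope complex above; this rests on the acyclicity of the $D_i$ and is exactly where the hypothesis that $\{ D_i \}$ is an \emph{admissible} cover enters. Once these are in place, the heart of the argument --- reducing the density for an arbitrary coherent $F$ to the hypothesis on $\Oc_X$, and converting that density into the vanishing of $\varprojlim^1$ --- is formal.
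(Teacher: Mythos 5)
Your overall strategy coincides with the paper's: Proposition~\ref{Prop:ExhaustedByCompactSteinImpliesCohomStein} reproduces Kiehl's argument (in the more general setting of Liu domains), and it runs exactly along your lines --- density of the consecutive maps $F(D_{i+1}) \to F(D_i)$ via Theorem~A together with the density of $\Oc(D_{i+1})$ in $\Oc(D_i)$ (Kiehl's Hilfssatz~2.5), successive approximation plus completeness for~(1), and the \v{C}ech complex of the nested acyclic cover for~(2). Your bookkeeping for~(2) --- identifying $\check{\H}^\bullet(\{D_i\},F)$ with $\varprojlim$ and $\varprojlim^1$ of the system $(F(D_i))$ and then quoting topological Mittag--Leffler --- is a sound repackaging of what the paper does via Lemma~\ref{lem:ProjectiveSystemCochainComplex} (Kiehl's Hilfssatz~2.6) and the explicit convergent-series correction for $\H^1$: each finite subcover $\{D_0,\dots,D_n\}$ contains its total space $D_n$, so its \v{C}ech complex is acyclic in positive degrees, and the Milnor sequence for this surjective system of complexes yields precisely your description of $\H^0$, $\H^1$ and the vanishing in degrees $\ge 2$; moreover the proof of topological Mittag--Leffler is the same series trick the paper writes out by hand.

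There is, however, one step that fails as written: the Cauchy claim in your proof of~(1). The canonical structure on $M_i = F(D_i)$ is a topology (an equivalence class of norms), not a preferred norm; once you fix arbitrary representatives $\|\cdot\|_i$, the restriction operators $M_{i+m} \to M_{i+\ell}$ are bounded, but with operator norms $C_{\ell,m}$ over which you have no control. Your error schedule bounds $t_{i+m} - t_{i+m-1}$ only in the norm of $M_{i+m-1}$, so at a fixed lower level $i+\ell$ the consecutive differences are only bounded by $C_{\ell,m}\,\eps\,2^{-m}$; if $C_{\ell,m}$ grows faster than $2^m$ in $m$, the sequence $(t_{i+m}|_{D_{i+\ell}})_{m}$ need not be Cauchy (in the non-archimedean setting Cauchy is equivalent to consecutive differences tending to $0$, but they must genuinely tend to $0$). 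The repair is exactly the first line of the paper's proof: recursively rescale the norms within their equivalence classes so that every restriction is a contraction, i.e.\ $\|s|_{D_i}\|_i \le \|s\|_{i+1}$ for $s \in M_{i+1}$; alternatively, choose the error at step $m$ adaptively small with respect to the finitely many restriction operators down to the levels below $i+m$. Note that your appeal to topological Mittag--Leffler in~(2) is unaffected, since the proof of that theorem contains this adaptive choice, but your construction in~(1) is explicit and does need the normalization.
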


\begin{remark} Kiehl works with rigid spaces so that $k$ should be supposed non-trivially valued and $X$ strict. However his proof goes through \emph{verbatim} under the hypothesis of Theorem \ref{Thm:SpacesExhaustedByWeierstrassAreStein}.  One can also argue that (1) follows from the density of $\Oc(D_{i+1})$ in $\Oc(D_{i})$ (see \cite[Hilfssatz 2.5]{KiehlStein}), while the compatibility of cohomology with field extensions (Theorem \ref{thm:extensioncohomology}) allows one to deduce (2) from the original result of Kiehl.
\end{remark}

In the late eighties, Liu showed in a series of papers (\cite{LiuCRAS}, \cite{LiuBordeaux}, \cite{LiuTohoku}) that Kiehl's notion is too restrictive: in \cite{LiuCRAS}, he constructs a compact $k$-analytic space which is not affinoid but whose normalization is, and, in \cite{LiuTohoku}, he exhibits an analytic domain of the $2$-dimensional disc which is not affinoid but whose higher coherent cohomology vanishes.

To ensure that the spaces he constructs have no higher coherent cohomology, Liu proves a useful criterion, valid for compact analytic spaces. Let us recall it here.

\begin{definition} A $k$-analytic space $X$ is said to be:
\begin{itemize}
\item \emph{rig-holomorphically separable} if for all rigid points $x, x' \in X_\rig$ there exists a holomorphic function $f \in \Oc(X)$ such that $f(x) = 0$ and $f(x') = 1$;
\item \emph{cohomologically Stein} if for every coherent sheaf $F$ of $\Oc_X$-modules and every $q \ge 1$ the cohomology group $\H^q(X, F)$ vanishes.
\end{itemize}
\end{definition}

\begin{theorem}[{\cite[Th\'eor\`eme 2]{LiuTohoku}}] \label{Thm:LiuTohoku} Suppose $k$ non-trivially valued. Let $X$ be a separated \emph{compact} strictly $k$-analytic space. Then the following conditions are equivalent:
\begin{enumerate}
\item $X$ is rig-holomorphically separable and $\Oc_X$ is acyclic;
\item $X$ is cohomologically Stein.
\end{enumerate}
Furthermore, if $X$ satisfies one of the preceding equivalent conditions, then it can be embedded as an analytic domain in a strictly $k$-affinoid space. In particular, if $X$ is cohomologically Stein, then $X_{k'}$ is cohomologically Stein for every analytic extension $k'$ of $k$.
\end{theorem}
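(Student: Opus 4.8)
The implication $(2) \Rightarrow (1)$ is the softer one, and I would dispatch it first. Acyclicity of $\Oc_X$ is simply the case $F = \Oc_X$ of cohomological Steinness. For rig-holomorphic separability, fix distinct rigid points $x, x' \in X_\rig$ and let $\mathcal{I} \subset \Oc_X$ be the coherent ideal sheaf of the reduced finite closed subspace $\{x, x'\}$, giving a short exact sequence $0 \to \mathcal{I} \to \Oc_X \to \kappa(x) \oplus \kappa(x') \to 0$ whose quotient is a skyscraper supported on the two points. Since $X$ is cohomologically Stein, $\H^1(X, \mathcal{I}) = 0$, so $\Oc(X) \to \kappa(x) \oplus \kappa(x')$ is surjective; any preimage of $(0,1)$ is a global function with $f(x) = 0$ and $f(x') = 1$.

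For $(1) \Rightarrow (2)$ I would route through the embedding asserted in the ``furthermore'' clause, which I regard as the heart of the matter. Since $X$ is compact I would cover it by finitely many affinoid domains and, on each, fix finitely many functions generating the corresponding affinoid algebra; these are only defined locally, and the role of the hypotheses is to globalise them. Rig-holomorphic separability ensures that rigid points lying in different pieces are separated by a global function, while the vanishing of $\H^1(X, \Oc_X)$, through a Banach open-mapping and Mittag-Leffler argument in the spirit of the density statement of Theorem~\ref{Thm:SpacesExhaustedByWeierstrassAreStein}(1), lets me approximate the local coordinate functions by global ones closely enough to recover the local affinoid structure. After rescaling so that all chosen functions are bounded by $1$, they assemble into a morphism $\phi \colon X \to \mathbb{B}^n = \mathcal{M}(k\{T_1, \dots, T_n\})$. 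I would then check that $\phi$ is injective on rigid points (by separability) and, locally on each affinoid piece, an isomorphism onto a rational domain (because the global functions recover the local coordinates); gluing these local isomorphisms and using that $X$ is separated and compact identifies $X$ with a finite union of rational domains $U \subseteq \mathbb{B}^n$. This already yields the embedding claimed in the ``furthermore''.

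To deduce cohomological Steinness of $U$, I would show that $U$ admits a cofinal decreasing family of affinoid neighborhoods $W \supseteq U$ inside $\mathbb{B}^n$ with $U = \bigcap_W W$, and that every coherent sheaf $F$ on $U$ extends to a coherent sheaf $F_W$ on some such $W$. Granting this, Tate's acyclicity theorem gives $\H^q(W, F_W) = 0$ for $q \ge 1$ and every affinoid $W$, and continuity of cohomology yields $\H^q(U, F) = \varinjlim_W \H^q(W, F_W) = 0$. For the ``in particular'', I would base change the whole picture along $k \to k'$: the space $\mathbb{B}^n_{k'}$ stays affinoid, $U_{k'} \subseteq \mathbb{B}^n_{k'}$ stays a compact analytic domain whose affinoid neighborhoods are the base changes $W_{k'}$, so the same computation (or directly the base-change statement of Theorem~\ref{thm:extensioncohomology} applied neighborhood by neighborhood) shows that $X_{k'} = U_{k'}$ is cohomologically Stein.

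I expect the main obstacle to be the embedding step, and specifically the promotion of the locally defined affinoid coordinates to global functions without destroying the local structure: this is precisely where \emph{both} hypotheses are genuinely used and where the possible non-affinoidicity of $X$ must be accommodated. The second delicate point is establishing the fundamental system of affinoid neighborhoods together with the extendability of coherent sheaves, since this is the mechanism by which $\Oc_X$-acyclicity is made to propagate to \emph{all} coherent sheaves rather than just to the structure sheaf.
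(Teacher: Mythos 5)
Your direction $(2) \Rightarrow (1)$ is correct (it is the same classical ideal-sheaf argument the paper itself uses later, in the proof of $(4)\Rightarrow(2\mathrm{b})$ of Theorem~\ref{Thm:EquivalenceBoundary}). Note, however, that the paper does not prove the present statement at all: it is quoted from Liu, and the ingredients of Liu's proof surface in Section~\ref{sec:Liu} (Theorem~\ref{thm:LiuGerritzenGrauert}, Proposition~\ref{prop:FunctionsOnRationalDomains}, and the Runge-immersion proposition). Measured against that route, your plan for $(1)\Rightarrow(2)$ has two genuine gaps. The first is in the embedding step. You propose to promote local affinoid coordinates to global functions using $\H^1(X,\Oc_X)=0$ via an open-mapping/Mittag--Leffler density argument. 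But density of global functions in $\Oc(D)$, for $D$ an affinoid domain of $X$, is not a consequence of any cohomological vanishing: it already fails for $X=\M(k\{x\})$ (the closed unit disc, which is affinoid, so \emph{all} coherent cohomology vanishes) and $D=\{|x|=1\}$, since the closure of $k\{x\}$ in $k\{x,x^{-1}\}$ is $k\{x\}$. What is true is the localized statement: $\Oc(X)[1/f_0]$ is dense in $\Oc(D)$ when $D$ is a \emph{rational} domain $\{|f_i|\le r_i|f_0|\}$ cut out by \emph{global} functions (Proposition~\ref{prop:FunctionsOnRationalDomains}). So before any approximation can start one must know that the affinoid pieces of $X$ are finite unions of rational domains of $X$; that is precisely Liu's Gerritzen--Grauert theorem for $S$-spaces (Theorem~\ref{thm:LiuGerritzenGrauert}), the deep input using both hypotheses, which your outline omits entirely. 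Your plan has the logical order reversed: the rational structure is the cause of the density, not a consequence of it.

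The second gap is in deducing cohomological Steinness from the embedding. Your mechanism --- cofinal affinoid neighbourhoods $W\supseteq U$, extension of coherent sheaves to some $W$, continuity of cohomology --- makes no use whatsoever of the acyclicity of $\Oc_X$, so if it worked it would prove that \emph{every} compact analytic domain of an affinoid space is cohomologically Stein. That is false: take $U=\{|x|=1\}\cup\{|y|=1\}$ inside the bidisc $\M(k\{x,y\})$. The two pieces and their intersection are affinoid, so the \v{C}ech complex of this cover computes cohomology, and $\H^1(U,\Oc_U)=\coker\bigl(\Oc(U_1)\oplus\Oc(U_2)\to\Oc(U_1\cap U_2)\bigr)\neq 0$, because no element of the image has a non-zero coefficient on $x^{-1}y^{-1}$ (elements of $\Oc(U_1)$ carry only monomials $x^iy^j$ with $j\ge 0$, those of $\Oc(U_2)$ only monomials with $i\ge 0$). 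Yet $U$ is compact, separated, strict and rig-holomorphically separable (functions of the ambient bidisc already separate rigid points); it only fails $\Oc_U$-acyclicity. Running your own continuity-of-cohomology argument backwards, this example also shows that $U$ admits \emph{no} cofinal system of affinoid neighbourhoods, so your key unproved claim is false for general compact analytic domains, and any correct version of it would have to consume the hypothesis $\H^q(X,\Oc_X)=0$ in a way your outline does not indicate. Liu's actual argument avoids neighbourhoods altogether: it exploits the Runge structure of the immersion (each piece of $X$ is a closed subspace of a Weierstrass domain of an affinoid) together with density arguments. This is also the only route to the final base-change clause: Theorem~\ref{thm:extensioncohomology} gives acyclicity of $\Oc_{X_{k'}}$, but not of arbitrary coherent sheaves on $X_{k'}$ (they need not descend to $k$); one must base change the embedding and re-run the equivalence over $k'$.
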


\subsection{Statement of the results} In order to state the results, let us introduce the following definitions:

\begin{definition} Let $X$ be a $k$-analytic space.
\begin{itemize}
\item The \emph{holomorphic convex hull} of a compact subset $K$ of $X$ is
$$ \hat{K}_X := \{ x \in X : |f(x)| \le \| f\|_K \textup{ for all } f \in \Oc(X) \},$$
where, for $f \in \Oc(X)$,  $ \| f \|_K := \sup_{x \in K} |f(x)|$. 
\end{itemize}
The $k$-analytic space $X$ is said to be:
\begin{itemize}
\item \emph{holomorphically separable} if for all points $x, x' \in X$ there exists a holomorphic function $f \in \Oc(X)$ such that $|f(x)| \neq |f(x')|$;
\item \emph{holomorphically convex} if for every compact subset $K \subset X$ the holomorphically convex hull~$\hat{K}_X$ of~$K$ is compact.
\end{itemize}
\end{definition}

\begin{definition}
A coherent sheaf~$F$ on a $k$-analytic space~$X$ is said to be \emph{universally acyclic} if, for every analytic extension~$k'$ of~$k$, $F_{k'}$ is acyclic on~$X_{k'}$.
\end{definition} 

Note that, by Theorem~\ref{thm:extensioncohomology}, if $k$~is non-trivially valued and $X$~is separated and countable at infinity, then any acyclic coherent sheaf on~$X$ is universally acyclic.

\begin{definition} A  $k$-analytic space $X$ which is separated, holomorphically separable, compact and on which~$\Oc_X$ is universally acyclic is called a \emph{Liu space}. 
\end{definition}

\begin{definition}
A $k$-analytic space $X$ is said to be \emph{W-exhausted by Liu domains} if it admits a cover for the G-topology $\{ D_i \}_{i \in \N}$ by Liu spaces such that, for $i \ge 0$, $D_i$ is contained in $D_{i+1}$ and the restriction map  $\Oc_X(D_{i+1}) \to \Oc_X(D_i)$ has dense image.
\end{definition}

The main results of this note are:

\begin{theorem}[{\emph{cf.} Theorem
\ref{Thm:EquivalenceBoundary}}] \label{Thm:EquivalenceBoundaryIntro} Let $X$ be a separated $k$-analytic space countable at infinity. The following are equivalent:
\begin{enumerate}
\item for every analytic extension $k'$ of $k$, $X_{k'}$ is cohomologically Stein;
\item $X$ is holomorphically convex, holomorphically separable and $\Oc_X$ is acyclic;
\item $X$ is W-exhausted by Liu domains.
\end{enumerate}
Moreover, if the valuation of $k$ is non-trivial and $X$ is strict, one may replace holomorphically separable by rig-holomorphically separable in~(2). 
\end{theorem}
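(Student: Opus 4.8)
The plan is to prove the cycle of implications $(3)\Rightarrow(1)\Rightarrow(2)\Rightarrow(3)$, dealing with the final ``moreover'' assertion along the way. For $(3)\Rightarrow(1)$, suppose $X$ is W-exhausted by Liu domains $\{D_i\}_{i\in\N}$. The first step is to upgrade the universal acyclicity of $\Oc_{D_i}$ to the statement that each $D_i$ is cohomologically Stein and remains so after any base change: reducing by a field extension to the strict, non-trivially valued situation (where holomorphic separability yields rig-holomorphic separability) one applies Theorem~\ref{Thm:LiuTohoku}, which both gives the vanishing of higher cohomology of every coherent sheaf on $D_i$ and embeds $D_i$ into a strictly affinoid space, so that $(D_i)_{k'}$ is again cohomologically Stein for every $k'$. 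Granting this, one runs the proof of Kiehl's Theorem~\ref{Thm:SpacesExhaustedByWeierstrassAreStein} \emph{verbatim} with affinoid domains replaced by Liu domains: the argument only uses compactness of the pieces, acyclicity of coherent sheaves on them, and density of the transition maps $\Oc(D_{i+1})\to\Oc(D_i)$, the latter furnishing the Mittag-Leffler condition that annihilates $\varprojlim^1$ and hence $\H^q(X,F)$ for $q\ge 1$. As density is preserved under $\hotimes_k k'$, base changing the whole exhaustion shows that $X_{k'}$ is cohomologically Stein.

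For $(1)\Rightarrow(2)$, taking $k'=k$ immediately gives that $\Oc_X$ is acyclic, indeed that $X$ is cohomologically Stein. Holomorphic separability I would deduce from the vanishing of $\H^1$: given distinct points $x,x'$, a coherent quotient of $\Oc_X$ whose cosupport is $\{x,x'\}$ has surjective global sections, producing a function with prescribed, hence unequal, values at the two points. The crux is holomorphic convexity. Writing $X=\bigcup_n X_n$ as an increasing union of compacts with $X_n\subset\mathrm{int}(X_{n+1})$ (possible since $X$ is countable at infinity) and fixing a compact $K$, one wants $\hat{K}_X\subset X_n$ for some $n$, which forces $\hat{K}_X$ to be compact. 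Were this to fail there would be points of $\hat{K}_X$ leaving every $X_n$, and one must manufacture $f\in\Oc(X)$ with $\|f\|_K$ finite yet $|f|$ unbounded along them, a contradiction. Producing such an $f$ by solving a sequence of approximation problems with control, whose solvability rests on the cohomological Stein hypothesis together with the density it yields on compact pieces, is the \emph{main obstacle}: it is the non-archimedean incarnation of the classical converse to Cartan's Theorem~B.

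For $(2)\Rightarrow(3)$, assuming $X$ holomorphically convex, holomorphically separable, with $\Oc_X$ acyclic, holomorphic convexity provides an exhaustion by compact subsets $K_i$ with $K_i\subset\mathrm{int}(K_{i+1})$, each equal to its own holomorphically convex hull. One then thickens each $K_i$ to a compact analytic domain $D_i$ that is a Liu space. Separatedness and compactness are clear, and holomorphic separability descends from $X$; the delicate points are the universal acyclicity of $\Oc_{D_i}$ and the density of $\Oc(D_{i+1})\to\Oc(D_i)$. The former is arranged by choosing $D_i$ so that, after the reduction to the strict non-trivially valued case, Theorem~\ref{Thm:LiuTohoku} applies, i.e. $D_i$ is rig-holomorphically separable with acyclic structure sheaf; the latter is a Runge-type approximation following from the holomorphic convexity of $K_i$ and the acyclicity of $\Oc_X$, exactly as analytic blocks are built in complex geometry. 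Constructing the $D_i$ with all these properties simultaneously is the second place where genuine work is needed.

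Finally, the ``moreover'' clause is essentially handled by the reductions above: when $k$ is non-trivially valued and $X$ is strict, rigid points are dense and every invocation of Theorem~\ref{Thm:LiuTohoku} only requires separation of rigid points, so rig-holomorphic separability of $X$ already suffices to carry out the construction of $(2)\Rightarrow(3)$, while conversely holomorphic separability trivially implies it on such points. Hence in~(2) one may replace ``holomorphically separable'' by ``rig-holomorphically separable'' without affecting the equivalence.
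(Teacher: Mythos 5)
Your skeleton --- the cycle $(3)\Rightarrow(1)\Rightarrow(2)\Rightarrow(3)$, with Kiehl's projective-limit argument run on Liu domains for $(3)\Rightarrow(1)$ and Cartan's enveloping construction for $(2)\Rightarrow(3)$ --- is the same as the paper's, and your $(3)\Rightarrow(1)$ is essentially correct (it is Proposition~\ref{Prop:ExhaustedByCompactSteinImpliesCohomStein} together with Corollary~\ref{Cor:LiuSpaceCohomStein}). But the two places you yourself flag as ``the main obstacle'' and ``where genuine work is needed'' are not details: they are the actual content of the theorem, and for the first one the strategy you sketch would not work. For holomorphic convexity in $(1)\Rightarrow(2)$ you propose to take $k'=k$, fix an escaping sequence in $\hat{K}_X$, and manufacture an unbounded $f\in\Oc(X)$ by approximation on compact pieces. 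This cannot get started: the points of $\hat{K}_X$ are in general not rigid, so an escaping sequence carries no coherent ideal sheaf and no evaluation problem to solve. The paper's proof (implication (4)$\Rightarrow$(2b) of Theorem~\ref{Thm:EquivalenceBoundary}) instead (a)~reduces compactness of $\hat{K}_X$ to sequential compactness by angelicity \cite[Corollaire 5.12]{PoineauAngeliques}; (b)~extends scalars so that the given sequence becomes a sequence of $k'$-\emph{rational} points --- this is precisely why condition (1) must quantify over all extensions $k'$; (c)~uses the compatibility of holomorphic convex hulls with extension of scalars (Proposition~\ref{Prop:CompatibilityHolConvexhullsExtScalars}, one of the two key ingredients announced in the introduction, and absent from your proposal) to guarantee that the lifted points still lie in the hull of $K'$; and (d)~applies $\H^1(X_{k'},I)=0$ to the ideal sheaf $I$ of this now discrete set of rational points to produce $f$ with $|f(x'_i)|\ge i$, with no approximation needed.

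The second gap is your claim that holomorphic separability follows from $\H^1$-vanishing via a coherent quotient of $\Oc_X$ with cosupport $\{x,x'\}$. That argument is only meaningful when $x$ and $x'$ are rigid: for general Berkovich points $\{x\}\cup\{x'\}$ is not a closed analytic subspace, so there is no such coherent sheaf, and what cohomological vanishing yields is only \emph{rig}-holomorphic separability (after every extension). This is exactly why the paper's Theorem~\ref{Thm:EquivalenceBoundary} inserts the auxiliary conditions (2b), (2c) and (4) and proves a longer cycle: full holomorphic separability is recovered only \emph{after} the W-exhaustion (3) has been built, because Liu domains are themselves holomorphically separable --- a fact resting on Liu's Gerritzen--Grauert theorem extended to non-strict spaces (Proposition~\ref{prop:LiuGerritzenGrauertNonStrict}, Corollary~\ref{Cor:StrictLiuSpacesAreHolSeparable}) and on descent of holomorphic separability along universally multiplicative extensions (Proposition~\ref{prop:DescendingHolomorphicSeparability}). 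Finally, in $(2)\Rightarrow(3)$ the properties you defer (universal acyclicity of $\Oc_{D_i}$ and density of $\Oc(D_{i+1})\to\Oc(D_i)$) are supplied in the paper by Proposition~\ref{prop:FunctionsOnRationalDomains}, which in turn rests on the Banachoid-space Theorem~\ref{thm:FXGY}; note also that (2) only gives acyclicity of $\Oc_X$, and the upgrade to universal acyclicity requires Theorem~\ref{thm:extensioncohomology}. As it stands, the proposal reproduces the paper's architecture but leaves unproved, or mis-attacks, the steps that constitute its substance.
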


When the spaces in question are without boundary, the previous characterization takes the following form:

\begin{theorem}[{\emph{cf.} Theorem \ref{Thm:EquivalenceWithoutBoundary}}] \label{Thm:EquivalenceWithoutBoundaryIntro} Let $X$ be a $k$-analytic space without boundary and countable at infinity. The following are equivalent:

\begin{enumerate}
\item for every analytic extension $k'$ of $k$, $X_{k'}$ is cohomologically Stein;
\item $X$ is holomorphically convex and holomorphically separable;
\item $X$ is W-exhausted by affinoid domains.
\end{enumerate} 
Moreover, if the valuation of $k$ is non-trivial, one may replace holomorphically separable by rig-holomorphically separable in~(2).
\end{theorem}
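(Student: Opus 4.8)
The plan is to deduce the statement from the general Theorem~\ref{Thm:EquivalenceBoundaryIntro}, the absence of boundary being used at exactly one point: to replace ``Liu domain'' by ``affinoid domain'' in~(3). Two of the implications are then formal. For $(3)\Rightarrow(1)$, observe that every affinoid domain is a Liu space --- it is separated, compact, its functions separate points in the strong sense $|f(x)|\neq|f(y)|$, and $\Oc$ is universally acyclic by Tate's theorem and the stability of affinoids under base change --- so a space W-exhausted by affinoid domains is W-exhausted by Liu domains, and Theorem~\ref{Thm:EquivalenceBoundaryIntro} yields~(1). For $(1)\Rightarrow(2)$ one again invokes Theorem~\ref{Thm:EquivalenceBoundaryIntro}, which returns holomorphic convexity and holomorphic separability (and the acyclicity of $\Oc_X$, which we discard); to apply it one needs $X$ separated, and this is obtained from~(1) by the usual argument producing holomorphically separating functions from the vanishing of $\H^1$ of the ideal sheaf of a pair of points, holomorphic separability forcing the diagonal to be closed. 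The rig-separable variant is transported through the corresponding clauses of Theorem~\ref{Thm:EquivalenceBoundaryIntro} and Liu's Theorem~\ref{Thm:LiuTohoku}.

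The substance is the implication $(2)\Rightarrow(3)$, where I must manufacture an exhaustion by \emph{affinoid} domains. As $X$ is countable at infinity and holomorphically convex, I would first replace a compact exhaustion by the holomorphically convex hulls of its members --- compact by hypothesis --- and fatten to obtain holomorphically convex compacts $K_0\subseteq K_1\subseteq\cdots$ with $K_n$ in the topological interior $\mathring{K}_{n+1}$ and $\bigcup_n K_n=X$. For each $n$ the core construction is to choose finitely many global functions $f_1,\dots,f_N\in\Oc_X(X)$, using holomorphic separability to make the resulting morphism $\phi=(f_1,\dots,f_N)\colon X\to\A^N$ finite on the region of interest and using holomorphic convexity to include functions that expel the complement of $\mathring{K}_{n+1}$, so that for a suitable closed polydisc $E\subset\A^N$ the analytic domain $D_n:=\phi^{-1}(E)$ is compact and satisfies $K_n\subseteq D_n\subseteq\mathring{K}_{n+1}$.

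The decisive claim is that $D_n$ is affinoid, and this is where the hypothesis on the boundary is spent. Since $\partial(X/k)=\emptyset$ and $\A^N$ is boundaryless over $k$, the relative interior formula $\mathrm{Int}(X/k)=\mathrm{Int}(X/\A^N)$ shows that $\phi$ has empty relative boundary; the relative boundary being stable under the base change $E\hookrightarrow\A^N$, the morphism $\phi|_{D_n}\colon D_n\to E$ is boundaryless as well, and, $D_n$ being compact, it is proper. Having arranged $\phi|_{D_n}$ to have finite fibres, it is therefore finite, so $D_n$ is finite over the affinoid $E$ and hence affinoid; being the preimage of the affinoid domain $E$ it is an analytic domain in $X$, thus an affinoid domain. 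The inclusions $D_n\subseteq\mathring{K}_{n+1}\subseteq D_{n+1}$ give the required nesting, and once the affinoid exhaustion is in place Kiehl's Theorem~\ref{Thm:SpacesExhaustedByWeierstrassAreStein} re-establishes the acyclicity of $\Oc_X$ for free, accounting for its disappearance from~(2) and closing the loop with~(1).

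I expect the main obstacle to be twofold and concentrated in the construction of $D_n$. First, holomorphic separability separates only pairs of points, whereas I need a single global map that is finite --- ideally a closed immersion --- on the compact $D_n$; bridging this gap requires a compactness argument extracting finitely many functions that collectively separate points and generate cotangent spaces, after which the properness supplied by boundarylessness promotes quasi-finiteness to finiteness. Second, the density of $\Oc_X(D_{n+1})\to\Oc_X(D_n)$ is a Runge-type approximation statement: arranging $\phi$ to present $D_n$ as a Weierstrass domain (so that $\Oc_X(X)$, hence $\Oc_X(D_{n+1})$, maps densely onto $\Oc_X(D_n)$) is precisely where holomorphic convexity is consumed, and reconciling this with the finiteness of $\phi|_{D_n}$ is the delicate technical heart of the argument.
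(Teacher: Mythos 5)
Your skeleton for the crucial implication $(2)\Rightarrow(3)$ --- enveloping a compact exhaustion by analytic domains $D_n$ mapping to polydiscs, extracting properness of $D_n \to E$ from compactness plus empty boundary, then upgrading properness to finiteness and invoking ``finite over an affinoid implies affinoid'' --- is exactly the paper's (Lemma~\ref{lemma:AffinoidDomainsEnvelopingCompacts} combined with the proof of Theorem~\ref{thm:Remmert}). But the step you explicitly defer, finiteness of the fibres of $\phi_{\rvert D_n}$, is the crux of the whole theorem, and the mechanism you propose for it would not work: finitely many global functions cannot ``collectively separate points'' of a positive-dimensional compact (that amounts to injectivity of~$\phi$ on~$D_n$, an embedding-type statement not available before Stein theory is established), and ``generating cotangent spaces'' is Theorem~A input (Proposition~\ref{prop:ThmA}), which under hypothesis~(2) you do not yet have, so using it is circular. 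The missing idea is that \emph{no} choice of functions is needed: once $\phi_{\rvert D_n} \colon D_n \to E$ is proper, every fibre $Z$ over a point $e \in E$ is a proper $\khat(e)$-analytic space, so Kiehl's direct image theorem makes $\Oc(Z)$ a finite-dimensional $\khat(e)$-algebra; hence any $f \in \Oc(X)$ satisfies on~$Z$ a polynomial equation over~$\khat(e)$, the induced map $Z \to \A^{1,\an}_{\khat(e)}$ has finite image, and $|f|$ is constant on each connected component of~$Z$. A positive-dimensional component would therefore contain two distinct points of~$X$ that no global function separates, contradicting holomorphic separability; so all fibres are finite and $\phi_{\rvert D_n}$ is finite. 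This is precisely the argument the paper applies to the fibres of the Remmert reduction $\pi \colon X \to S$ after Stein-factorizing the envelope maps; your direct version goes through once this argument replaces the one you sketch. (A smaller repair: as written $\phi^{-1}(E)$ need not be compact, since convexity only lets you expel the compact boundary $\partial K_{n+1}$, not the whole complement; one must take $D_n := \phi^{-1}(E) \cap \mathring{K}_{n+1}$, which is clopen in $\phi^{-1}(E)$, hence still an analytic domain, compact and boundaryless over~$E$.)

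The second point you flag without resolving, density of $\Oc(D_{n+1}) \to \Oc(D_n)$, is settled in the paper by coherence of proper direct images: if $E'$ is the sub-polydisc of $E_{n+1}$ cutting out $D_n$, then $(\phi_{n+1})_\ast \Oc$ is coherent on $E_{n+1}$, its sections over the Weierstrass domain $E'$ form the algebra $\Oc(D_{n+1})\bigl\{ f_1/r_1, \dots, f_{n}/r_{n}\bigr\}$ in which the image of $\Oc(D_{n+1})[f_1,\dots,f_n]$ is dense, and one then restricts to $D_n$, a union of connected components of $\phi_{n+1}^{-1}(E')$, onto whose function ring restriction is surjective. Finally, your formal reductions of $(3)\Rightarrow(1)$ and $(1)\Rightarrow(2)$ to Theorem~\ref{Thm:EquivalenceBoundaryIntro} require $X$ to be \emph{separated}, which is not among the hypotheses of the present statement, and your patch delivers less than you claim: the $\H^1$-of-an-ideal-sheaf argument makes sense only for \emph{rigid} points (a non-rigid point is not a closed analytic subspace), so it yields rig-separability after scalar extension rather than holomorphic separability, and for Berkovich spaces Hausdorffness of $|X|$ does not formally imply that the diagonal is a closed immersion. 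The paper sidesteps all of this by proving those implications directly: $(3)\Rightarrow(1)$ is Kiehl's Theorem~\ref{Thm:SpacesExhaustedByWeierstrassAreStein} together with stability of the exhaustion under scalar extension, with no separatedness needed, and holomorphic separability is deduced from~(3) (via density of $\Oc(X)$ in $\Oc(D_i)$), not from~(1).
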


\begin{remark}The question whether the equivalent conditions in Theorem \ref{Thm:EquivalenceWithoutBoundaryIntro} are in turn equivalent to $X$ being cohomologically Stein remains open.
\end{remark}

The proof of Theorem \ref{Thm:EquivalenceBoundaryIntro} has two ingredients. One is Cartan's original argument to exhaust Stein spaces (\emph{cf.} \cite[Lemme p. 8-9]{Cartan}, \cite[Chapter IV, \S 3 Theorems 6-7]{GrauertRemmertSteinSpaces}). 
The other one is the compatibility of the construction of the holomorphically convex hull with extension of scalars (\emph{cf.} Proposition \ref{Prop:CompatibilityHolConvexhullsExtScalars}).

Let us state some formal consequences of Theorems \ref{Thm:EquivalenceBoundaryIntro} and \ref{Thm:EquivalenceWithoutBoundaryIntro}.

\begin{corollary} Let $X$ be a $k$-analytic space without boundary. Then $X$ is W-exhausted by Liu domains if and only if it is W-exhausted by affinoid domains.
\end{corollary}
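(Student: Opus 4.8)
The plan is to squeeze both exhaustion properties between the single intrinsic condition that appears in Theorems \ref{Thm:EquivalenceBoundaryIntro} and \ref{Thm:EquivalenceWithoutBoundaryIntro}, namely that $X_{k'}$ be cohomologically Stein for \emph{every} analytic extension $k'$ of $k$. The two theorems both identify this condition with an exhaustion property (by Liu domains in the general separated case, by affinoid domains in the boundaryless case), so once $X$ is known to fall under the hypotheses of both, the corollary becomes a formal chaining.

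First I would dispatch the easy implication: if $X$ is W-exhausted by affinoid domains, then it is W-exhausted by Liu domains. This needs only the observation that every $k$-affinoid space is a Liu space. Indeed, an affinoid space $\Mc(A)$ is separated and compact; it is holomorphically separable because distinct points are distinct multiplicative seminorms on $A$, so some $f \in A$ takes different absolute values at them; and $\Oc$ is universally acyclic since for every analytic extension $k'$ the base change is again affinoid, whence $\Oc$ is acyclic by Tate's theorem. Consequently any affinoid cover $\{D_i\}$ realizing the W-exhaustion is a cover by Liu spaces with the same nestedness and density of restriction maps, so it also witnesses that $X$ is W-exhausted by Liu domains. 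Note that this implication uses neither the boundarylessness of $X$ nor the two main theorems.

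For the converse, suppose $X$ is without boundary and W-exhausted by Liu domains $\{D_i\}_{i \in \N}$. By the Remark following the definition of W-exhaustion we may assume $D_i$ lies in the topological interior of $D_{i+1}$, so that $X = \bigcup_i \mathring{D}_i$ is an increasing union of open subsets, each with compact closure inside the next term; in particular $X$ is countable at infinity and every compact analytic domain of $X$ is contained in some $D_i$. Since each $D_i$ is separated and any two affinoid domains of $X$ lie in a common $D_i$, separatedness of $X$ follows from that of the $D_i$. Hence the hypotheses of Theorem \ref{Thm:EquivalenceBoundaryIntro} are met, and, its condition~(3) being satisfied, condition~(1) holds: for every analytic extension $k'$ of $k$ the space $X_{k'}$ is cohomologically Stein.

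To conclude I would feed this into the boundaryless characterization. As $X$ is without boundary and countable at infinity, Theorem \ref{Thm:EquivalenceWithoutBoundaryIntro} is available, and its condition~(1) — which is verbatim the one just established — yields condition~(3), namely that $X$ is W-exhausted by affinoid domains. The only point demanding care is the verification that W-exhaustion by Liu domains forces $X$ to be separated and countable at infinity, which is precisely what permits the passage through Theorem \ref{Thm:EquivalenceBoundaryIntro}; once that is in place, everything reduces to the tautology that the common condition ``$X_{k'}$ cohomologically Stein for all $k'$'' is shared by both equivalences.
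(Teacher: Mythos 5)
Your proof is correct and takes essentially the same route the paper intends: the corollary is presented there as a purely formal consequence of Theorems \ref{Thm:EquivalenceBoundaryIntro} and \ref{Thm:EquivalenceWithoutBoundaryIntro}, obtained by chaining the common condition (1) (cohomologically Stein after every extension of scalars) through both equivalences, with the easy direction coming from the observation that affinoid spaces are Liu spaces. Your extra verification that a W-exhaustion by Liu domains forces $X$ to be separated and countable at infinity (via the interior property, which holds for Liu domains by the same compactness/quasi-net argument that justifies the paper's Remark in the affinoid case) is exactly the detail the paper leaves implicit, and you handle it correctly.
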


\begin{corollary} Let $X$ be a $k$-analytic space and $k'$ an analytic extension of $k$. Then $X$ is W-exhausted by Liu domains if and only if $X_{k'}$ is.
\end{corollary}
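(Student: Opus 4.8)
The plan is to deduce the corollary from Theorem~\ref{Thm:EquivalenceBoundaryIntro}, whose condition~(1) is by its very formulation a statement about \emph{all} analytic extensions of the base field. The first step is a reduction of the hypotheses. Being W-exhausted by Liu domains forces a space to be separated and countable at infinity, since it is then an increasing union of compact separated analytic domains. Moreover, both properties are preserved by the extension of scalars $k'/k$ and descend along it: the projection $X_{k'} \to X$ is a continuous surjection, so that countability at infinity descends by taking images of the compacts, and separatedness descends along the faithfully flat extension $k \to k'$. Hence, whenever either side of the asserted equivalence holds, both $X$ and $X_{k'}$ are separated and countable at infinity, and if neither side holds there is nothing to prove. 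We may therefore apply Theorem~\ref{Thm:EquivalenceBoundaryIntro} to both $X$ and $X_{k'}$.

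By that theorem, $X$ is W-exhausted by Liu domains if and only if $X_{k''}$ is cohomologically Stein for every analytic extension $k''$ of $k$; call this condition $(\star_k)$. Likewise $X_{k'}$ is W-exhausted by Liu domains if and only if $(X_{k'})_\ell = X_\ell$ is cohomologically Stein for every analytic extension $\ell$ of $k'$; call this $(\star_{k'})$. It thus suffices to prove $(\star_k) \Leftrightarrow (\star_{k'})$. Since every analytic extension of $k'$ is in particular an analytic extension of $k$, the implication $(\star_k) \Rightarrow (\star_{k'})$ is immediate.

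For the converse, fix an arbitrary analytic extension $k''$ of $k$; I would first produce a single analytic field $\ell$ receiving isometric $k$-embeddings of both $k'$ and $k''$. The completed tensor product $k' \hotimes_k k''$ is a nonzero Banach $k$-algebra (its unit has norm one), so $\M(k' \hotimes_k k'')$ is nonempty, and the completed residue field $\ell$ at any of its points has the required property. As $\ell$ is an analytic extension of $k'$, condition $(\star_{k'})$ yields that $X_\ell$ is cohomologically Stein. To descend this to $k''$, take any coherent sheaf $F$ on $X_{k''}$: the base-change isomorphism of Theorem~\ref{thm:extensioncohomology} identifies $\H^q(X_\ell, F_\ell)$ with $\H^q(X_{k''}, F) \hotimes_{k''} \ell$, which vanishes for $q \ge 1$. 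Since completed extension of scalars along a complete valued field extension is faithful, a $k''$-Banach space embedding isometrically into its scalar extension, this forces $\H^q(X_{k''}, F) = 0$. Thus $X_{k''}$ is cohomologically Stein, which establishes $(\star_k)$ and completes the proof.

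The step I expect to be the crux is this final descent: it is where one genuinely needs the compatibility of coherent cohomology with the field extension $k'' \to \ell$ (Theorem~\ref{thm:extensioncohomology}) together with the faithfulness of completed scalar extension, and where the auxiliary field $\ell$ dominating both $k'$ and $k''$ is indispensable in order to bring the hypothesis $(\star_{k'})$ to bear on $X_{k''}$. The remaining arguments are routine manipulations organised around Theorem~\ref{Thm:EquivalenceBoundaryIntro}.
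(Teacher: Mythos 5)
Your proof is correct and is essentially the paper's own argument: the paper records this corollary as a formal consequence of Theorem~\ref{Thm:EquivalenceBoundaryIntro} and proves it by invoking Theorem~\ref{thm:extensioncohomology}, which is exactly the descent through a common analytic extension that you spell out (the same mechanism appears in the paper's proof of (2c)~$\Rightarrow$~(1) of Theorem~\ref{Thm:EquivalenceBoundary}), your preliminary reduction to the separated, countable-at-infinity case being a careful treatment of hypotheses the paper leaves implicit. One small point to add: the $q \ge 1$ case of Theorem~\ref{thm:extensioncohomology} requires one of the two fields involved to be non-trivially valued, so in the edge case where $k'$ and $k''$ (hence the field $\ell$ you construct) are all trivially valued, you should first replace $\ell$ by a non-trivially valued analytic extension of it, which is harmless since such a field is still an analytic extension of $k'$ and the faithfulness argument is unchanged.
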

\begin{proof}
It follows from Theorem \ref{thm:extensioncohomology}.
\end{proof}

\begin{corollary} \label{cor:FiniteCoverStein} Let $f \colon Y \to X$ be a finite morphism between separated $k$-analytic spaces. Then,
\begin{enumerate}
\item if $X$ is W-exhausted by Liu domains, then so is $Y$;
\item if $f$ is surjective and  $Y$ is W-exhausted by Liu domains, then $X$ is W-exhausted by Liu domains.
\end{enumerate}
\end{corollary}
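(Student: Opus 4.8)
The plan is to run everything through the cohomological characterization of Theorem~\ref{Thm:EquivalenceBoundaryIntro}: a separated $k$-analytic space countable at infinity is W-exhausted by Liu domains if and only if $X_{k'}$ is cohomologically Stein for every analytic extension $k'$ of $k$. To be allowed to invoke it I first check the two standing hypotheses. Liu spaces are compact, so a space W-exhausted by Liu domains is a countable increasing union of compact subsets and is in particular countable at infinity; a finite morphism is proper, hence $f^{-1}$ sends compact sets to compact sets, and $f$ being continuous does too. Thus in~(1) countability at infinity descends from $X$ to $Y$, and in~(2) it descends from $Y$ to $X = f(Y)$. Separatedness is free: $f$ is separated, so $Y$ is separated over $k$ in~(1), while $X$ is separated by assumption in~(2).

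The second preliminary is the effect of a finite morphism on cohomology. A finite morphism is affine --- the preimage of an affinoid domain is affinoid --- so $f_*$ is exact on coherent sheaves and $\RR^q f_* = 0$ for $q \ge 1$; the Leray spectral sequence then degenerates into isomorphisms $\H^q(Y, G) \cong \H^q(X, f_* G)$ for every coherent sheaf $G$ on $Y$. Finite morphisms are moreover stable under base change and $f_*$ commutes with extension of scalars, so the same isomorphisms hold for $f_{k'} \colon Y_{k'} \to X_{k'}$ and every analytic extension $k'$.

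With these in hand, part~(1) is immediate. If $X$ is W-exhausted by Liu domains then $X_{k'}$ is cohomologically Stein for all $k'$; for a coherent sheaf $G$ on $Y_{k'}$ the pushforward $(f_{k'})_* G$ is coherent, so $\H^q(Y_{k'}, G) \cong \H^q(X_{k'}, (f_{k'})_* G) = 0$ for $q \ge 1$. Hence $Y_{k'}$ is cohomologically Stein for every $k'$, and Theorem~\ref{Thm:EquivalenceBoundaryIntro} yields that $Y$ is W-exhausted by Liu domains.

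Part~(2) is the non-archimedean analogue of Chevalley's theorem on the descent of affineness along a finite surjective morphism, and this is where the real work lies. Fixing $k'$ and abbreviating $X, Y, f$ for $X_{k'}, Y_{k'}, f_{k'}$, I must prove that $X$ is cohomologically Stein knowing that $Y$ is and that $f$ is finite and surjective. The starting point is the unit $F \to f_* f^* F \cong F \otimes_{\Oc_X} f_*\Oc_Y$ of the adjunction, whose target is acyclic since $\H^q(X, f_* f^* F) \cong \H^q(Y, f^* F) = 0$ for $q \ge 1$. One would like to deduce the vanishing of $\H^q(X, F)$ by dimension shifting, but the cokernel of the unit is not supported on a smaller set, so a naive shift does not close. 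Instead I would reduce to $X$ reduced (filtering by the powers of the nilradical), and then argue by Noetherian induction on the support of the coherent sheaves at play, using that the cohomological dimension of $X$ for coherent sheaves is finite: over the dense open locus where $f$ is flat the unit is faithfully flat and its kernel vanishes, which confines the defect of the argument to a proper closed analytic subset, to which the inductive hypothesis applies. The main obstacle is precisely to make this descent rigorous in the Berkovich setting --- ensuring that a Noetherian induction on supports is available and controlling the generic behaviour of $f$ (in particular in positive residue characteristic, where no trace splitting is at hand). Once $X_{k'}$ is shown to be cohomologically Stein for all $k'$, Theorem~\ref{Thm:EquivalenceBoundaryIntro} gives that $X$ is W-exhausted by Liu domains.
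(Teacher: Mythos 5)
Your reduction to the cohomological characterization is the same as the paper's, and your part~(1) is correct and essentially identical to the paper's argument: finite morphisms have vanishing higher direct images (Kiehl's theorem on affinoids plus Leray), so $Y_{k'}$ inherits cohomological Steinness from $X_{k'}$; this is the paper's Lemma~\ref{lem:ComputationCohomologyFiniteMorphism} and Proposition~\ref{lem:FiniteOverSteinAreStein}. Your bookkeeping of countability at infinity and separatedness is also fine (and slightly more explicit than the paper's).

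Part~(2), however, contains a genuine gap, and you name it yourself: the descent of cohomological Steinness along a finite \emph{surjective} morphism is precisely the paper's Proposition~\ref{prop:CohSteinFiniteCovering}, a substantial result with its own proof, and you do not prove it --- you only sketch a strategy and acknowledge that ``the main obstacle is precisely to make this descent rigorous.'' Worse, the sketched strategy points in a direction that cannot close. Working with the unit $F \to f_\ast f^\ast F$, generic flatness only controls the \emph{kernel}; the cokernel $Q$ is an arbitrary coherent sheaf, not supported on a smaller set, and the long exact sequence exhibits $\H^1(X,F)$ as a quotient of $\coker\bigl(\H^0(X, f_\ast f^\ast F) \to \H^0(X,Q)\bigr)$, so no induction (on dimension, on supports, or descending on $q$) ever establishes the base case $q=1$. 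The paper's argument, adapted from Liu, goes the opposite way: one first proves a Theorem-A-type Claim that at each point $x$ there exist global sections $s_1,\dots,s_n \in \Oc(Y)$ making $s \colon \Oc_X^n \to f_\ast\Oc_Y$ an isomorphism at $x$ --- a claim whose proof needs $Y$ cohomologically Stein (to extend sections from closed subspaces), induction on dimension, and Ducros's classification of Zariski-trivial spaces (Proposition~\ref{Prop:ZariskiTrivialSpaces}) --- and then applies $\FHom(-,F)$ to the sequence $\Oc_X^n \to f_\ast\Oc_Y \to \coker(s) \to 0$, i.e.\ maps \emph{into} $F$ rather than out of it. In that direction the sheaf $\FHom(f_\ast\Oc_{Y},F)$ is a pushforward from $Y$ (hence acyclic), and all the error terms ($\FHom(\coker(s),F)$, $\coker$ of the evaluation map) are supported on proper closed analytic subsets, so the induction on dimension closes. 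This reversal is the key idea your proposal is missing; without it, or without some substitute proof of Proposition~\ref{prop:CohSteinFiniteCovering}, part~(2) is not established.
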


\begin{proof} (1) By Theorem \ref{thm:extensioncohomology}, it suffices to show that, for every analytic extension~$k'$ of~$k$, $Y_{k'}$~is cohomologically Stein. This follows from the Proper Mapping Theorem of Kiehl (\emph{cf.} \cite[Proposition 3.3.5]{Berkovich90}) and vanishing of higher direct image (see also Proposition \ref{lem:FiniteOverSteinAreStein}).

(2) As for (1), it suffices to show that the higher coherent cohomology of $X$ vanishes on every analytic extension of $k$. This follows from Proposition~\ref{prop:CohSteinFiniteCovering}. 
\end{proof}

\begin{corollary} Let $X$ be a separated $k$-analytic space countable at infinity. Then,
\begin{enumerate}
\item $X$ is W-exhausted by Liu domains if and only if its reduction $X_{\red}$ is;
\item $X$ is W-exhausted by Liu domains if and only if every irreducible component of $X$ is.
 \end{enumerate}
\end{corollary}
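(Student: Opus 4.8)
The plan is to deduce both statements from the behaviour of the property ``W-exhausted by Liu domains'' under finite morphisms, recorded in Corollary~\ref{cor:FiniteCoverStein}. The two inputs I need are purely geometric. First, the reduction morphism $\pi \colon X_{\red} \to X$ is a closed immersion that is a homeomorphism onto $X$, hence a \emph{finite surjective} morphism. Second, the irreducible components of $X$, endowed with their reduced structures, assemble into a finite surjective morphism onto $X$. Once these are in place, all the cohomological content (transfer of acyclicity across a nilpotent thickening, and across a finite cover) is already absorbed into Corollary~\ref{cor:FiniteCoverStein}, whose proof rests on Theorem~\ref{thm:extensioncohomology} and the propositions on finite coverings.

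For~(1), I would argue as follows. Since $X$ is separated and countable at infinity, so is $X_{\red}$, as the two spaces share the same underlying topological space. The reduction $\pi \colon X_{\red} \to X$ is a closed immersion, hence finite, so Corollary~\ref{cor:FiniteCoverStein}(1) shows that $X_{\red}$ is W-exhausted by Liu domains whenever $X$ is. Conversely, $\pi$ is also surjective, so Corollary~\ref{cor:FiniteCoverStein}(2) shows that $X$ is W-exhausted by Liu domains whenever $X_{\red}$ is. This settles~(1) with no separate d\'evissage on my part.

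For~(2), write $\{X_i\}_i$ for the irreducible components of $X$, each with its reduced closed-analytic-subspace structure; by countability at infinity together with local finiteness of the family of components, there are at most countably many of them. In the direction $(\Rightarrow)$, each inclusion $X_i \hookrightarrow X$ is a closed immersion, hence finite, so Corollary~\ref{cor:FiniteCoverStein}(1) gives that $X_i$ is W-exhausted by Liu domains as soon as $X$ is. For $(\Leftarrow)$, assume each $X_i$ is W-exhausted by Liu domains, say by $\{D_{i,n}\}_n$. I would first verify that the disjoint union $Y = \coprod_i X_i$ is W-exhausted by Liu domains: the sets $E_n := \coprod_{i \le n} D_{i,n}$ are nested compact analytic domains of $Y$, each a finite disjoint union of Liu spaces and therefore itself a Liu space (separatedness, compactness, and universal acyclicity of $\Oc$ pass to finite disjoint unions, and holomorphic separability across distinct components is witnessed by the locally constant function equal to $1$ on one component and $0$ on the others), they cover $Y$ for the G-topology, and the restriction $\Oc_Y(E_{n+1}) \to \Oc_Y(E_n)$ has dense image because each factor map $\Oc_{X_i}(D_{i,n+1}) \to \Oc_{X_i}(D_{i,n})$ does. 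The canonical morphism $Y \to X$ factors as $Y \to X_{\red} \to X$, where $Y \to X_{\red}$ is finite (local finiteness of the components) and surjective (the components cover $X$), and $X_{\red} \to X$ is the finite surjective $\pi$ above; the composite is thus finite and surjective. Corollary~\ref{cor:FiniteCoverStein}(2) then yields that $X$ is W-exhausted by Liu domains.

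The genuinely cohomological work is entirely delegated to Corollary~\ref{cor:FiniteCoverStein}, so what remains are geometric and bookkeeping verifications. The step I expect to demand the most care is the finiteness of $Y \to X_{\red}$ in the possibly non-compact, countably-infinite-component situation: one must invoke local finiteness of the family of irreducible components to conclude that this morphism is genuinely finite, and not merely a locally finite union of closed immersions, and one must check that $Y$ is again separated and countable at infinity so that Corollary~\ref{cor:FiniteCoverStein} applies. The construction of the exhaustion $\{E_n\}$ and the density of its transition maps are then routine, reducing componentwise to the given exhaustions of the $X_i$.
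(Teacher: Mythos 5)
Your proof is correct and follows essentially the same route as the paper: both statements are reduced to Corollary~\ref{cor:FiniteCoverStein} via the finite surjective morphisms $X_{\red} \to X$ and $\bigsqcup_{Y \in \textup{Irr}(X)} Y \to X$. The only difference is that you spell out the verification (left implicit in the paper's two-line proof) that the disjoint union of the irreducible components, each W-exhausted by Liu domains, is itself W-exhausted by Liu domains before applying part~(2) of that corollary.
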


\begin{proof} This is a direct consequence of Corollary \ref{cor:FiniteCoverStein}. Indeed, if $\textup{Irr}(X)$ denotes the set of irreducible components of $X$, then the map $\bigsqcup_{Y \in \textup{Irr}(X)} Y \to X$ is finite and surjective. The same is true for the reduction map $X_{\red} \to X$.
\end{proof}

\subsubsection*{Structure of the paper} In Section~\ref{sec:CohomologicallyStein}, we prove some properties of cohomologically Stein spaces, including a version of Cartan's Theorem~A (\emph{cf.} Proposition~\ref{prop:ThmA}), and show that they are stable finite morphisms (\emph{cf.} Proposition~\ref{lem:FiniteOverSteinAreStein}), which permits us to deduce Corollary \ref{cor:FiniteCoverStein} from Theorem \ref{Thm:EquivalenceWithoutBoundaryIntro}. In Section~\ref{sec:Liu}, we recall some results on Liu spaces and extend them to the non-strict setting. In Section~\ref{sec:HolomorphicConvexity} we prove compatibility of holomorphic convex hulls to extension of scalars (\emph{cf.} Proposition \ref{Prop:CompatibilityHolConvexhullsExtScalars}) and products (\emph{cf.} Proposition \ref{Prop:CompatibilityHolConvexhullsProducts}). In Section~\ref{sec:boundary}, we give a proof of our main Theorem~\ref{Thm:EquivalenceBoundaryIntro}. In Section~\ref{sec:ProofOfMainTheorem}, we focus on spaces with no boundary: we prove a factorisation theorem for proper morphisms on holomorphically convex spaces (\emph{cf.} Theorem~\ref{thm:Remmert}) and deduce Theorem~\ref{Thm:EquivalenceWithoutBoundaryIntro}. Appendix \ref{sec:Banachoid} recalls some results on normed structures on the space of global sections of a coherent sheaf on a $k$-analytic space (proved by A.~Pulita and the second named author in~\cite{PoineauPulitaBanachoid}). Appendix~\ref{sec:Zariski-trivial} contains a description of the Zariski-trivial analytic spaces due to A.~Ducros.

\subsubsection*{Conventions} Let $k$ be a field complete with respect to a non-archimedean valuation. An analytic extension of~$k$ is a complete valued field~$k'$ containing~$k$ whose absolute value restricts to that of~$k$. 

Let $X$ be a $k$-analytic space in the sense of V.~Berkovich (\cite[\S 1.2]{BerkovichIHES}). For each point $x\in X$, we denote by~$\khat(x)$ the associated complete residue field. A point $x \in X$ is said to be rigid if $\khat(x)$ is a finite extension of $k$: if this is the case, its (already complete) residue field is written $k(x)$. The set of rigid point is denoted by $X_{\rig}$. The spectrum of a Banach $k$-algebra $A$ is denoted by $\M(A)$.

\subsubsection*{Acknowledgements} We warmly thank P. Dingoyan, A. Ducros and J. Nicaise for their interest on this work and several useful discussions. The second named author was partially supported by the ANR project ``GLOBES'': ANR-12-JS01-0007-01 and ERC Starting Grant ``TOSSIBERG'': 637027.

\section{Cohomological vanishing} \label{sec:CohomologicallyStein}

In this section some formal properties of cohomologically Stein spaces are discussed. Proofs are closer to their scheme-theoretic analogues rather than the complex analytic ones (\emph{cf.} \cite[Proposition 73.1]{KaupKaup}) because in the complex case one usually works with the definition of Stein as holomorphically separable and holomorphic convex.

\begin{proposition}[Cartan's Theorem A] \label{prop:ThmA} Assume that $k$ is \emph{non-trivially} valued. Let $X$ be a cohomologically Stein $k$-analytic space. For each coherent sheaf~$F$ on~$X$ and each point~$x$ in~$X$, the image of the set of global sections~$F(X)$ generates the stalk~$F_{x}$ as an $\Oc_{X,x}$-module.
\end{proposition}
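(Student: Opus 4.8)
The plan is to mimic the classical scheme-theoretic derivation of Theorem A from Theorem B, adapted to the $k$-analytic setting. Fix a coherent sheaf $F$ on $X$ and a point $x \in X$. Since $k$ is non-trivially valued and $X$ is cohomologically Stein, I expect $x$ to admit a rigid point in its ``neighborhood'' or at least to reduce to understanding the maximal ideal $\m_x$ of $\Oc_{X,x}$; the key is to produce, for a given generating system of the stalk over the local ring, global sections reducing to it. More precisely, let $\m \subset \Oc_{X,x}$ be the maximal ideal and consider the skyscraper-type quotient $F / \m F$ at $x$, equivalently the fiber $F_x \otimes_{\Oc_{X,x}} \khat(x)$. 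By Nakayama's lemma, it suffices to show that the composite map
\[
F(X) \too F_x \too F_x \otimes_{\Oc_{X,x}} \khat(x)
\]
is surjective, for then a lift of any basis of the $\khat(x)$-vector space $F_x/\m F_x$ generates $F_x$ over the local ring $\Oc_{X,x}$.

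To prove that surjectivity, I would build a short exact sequence of coherent sheaves
\[
0 \too \Ic \cdot F \too F \too \cal{Q} \too 0,
\]
where $\Ic \subset \Oc_X$ is a coherent ideal sheaf cutting out the point $x$ (or a suitable closed analytic subset containing $x$) with the property that $\cal{Q}_x \cong F_x \otimes_{\Oc_{X,x}} \khat(x)$; the existence of such a coherent ideal with the correct stalk is where the non-trivial valuation and the structure of $k$-analytic spaces intervene. The associated long exact cohomology sequence reads
\[
F(X) \too \cal{Q}(X) \too \H^1(X, \Ic \cdot F),
\]
and since $\Ic \cdot F$ is coherent and $X$ is cohomologically Stein, the term $\H^1(X, \Ic \cdot F)$ vanishes. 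Hence $F(X) \to \cal{Q}(X)$ is surjective, and evaluating at $x$ gives the desired surjection onto $F_x \otimes \khat(x)$.

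The main obstacle I anticipate is the construction of the coherent ideal sheaf $\Ic$ with the correct stalk at $x$, together with the identification of $\cal{Q}_x$: one must ensure that $\Ic$ is genuinely coherent on all of $X$ (not merely near $x$) and that taking global sections of $\cal{Q}$ recovers precisely the reduction of the stalk modulo $\m$. In the scheme-theoretic setting one uses the closed subscheme structure freely, but here one should invoke the coherence of the structure sheaf of a $k$-analytic space and the fact that the point $x$ (after possibly passing to an associated rigid point, using that $k$ is non-trivially valued) can be realized as the support of such a quotient. Once this local-to-global coherence issue is settled, the Nakayama argument and the vanishing of $\H^1$ finish the proof; the hypothesis that the valuation is non-trivial is used precisely to guarantee enough rigid points to carry out this construction.
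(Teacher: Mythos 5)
Your Nakayama reduction is fine, and your argument is correct \emph{at rigid points}: such a point is a closed analytic subspace of $X$, the kernel $\Ic$ of the evaluation map $\Oc_X \to k(x)$ is a coherent sheaf of ideals, and the vanishing of $\H^1(X, \Ic F)$ gives the surjection $F(X) \to F_x/\m_x F_x$. The genuine gap is that the proposition concerns \emph{every} point of $X$, and for a non-rigid point the object your plan rests on --- a coherent sheaf of ideals cutting out $x$, equivalently a coherent quotient $\cal{Q}$ of $F$ with $\cal{Q}_x \iso F_x/\m_x F_x$ whose global sections compute this fibre --- does not exist. This is not a technical point that can be ``settled'': in a Berkovich space a point of type $2$, $3$ or $4$ is not a closed analytic subspace, and its Zariski closure may even be the whole space. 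For instance, if $x$ is the Gauss point of $\M(k\{T\})$, every analytic function vanishing at $x$ vanishes identically, so the only closed analytic subset containing $x$ is the whole disc, and a skyscraper at $x$ is not coherent (the support of a coherent sheaf is Zariski-closed). Your fallback of ``passing to an associated rigid point'' does not close the gap either: generation at rigid points does not formally imply generation at $x$ --- one needs the further observations that the surjectivity locus of a morphism $\Oc_X^n \to F$ is Zariski-open and that $x$ lies in it --- and, worse, the proposition does not assume $X$ strict, so $X$ may contain no rigid point at all: for example the $k$-affinoid disc $\M(k_r\{T\})$ over the field $k_r$ of Definition~\ref{def:Definitionk_r}, with $r \notin \sqrt{|k^\times|}$, has none.

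For comparison, the paper's proof is organized around exactly this difficulty. It reduces to $X$ irreducible and inducts on the dimension. When the Zariski closure $Z$ of $x$ is a proper subspace, it applies the inductive hypothesis to $Z$ (which is cohomologically Stein by Proposition~\ref{lem:FiniteOverSteinAreStein}) and lifts generation from $Z$ to $X$ by your type of argument --- $\H^1$-vanishing of the coherent kernel of $F \to F_{\vert Z}$ plus Nakayama --- applied to the ideal of $Z$ rather than that of $x$ (Remark~\ref{Rmk:ProofThmA}). When $x$ is Zariski-dense, it produces a morphism $\Oc_X^n \to F$ surjective at \emph{some} point (via the inductive hypothesis on a nontrivial closed irreducible subspace, when one exists) and concludes because the surjectivity locus is a nonempty Zariski-open subset, hence contains $x$; and when $X$ has no nontrivial closed analytic subspace whatsoever, Proposition~\ref{Prop:ZariskiTrivialSpaces} (due to Ducros) shows that $X$ is a single point $\M(A)$ with $A$ finite local over some $k_r$. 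It is in this last structure theorem --- not in any density of rigid points --- that the hypothesis of non-trivial valuation is actually used.
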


\begin{proof} Let~$F$ be a coherent sheaf on~$X$ and $x$~be a point of~$X$. Let us begin with:

\begin{remark} \label{Rmk:ProofThmA}
Suppose $x$ belongs to a closed $k$-analytic subspace $Z$ of $X$ on which the statement holds, that is, suppose~$F(Z)$ generates~$F_{\vert Z,x}$ as an $\Oc_{Z,x}$-module. The vanishing of the first cohomology group of the coherent sheaf of ideals on $X$ defining~$Z$ ensures the surjectivity of the restriction map $F(X) \to F(Z)$. It follows that $F(X)$ generates~$F_{\vert Z,x}$ as an $\Oc_{Z,x}$-module. By Nakayama's Lemma, it also generates~$F_{x}$ an an~$\Oc_{X,x}$-module.
\end{remark}

According to the previous remark, it suffices to prove the statement for the irreducible components of $X$. Therefore $X$ may be assumed irreducible and we can argue by induction on the dimension $d$ of $X$. If $d = 0$ there is nothing to prove. Suppose $d \ge 1$ and that the statement is true for irreducible spaces of dimension strictly smaller than~$d$.

If the Zariski-closure of the point $x$ in $X$ is not the whole space, it is an irreducible subspace of dimension smaller than $d$. Then the statement is true by induction hypothesis and Remark \ref{Rmk:ProofThmA}. Suppose from now on that $x$ is Zariski-dense in $X$. Then two cases need to be distinguished. 

First, if $X$ has a non-trivial irreducible closed analytic subset $Z$, then the dimension of $Z$ is smaller than $d$ and the statement holds for $Z$. It follows from Remark~\ref{Rmk:ProofThmA} that there exists a homomorphism $f \colon \Oc_X^n \to F$ is surjective at some point of $X$. Since the locus where $f$ is surjective is a Zariski-open subset $U$ of $X$, the point $x$ belongs to $U$ and $f$ is surjective at $x$. 

Second, if, on the contrary, there is no non-trivial closed analytic subspace of~$X$, then Proposition~\ref{Prop:ZariskiTrivialSpaces} states that $X = \M(A)$ for a local finite algebra $A$ over an analytic extension $K$ of $k$. Since is $X$ is made of one point, the result holds trivially.
\end{proof}

\begin{definition} Let $X$ be a $k$-analytic space and $M$ a finitely generated $\Oc(X)$-module. Let $P_M$ be the presheaf for the $G$-topology of $X$ associating to an analytic domain~$D$ of~$X$ the $\Oc(D)$-module $M \otimes_{\Oc(X)} \Oc(D)$. We denote by $\tilde{M}$ the sheaf of $\Oc_{X}$-modules for the $G$-topology of $X$ associated to $P_M$.
\end{definition}

\begin{lemma} Let $X$ be a $k$-analytic space and $M$ be a finitely generated $\Oc(X)$-module. Then,
\begin{enumerate}
\item for each affinoid domain~$D$ of~$X$, we have $\tilde{M}(D) = M \otimes_{\Oc(X)} \Oc(D)$;
\item the sheaf $\tilde{M}$ is coherent;
\item $\tilde{M}(D) = M \otimes_{\Oc(X)} \Oc(D)$ for every affinoid domain $D \subset X$;
\item  if $N$ is a finitely generated $\Oc(X)$-module and $\phi \colon M \to N$ is a surjective homomorphism, then the associated homomorphism of coherent sheaves $\tilde{\phi} \colon \tilde{M} \to \tilde{N}$ is surjective.
\end{enumerate}
\end{lemma}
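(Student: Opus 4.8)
The plan is to prove the four assertions about $\tilde{M}$ essentially by reducing everything to the affinoid case, where tensor products behave well, and then using the sheaf property and coherence machinery for the $G$-topology.

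The plan is to reduce all four assertions to the theory of coherent sheaves on affinoid spaces, which itself rests on Tate's acyclicity theorem: the point is that on an affinoid domain the presheaf $P_M$ coincides, after the obvious identifications, with the presheaf defining the coherent sheaf attached to a finite module over an affinoid algebra, and the latter is already a sheaf for the $G$-topology. I would first treat~(1), which is identical to~(3). Fix an affinoid domain $D \subset X$ and set $M_D := M \otimes_{\Oc(X)} \Oc(D)$. Since $M$ is generated over $\Oc(X)$ by finitely many elements $m_1, \dots, m_n$, the module $M_D$ is generated over the affinoid algebra $\Oc(D)$ by $m_1 \otimes 1, \dots, m_n \otimes 1$, hence is finite. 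By the theory of coherent modules on affinoid spaces, $M_D$ gives rise to a coherent sheaf $\widetilde{M_D}$ on $D$ whose value on an affinoid subdomain $D' \subset D$ is $M_D \otimes_{\Oc(D)} \Oc(D')$; in particular $\widetilde{M_D}(D) = M_D$. For every affinoid subdomain $D' \subset D$, associativity of the tensor product gives
\[
P_M(D') = M \otimes_{\Oc(X)} \Oc(D') \simeq \bigl(M \otimes_{\Oc(X)} \Oc(D)\bigr) \otimes_{\Oc(D)} \Oc(D') = M_D \otimes_{\Oc(D)} \Oc(D'),
\]
so $P_M$ and the presheaf defining $\widetilde{M_D}$ agree on affinoid subdomains of $D$. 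As affinoid domains form a basis for the $G$-topology, the two presheaves have the same sheafification on $D$; therefore $\tilde{M}|_D = \widetilde{M_D}$, and $\tilde{M}(D) = \widetilde{M_D}(D) = M_D = M \otimes_{\Oc(X)} \Oc(D)$.

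Statement~(2) is then immediate: the identity $\tilde{M}|_D = \widetilde{M_D}$ exhibits $\tilde{M}$, locally on the affinoid domains covering $X$, as the coherent sheaf associated to a finite module over an affinoid algebra, which is precisely the definition of coherence. For~(4), functoriality of the construction $M \mapsto \tilde{M}$ together with~(1) identifies, for each affinoid domain $D$, the map $\tilde{\phi}(D) \colon \tilde{M}(D) \to \tilde{N}(D)$ with $\phi \otimes \id \colon M_D \to N_D$. Right-exactness of the tensor product makes this surjective whenever $\phi$ is surjective. Since a morphism of sheaves whose restriction to the sections over every member of a $G$-covering is surjective is itself surjective, and since every section of $\tilde{N}$ over an analytic domain lifts after passing to an affinoid cover, we conclude that $\tilde{\phi}$ is surjective.

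The main obstacle is the affinoid input invoked in~(1): that a finite module over an affinoid algebra defines an honest sheaf for the $G$-topology with the expected sections $M_D \otimes_{\Oc(D)} \Oc(D')$ on affinoid subdomains. This is the content of Tate's acyclicity theorem, supplemented by Kiehl's theorem in the non-strict setting (and one should note that for finite modules the ordinary tensor product already agrees with the completed one, so that $P_M$ and the presheaf attached to $M_D$ match on the nose). Once this is granted, the remainder is formal manipulation with sheafification, associativity of the tensor product, and its right-exactness.
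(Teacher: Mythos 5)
Your proof is correct and follows essentially the same route as the paper: Tate's (Kiehl's) acyclicity theorem is the key input establishing $\tilde{M}(D) = M \otimes_{\Oc(X)} \Oc(D)$ on affinoid domains, coherence then follows since $\tilde{M}$ is locally the sheaf attached to a finite module over an affinoid algebra, and surjectivity in~(4) follows from right-exactness of the tensor product. Your write-up simply fleshes out the details (the identification $\tilde{M}|_D = \widetilde{M_D}$ via agreement of the presheaves on a basis) that the paper's terse proof leaves implicit.
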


\begin{proof} (1) is a consequence of Tate's acyclicity theorem \cite[Proposition~2.2.5]{Berkovich90}.

(2) follows from (1). 

(3) follows from (1) and the fact that the tensor product is right-exact.
\end{proof}

\begin{lemma}[{\cite[Lemme 1]{LiuTohoku}}] \label{lem:FiniteGenerationOnCompactSpaces} Let $X$ be a compact cohomologically Stein space, $F$ a coherent sheaf on $X$, $M$ an $\Oc_X(X)$-module and $\phi \colon M \to F(X)$ a homomorphism. 

Suppose that there is a finite affinoid G-cover $\{ X_i \}_{i \in I}$ of $X$ such that, for all $i \in I$, the homomorphism $\phi_{i} \colon M \otimes_{\Oc_X(X)} \Oc_X(X_i) \to F(X_i)$ is surjective.   Then, there is a finitely generated submodule $M_0$ of $M$ such that $\phi(M_0) = F(X)$.
\end{lemma}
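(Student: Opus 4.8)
The plan is to choose~$M_0$ to be generated by finitely many carefully selected elements, realise the induced map on global sections as the global-sections map of a surjection of coherent sheaves, and then read off surjectivity from the vanishing of an~$\H^1$ provided by the cohomologically Stein hypothesis.

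First I would construct the candidate~$M_0$. For each~$i \in I$, the domain~$X_i$ is affinoid and $F$~is coherent, so $F(X_i)$~is a finitely generated $\Oc_X(X_i)$-module. By definition $\phi_i(m \otimes a) = a \cdot \phi(m)_{\vert X_i}$, so the image of~$\phi_i$ is exactly the $\Oc_X(X_i)$-submodule of~$F(X_i)$ generated by the restrictions~$\phi(m)_{\vert X_i}$, $m \in M$. Since $\phi_i$~is surjective and $F(X_i)$~is finitely generated, I can choose finitely many elements $m_{i,1}, \dots, m_{i,n_i} \in M$ such that already $\phi(m_{i,1})_{\vert X_i}, \dots, \phi(m_{i,n_i})_{\vert X_i}$ generate~$F(X_i)$. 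As $I$~is finite, the submodule $M_0 \subset M$ generated by all the~$m_{i,j}$ is finitely generated; put $N = \sum_{i \in I} n_i$.

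Next I would globalise. The sections~$\phi(m_{i,j}) \in F(X)$ define a morphism of coherent sheaves $u \colon \Oc_X^N \to F$ sending the standard basis sections to the~$\phi(m_{i,j})$. By the choice of the~$m_{i,j}$, the map on global sections $\Oc_X(X_i)^N \to F(X_i)$ is surjective; since on an affinoid space a morphism of coherent sheaves is surjective as soon as it is surjective on global sections, $u_{\vert X_i}$~is a surjection of sheaves for every~$i$. As $\{X_i\}_{i \in I}$~is a G-cover of~$X$, the morphism~$u$ is surjective, its kernel $\Gc := \ker u$ is coherent, and we obtain a short exact sequence
$$ 0 \too \Gc \too \Oc_X^N \xrightarrow{\ u\ } F \too 0. $$
Taking global sections and using that $X$~is cohomologically Stein, hence $\H^1(X, \Gc) = 0$, the map $\Oc_X(X)^N = \Oc_X^N(X) \to F(X)$ is surjective. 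By construction this map sends $(a_{i,j})_{i,j}$ to $\sum_{i,j} a_{i,j}\, \phi(m_{i,j})$, so its image equals the $\Oc_X(X)$-span of the~$\phi(m_{i,j})$, which is exactly~$\phi(M_0)$. Therefore $\phi(M_0) = F(X)$.

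The main point to get right is the passage between surjectivity of a module map and of a sheaf map: on the affinoid pieces it is the affinoid equivalence of categories that turns surjectivity on global sections into surjectivity of sheaves, while the global input $\H^1(X, \Gc) = 0$ is precisely what upgrades this local surjectivity back to surjectivity of $\Oc_X(X)^N \to F(X)$. The only remaining care is the bookkeeping needed to check that this global-sections map literally coincides with the module map induced by the~$\phi(m_{i,j})$, so that its image is~$\phi(M_0)$ and not merely some larger submodule.
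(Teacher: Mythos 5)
Your proof is correct and follows essentially the same route as the paper's: choose finitely many elements of~$M$ whose images generate $F(X_i)$ for every~$i$ (possible since $I$ is finite and each $F(X_i)$ is finitely generated), observe that the resulting morphism $\Oc_X^N \to F$ is surjective as a morphism of sheaves, and use the vanishing of $\H^1$ of its kernel to conclude surjectivity on global sections, whose image is $\phi(M_0)$. The only difference is expository: you spell out the affinoid equivalence of categories and the short exact sequence that the paper leaves implicit.
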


\begin{proof} 
Let $i\in I$. Since $F(X_i)$ is a finitely generated $\Oc(X_i)$-module, there exists a finite subset~$N_{0}$ of~$M$ such that the set $\{\varphi_{i}(m\otimes 1)\}_{m\in N_{0}}$ generates $F(X_{i})$. Up to enlarging~$N_{0}$, we may assume that, for each $j\in I$, $\{\varphi_{j}(m\otimes 1)\}_{m\in N_{0}}$ generates $F(X_{j})$. The associated homomorphism of coherent sheaves $\Oc_{X}^{\# N_{0}} \to F$ is then surjective. Since $X$ is cohomologically Stein, the homomorphism induced on the global sections $\Oc_{X}(X)^{\# N_{0}} \to F(X)$ is surjective too. It follows that the result holds for the submodule~$M_{0}$ of~$M$ generated by~$N_{0}$.
\end{proof}

\begin{proposition}[{\cite[Proposition 1.2]{LiuTohoku}}] \label{Prop:GlobalSectionsCoherentSheafOnLiuSpaces} Assume that~$k$ is not trivially valued. Let $X$ be a cohomologically Stein $k$-analytic space. Then:
\begin{enumerate}
\item for each affinoid domain $D$ of $X$, the restriction $\Oc_X(X) \to \Oc_X(D)$ is a flat homomorphism;
\item for each finitely generated (resp. presented) $\Oc_X(X)$-module $M$ the canonical homomorphism $M \to \tilde{M}(X)$ is surjective (resp. bijective).
\end{enumerate}
Moreover, if $X$ is compact, then:
\begin{enumerate}
\setcounter{enumi}{2}
\item the ring $\Oc_X(X)$ is noetherian;
\item given a coherent sheaf $F$ on $X$, the $\Oc_X(X)$-module $M := F(X)$ is finitely generated and the canonical homomorphism $\tilde{M}\to F$ an isomorphism.
\end{enumerate}
\end{proposition}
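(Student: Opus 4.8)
The plan is to treat the four assertions in the order (2), (1), (3), (4), since the flatness in (1) is most naturally deduced from (2), and the compact statements (3)--(4) rest on the general ones together with finiteness coming from compactness. Throughout, the recurring mechanism is the same: resolve a module by free modules, pass to the associated coherent sheaves via $M \mapsto \tilde M$, and exploit the vanishing $\H^1(X, \mathcal{G}) = 0$ for every coherent $\mathcal{G}$ (the cohomologically Stein hypothesis) to keep the sequences exact after taking global sections, thereby converting sheaf statements back into statements about $\Oc(X)$-modules. Cartan's Theorem~A (Proposition~\ref{prop:ThmA}) provides the converse bridge: a coherent sheaf whose module of global sections vanishes is itself zero.

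For (2) I would start from a presentation. Given $M$ finitely generated, pick $\Oc(X)^n \to M \to 0$; applying $M \mapsto \tilde M$ (which is right exact and, for finitely generated modules, produces a coherent sheaf with $\tilde M(D) = M \otimes_{\Oc(X)} \Oc(D)$ on affinoid domains) yields a surjection $\Oc_X^n \to \tilde M$ whose kernel $\mathcal{R}$ is coherent. Taking global sections and using $\H^1(X, \mathcal{R}) = 0$ gives that $\Oc(X)^n \to \tilde M(X)$ is onto; a naturality check identifies this map with the composite $\Oc(X)^n \to M \to \tilde M(X)$, whence $M \to \tilde M(X)$ is surjective. When $M$ is finitely presented I repeat the argument one step further: from $\Oc(X)^m \to \Oc(X)^n \to M \to 0$ the image sheaf $\mathcal{R} = \im(\Oc_X^m \to \Oc_X^n)$ is coherent, and the vanishing of $\H^1$ both for $\mathcal{R}$ and for the kernel of $\Oc_X^m \to \mathcal{R}$ computes $\mathcal{R}(X) = \im(\Oc(X)^m \to \Oc(X)^n)$, so that $\tilde M(X) = \coker(\Oc(X)^m \to \Oc(X)^n) = M$ canonically.

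For (1) the key point I would isolate is that, for a finitely generated ideal $I \subset A := \Oc(X)$, the morphism of coherent sheaves $\tilde I \to \Oc_X$ induced by $I \hookrightarrow A$ is injective; granting this, restriction to an affinoid domain $D$ (where $\Gamma$ is left exact and $\tilde I(D) = I \otimes_A \Oc(D)$) shows $I \otimes_A \Oc(D) \to \Oc(D)$ injective for every finitely generated $I$, which is exactly the local criterion for flatness of $A \to \Oc(D)$. To prove the injectivity of $\tilde I \to \Oc_X$, I factor it as $\tilde I \twoheadrightarrow \mathcal{J} \hookrightarrow \Oc_X$ through its image and set $\mathcal{K} = \ker(\tilde I \to \Oc_X)$. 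Passing to global sections and using $\H^1(X, \mathcal{K}) = 0$, the surjectivity $I \to \tilde I(X)$ from (2) together with the injectivity of $I \hookrightarrow A$ forces $\mathcal{K}(X) = 0$; Theorem~A then upgrades this to $\mathcal{K} = 0$. This deduction of flatness from the vanishing of the global sections of a kernel sheaf is, I expect, the main conceptual obstacle, since it is the one place where the naive stalkwise approach (flatness of $\Oc_{X,x}$ over $A$) is circular and must be replaced by global cohomological input.

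Finally, for the compact case I would first establish (3). Given an ideal $I \subset A$, let $\mathcal{I} \subseteq \Oc_X$ be the $\Oc_X$-submodule generated by the sections in $I$; it is coherent because the local rings $\Oc_{X,x}$ are noetherian (so the subsheaf generated by the family $I$ is locally of finite type), and by compactness finitely many elements $f_1, \dots, f_n \in I$ generate $\mathcal{I}$ globally. Taking global sections of $\Oc_X^n \to \mathcal{I}$ and using $\H^1$-vanishing identifies $\mathcal{I}(X)$ with $(f_1, \dots, f_n)$, and since $(f_1, \dots, f_n) \subseteq I \subseteq \mathcal{I}(X)$ we obtain $I = (f_1, \dots, f_n)$, so $A$ is noetherian. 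For (4) I would apply Lemma~\ref{lem:FiniteGenerationOnCompactSpaces} to $\phi = \id \colon F(X) \to F(X)$: Theorem~A and compactness guarantee that on a finite affinoid cover the maps $F(X) \otimes_A \Oc(X_i) \to F(X_i)$ are surjective, so the lemma produces a finitely generated submodule equal to $F(X)$, proving $M := F(X)$ finitely generated. Then $\tilde M$ is coherent, $M \cong \tilde M(X)$ by (2) (using noetherianity to know $M$ is finitely presented), the evaluation $\tilde M \to F$ is surjective by Theorem~A, and its kernel is a coherent sheaf with vanishing global sections, hence zero by Theorem~A again; thus $\tilde M \to F$ is an isomorphism.
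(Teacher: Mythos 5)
Your proof is correct, and it runs on the same engine as the paper's: the tilde construction, vanishing of $\H^1$ of coherent kernel and image sheaves to keep global sections exact, and Theorem~A (Proposition~\ref{prop:ThmA}) to upgrade vanishing of global sections of a coherent sheaf to vanishing of the sheaf itself. In particular, your argument for (1) — factor $\tilde I \to \Oc_X$ through its image, show the kernel has no global sections, conclude by Theorem~A — is exactly the paper's, and your (3) and (4) coincide with the paper's up to unwinding Lemma~\ref{lem:FiniteGenerationOnCompactSpaces} instead of citing it. The one genuinely different step is your finitely presented case of (2): the paper proves (1) first and uses flatness to obtain exactness of $0 \to \tilde{K} \to \Oc_X^n \to \tilde{M} \to 0$, where $K$ is the finitely generated kernel of the presentation, and then runs a diagram chase; you instead replace $\tilde{K}$ by the image sheaf $\mathcal{R} = \im(\Oc_X^m \to \Oc_X^n)$, which is a subsheaf of $\Oc_X^n$ for free, and compute $\mathcal{R}(X) = \im(\Oc(X)^m \to \Oc(X)^n)$ from the vanishing of $\H^1$ of $\ker(\Oc_X^m \to \mathcal{R})$. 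This makes (2) entirely independent of (1), reversing the paper's dependency (the paper's proof of (1) uses the finitely generated case of (2)), and is arguably slightly cleaner. Two small repairs: in (3), coherence of the subsheaf $\mathcal{I} \subseteq \Oc_X$ generated by $I$ should be justified as in the paper, via noetherianity of the affinoid algebras $\Oc_X(D)$ (so that $I\Oc_X(D)$ is finitely generated and $\mathcal{I}_{\vert D}$ is its associated coherent sheaf), not via noetherianity of the local rings — finite generation of stalks alone does not yield local finite generation of the subsheaf; and in (4) you use noetherianity plus the finitely presented case of (2) to get $M \cong \tilde{M}(X)$, where the surjectivity (finitely generated) case already suffices, as in the paper — both routes work.
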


\begin{proof} 
(1) Let~$D$ be an affinoid domain of~$X$. It suffices to show that for every finitely generated ideal~$I$ of~$\Oc_X(X)$ the natural homomorphism  $ I \otimes_{\Oc_X(X)} \Oc_X(D) \to I \Oc_X(D)$ is injective. Let $\tilde{I}$ be the coherent sheaf associated to $I$ and $J$ be the coherent sheaf of ideals generated by~$I$. 

Consider the natural map $\theta \colon \tilde{I} \to J$. It is surjective by definition. By Lemma~\ref{lem:FiniteGenerationOnCompactSpaces}, the map $\psi\colon I \to \tilde I(X)$ is surjective. Since $\theta(X) \circ \psi \colon I \to J(X)$ is injective, it follows that~$\theta(X)$ is injective too. Consider the coherent sheaf $F = \ker \phi$. We have just proved that $F(X)=0$ and Proposition~\ref{prop:ThmA} then implies that $F = 0$, that is to say $\theta$~is injective. By using Lemma~\ref{lem:FiniteGenerationOnCompactSpaces} on~$D$, we obtain $J(D) = I \Oc_{X}(D)$ and we conclude that the map  $$\tilde{I}(D) = I \otimes_{\Oc(X)} \Oc_X(D) \too J(D) = I \Oc_X(D)$$ is an isomorphism.

\medskip

(2) In the finitely generated case, the result follows from Lemma~\ref{lem:FiniteGenerationOnCompactSpaces}.

Let~$M$ be a finitely \emph{presented} $\Oc_X(X)$-module and 
$$ 0 \too K \too \Oc(X)^n \too M \too 0$$
be a presentation of $M$ with finitely generated kernel $K$. Thanks to~(1), the associated sequence of coherent sheaves
$$ 0 \too \tilde{K} \too \Oc_X^n \too \tilde{M} \too 0$$
is exact. This yields a commutative diagram
$$ \begin{tikzcd}
0 \ar[r]  & K \ar[d, "\alpha"] \ar[r] & \Oc(X)^n  \ar[d, equal] \ar[r]  & M \ar[d, "\beta"] \ar[r]  & 0 \\
0 \ar[r]  & \tilde{K}(X) \ar[r]  & \Oc(X)^n \ar[r]  & \tilde{M}(X) \ar[r]  & 0
\end{tikzcd}
$$
where the second row is exact because $X$ is cohomologically Stein. Since the central vertical row is the identity, it follows that $\alpha$ is injective and $\beta$ is surjective. Moreover, since $K$ is finitely generated, the map $\alpha$ is also surjective therefore $\beta$ is injective.

\medbreak

Henceforth suppose $X$ compact.

\medskip

(3) Let $I$ be an ideal of $\Oc_{X}(X)$ and $J$ be the sheaf of ideals generated by~$I$. Note that, for each affinoid domain~$D$ of~$X$, $I\Oc_{X}(D)$ is an ideal of~$\Oc_{X}(D)$, necessarily of finite type since $\Oc_{X}(D)$ is noetherian, so that~$J_{\vert D}$ is coherent. As a consequence, $J$ is coherent. Thanks to Lemma \ref{lem:FiniteGenerationOnCompactSpaces} there is a finitely generated ideal $I_0$ of $\Oc_X(X)$ such that $I_0$ generates $J(X)$. It follows that $I_{0} = I = J(X)$.

\medskip

(4) By Proposition~\ref{prop:ThmA}, the global sections $F(X)$ of $F$ generate the stalk of $F$ at every point $x \in X$, hence they also generate the global sections $F(D)$ over every affinoid domain $D$ of $X$. Applying Lemma \ref{lem:FiniteGenerationOnCompactSpaces}, we deduce that the $\Oc_X(X)$-module $M = F(X)$ is finitely generated. 

The natural map $\tilde{M} \to F$ is an isomorphism: it is surjective by definition; it is also injective as the homomorphism induced on global sections $\tilde{M}(X) \to F(X)$ is injective, according to (2). \end{proof}

\begin{remark}
If $X$ is a cohomologically Stein compact space over a trivially valued field, then $\Oc_{X}(X)$ is still noetherian. Indeed the proof of~(3) in the proposition below did not make use of the assumption on the valuation of~$k$.
\end{remark}

\begin{proposition} \label{lem:FiniteOverSteinAreStein}
Let $f \colon Y \to X$ be a finite morphism of $k$-analytic spaces. If $X$ is cohomologically Stein, then $Y$ is cohomologically Stein. 
\end{proposition}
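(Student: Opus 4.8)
The plan is to compare the cohomology of a coherent sheaf on~$Y$ with that of its pushforward on~$X$, using that a finite morphism is affine. Recall first that, $f$~being finite, the preimage $f^{-1}(D)$ of an affinoid domain $D \subset X$ is again an affinoid domain. Fix a coherent sheaf $F$ on~$Y$. By Kiehl's proper mapping theorem (\emph{cf.} \cite[Proposition 3.3.5]{Berkovich90}), the pushforward $f_{*}F$ is a coherent sheaf on~$X$, and I will argue that the higher direct images vanish.

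Concretely, the sheaf $\RR^q f_{*} F$ on the G-topology of~$X$ is associated to the presheaf $D \mapsto \H^q(f^{-1}(D), F)$. Since for an affinoid domain~$D$ the preimage $f^{-1}(D)$ is itself affinoid, Tate's acyclicity theorem (\emph{cf.} \cite[Proposition 2.2.5]{Berkovich90}) gives $\H^q(f^{-1}(D), F) = 0$ for all $q \ge 1$, whence $\RR^q f_{*} F = 0$ for $q \ge 1$. The Leray spectral sequence $\H^p(X, \RR^q f_{*} F) \Rightarrow \H^{p+q}(Y, F)$ then degenerates, yielding natural isomorphisms $\H^q(Y, F) \simeq \H^q(X, f_{*} F)$ for every~$q$. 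As $X$~is cohomologically Stein and $f_{*}F$~is coherent, the right-hand side vanishes for $q \ge 1$; since~$F$ was arbitrary, this proves that $Y$~is cohomologically Stein.

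The routine points—that preimages of affinoid domains under a finite morphism are affinoid, and that coherent cohomology is computed on the G-topology—are standard, so the argument is essentially formal once the two substantive inputs are in place. Those inputs are the coherence of $f_{*}F$ and the vanishing of the higher direct images; both are classical for finite (indeed proper) morphisms in the Berkovich setting, the former by Kiehl's proper mapping theorem and the latter reducing to Tate acyclicity on affinoid preimages. Accordingly, the step I expect to require the most care is the precise formulation of the Leray spectral sequence in the G-topological framework, although, granted the direct-image vanishing, this is harmless.
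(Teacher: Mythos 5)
Your proof is correct and follows essentially the same route as the paper: both reduce the statement to the identification $\H^q(Y,F)\simeq \H^q(X,f_\ast F)$ via vanishing of the higher direct images $\RR^q f_\ast F$ (obtained from the fact that preimages of affinoid domains under a finite morphism are affinoid, hence acyclic for coherent sheaves) and degeneration of the Leray spectral sequence, then conclude using coherence of $f_\ast F$ and the cohomological Steinness of~$X$. The only cosmetic difference is that the paper justifies $\RR^q f_\ast F=0$ by working locally over an affinoid base and citing Kiehl's formula from \cite[Proposition 3.3.5]{Berkovich90}, whereas you describe $\RR^q f_\ast F$ as the sheafification of $D \mapsto \H^q(f^{-1}(D),F)$; both are standard and equally valid.
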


Let us begin with the following:

\begin{lemma} \label{lem:ComputationCohomologyFiniteMorphism} Let $f \colon Y \to X$ be a finite morphism of $k$-analytic spaces. Then for  every coherent sheaf $F$ on $Y$ and all $q \ge 1$, 
$$ \H^q(X, f_\ast F) = \H^q(Y, F). $$
\end{lemma}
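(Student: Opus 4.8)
The plan is to combine the Leray spectral sequence for~$f$ with the vanishing of the higher direct images $\RR^q f_\ast F$ for $q \ge 1$.

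First I would recall that the preimage of an affinoid domain under a finite morphism is again affinoid: if $U = \M(A)$ is an affinoid domain of~$X$, then $f^{-1}(U) = \M(B)$ with~$B$ a finite Banach $A$-algebra. This assertion is local on the base and follows directly from the definition of a finite morphism.

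The key step is then to prove that $\RR^q f_\ast F = 0$ for all $q \ge 1$. By definition, $\RR^q f_\ast F$ is the sheaf for the G-topology of~$X$ associated to the presheaf $U \mapsto \H^q(f^{-1}(U), F)$. Since the affinoid domains of~$X$ form a basis of the G-topology, it suffices to check the vanishing for such a~$U$. By the previous step $f^{-1}(U)$ is affinoid, so Tate's acyclicity theorem (\emph{cf.} \cite[Proposition~2.2.5]{Berkovich90}) gives $\H^q(f^{-1}(U), F) = 0$ for $q \ge 1$; hence $\RR^q f_\ast F = 0$.

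Finally, the Leray spectral sequence
$$ E_2^{p,q} = \H^p(X, \RR^q f_\ast F) \Longrightarrow \H^{p+q}(Y, F) $$
degenerates at the $E_2$-page, all of its rows with $q \ge 1$ being zero, so that only the row $q = 0$ survives. This yields the isomorphism $\H^q(X, f_\ast F) \cong \H^q(Y, F)$ for every~$q$, which is the assertion for $q \ge 1$. The only substantial point is the identification of $f^{-1}(U)$ as an affinoid space; once this is granted, the vanishing of the higher direct images and the collapse of the spectral sequence are formal.
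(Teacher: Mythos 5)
Your proof is correct and follows essentially the same route as the paper: both arguments rest on the degeneration of the Leray spectral sequence once the higher direct images $\RR^q f_\ast F$ vanish, and both deduce that vanishing from the fact that the preimage of an affinoid domain under a finite morphism is affinoid, together with the acyclicity of coherent sheaves on affinoid spaces. The only cosmetic difference is that you check the vanishing of $\RR^q f_\ast F$ directly from its description as the sheafification of $U \mapsto \H^q(f^{-1}(U), F)$, whereas the paper invokes Kiehl's direct image theorem (\cite[Proposition 3.3.5]{Berkovich90}) to identify $\RR^q f_\ast F$ over an affinoid base.
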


\begin{proof}[Proof of the Lemma] The statement follows by an argument of degeneration of the Leray spectral sequence as soon as all higher direct images vanish \cite[Exercise~8.1]{Hartshorne}. Let us prove $\textup{R}^qf_\ast F = 0$ for every coherent sheaf $F$ of $\Oc_Y$-modules and for every $q \ge 1$. Since this may be checked locally, $X$~may be supposed affinoid. In this case, by Kiehl's theorem \cite[Proposition 3.3.5]{Berkovich90},
$$\textup{R}^qf_\ast F = \H^q(Y, F) \otimes_{k} \Oc_X,$$
for all $q \ge 0$. Since $f$ is finite and $X$ is affinoid, $Y$ is affinoid too and the cohomology groups $\H^q(Y, F)$ vanish for all $q \ge 1$.
\end{proof}

\begin{proof}[{Proof of Proposition \ref{lem:FiniteOverSteinAreStein}}] In order to compute the cohomology of a coherent sheaf~$F$ on~$Y$, thanks to the preceding lemma, one is led back to computing the cohomology of the sheaf $f_\ast F$ on $X$. Since $f_{\ast} F$ is coherent and~$X$ is assumed cohomologically Stein, $\H^q(X, f_\ast F)$ vanishes for all $q \ge 1$.
\end{proof}

\begin{lemma}[{\cite[Hilfssatz 2.6]{KiehlStein}}] \label{lem:ProjectiveSystemCochainComplex} Let $\{ \CC_i^\bullet\}_{i \in \N}$ be a projective system of cochain complexes with transition maps 
$\phi_i^\bullet \colon \CC^\bullet_{i+1} \to \CC^\bullet_i$ and differentials $\partial^p_i \colon \CC^{p}_{i} \to \CC^{p+1}_i$. Let $q\in \Z$. Assume (at least) one of the following hypotheses:

\begin{enumerate}
\item for all $i \in \N$, the map $\phi_i^q \colon \CC^{q}_{i+1} \to \CC^{q}_{i}$ as well as the induced map on $q$-cocycles 
$$ \phi^q_i \colon \ZZ^q_{i+1} := \ker \partial^q_{i+1}  \too \ZZ^q_i : = \ker \partial^q_i,$$
are surjective;
\item for all $i \in \N$, the maps $\phi_i^p \colon \CC^{p}_{i+1} \to \CC^{p}_{i}$ are surjective for $p = q-1, q$ and the cohomology group 
$ \H^q(\CC_i^\bullet)$ vanishes.
\end{enumerate}
Then, the natural map
$$ \H^{q+1}(\varprojlim_{i \in \N} \CC_i^\bullet) \too \varprojlim_{i \in \N} \H^{q+1}(\CC_i^\bullet),$$
is an isomorphism.
\end{lemma}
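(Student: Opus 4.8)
The plan is to analyze the projective system of cochain complexes by passing to the associated system of short exact sequences relating cochains, cocycles and coboundaries, and then to invoke the exactness of the inverse limit functor under a Mittag-Leffler type condition. The key obstruction to commuting $\H^{q+1}$ with $\varprojlim$ is the first derived functor $\varprojlim^1$, which measures the failure of surjectivity of inverse limits; so the heart of the argument is to verify that the relevant systems satisfy the Mittag-Leffler condition (indeed, are genuinely surjective), guaranteeing the vanishing of $\varprojlim^1$.

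\begin{proof}
For each $i \in \N$ write $\ZZ^p_i := \ker \partial^p_i$ for the $p$-cocycles and $\BB^p_i := \im \partial^{p-1}_i$ for the $p$-coboundaries, so that $\H^p(\CC^\bullet_i) = \ZZ^p_i / \BB^p_i$. The differentials and transition maps assemble these into short exact sequences of projective systems
$$ 0 \too \ZZ^p_\bullet \too \CC^p_\bullet \too \BB^{p+1}_\bullet \too 0 $$
and
$$ 0 \too \BB^p_\bullet \too \ZZ^p_\bullet \too \H^p(\CC^\bullet_\bullet) \too 0. $$
First I would recall the general principle that $\varprojlim$ is left-exact and that a short exact sequence of projective systems indexed by~$\N$ yields a six-term exact sequence involving $\varprojlim$ and $\varprojlim^1$; moreover $\varprojlim^1$ of a system vanishes as soon as the system is Mittag-Leffler, in particular when all transition maps are surjective. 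The strategy is then to feed the surjectivity hypotheses into these exact sequences to control the error terms, and to use the identity $\varprojlim^1 = 0$ at the crucial spot to force the comparison map to be an isomorphism.
\end{proof}

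\begin{proof}[Continuation]
Under hypothesis~(1), I would argue as follows. The transition maps $\phi^q_i$ are surjective on $\CC^q$ and on $\ZZ^q$ by assumption, whence $\varprojlim^1 \ZZ^q_\bullet = 0$. To reach the conclusion about $\H^{q+1}$, consider the map $\partial^q_\bullet \colon \CC^q_\bullet \to \BB^{q+1}_\bullet$; taking inverse limits and comparing with the sequence $0 \to \ZZ^{q+1}_\bullet \to \CC^{q+1}_\bullet \to \BB^{q+2}_\bullet \to 0$ lets one identify $\H^{q+1}(\varprojlim \CC^\bullet_\bullet) = \varprojlim \ZZ^{q+1}_\bullet / \im(\varprojlim \partial^q)$. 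The point is that $\varprojlim^1 \ZZ^q_\bullet = 0$ guarantees that forming $\varprojlim \BB^{q+1}_\bullet$ behaves well, so that $\im(\varprojlim \partial^q) = \varprojlim \BB^{q+1}_\bullet$, and then the left-exactness of $\varprojlim$ applied to $0 \to \BB^{q+1}_\bullet \to \ZZ^{q+1}_\bullet \to \H^{q+1} \to 0$ yields the desired isomorphism $\H^{q+1}(\varprojlim \CC^\bullet_\bullet) \iso \varprojlim \H^{q+1}(\CC^\bullet_\bullet)$. Hypothesis~(2) reduces to the same situation: the vanishing of $\H^q(\CC^\bullet_i)$ together with surjectivity of $\phi^{q-1}$ and $\phi^q$ forces the induced maps on $\ZZ^q_\bullet$ to be surjective (one uses $\ZZ^q_i = \BB^q_i$ and lifts along the surjective $\phi^{q-1}$), thereby recovering the hypotheses of case~(1).

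The step I expect to be the main obstacle is verifying, under hypothesis~(2), that the transition maps on cocycles $\ZZ^q_{i+1} \to \ZZ^q_i$ are actually surjective: this requires a careful diagram chase combining the vanishing of $\H^q(\CC^\bullet_i)$ (which makes every $q$-cocycle of $\CC^\bullet_i$ a coboundary) with the surjectivity of $\phi^{q-1}_i$ (which lets one lift the bounding $(q-1)$-cochain to level $i+1$). Keeping track of which surjectivity is needed where, and ensuring that the lifted cochain is indeed a cocycle after applying the differential, is the delicate bookkeeping; once this surjectivity is established, the vanishing of $\varprojlim^1$ and hence the isomorphism follow formally.
\end{proof}
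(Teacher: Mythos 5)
Your reduction of hypothesis (2) to hypothesis (1) is correct and matches the paper's, and your identification $\H^{q+1}(\varprojlim_i \CC_i^\bullet) = \varprojlim_i \ZZ^{q+1}_i / \varprojlim_i \BB^{q+1}_i$ (via $\varprojlim^1 \ZZ^q_\bullet = 0$, itself a consequence of the surjectivity of the transition maps on $q$-cocycles) is also sound. The gap is in your final step: you claim that ``the left-exactness of $\varprojlim$ applied to $0 \to \BB^{q+1}_\bullet \to \ZZ^{q+1}_\bullet \to \H^{q+1} \to 0$ yields the desired isomorphism.'' Left-exactness gives only the \emph{injectivity} of the comparison map $\varprojlim \ZZ^{q+1}_i/\varprojlim \BB^{q+1}_i \to \varprojlim \H^{q+1}(\CC_i^\bullet)$. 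Its surjectivity is controlled by the connecting map into $\varprojlim^1 \BB^{q+1}_\bullet$, so you still need $\varprojlim^1 \BB^{q+1}_\bullet = 0$, i.e.\ the Mittag-Leffler condition for the system of \emph{coboundaries}. Concretely: a compatible family of classes $(h_i)_i$ lifts to cocycles $z_i \in \ZZ^{q+1}_i$ only level by level, and to correct these lifts into a compatible family one must lift each discrepancy $\phi_i^{q+1}(z_{i+1}) - z_i \in \BB^{q+1}_i$ to a coboundary at level $i+1$; this is exactly surjectivity of $\BB^{q+1}_{i+1} \to \BB^{q+1}_i$, which you never establish.

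This is where the other half of hypothesis (1) --- surjectivity of $\phi_i^q$ on the cochains $\CC^q_i$, which your argument never invokes --- must enter: in the commutative diagram comparing the sequences $0 \to \ZZ^q \to \CC^q \to \BB^{q+1} \to 0$ at levels $i+1$ and $i$, surjectivity of the middle vertical arrow forces surjectivity of the right-hand one. This diagram chase is precisely the first step of the paper's proof. The hypothesis is not redundant, so the gap is genuine rather than cosmetic: keep the cocycle-level surjectivity but drop the cochain-level one, and the conclusion can fail. For instance, take complexes concentrated in degrees $q$ and $q+1$ with $\CC^q_i = \bigoplus_{n \ge i}\Z$, transition maps the inclusions, $\CC^{q+1}_i = \bigoplus_{n\ge 0}\Z$ with identity transitions, and $\partial^q_i$ the obvious inclusion. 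Then $\ZZ^q_i = 0$ for all~$i$, so the cocycle transitions are trivially surjective, yet $\H^{q+1}(\varprojlim_i \CC_i^\bullet) = \bigoplus_{n\ge 0}\Z$ while $\varprojlim_i \H^{q+1}(\CC_i^\bullet) \cong \prod_{n \ge 0}\Z$, and the comparison map is the non-surjective inclusion. Once you add the missing diagram chase, your argument becomes complete and essentially identical to the paper's.
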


\begin{proof}[Proof of Lemma \ref{lem:ProjectiveSystemCochainComplex}] For $p \in \Z$, define
$
\CC^p := \varprojlim \CC^p_i
$
and denote by $\partial^p \colon \CC^p \to \CC^{p+1}$ the induced differential map. Set also $\ZZ^p := \ker \partial^p$ and $\BB^{p+1} := \im \partial^p$. Assume (1). For all $i \in \N$, the map at the level of coboundaries,
$$ \phi^{q+1}_i \colon \BB^{q+1}_{i+1} := \im \partial^q_{i+1}  \too \BB^{q+1}_i : = \im \partial^q_i,$$
is surjective. Indeed, in the commutative diagram
$$
\begin{tikzcd}
0  \ar[r] & \ZZ^q_{i+1} \ar[d] \ar[r] & \CC^q_{i+1} \ar[r] \ar[d, "\phi_i^q"] & \BB^{q+1}_{i+1} \ar[r] \ar[d, "\phi_i^{q+1}"] & 0 \\
0 \ar[r] & \ZZ^q_i \ar[r] & \CC^q_i  \ar[r]& \BB^{q+1}_i \ar[r] & 0
\end{tikzcd}
$$
the central vertical arrow is surjective, hence so is the vertical one on the right.  As a conseuqence, the projective system of short exact sequences
$$ 0 \too \BB^{q+1}_i \too \ZZ^{q+1}_i \too \H^{q+1}(\CC_i^\bullet) \too 0, $$
satisfies the Mittlag-Leffler condition and the natural map
$$ \varprojlim_{i \in \N} \ZZ^{q+1}_i / \varprojlim_{i \in \N} \BB^{q +1 }_i \too \varprojlim_{i \in \N} \H^{q+1}(\CC_i^\bullet),$$
is an isomorphism.

The hypothesis that $\phi_i^q$ induces a surjective map on $q$-cocycles implies that  the projective system of short exact sequences
$$ 0 \too \ZZ^q_i \too \CC^q_i \too \BB^{q+1}_i \too 0, $$
satisfies the Mittlag-Leffler condition. The projective limit $\varprojlim_{i \in \N} \BB^{q+1}_i$ is therefore identified with the quotient $\varprojlim_{i \in \N} \CC^q_i / \varprojlim_{i \in \N} \ZZ^q_i$. The latter is in turn isomorphic to $\CC^q/ \ZZ^q$ as the natural map
$$ \ZZ^{q} \too \varprojlim_{i \in \N} \ZZ^{q}_i,$$
is an isomorphism (``kernels commutes with projective limits''). Summing up, $\BB^{q+1}$ is isomorphic to the projective limit $\varprojlim_{i \in \N} \BB^q_i$. This concludes the proof of the statement under assumption~(1).

Assume now (2): it suffices to prove that (1) is verified. Let $t \in \ZZ^q_i$ be a $q$-cocyle. Since the $q$-th cohomology group $\H^q(\CC_i^\bullet)$ vanishes, there exists $s \in \CC^{q-1}_i$  such that $\partial_i^{q-1}(s) = t$. By hypothesis the map $\phi_i^{q-1}$ is surjective, thus there is $s' \in \CC^{q-1}_{i+1}$ such that $\phi_i^{q-1}(s') = s$. The element $t' := \partial^{q-1}_{i+1}(s')$ is a $q$-cocycle that satisfies $\phi_i^q(t') = t$.
\end{proof}

\begin{lemma} \label{lem:ComputationCohomologyDirectLimits}  Let $X$ be a separated $k$-analytic space. For $n \in \N$, let $I_n$ be a coherent sheaf of ideals of $\Oc_X$ and $X_n$ the associated closed analytic subspace of $X$. Assume the following:
\begin{enumerate}
\item $I_{n+1} \subset I_n$ for every $n \in \N$;
\item $X_n$ cohomologically Stein for all $n \in \N$;
\item for every affinoid domain $D \subset X$, there exists an integer $r \ge 0$ such that $I_{r \rvert D}= 0$.
\end{enumerate}
Then $X$ is cohomologically Stein.
\end{lemma}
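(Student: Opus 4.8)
The plan is to realise $F$ as an inverse limit of coherent sheaves supported on the $X_{n}$, and to feed the associated projective system of \v{C}ech complexes into Lemma~\ref{lem:ProjectiveSystemCochainComplex}. Fix a coherent sheaf $F$ on $X$. For each $n\in\N$, I would set $F_{n}:=F/I_{n}F$. Since $F_{n}$ is annihilated by $I_{n}$, it is (the pushforward of) a coherent sheaf on the closed subspace $X_{n}$, so hypothesis~(2) gives $\H^{q}(X,F_{n})=\H^{q}(X_{n},F_{n})=0$ for all $q\ge 1$. The inclusions $I_{n+1}\subset I_{n}$ of~(1) yield surjections $F_{n+1}\to F_{n}$ whose kernel $K_{n}:=I_{n}F/I_{n+1}F$ is annihilated by $I_{n+1}$, hence is coherent on $X_{n+1}$; therefore $\H^{q}(X,K_{n})=\H^{q}(X_{n+1},K_{n})=0$ for $q\ge 1$ as well. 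Finally, condition~(3) ensures that over every affinoid domain $U$ one has $I_{n}|_{U}=0$ (so $F_{n}|_{U}=F|_{U}$) for $n$ large; consequently the kernel $\bigcap_{n}I_{n}F$ of the natural map $F\to\varprojlim_{n}F_{n}$ vanishes, and this map is an isomorphism of sheaves.

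Next I would fix a G-cover $\Uc=\{U_{\alpha}\}$ of $X$ by affinoid domains. As $X$ is separated, all finite intersections $U_{\alpha_{0}\cdots\alpha_{p}}$ are affinoid, and by Tate's acyclicity theorem coherent sheaves have no higher cohomology on them; hence the \v{C}ech complex $\CC^{\bullet}(\Uc,G)$ computes $\H^{\bullet}(X,G)$ for every coherent sheaf $G$. Setting $\CC_{n}^{\bullet}:=\CC^{\bullet}(\Uc,F_{n})$, the fact that the tower $(F_{n}|_{U})_{n}$ stabilises to $F|_{U}$ over each affinoid $U$ shows that taking sections commutes with the inverse limit, so that $\varprojlim_{n}\CC_{n}^{\bullet}=\CC^{\bullet}(\Uc,F)$, a complex computing $\H^{\bullet}(X,F)$. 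Moreover, for each affinoid intersection $U_{\sigma}$ the exact sequence $0\to K_{n}\to F_{n+1}\to F_{n}\to 0$ together with $\H^{1}(U_{\sigma},K_{n})=0$ shows that $F_{n+1}(U_{\sigma})\to F_{n}(U_{\sigma})$ is surjective; thus every transition map $\phi_{n}^{p}\colon\CC^{p}_{n+1}\to\CC^{p}_{n}$ is surjective.

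It then remains to apply Lemma~\ref{lem:ProjectiveSystemCochainComplex} in each degree $m\ge 1$ to obtain
\[
\H^{m}(X,F)\;=\;\H^{m}\big(\varprojlim_{n}\CC_{n}^{\bullet}\big)\;\xrightarrow{\ \sim\ }\;\varprojlim_{n}\H^{m}(\CC_{n}^{\bullet})\;=\;\varprojlim_{n}\H^{m}(X,F_{n})\;=\;0.
\]
For $m\ge 2$ I would invoke hypothesis~(2) of the lemma (with $q=m-1$): the surjectivity of $\phi_{n}^{m-2},\phi_{n}^{m-1}$ was just established, and $\H^{m-1}(\CC_{n}^{\bullet})=\H^{m-1}(X,F_{n})=0$ since $m-1\ge 1$. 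The degree $m=1$ is the delicate case, for $\H^{0}(\CC_{n}^{\bullet})=F_{n}(X)\ne 0$ rules out hypothesis~(2); here I would use hypothesis~(1) instead, the required surjectivity on $0$-cocycles $\ZZ^{0}_{n+1}=F_{n+1}(X)\to F_{n}(X)=\ZZ^{0}_{n}$ being exactly the vanishing $\H^{1}(X,K_{n})=0$.

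The main points requiring care are the identification $\varprojlim_{n}\CC_{n}^{\bullet}=\CC^{\bullet}(\Uc,F)$ — that is, that sections over the cover commute with the inverse limit, which is precisely what the local stabilisation in~(3) guarantees — and the split into two regimes: the hard part is degree one, where affinoid acyclicity of $K_{n}$ is not enough and one genuinely needs the \emph{global} surjectivity $F_{n+1}(X)\to F_{n}(X)$, i.e.\ the cohomological Steinness of $X_{n+1}$.
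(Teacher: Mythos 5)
Your proof is correct and takes essentially the same route as the paper's: both realise the \v{C}ech complex of $F$ with respect to an affinoid G-cover of the separated space $X$ as the inverse limit of the \v{C}ech complexes of $F_{n}=F/I_{n}F$ on $X_{n}$ (using hypothesis (3) for the stabilisation), check surjectivity of the transition maps on affinoid intersections, and then apply Lemma~\ref{lem:ProjectiveSystemCochainComplex} with hypothesis (2) in degrees $\ge 2$ and hypothesis (1) in degree $1$, where the needed surjectivity on $0$-cocycles $F_{n+1}(X)\to F_{n}(X)$ comes from the cohomological Steinness of $X_{n+1}$. The only difference is cosmetic: you name the kernels $K_{n}=I_{n}F/I_{n+1}F$ and argue via their vanishing $\H^{1}$, whereas the paper phrases the same two surjectivity statements directly in terms of closed subspaces of affinoids and of $X_{n+1}$.
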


\begin{remark} The separation hypothesis here appears in order to be able to compute the cohomology of a coherent sheaf as its \v{C}ech cohomology with respect to an affinoid cover.
\end{remark}

\begin{proof} Let $F$ be a coherent sheaf of $\Oc_X$-modules and $\cal{D} = \{ D_\lambda \}_{\lambda \in \Lambda}$ an affinoid cover of $X$ for the $G$-topology. For $n \in \N$ consider the induced affinoid cover $\cal{D}_n = \{ D_\lambda \times_X X_n \}_{\lambda \in \Lambda}$ on $X_n$ and let $\CC^\bullet_n$ be the \v{C}ech complex $\CC^\bullet(\cal{D}_n, F)$. The cochain complexes $\CC^\bullet_n$ form a projective system through the maps $\CC^\bullet_{n+1} \to \CC^\bullet_n$ induced by restriction and its limit
$$ \CC^\bullet := \varprojlim_{n \in \N} \CC^\bullet_n,$$
is identified with the \v{C}ech complex $\CC^\bullet(\cal{D}, F)$. 

We show that the restriction maps $\CC_{n+1}^q \to \CC_n^q$ are surjective for all $q, n$. For $\lambda = (\lambda_0, \dots,  \lambda_q) \in \Lambda^{q + 1}$ write $D_{\lambda} = D_{\lambda_0} \times_X \cdots \times_X D_{\lambda_q}$. The $k$-analytic space $X_{n+1} \times_X D_\lambda$ is affinoid (thus cohomologically Stein) and $X_n \times_X D_\lambda$ is a closed analytic subspace of $X_{n+1} \times_X D_\lambda$. Thus the restriction map
$$ F( X_{n+1} \times_X  D_\lambda) \too F( X_n \times_X  D_\lambda),$$
is surjective. In particular the restriction map $\CC^q_{n+1} \to \CC^q_n$ is surjective.

Since the affinoid cover~$\cal{D}_n$ is acyclic and $X_n$ is cohomologically Stein, $\H^q(\CC^\bullet_n) = \H^q(X_n, F)$ vanishes for $q\ge 1$ and hypothesis~(2) of Lemma~\ref{lem:ProjectiveSystemCochainComplex} is fulfilled. On the other hand, the affinoid cover~$\cal{D}$ is acyclic, thus, for $q\ge 2$,
$$ \H^q(X, F) = \H^q(\CC^\bullet) = \varprojlim_{n \in \N} \H^q(\CC^\bullet_n) = \varprojlim_{n \in \N} \H^q(X_n, F) = 0.$$

This leaves us with proving the vanishing of $\H^1(X, F)$. For this we apply Lemma~\ref{lem:ProjectiveSystemCochainComplex} with $q = 0$. The hypothesis (1) is indeed fulfilled: for all $n \in \N$, the restriction map $\phi_{i+1}^0 \colon \CC^0_{n+1} \to \CC^0_{n}$ is surjective because $X_{n+1} \times D_\lambda$ is affinoid (hence cohomologically Stein); on the other hand, the $0$-cocyles $\ZZ^0_n$ are nothing but the global sections $F(X_n)$, therefore the surjectivity $F(X_{n+1}) \to F(X_n)$ (guaranteed by the hypothesis of $X_{n+1}$ being cohomologically Stein) reads into the surjectivity of $\phi_n^q$ at the level of $0$-cocycles.
\end{proof}

\begin{proposition} \label{prop:SteinReduction} 
Let $X$ be a separated $k$-analytic space. Then $X$ is cohomologically Stein if and only if $X_\textup{red}$ is cohomologically Stein.
\end{proposition}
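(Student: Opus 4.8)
The plan is to prove the two implications separately. The forward direction in fact holds for any closed analytic subspace: the reduction morphism $i \colon X_{\red} \to X$ is a closed immersion, hence finite, so for a coherent sheaf $G$ on $X_{\red}$ the pushforward $i_\ast G$ is coherent on $X$ and Lemma~\ref{lem:ComputationCohomologyFiniteMorphism} gives $\H^q(X_{\red}, G) = \H^q(X, i_\ast G)$ for all $q \ge 1$. Since $X$ is cohomologically Stein the right-hand side vanishes, and therefore so does the left; thus $X_{\red}$ is cohomologically Stein.

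For the converse, assume $X_{\red}$ is cohomologically Stein. Let $\mathcal{N} \subset \Oc_X$ be the nilradical --- a coherent sheaf of ideals --- and, for $n \in \N$, set $I_n := \mathcal{N}^{n+1}$, letting $X_n$ be the associated closed analytic subspace, so that $X_0 = X_{\red}$ and $I_{n+1} \subset I_n$. I would apply Lemma~\ref{lem:ComputationCohomologyDirectLimits} to this family. Hypothesis~(1) is immediate, and hypothesis~(3) holds because for every affinoid domain $D$ the ring $\Oc_X(D)$ is noetherian, so its nilradical is nilpotent: $\mathcal{N}^m_{\rvert D} = 0$ for $m$ large, that is $I_{m-1 \rvert D} = 0$. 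Everything then hinges on hypothesis~(2), namely that each $X_n$ is cohomologically Stein.

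I would establish this by induction on $n$, the base case $n = 0$ being the hypothesis. For the inductive step, observe that $j \colon X_{n-1} \hookrightarrow X_n$ is a square-zero thickening: its defining ideal $J := \mathcal{N}^n/\mathcal{N}^{n+1}$ inside $\Oc_{X_n}$ satisfies $J^2 = 0$, since $2n \ge n+1$ for $n \ge 1$. Given a coherent sheaf $F$ on $X_n$, both $JF$ and $F/JF$ are annihilated by $J$, hence are pushforwards along $j$ of coherent sheaves on $X_{n-1}$; by Lemma~\ref{lem:ComputationCohomologyFiniteMorphism} and the induction hypothesis their cohomology on $X_n$ vanishes in positive degrees. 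The long exact sequence attached to $0 \to JF \to F \to F/JF \to 0$ then forces $\H^q(X_n, F) = 0$ for $q \ge 1$, completing the induction. The delicate point is exactly this inductive step: one must recognise $X_{n-1} \hookrightarrow X_n$ as square-zero and identify $JF$ and $F/JF$ as coherent sheaves genuinely supported on $X_{n-1}$, so that the finite-morphism comparison applies. Once hypothesis~(2) is secured, the passage to the (possibly infinite) filtration is handled cleanly by Lemma~\ref{lem:ComputationCohomologyDirectLimits}, which yields that $X$ is cohomologically Stein.
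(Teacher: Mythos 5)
Your proof is correct and takes essentially the same route as the paper: both directions reduce to Lemma~\ref{lem:ComputationCohomologyDirectLimits} applied to the closed subspaces cut out by powers of the nilradical, with hypothesis~(2) verified by a nilpotent d\'evissage combined with Lemma~\ref{lem:ComputationCohomologyFiniteMorphism}. The only cosmetic difference is in that d\'evissage: the paper, for each fixed $X_n$, filters a coherent sheaf $F$ by $N^\ell F$ and pushes every graded piece directly to $X_{\red}$, whereas you induct on $n$ using the square-zero thickening $X_{n-1}\subset X_n$ --- the two arguments are interchangeable.
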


\begin{proof} ($\Rightarrow$) Clear. ($\Leftarrow$) Let $N \subset \Oc_X$ the sheaf of ideals made of nilpotent functions and, for $n \in \N$, let $X_n$ the closed analytic subspace associated to $N^n$. Since affinoid algebras are noetherian, for every affinoid domain $D \subset X$, there exists an integer $r \ge 0$ such that $N^r_{\rvert D} = 0$. According to Lemma \ref{lem:ComputationCohomologyDirectLimits} it suffices to show that $X_n$ is Stein for all $n \in \N$. 

Let $n \in \N$ be a non-negative integer and $F$~be a coherent sheaf of $\Oc_{X_n}$-modules. For every integer $\ell \ge 0$, consider the short exact sequence
$$ 0 \too N^{\ell + 1} F \too N^\ell F \too N^\ell F / N^{\ell + 1} F \too 0,$$
and the associated long exact sequence of cohomology
$$ \cdots \too \H^q(X_n, N^{\ell + 1} F) \too \H^q(X_n, N^\ell F) \too \H^q(X_n, N^\ell F / N^{\ell + 1} F)  \too \cdots. $$

If $\iota \colon X_{\textup{red}} \to X_n$ is the canonical morphism, then $N^\ell F / N^{\ell + 1} F =\iota_\ast \iota^\ast N^\ell F$. Since $X_{\textup{red}}$ is supposed to be Stein, the higher cohomology of $N^\ell F / N^{\ell + 1} F$ vanishes, by Lemma~\ref{lem:ComputationCohomologyFiniteMorphism}. Employing this in the long exact sequence of cohomology groups, the homomorphism 
$$ \H^q(X_{n}, N^{\ell + 1} F) \too \H^q(X_{n}, N^\ell F),$$
is seen to be surjective for all $q \ge 1$ (actually bijective for $q \ge 2$). Since the sheaf~$N^n$ vanishes on~$X_n$, $\H^q(X_n, N^\ell F)$ vanishes for all $q \ge 1$ and all $\ell \ge 0$.
\end{proof}

\begin{proposition} \label{prop:CriterionCohSteinIrreducibleComponents} Let $X$ be a separated $k$-analytic space. Then $X$ is cohomologically Stein if and only if every irreducible component of $X$ is cohomologically Stein.
\end{proposition}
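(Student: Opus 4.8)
The two implications are of very unequal difficulty. For the direct implication, I would simply note that each irreducible component~$Y$ of~$X$ is a closed analytic subspace, so that the inclusion $\iota\colon Y\to X$ is a closed immersion, hence a finite morphism. Since~$X$ is cohomologically Stein, Proposition~\ref{lem:FiniteOverSteinAreStein} applies verbatim and yields that~$Y$ is cohomologically Stein. For the converse, the first step is to reduce to the case where~$X$ is reduced: the irreducible components of~$X$ and of~$X_{\red}$ coincide as reduced closed subspaces, so the hypothesis is unaffected, while Proposition~\ref{prop:SteinReduction} shows that~$X$ is cohomologically Stein if and only if~$X_{\red}$ is. From now on I assume~$X$ reduced and write~$\{X_i\}$ for its irreducible components.

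The heart of the argument is a gluing step for two pieces. Suppose $X = Y \cup X'$ with $Y, X'$ reduced closed subspaces, both cohomologically Stein, and let $I_Y, I_{X'} \subset \Oc_X$ be the corresponding radical ideal sheaves. Because~$X$ is reduced and $Y \cup X' = X$, we have $I_Y \cap I_{X'} = 0$, hence $I_Y \cdot I_{X'} = 0$. Given a coherent sheaf~$F$ on~$X$, I would consider the coherent subsheaf $I_Y F \subset F$ and the short exact sequence
$$ 0 \too I_Y F \too F \too F / I_Y F \too 0. $$
Here $F/I_Y F$ is annihilated by~$I_Y$, hence is the pushforward of a coherent sheaf on~$Y$, while $I_Y F$ is annihilated by~$I_{X'}$ (since $I_{X'} I_Y = 0$), hence is the pushforward of a coherent sheaf on~$X'$. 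By Lemma~\ref{lem:ComputationCohomologyFiniteMorphism}, together with the assumption that~$Y$ and~$X'$ are cohomologically Stein, the two outer terms have vanishing cohomology in positive degrees; the associated long exact sequence then forces $\H^q(X, F) = 0$ for all $q \ge 1$, so that~$X$ is cohomologically Stein.

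With this step available, an immediate induction on the number of components settles the case where~$X$ has finitely many irreducible components: one peels off a component~$X_m$, sets $Y = X_1 \cup \cdots \cup X_{m-1}$ (cohomologically Stein by the inductive hypothesis, its components being among those of~$X$), and applies the gluing step to $X = Y \cup X_m$. To pass to the general case I would invoke Lemma~\ref{lem:ComputationCohomologyDirectLimits}. Enumerating the components as $X_1, X_2, \dots$, put $Y_n := X_1 \cup \cdots \cup X_n$ with radical ideal~$I_n$. Then the~$I_n$ form a decreasing sequence, each~$Y_n$ is cohomologically Stein by the finite case, and any affinoid domain~$D$ meets only finitely many components, so $I_n$ restricts to~$0$ on~$D$ for~$n$ large; the three hypotheses of the lemma are met, and~$X$ is cohomologically Stein.

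The routine points—coherence of~$I_Y F$ and the identification of the two outer terms as pushforwards—are standard. The one genuinely delicate issue is the passage from finitely to infinitely many components: it rests on the local finiteness of the family of irreducible components to produce the ``eventually zero'' condition of Lemma~\ref{lem:ComputationCohomologyDirectLimits}, and on reducing to a setting with at most countably many components so that the enumeration $X_1, X_2, \dots$ makes sense (automatic when~$X$ is countable at infinity, or after splitting off the connected components, over which cohomology is a product). I would also stress that reducedness is used precisely, and only, to secure $I_Y \cdot I_{X'} = 0$: without it the intersection $I_Y \cap I_{X'}$ is merely the nilradical and the gluing step collapses, which is exactly why the reduction to reduced~$X$ via Proposition~\ref{prop:SteinReduction} comes first.
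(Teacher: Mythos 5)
Your argument is correct up to its last step, and the correct part takes a genuinely different route from the paper. Your core is a Mayer--Vietoris-type gluing through ideal sheaves: after reducing to $X$ reduced via Proposition~\ref{prop:SteinReduction}, you use $0 \to I_Y F \to F \to F/I_Y F \to 0$ together with $I_Y \cdot I_{X'} = 0$ to peel off components one at a time, then pass to countably many components with Lemma~\ref{lem:ComputationCohomologyDirectLimits}; all of this is sound (the identification of the outer terms as pushforwards and the appeal to Lemma~\ref{lem:ComputationCohomologyFiniteMorphism} are exactly right). The paper instead inducts on the \emph{dimension} of $X$: it reduces to finite dimension by applying Lemma~\ref{lem:ComputationCohomologyDirectLimits} to the filtration by unions of components of dimension $\le n$, and then handles all top-dimensional components simultaneously by applying $\FHom(-,F)$ to $0 \to \Oc_X \to \pi_\ast \Oc_{X'} \to C \to 0$, where $\pi \colon X' = \bigsqcup_i X^{(i)} \to X$ is the finite map from the disjoint union of \emph{all} the components; the error sheaves are supported in dimension $\le d-1$ and are killed by the inductive hypothesis.

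This structural difference is precisely where your proof has a genuine gap. The proposition carries no countability hypothesis, and your only bridge to the general case --- splitting into connected components --- rests on the claim that a connected $k$-analytic space has at most countably many irreducible components. That claim is false. Take $k = \C((t))$, let $\eta$ be the Gauss point of the closed unit bidisc $\D := \D(1,1)$, and set $X_0 := \D \smallsetminus \{\eta\}$. Then $X_0$ is separated and connected (two rigid points can be joined by paths moving one coordinate at a time, which avoid $\eta$; any other point joins a rigid point inside an open set of the form $\{|f| < \|f\|\}$, which misses $\eta$). Choose an uncountable set $S \subset \C^2 \subset X_0$ of rigid points all of whose coordinates are algebraically independent over $\Q$. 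For $f \in k\{T_1,T_2\}$ of spectral norm $1$, the inequality $|f(s)| < 1$ at $s \in S$ means that the reduction $\tilde{f} \in \C[T_1,T_2]$ vanishes at $s$; but a nonzero polynomial vanishes at only finitely many points of $S$, since a curve through sufficiently many of them would be defined over the field generated by the coordinates of finitely many of them, and any further point of $S$ on that curve would violate algebraic independence. Hence the only accumulation point of $S$ in the compact space $\D$ is $\eta$, so $S$ is closed and discrete in $X_0$. Now attach a line at each point of $S$: inside $X_0 \times \A^{1,\an}_k$, the subset $W := \bigl(X_0 \times \{0\}\bigr) \cup \bigcup_{s \in S} \{s\} \times \A^{1,\an}_k$ is a locally finite union of closed analytic subspaces, hence a closed analytic subspace; it is separated, connected, and its irreducible components are $X_0 \times \{0\}$ and the uncountably many lines $\{s\} \times \A^{1,\an}_k$. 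On such a space your enumeration $Y_n = X_1 \cup \dots \cup X_n$ cannot be set up, and Lemma~\ref{lem:ComputationCohomologyDirectLimits}, which requires an $\N$-indexed exhaustion, does not apply. This is exactly the difficulty the paper's proof is built to avoid: it never enumerates components, its $\N$-indexed filtration is by dimension, and within a fixed dimension the disjoint union $\bigsqcup_i X^{(i)}$ is cohomologically Stein regardless of the cardinality of the index set, because cohomology of a disjoint union is a product. So your proof is complete under the extra hypothesis that $X$ has countably many irreducible components (in particular when $X$ is countable at infinity), but not in the stated generality, in which the proposition is actually used (e.g.\ in Proposition~\ref{prop:CohSteinFiniteCovering}); to repair it, you would need to replace the enumeration by the dimension filtration, at which point the two-piece gluing no longer suffices for the top-dimensional part and something like the paper's $\FHom$ step seems unavoidable.
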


\begin{proof} ($\Rightarrow$) Clear. ($\Leftarrow$) Thanks to Lemma \ref{lem:ComputationCohomologyDirectLimits}, one may assume that $X$ has finite dimension. Indeed, let $X_n$ be the union of irreducible components of $X$ of dimension $\le n$ and let $I_n$ be the coherent sheaf of ideals defining $X_n$. Since an affinoid domain $D \subset X$ has finite dimension, the hypothesis of Lemma \ref{lem:ComputationCohomologyDirectLimits} is fulfilled. Therefore it suffices to prove the statement when $X$ is finite-dimensional.

We argue by induction on the dimension $d$ of $X$. If $d = 0$ the statement is obviously true. Suppose $d \ge 1$. If the support of a coherent sheaf $F$ on $X$ has dimension $\le d - 1$ then its higher cohomology vanishes (whether $\Supp F$ is reduced or not does not matter because of Proposition \ref{prop:SteinReduction}). Indeed, any irreducible component~$S$ of $\Supp F$ is contained in an irreducible component $Y$ of $X$. According to Proposition~\ref{lem:FiniteOverSteinAreStein}, $S$ is cohomologically Stein being a closed analytic subset of $Y$ (which is cohomologically Stein). Applying the inductive hypothesis to $\Supp F$ one concludes that it is cohomologically Stein.

Let $\{ X^{(i)}\}_{i \in I}$ be the set of irreducible components of $X$ and
$$ \pi \colon X' = \bigsqcup_{i \in I} X^{(i)} \too X$$
the natural map. The $k$-analytic space $X'$ is cohomologically Stein.  The morphism~$\pi$ is finite: indeed, this property being local on the base, one may suppose $X$ to be affinoid, for which the statement is clear (because an affinoid has at most finitely many irreducible components).

Let $\pi^\sharp \colon \Oc_X \to \pi_\ast \Oc_{X'}$ be the homomorphism induced by $\pi$ and consider the short exact sequence
$$ 0 \too \Oc_X \too \pi_\ast \Oc_{X'} \too C  \too 0,$$
where $C = \coker{\pi^\sharp}$. Let $U \subset X$ be the subset of points belonging to only one irreducible component. Its complement $X \smallsetminus U$ is a closed analytic subset of dimension $\le d- 1$.

Let $F$ be a coherent sheaf of $\Oc_X$-modules. Applying $\FHom(-, F)$ to the preceding short exact sequence, one obtains an exact sequence
$$ 0 \too \FHom(C, F) \too \FHom(\pi_\ast \Oc_{X'}, F) \stackrel{\phi}{\too} \FHom(\Oc_X, F) = F. $$
Observe the following:
\begin{enumerate}
\item The sheaf $\FHom(\pi_\ast \Oc_{X'}, F)$ has a natural structure of $\pi_{\ast} \Oc_{X'}$-module, hence it is the push-forward $\pi_\ast E$ of a coherent sheaf $E$ of $\Oc_{X'}$. Since $\pi$ is finite, according to Lemma \ref{lem:ComputationCohomologyFiniteMorphism},
$$ \H^q(X, \FHom(\pi_\ast \Oc_{X'}, F)) = \H^q(X', E) = 0,$$
for all $q \ge 1$.

\item The support of $C$ is contained in $X \smallsetminus U$ as well as the support $\FHom(C, F)$. Thus the higher cohomology of $\FHom(C, F)$ vanishes.
\end{enumerate}

Applying these considerations to the long exact sequence of cohomology groups associated to the short exact sequence,
$$ 0 \too \FHom(C, F) \too \FHom(\pi_\ast \Oc_{X'}, F) \too \Im(\phi) \too 0,$$
one obtains that the higher cohomology of $\Im(\phi)$ vanishes.  Consider the short exact sequence
$$ 0 \too \Im(\phi) \too F \too \coker(\phi) \too 0.$$

Since $\pi^\sharp$ is an isomorphism on $U$, the support of $\coker(\phi)$ is contained in $X \smallsetminus U$. Thus the higher cohomology of $\coker(\phi)$ vanishes. The long exact sequence of cohomology associated to the preceding short exact sequence gives $\H^q(X, F) = 0$ for all $q \ge 1$. 
\end{proof}

\begin{proposition} \label{prop:CohSteinFiniteCovering} Suppose $k$ \emph{non-trivially} valued. Let $X, Y$ be separated $k$-analytic spaces and $f \colon Y \to X$ a finite morphism. If $f$ is surjective and $Y$ is cohomologically Stein, then $X$ is cohomologically Stein.
\end{proposition}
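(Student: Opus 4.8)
The plan is to mimic the dévissage used for Proposition~\ref{prop:CriterionCohSteinIrreducibleComponents}, with the finite surjective morphism $f$ playing the role of the normalisation-by-components map. The essential new difficulty is that $f$ need not be generically an isomorphism, so the kernel of the relevant evaluation map will no longer be supported in lower dimension; handling this is the heart of the matter. First I would carry out the standard reductions. By Proposition~\ref{prop:SteinReduction} I may replace $X$ by $X_\red$ and $Y$ by $Y_\red$, hence assume both reduced. Filtering $X$ by the unions of its irreducible components of dimension $\le n$ exactly as in Proposition~\ref{prop:CriterionCohSteinIrreducibleComponents} (via Lemma~\ref{lem:ComputationCohomologyDirectLimits}), I reduce to the case where $X$ has finite dimension $d$, and then argue by induction on $d$, the case $d=0$ being clear. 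The induction hypothesis yields the following fact that I will use repeatedly: if $S\subset X$ is a reduced closed analytic subset with $\dim S<d$, then $f^{-1}(S)_\red\to S$ is finite and surjective with cohomologically Stein source (a closed analytic subspace of $Y$, by Proposition~\ref{lem:FiniteOverSteinAreStein}), so $S$ is cohomologically Stein by the induction hypothesis. Consequently, by Proposition~\ref{prop:SteinReduction} and Lemma~\ref{lem:ComputationCohomologyFiniteMorphism}, every coherent sheaf on $X$ supported in dimension $<d$ is acyclic.

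Next I would introduce the evaluation map. Fix a coherent sheaf $F$ on $X$ and put $\mathcal{A}:=f_\ast\Oc_Y$. Since $f$ is finite, the internal Hom $\mathcal{G}:=\FHom_{\Oc_X}(\mathcal{A},F)$ carries a natural $\mathcal{A}$-module structure, hence equals the direct image $f_\ast E$ of a coherent sheaf $E$ on $Y$; by Lemma~\ref{lem:ComputationCohomologyFiniteMorphism} and the hypothesis on $Y$ it is acyclic. Evaluation at the unit section $1\in\mathcal{A}$ defines an $\Oc_X$-linear map $\phi\colon\mathcal{G}\to F$. As $X$ is reduced and $f$ surjective, $\Oc_X\to\mathcal{A}$ is injective, and on a dense Zariski-open $V\subset X$ the cokernel $\mathcal{A}/\Oc_X$ is locally free, so there $\Oc_X$ is locally a direct summand of $\mathcal{A}$ and $\phi$ is surjective. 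Thus $\coker\phi$ is supported on the closed analytic subset $X\smallsetminus V$, of dimension $<d$, and is therefore acyclic. The kernel $\ker\phi=\FHom_{\Oc_X}(\mathcal{A}/\Oc_X,F)$, however, is \emph{not} supported in lower dimension as soon as $f$ has generic degree $>1$: this is exactly where the argument of Proposition~\ref{prop:CriterionCohSteinIrreducibleComponents}, which relied on both kernel and cokernel being lower-dimensional, ceases to apply.

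To conclude I would run a degree-increasing recursion. Splitting $\phi$ into $0\to\ker\phi\to\mathcal{G}\to\Im\phi\to0$ and $0\to\Im\phi\to F\to\coker\phi\to0$ and using that $\mathcal{G}$ and $\coker\phi$ are acyclic, the long exact sequences give a natural isomorphism $\H^q(X,F)\cong\H^{q+1}(X,\ker\phi)$ for every $q\ge2$. Applying the same construction to $\ker\phi$ and iterating $m$ times produces a coherent sheaf $K^{(m)}$ with $\H^q(X,F)\cong\H^{q+m}(X,K^{(m)})$; since the coherent cohomology of the $d$-dimensional space $X$ vanishes in degrees $>d$, choosing $m=d$ forces $\H^q(X,F)=0$ for all $q\ge2$ and all coherent $F$. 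For $q=1$ this vanishing, applied to $\ker\phi$, gives $\H^2(X,\ker\phi)=0$, whence $\H^1(X,\Im\phi)=0$ from the first sequence, and then $\H^1(X,F)=0$ from the second since $\coker\phi$ is acyclic. Therefore $\H^q(X,F)=0$ for all $q\ge1$, that is, $X$ is cohomologically Stein.

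The main obstacle is precisely the non-birational case. When $f$ has generic degree $>1$ the evaluation map $\phi$ has a kernel of full dimension $d$, so one cannot finish by the dimension-drop mechanism of Proposition~\ref{prop:CriterionCohSteinIrreducibleComponents}. The degree-increasing recursion is designed to circumvent this, but its termination rests on the bound of the coherent cohomological dimension of $X$ by $\dim X$; this is the one non-formal external ingredient on which I expect the proof to depend.
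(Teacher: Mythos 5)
The reductions in your first paragraph and the setup of the evaluation map are sound and run parallel to the paper: $\mathcal{G}=\FHom(f_\ast\Oc_Y,F)$ is indeed acyclic by Lemma~\ref{lem:ComputationCohomologyFiniteMorphism}, and the cokernel of $\phi\colon\mathcal{G}\to F$ is supported in dimension $<d$ by your generic-splitting argument. The gap is the termination of your dimension-shifting recursion: everything rests on the assertion that $\H^q(X,G)=0$ for every coherent sheaf~$G$ and every $q>d=\dim X$. This is not a formal fact, it is proved nowhere in the paper and cited nowhere in it, and it is not available in the generality you need: here $X$ is an arbitrary separated $k$-analytic space (possibly non-good, non-paracompact, non-strict, not countable at infinity), and coherent cohomology lives on the G-site. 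Even in complex geometry the analogous bound (coherent cohomological dimension $\le\dim$) is a genuine theorem of Siu, not a consequence of general sheaf theory --- the naive topological bound only gives twice the dimension. In the Berkovich setting one could hope to deduce it for good, paracompact, \emph{strictly} $k$-analytic spaces by combining Berkovich's comparison of topological and analytic dimension with the comparison between G-cohomology and topological cohomology, but none of these hypotheses are granted here. So the isomorphisms $\H^q(X,F)\cong\H^{q+m}(X,K^{(m)})$, while correctly derived, never terminate on their own: the whole difficulty of the proposition has been displaced into an unproven vanishing theorem. A telling symptom is that your argument never invokes the hypothesis that $k$ is non-trivially valued, which the paper genuinely needs.

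For contrast, the paper closes the same hole by using the cohomological Steinness of $Y$ much more strongly. A Theorem-A-type claim (proved by the same Zariski-dense/Zariski-closed dichotomy as Proposition~\ref{prop:ThmA}, extending sections from closed subspaces of $Y$ via the vanishing of $\H^1$ of ideal sheaves, and falling back on Proposition~\ref{Prop:ZariskiTrivialSpaces} --- this is where non-trivial valuation enters) produces finitely many global sections $s_1,\dots,s_n\in\Oc(Y)$ such that $s\colon\Oc_X^n\to f_\ast\Oc_Y$ is an isomorphism at a Zariski-dense point. Applying $\FHom(-,F)$ to $\Oc_X^n\too f_\ast\Oc_Y\too\coker(s)\too 0$, \emph{both} the kernel and the cokernel of the resulting map $\FHom(f_\ast\Oc_Y,F)\to F^n$ are then supported in dimension $<d$, so the one-step dimensional induction of Proposition~\ref{prop:CriterionCohSteinIrreducibleComponents} applies verbatim, with no bound on cohomological dimension needed. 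If you want to repair your proof, replace your rank-one evaluation map by such a generically bijective map $s$: this is precisely the point where one must use that $Y$ is cohomologically Stein (and not merely that pushforwards from $Y$ are acyclic).
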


\begin{proof} We adapt the argument given in \cite{LiuCRAS}. According to Proposition \ref{prop:CriterionCohSteinIrreducibleComponents} it suffices to prove the statement when $X$ is also assumed irreducible.

We argue by induction on the dimension $d$ of $X$. If $d = 0$ the statement is true. Suppose $d \ge 1$. If a coherent sheaf $F$ is not supported at the whole $X$, then its higher cohomology vanishes. Indeed, let $S$ be an irreducible component of $\Supp(F)$. Then $f^{-1}(S) \subset Y$ is cohomologically Stein because it is a closed analytic subset of $Y$ which is assumed to be cohomologically Stein. Since $\dim S \le d - 1$, the induction hypothesis implies that $S$ is cohomologically Stein. In particular, all the irreducible components of $\Supp(F)$ are cohomologically Stein, hence $\Supp F$ is cohomologically thanks to Proposition \ref{prop:CriterionCohSteinIrreducibleComponents}.

\begin{claim} Given $x \in X$, there are global sections $s_1, \dots, s_n \in \Oc(Y)$ such that the induced homomorphism $s \colon \Oc_X^n \to f_\ast \Oc_Y$ is an isomorphism at $x$.
\end{claim}

\begin{proof}[Proof of the Claim] It suffices to prove that there are sections $s_1, \dots, s_n$ such that $s \colon \Oc_X^n \to f_\ast \Oc_Y$ is surjective at $x$. The argument to prove this is similar to the one of Theorem A (Proposition \ref{prop:ThmA}), and we sketch the changes one has to perform. Begin with:

\begin{remark} \label{Rmk:ProofClaimFiniteStein}
Suppose $x$ belongs to a closed $k$-analytic subspace $Z$ of $X$ such that there are sections $s_1, \dots, s_n$ of $f_\ast \Oc_Y$ over $Z$ (\emph{i.e.} global sections of $\Oc_Y$ over $f^{-1}(Z)$) such that the induced homomorphism $\Oc_Z^n \to (f_\ast \Oc_Y)_{\rvert Z}$ is surjective at $x$. Since $Y$~is cohomologically Stein, the sections $s_1, \dots, s_n$ extend to global sections of~$\Oc_Y$ over~$Y$ and the induced map $\Oc_X^n \to f_\ast \Oc_Y$ is surjective at $x$.
\end{remark}

As for the proof of Proposition \ref{prop:ThmA}, one may suppose $X$ irreducible and argue on the dimension of $X$. Again for $d = 0$ there is nothing to prove, so one can assume $d$ positive and that the statement is true for irreducible space of lower dimension. If $x$ is not Zariski-dense, the result holds by induction hypothesis and Remark \ref{Rmk:ProofClaimFiniteStein}. It remains to treat the case when $x$ is Zariski-dense. If $X$ admits a closed analytic subspace $Z$, the  induction hypothesis yields a homomorphism $s \colon \Oc_X^n \to f_\ast \Oc_Y$ which is surjective at some point of $Z$. Since the locus where $s$ is surjective is a Zariski-open subset of $X$, $s$ is surjective at $x$.  If $X$ has no closed analytic subspace, then Proposition \ref{Prop:ZariskiTrivialSpaces} states that $X = \M(A)$ for a local finite algebra $A$ over an analytic extension $K$ of $k$. Then $Y = \M(B)$ for a finite $K$-algebra $B$ and the result holds trivially.
\end{proof}

The rest of the proof goes along the same lines as that of Proposition \ref{prop:CriterionCohSteinIrreducibleComponents}, applying $\FHom(-, F)$ to the exact sequence
\begin{equation*} \Oc_X^n \too f_\ast \Oc_{Y} \too \coker(s)  \too 0. \qedhere \end{equation*}
\end{proof}

\section{Preliminaries on Liu spaces}\label{sec:Liu}

\subsection{Recollecting some results of Liu} Let $X$ be a $k$-analytic space and $x \in X$ a rigid point. Let $\mathfrak{m}_x$ be the kernel of the evaluation map $\Oc(X) \to k(x)$, where~$k(x)$ denotes the residue field at $x$. The ideal $\mathfrak{m}_x$ is maximal: indeed, by definition the integral domain $\Oc(X) / \mathfrak{m}_x$ injects into $k(x)$ which is finite-dimensional over $k$, and it follows that $\Oc(X) / \mathfrak{m}_x$ is a field. For a ring $A$, let $\Max(A)$ denote the set of maximal ideals of $A$.

\begin{lemma} A $k$-analytic space $X$ is rig-holomorphically separable if and only if the map $X_{\rig} \to \Max(\Oc(X))$,  $x \mapsto \mathfrak{m}_x$ is injective.
\end{lemma}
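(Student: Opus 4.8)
The plan is to prove the two implications separately, translating the definition of rig-holomorphic separability into the language of the maximal ideals $\mathfrak{m}_x$. Throughout, one records the dictionary that, for a rigid point $x$, the value $f(x) \in k(x)$ of a function $f \in \Oc(X)$ is precisely the image of $f$ under the evaluation map $\Oc(X) \to k(x)$; in particular $f(x) = 0$ is equivalent to $f \in \mathfrak{m}_x$. Recall also, as established in the paragraph preceding the statement, that each $\mathfrak{m}_x$ is a maximal ideal of $\Oc(X)$.

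For the direct implication, I would assume $X$ rig-holomorphically separable and show that $x \mapsto \mathfrak{m}_x$ is injective. Given two distinct rigid points $x \neq x'$, separability provides $f \in \Oc(X)$ with $f(x) = 0$ and $f(x') = 1$. Then $f \in \mathfrak{m}_x$ while $f \notin \mathfrak{m}_{x'}$, since $f(x') = 1 \neq 0$; hence $\mathfrak{m}_x \neq \mathfrak{m}_{x'}$, which is exactly the desired injectivity.

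For the converse, I would assume the map injective and fix distinct rigid points $x \neq x'$, so that $\mathfrak{m}_x$ and $\mathfrak{m}_{x'}$ are two \emph{distinct} maximal ideals of $\Oc(X)$. The essential algebraic input is that distinct maximal ideals are comaximal, $\mathfrak{m}_x + \mathfrak{m}_{x'} = \Oc(X)$; the Chinese Remainder Theorem then makes the homomorphism $\Oc(X) \to \Oc(X)/\mathfrak{m}_x \times \Oc(X)/\mathfrak{m}_{x'}$ surjective, so there exists $f \in \Oc(X)$ with $f \in \mathfrak{m}_x$ and $f - 1 \in \mathfrak{m}_{x'}$. Unwinding the dictionary above, this means $f(x) = 0$ and $f(x') = 1$, producing the required separating function.

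The argument is essentially formal once the dictionary $f(x) = 0 \Leftrightarrow f \in \mathfrak{m}_x$ is recorded, so I expect no genuine obstacle; the only substantive ingredient is the comaximality of distinct maximal ideals. The single point deserving care is checking that the class furnished by the Chinese Remainder Theorem indeed evaluates to $1$ at $x'$: this holds because the evaluation $\Oc(X) \to k(x')$ factors through $\Oc(X)/\mathfrak{m}_{x'} \hookrightarrow k(x')$ and sends the constant $1$ to $1$, so $f - 1 \in \mathfrak{m}_{x'}$ forces $f(x') = 1$.
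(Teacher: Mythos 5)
Your proof is correct; the paper states this lemma without proof, and your argument is precisely the intended one, with the paper's preceding remark that each $\mathfrak{m}_x$ is a maximal ideal of $\Oc(X)$ serving exactly to enable the comaximality/Chinese Remainder step in your converse direction. The forward direction and the dictionary $f(x)=0 \Leftrightarrow f \in \mathfrak{m}_x$ are handled correctly as well, so nothing is missing.
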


\begin{lemma} \label{lem:RigHolSepImpliesHolSep} Let $X$ be $k$-analytic space, $x$, $x'$ distinct points of $X$ and $f$ a holomorphic function on $X$ such that $|f(x)| \neq |f(x')|$. Then,
\begin{enumerate}
\item if $x$ is rigid, there is $g \in \Oc(X)$ such that $g(x) = 0$ and $|g(x')| > 0$;
\item if $x, x'$ are both rigid, there is $g \in \Oc(X)$ such that $g(x) = 0$ and $g(x') = 1$.
\end{enumerate}
In particular, if $X$ is holomorphically separable, then it is rig-holomorphically separable. 
\end{lemma}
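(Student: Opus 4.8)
The plan is to upgrade the mere inequality $|f(x)| \neq |f(x')|$ into the exact conditions on $g$ by applying to $f$ a carefully chosen polynomial with coefficients in $k$. The one external fact I would rely on is classical: over a complete non-archimedean field, \emph{all roots of an irreducible polynomial have the same absolute value}. (One sees this from the Newton polygon of an irreducible polynomial being a single segment, or from the uniqueness of the extension of $|\cdot|$ to $\ol{k}$, which forces it to be invariant under $k$-automorphisms and hence isometric on conjugate elements.)

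For (1), I would argue as follows. Since $x$ is rigid, the value $a := f(x)$ lies in the finite extension $k(x)$ of $k$, so it has a minimal polynomial $P \in k[T]$. Setting $g := P(f) \in \Oc(X)$ gives $g(x) = P(a) = 0$ immediately. The point is then to show $g(x') \neq 0$, i.e. that $b := f(x') \in \khat(x')$ is not a root of $P$. If it were, $b$ would be algebraic over $k$ and a root of the irreducible polynomial $P$; its absolute value, computed in $\khat(x')$, coincides with the unique extension of that of $k$, so the key fact would give $|b| = |a|$, contradicting $|f(x')| \neq |f(x)|$. Hence $g(x') \neq 0$, that is $|g(x')| > 0$.

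For (2), I would first invoke (1) to obtain $g_1 \in \Oc(X)$ with $g_1(x) = 0$ and $c := g_1(x') \neq 0$; as $x'$ is rigid, $c$ lies in the field $k(x')$, so $c^{-1} \in k[c]$ and I may write $c^{-1} = U(c)$ with $U \in k[T]$. Then $g := g_1\, U(g_1) \in \Oc(X)$ satisfies $g(x) = 0$ and $g(x') = c\,U(c) = 1$. The final assertion follows at once: given distinct rigid points $x, x'$, holomorphic separability furnishes an $f$ with $|f(x)| \neq |f(x')|$, and (2) then produces the desired $g$, so $X$ is rig-holomorphically separable.

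I expect the only genuine obstacle to be controlling the value at $x'$ in part (1), namely guaranteeing $P(f(x')) \neq 0$; this is precisely where the rigidity of $x$ (needed to form the minimal polynomial) and the equal-absolute-value property of irreducible polynomials come into play. Once $g_1$ is in hand, the remainder is a routine interpolation resting only on $c^{-1}$ being a polynomial in $c$ over $k$.
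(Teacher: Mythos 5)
Your proof is correct and follows essentially the same route as the paper: part (1) is identical (apply the minimal polynomial $P$ of $f(x)$ and use that all roots of an irreducible polynomial over a complete non-archimedean field share the same absolute value), and part (2) likewise exploits the algebraicity of the nonzero value at the rigid point $x'$. Your device of writing $c^{-1}=U(c)$ with $U\in k[T]$ is only a cosmetic variant of the paper's normalization $P(g)/P(0)$ by the minimal polynomial of $g(x')$ (and in fact your version is slightly cleaner, since it directly produces $g(x)=0$, $g(x')=1$ without swapping the roles of the two points).
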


\begin{proof} (1) Let $P$ be the minimal polynomial of $f(x)$ over $k$. Then $P(f(x)) = 0$ and $P(f(x')) \neq 0$ (otherwise $f(x')$ would have the same norm as $f(x)$). 

(2) According to (1) there is $g \in \Oc(X)$ such that $g(x) = 0$ and $g(x') \neq 0$. Let $P$ be the minimal polynomial of $g(x')$ over $k$. Then $P(g(x')) = 0$ and $P(g(x)) = P(0)$ is not zero. The function $\frac{P(g)}{P(0)}$ does the job.
\end{proof}

\begin{remark} According to the preceding lemma, an $S$-space in the sense of Liu (\cite[D\'efinition~2.2]{LiuTohoku}) is a strict, separated, compact, rig-holomorphically separable $k$-analytic space such that $\Oc_X$ is acyclic.
\end{remark} 

\begin{theorem} \label{thm:LiuGerritzenGrauert} Suppose $k$ is \emph{non-trivially} valued. Let $X$ be a strict, separated, compact, rig-holomorphically separable $k$-analytic space such that $\Oc_X$ is acyclic. 
%(\emph{i.e.} a $S$-space is the sense of Liu). 
Then every strictly affinoid domain $D$ of $X$ is a finite union of rational domains of $X$.
\end{theorem}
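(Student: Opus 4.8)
The plan is to deduce the statement from the classical Gerritzen--Grauert theorem for strictly affinoid spaces, after realising $X$ inside such a space. Since $X$ is strict, separated, compact, rig-holomorphically separable and $\Oc_X$ is acyclic, it satisfies condition~(1) of Theorem~\ref{Thm:LiuTohoku}; that theorem then provides an embedding $\iota \colon X \hookrightarrow M$ of $X$ as an analytic domain of a strictly $k$-affinoid space $M = \M(A)$. I denote by $A = \Oc(M) \to \Oc(X)$ the associated restriction homomorphism.

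Given a strictly affinoid domain $D$ of $X$, I would first check that $D$ is a strictly affinoid domain of $M$. This is the transitivity of affinoid domains: $D \hookrightarrow X$ is an affinoid domain and $X \hookrightarrow M$ is an analytic domain, so any morphism from an affinoid space to $M$ whose image lies in $D$ factors through $X$ (because $X$ is an analytic domain) and then through $D$ (because $D$ is an affinoid domain of $X$); thus $D$ satisfies the universal property of an affinoid domain of $M$, and it is strict since both $D$ and $M$ are. Applying the Gerritzen--Grauert theorem inside $M$, I obtain a finite covering $D = R_1 \cup \cdots \cup R_m$ by rational domains of $M$, say
$$ R_j = \{ x \in M : |f_{j,i}(x)| \le |g_j(x)| \textup{ for } 1 \le i \le n_j \}, $$
with $f_{j,1}, \dots, f_{j,n_j}, g_j \in A$ generating the unit ideal of $A$.

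It remains to descend these rational domains along $\iota$. As $D \subset X$, we have $D = \bigcup_{j=1}^m (R_j \cap X)$, and each intersection is cut out inside $X$ by the images of the $f_{j,i}$ and $g_j$ in $\Oc(X)$. A relation $\sum_i a_i f_{j,i} + b\, g_j = 1$ in $A$ restricts to the same relation in $\Oc(X)$, so these images still generate the unit ideal of $\Oc(X)$; hence each $R_j \cap X$ is a rational domain of $X$, and $D$ is a finite union of rational domains of $X$, as desired. The only substantial ingredient is the embedding furnished by Theorem~\ref{Thm:LiuTohoku}: once $X$ sits as an analytic domain in a strictly affinoid space, the statement follows from the classical theory, the only points needing care being the transitivity of (strict) affinoid domains and the stability of unit-ideal generators under restriction.
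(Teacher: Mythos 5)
Your deduction is internally coherent as far as formal manipulation goes: the transitivity argument showing that a strictly affinoid domain $D$ of $X$ is a strictly affinoid domain of the ambient strictly affinoid space $M$, the application of the classical Gerritzen--Grauert theorem inside $M$, and the observation that the resulting rational domains $R_j$ of $M$ are contained in $D \subset X$ and remain rational domains of $X$ (since a relation $\sum_i a_i f_{j,i} + b\, g_j = 1$ in $A$ restricts to $\Oc(X)$, the restricted functions have no common zeros) are all correct steps.

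The problem is the input you feed into this machine. The embedding clause of Theorem~\ref{Thm:LiuTohoku} --- ``$X$ can be embedded as an analytic domain in a strictly $k$-affinoid space'' --- is not logically independent of the statement you are trying to prove: in Liu's paper it is Proposition~3.3, and Liu proves it \emph{using} Th\'eor\`eme~1, which is precisely Theorem~\ref{thm:LiuGerritzenGrauert}. The present paper follows the same order: the Gerritzen--Grauert statement comes first (and its non-strict analogue, Proposition~\ref{prop:LiuGerritzenGrauertNonStrict}, is proved first), and only afterwards does the paper state the embedding proposition for Liu spaces, whose proof --- declared ``similar to that of [Liu, Proposition~3.3]'' --- consists in writing $X$ as a finite union of rational domains via Gerritzen--Grauert and using that the algebras of those domains are topologically generated by global functions. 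So your argument, traced back to its source, runs: Gerritzen--Grauert for $S$-spaces $\Rightarrow$ embedding $\Rightarrow$ Gerritzen--Grauert for $S$-spaces. It is circular and does not establish the theorem. This is also why the paper's own ``proof'' is a bare citation of Liu's Th\'eor\`eme~1: that result is the hard technical core, proved by a genuine adaptation of the Gerritzen--Grauert argument exploiting the hypotheses that global functions separate rigid points and that $\Oc_X$ is acyclic; the embedding into an affinoid space is one of its corollaries, not an available starting point.
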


\begin{proof} This is Liu's version of Gerritzen-Grauert Theorem for $S$-spaces \cite[Th\'eor\`eme 1]{LiuTohoku}.
\end{proof}

\begin{corollary} \label{Cor:StrictLiuSpacesAreHolSeparable}With the hypotheses of Theorem \ref{thm:LiuGerritzenGrauert}, $X$ is holomorphically separable hence a Liu space.
\end{corollary}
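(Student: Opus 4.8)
The plan is to obtain holomorphic separability directly from an embedding of $X$ into a strictly $k$-affinoid space, and then to upgrade the hypothesis ``$\Oc_X$ acyclic'' to universal acyclicity so as to recognize $X$ as a Liu space. Observe first that the standing hypotheses are exactly those under which condition~(1) of Theorem~\ref{Thm:LiuTohoku} holds: $X$ is rig-holomorphically separable and $\Oc_X$ is acyclic. The embedding produced by that theorem is precisely what Liu's Gerritzen--Grauert theorem~\ref{thm:LiuGerritzenGrauert} makes available, which is why the corollary is stated under the same hypotheses.

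First I would apply Theorem~\ref{Thm:LiuTohoku} to conclude that $X$ is cohomologically Stein and that there is an embedding $\iota \colon X \hookrightarrow \M(A)$ realizing $X$ as an analytic domain of a strictly $k$-affinoid space $\M(A)$. Next I would separate points: given distinct $x, x' \in X$, their images under $\iota$ are distinct bounded multiplicative seminorms on $A$, so there exists $a \in A$ with $|a(x)| \neq |a(x')|$. Because $X$ is an analytic domain of $\M(A)$, the character of each point of $X$ factors through the restriction map $A \to \Oc(X)$; hence the function $f := \iota^\ast a \in \Oc(X)$ satisfies $|f(x)| = |a(x)| \neq |a(x')| = |f(x')|$. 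This shows that global functions separate the points of $X$, that is, $X$ is holomorphically separable. (Alternatively, one separates $x$ and $x'$ by reducing to the affinoid structure supplied by a finite union of rational domains as in Theorem~\ref{thm:LiuGerritzenGrauert}, but the affinoid embedding renders this unnecessary.)

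Finally, to identify $X$ as a Liu space I need $\Oc_X$ to be universally acyclic, not merely acyclic. This is the one genuine gap between the hypotheses and the definition of Liu space, and it is bridged by the last assertion of Theorem~\ref{Thm:LiuTohoku}: since $X$ is cohomologically Stein, $X_{k'}$ is cohomologically Stein --- in particular $\Oc_{X_{k'}}$ is acyclic --- for every analytic extension $k'$ of~$k$. Together with separatedness and compactness (both among the standing hypotheses) and the holomorphic separability just established, this exhibits $X$ as a Liu space. The hard part is not any single estimate but locating the right black box: once the affinoid embedding of Theorem~\ref{Thm:LiuTohoku} is in hand the separation of points is immediate, and the only substantive remaining step is passing from plain to universal acyclicity via the extension-invariance of the cohomologically Stein property.
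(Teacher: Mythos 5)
Your proof is correct, but it takes a different route from the paper. The paper proves separability directly from Theorem~\ref{thm:LiuGerritzenGrauert}: given distinct $x,x' \in X$, one finds a rational domain $D$ of $X$ containing $x$ but not $x'$ (using that $X$ is strict, compact and Hausdorff, so some strictly affinoid domain contains $x$ and misses $x'$, and that domain is a finite union of rational domains); since the defining functions $f_0,\dots,f_n$ of $D$ are \emph{global} sections of $\Oc_X$, the inequalities $|f_i(x)| \le r_i|f_0(x)|$ together with the failure of some inequality at $x'$ force $|f_j(x)| \neq |f_j(x')|$ for some $j$. You instead invoke the full strength of Theorem~\ref{Thm:LiuTohoku}: its embedding clause gives $\iota \colon X \hookrightarrow \M(A)$ as an analytic domain, and you separate points by pulling back an element of $A$ on which the two (distinct) seminorms $\iota(x), \iota(x')$ differ --- a clean and valid argument. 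For the ``hence a Liu space'' part, the paper implicitly relies on the remark following the definition of universal acyclicity (acyclic implies universally acyclic for separated spaces countable at infinity over a non-trivially valued field, via Theorem~\ref{thm:extensioncohomology}), whereas you extract universal acyclicity from the last clause of Theorem~\ref{Thm:LiuTohoku}; both are legitimate. The trade-off: the paper's argument uses only the Gerritzen--Grauert statement (Liu's Th\'eor\`eme 1), which is the lighter ingredient and explains why the corollary is stated under exactly those hypotheses and placed immediately after that theorem; your argument leans on Liu's Th\'eor\`eme 2, a stronger black box (itself deduced from Th\'eor\`eme 1 in Liu's paper), but since both are established external results quoted in the paper, there is no circularity and your proof stands.
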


\begin{proof} Given two distinct points $x, x'$ of $X$, according to Theorem \ref{thm:LiuGerritzenGrauert} there is a rational domain $D$ of $X$ containing $x$ and not containing $x'$. In particular there is $f \in \Oc(X)$ such that $|f(x)| \neq |f(x')|$.
\end{proof}

\subsection{Results for non-strict spaces}

\subsubsection{Rational domains}

\begin{definition} Let $X$ be a $k$-analytic space, $f_0, \dots, f_n \in \Oc(X)$ without common zeros and $r_1, \dots, r_n > 0$. The \emph{rational domain} associated with this data is the analytic domain
$$ \{ x \in X : |f_i(x)| \le r_i |f_0(x)|, i = 1, \dots, n\}.$$
A rational domain of $X$ is an analytic domain of the previous form.
\end{definition}

\begin{lemma}\label{Lemma:BaseChangeWithDisc} Let $X$ be a $k$-analytic space that is separated and countable at infinity. Let~$F$ be a coherent sheaf on $X$ that is universally acyclic. Let $\D := \D(r_1, \dots, r_n)$ be the disc of radii $r_1, \dots, r_n >0 $ and $p \colon X \times_k \D \to X$ the first projection. Then, 
\begin{enumerate}
\item $F(X)[t_1, \dots, t_n]$ is dense in $p^\ast F(X \times_k \D)$;
\item $p^\ast F$ is universally acyclic on $X \times_k \D$.
\end{enumerate}
\end{lemma}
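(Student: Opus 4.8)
The plan is to argue by induction on~$n$, the crux being the case $n = 1$. Since $\D(r_1, \dots, r_n) = \D(r_1, \dots, r_{n-1}) \times_k \D(r_n)$, once the case $n=1$ is available I would apply it with $X$ replaced by the separated space $X' := X \times_k \D(r_1, \dots, r_{n-1})$, which is again countable at infinity, and with $F$ replaced by $p^\ast F$, which is universally acyclic on~$X'$ by the inductive hypothesis. For the universal acyclicity in~(2) I would moreover reduce to acyclicity over the base field: as $(X \times_k \D)_{k'} = X_{k'} \times_{k'} \D_{k'}$ and $(p^\ast F)_{k'} = p'^\ast(F_{k'})$, and as $F_{k'}$ is itself universally acyclic on the separated space $X_{k'}$ (still countable at infinity), it is enough to prove, for an \emph{arbitrary} base field, that $F$ universally acyclic forces $\H^q(X \times_k \D, p^\ast F) = 0$ for $q \ge 1$; applying this statement over each~$k'$ then yields~(2).

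For $n=1$ write $\D = \D(r)$ and fix a countable affinoid $G$-cover $\{ V_\lambda \}_\lambda$ of~$X$, which exists because $X$ is countable at infinity; by separatedness the finite intersections $V_{\lambda_0 \cdots \lambda_q} := V_{\lambda_0} \cap \cdots \cap V_{\lambda_q}$ are again affinoid. I claim that $\{ V_\lambda \times_k \D \}_\lambda$ is an acyclic cover of $X \times_k \D$ for $p^\ast F$: each finite intersection $V_{\lambda_0 \cdots \lambda_q} \times_k \D$ is the product of an affinoid with an open disc, hence is $W$-exhausted by the affinoid domains $V_{\lambda_0 \cdots \lambda_q} \times_k \ol{\D}(s)$ (the restriction maps having dense image because polynomials are dense in the Tate algebra), so Theorem~\ref{Thm:SpacesExhaustedByWeierstrassAreStein} gives the vanishing of its higher cohomology. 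Consequently $\H^\bullet(X \times_k \D, p^\ast F)$ is computed by the \v{C}ech complex $\CC^\bullet(\{ V_\lambda \times_k \D \}, p^\ast F)$, while $\H^\bullet(X, F)$ is computed by $\CC^\bullet(\{ V_\lambda \}, F)$.

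The key observation is the identification of topological complexes
$$ \CC^\bullet(\{ V_\lambda \times_k \D \}, p^\ast F) \;\iso\; \CC^\bullet(\{ V_\lambda \}, F) \mathbin{\hat{\otimes}_k} \Oc(\D), $$
arising termwise from $p^\ast F(V_{\lambda_0 \cdots \lambda_q} \times_k \D) = F(V_{\lambda_0 \cdots \lambda_q}) \mathbin{\hat{\otimes}_k} \Oc(\D)$, where $\Oc(\D) = \varprojlim_{s < r} k\langle t/s \rangle$ is the nuclear Fr\'echet algebra of functions on the open disc. The \v{C}ech cochain spaces of~$F$ are countable products of $k$-Banach spaces, hence Fr\'echet, and in degrees $q \ge 1$ the complex $\CC^\bullet(\{ V_\lambda \}, F)$ is exact since $F$ is acyclic; by the open mapping theorem its differentials are therefore strict there. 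As $\Oc(\D)$ is nuclear, completed tensor product with it preserves exactness of a strict complex of Fr\'echet spaces, so the left-hand complex is exact in degrees $q \ge 1$ and $\H^q(X \times_k \D, p^\ast F) = 0$, proving~(2). In degree~$0$ the same identification gives $p^\ast F(X \times_k \D) = F(X) \mathbin{\hat{\otimes}_k} \Oc(\D)$, which is the completion of $F(X)[t_1, \dots, t_n]$ for the relevant Fr\'echet topology; truncating power series yields the density in~(1).

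The main obstacle is precisely the functional-analytic input just invoked: one must equip the cohomology of coherent sheaves on the non-compact space~$X$ with a well-behaved topology, verify that the \v{C}ech differentials are strict with closed image so that cohomology commutes with $\hat{\otimes}_k \Oc(\D)$, and establish exactness of completed tensor product with the nuclear algebra~$\Oc(\D)$. This is supplied by the theory of normed structures on global sections recalled in Appendix~\ref{sec:Banachoid} (following~\cite{PoineauPulitaBanachoid}). If one prefers to avoid the nuclear formalism, the same conclusion follows through the exhaustion $\D = \bigcup_m \ol{\D}(r_m)$ with $r_m \uparrow r$: each $\H^q(X \times_k \ol{\D}(r_m), p^\ast F) = \H^q(X, F) \mathbin{\hat{\otimes}_k} k\langle t/r_m\rangle$ vanishes for $q \ge 1$ by orthonormalisability of $k\langle t/r_m\rangle$, and one passes to the limit over~$m$ as in Kiehl's proof of Theorem~\ref{Thm:SpacesExhaustedByWeierstrassAreStein}. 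Here the transition maps have only dense image, not surjective image, so Lemma~\ref{lem:ProjectiveSystemCochainComplex} does not apply verbatim and the successive-approximation argument exploiting completeness together with density of polynomials has to be carried out by hand.
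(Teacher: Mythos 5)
Your proposal proves the wrong statement: in this paper $\D(r_1,\dots,r_n)$ denotes the \emph{closed} polydisc, which is an affinoid space. This is the convention used when the same notation is introduced in Lemma~\ref{lemma:AffinoidDomainsEnvelopingCompacts} (``the closed disc $\D := \D(r_1,\dots,r_n)$''), and it is the version needed downstream: the proof of Proposition~\ref{prop:FunctionsOnRationalDomains} invokes the present lemma for ``the closed disc with center~$0$ and radius~$r$'' in order to handle rational domains. Your main argument is written for the \emph{open} disc --- the exhaustion of $V_{\lambda_0\cdots\lambda_q}\times_k \D$ by the domains $V_{\lambda_0\cdots\lambda_q}\times_k \ol{\D}(s)$ with $s<r$, the Fr\'echet algebra $\Oc(\D)=\varprojlim_{s<r} k\langle t/s\rangle$, and the appeal to nuclearity all presuppose that $\D$ is open. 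The closed-disc case appears only in your fallback paragraph, and there it enters as an \emph{asserted input}, not as something proved: the identity $\H^q(X\times_k\ol{\D}(s),p^\ast F)=\H^q(X,F)\hotimes_k k\langle t/s\rangle$ ``by orthonormalisability'' is exactly the point at issue.

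That point --- commutation of (\v{C}ech) cohomology with $\hotimes_k\,\Oc(\D)$ --- is the entire content of the lemma, and the paper does not re-derive it: its proof consists of applying Theorem~\ref{thm:FXGY} with $Y=\D$ (affinoid, hence separated, compact, finite-dimensional, with $\Oc_\D$ universally acyclic) and $G=\Oc_\D$, part~(1) then following from the density of $F(X)\otimes_k k[t_1,\dots,t_n]$ in $F(X)\hotimes_k\Oc(\D)$. Your attempted self-contained derivation of this commutation has a genuine gap precisely where you delegate to ``the theory recalled in Appendix~\ref{sec:Banachoid}'': the appendix records Theorems~\ref{thm:extensioncohomology} and~\ref{thm:FXGY} as black boxes, but not the strictness and exactness statements your argument needs. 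In particular, your strictness claim rests on the open mapping theorem for Fr\'echet spaces, which requires $k$ to be non-trivially valued, whereas the lemma (and the whole paper) allows trivially valued~$k$ --- this is one of the reasons the Banachoid formalism of~\cite{PoineauPulitaBanachoid} is used in the first place. Likewise, the assertion that completed tensor product with a nuclear (or orthonormalisable) algebra preserves exactness of strict complexes of Fr\'echet spaces is invoked, not proved, and is not available from the appendix as stated. If you permit yourself the appendix's results, the efficient route is the paper's one-line proof via Theorem~\ref{thm:FXGY}, which makes the whole \v{C}ech apparatus unnecessary; if you do not, the functional-analytic core of your argument remains to be established, and for the closed polydisc rather than the open one.
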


\begin{proof} Both results follow from Theorem~\ref{thm:FXGY}. \end{proof}

\begin{lemma} \label{lem:PrincipalOpenSubsetSSpace} Let $X$ be a $k$-analytic space that is separated and countable at infinity and on which $\Oc_X$~is universally acyclic. Let $f \in \Oc(X)$ and $U = \{ x \in X : f(x) \neq 0\}$. Then,
\begin{enumerate}
\item if $F$ is a universally acyclic coherent sheaf on $X$, then $F_{\rvert U}$ is universally acyclic on $U$;
\item the image of $F(X)[f^{-1}]$ is dense in $F(U)$.
\end{enumerate}
\end{lemma}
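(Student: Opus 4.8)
The plan is to realise~$U$ as the graph of~$f^{-1}$, that is, as a closed analytic subspace of the relative affine line over~$X$, and to reduce everything to Lemma~\ref{Lemma:BaseChangeWithDisc}. Write $A := X \times_{k} \A^1$ for the product with the analytic affine line, let $p \colon A \to X$ be the first projection and let~$t$ be the coordinate on~$\A^1$. Set $h := tf - 1 \in \Oc(A)$. The morphism $U \to A$, $x \mapsto (x, f(x)^{-1})$, identifies~$U$ with the closed analytic subspace $V(h)$ cut out by~$h$: indeed $h(x,t) = 0$ forces $f(x) \neq 0$ and $t = f(x)^{-1}$, and~$p$ restricts to the open immersion $U \hookrightarrow X$ on~$V(h)$. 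Denoting by $j \colon U \xrightarrow{\sim} V(h) \hookrightarrow A$ the resulting closed immersion, we have $(p^\ast F)_{\rvert V(h)} = F_{\rvert U}$ and $j_\ast F_{\rvert U} = p^\ast F / h\, p^\ast F$.

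The key point is that~$h$ is a non-zero-divisor on~$p^\ast F$. This can be checked on an affinoid cover: over $V \times_k \D(\rho)$ a section of~$p^\ast F$ is a power series $\sum_i m_i t^i$ with coefficients in~$F(V)$, and $h \cdot \sum_i m_i t^i = \sum_i (f m_{i-1} - m_i) t^i$ vanishes only if all the~$m_i$ vanish. We thus obtain a short exact sequence of coherent sheaves on~$A$
\[ 0 \too p^\ast F \stackrel{\cdot h}{\too} p^\ast F \too j_\ast F_{\rvert U} \too 0. \]
Granting that~$p^\ast F$ is universally acyclic on~$A$ (see below), the associated long exact sequence of cohomology together with the finiteness of the closed immersion~$j$ (so that $\H^q(A, j_\ast F_{\rvert U}) = \H^q(U, F_{\rvert U})$ by Lemma~\ref{lem:ComputationCohomologyFiniteMorphism}) gives $\H^q(U, F_{\rvert U}) = 0$ for all $q \ge 1$; running the same argument over every analytic extension~$k'$ (universal acyclicity is preserved by base change, and~$h$ remains a non-zero-divisor) yields~(1). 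For~(2), the same long exact sequence gives an isomorphism $F(U) \iso p^\ast F(A) / h\, p^\ast F(A)$, since $\H^1(A, p^\ast F) = 0$; because $F(X)[t]$ is dense in~$p^\ast F(A)$ and the quotient map $p^\ast F(A) \to F(U)$ is continuous and surjective (Appendix~\ref{sec:Banachoid}), its image is dense, and this image is precisely that of $F(X)[f^{-1}]$ under $t \mapsto f^{-1}$.

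It remains to establish that~$p^\ast F$ is universally acyclic on $A = X \times_k \A^1$ and that $F(X)[t]$ is dense in~$p^\ast F(A)$. Here I would exhaust $\A^1 = \bigcup_i \D(\rho_i)$ by closed discs of radii $\rho_i \nearrow \infty$, so that $A = \bigcup_i (X \times_k \D(\rho_i))$. By Lemma~\ref{Lemma:BaseChangeWithDisc}, $p^\ast F$ is universally acyclic on each $X \times_k \D(\rho_i)$ and $F(X)[t]$ is dense in each $p^\ast F(X \times_k \D(\rho_i))$; since $p^\ast F(A) = \varprojlim_i p^\ast F(X \times_k \D(\rho_i))$ carries the projective-limit topology, density of $F(X)[t]$ passes to the limit. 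The vanishing of the higher cohomology of~$p^\ast F$ on~$A$ then follows from the same Mittag-Leffler mechanism underlying Kiehl's theorem (Theorem~\ref{Thm:SpacesExhaustedByWeierstrassAreStein}), by applying Lemma~\ref{lem:ProjectiveSystemCochainComplex} to the projective system of \v{C}ech complexes attached to the affinoid covers $\{V_\lambda \times_k \D(\rho_i)\}_\lambda$, for an affinoid $G$-cover $\{V_\lambda\}$ of~$X$, whose transition maps have dense image.

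I expect this last step---upgrading acyclicity on the finite discs $X \times_k \D(\rho_i)$ to acyclicity on $X \times_k \A^1$---to be the main obstacle, precisely because the exhausting pieces are not compact: their spaces of sections are Fr\'echet rather than Banach, so the passage from dense-image restriction maps to the surjectivity (or Mittag-Leffler condition) required by Lemma~\ref{lem:ProjectiveSystemCochainComplex} relies on the completeness and normed-structure results recalled in Appendix~\ref{sec:Banachoid} rather than on a direct application of Kiehl's original argument. Everything else is formal once the non-zero-divisor property of~$h$ and Lemma~\ref{Lemma:BaseChangeWithDisc} are in hand.
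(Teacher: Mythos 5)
Your main line of argument --- realising $U$ as the closed subspace of $X \times_k \A^{1,\an}_k$ cut out by $h = tf-1$, checking that $h$ is a non-zero-divisor on $p^\ast F$, and extracting both statements from the resulting short exact sequence --- is exactly the paper's proof. The divergence, and the gap, lies in how you establish that $p^\ast F$ is universally acyclic on $X \times_k \A^{1,\an}_k$ and that $F(X)[t]$ is dense in its global sections. You allow yourself only the disc case (Lemma~\ref{Lemma:BaseChangeWithDisc}) and try to upgrade to the affine line by exhausting with the domains $X \times_k \D(\rho_i)$. The density statement does pass to the projective limit as you say, but the acyclicity does not follow from Lemma~\ref{lem:ProjectiveSystemCochainComplex} as you propose to use it: \emph{both} hypotheses of that lemma demand surjective transition maps, whereas the restrictions $p^\ast F(V_\lambda \times_k \D(\rho_{i+1})) \to p^\ast F(V_\lambda \times_k \D(\rho_i))$ only have dense image and are essentially never surjective (already for $X = \M(k)$ and $F = \Oc_X$, the map $k\{\rho_{i+1}^{-1}t\} \to k\{\rho_i^{-1}t\}$ is not onto). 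You flag this yourself as ``the main obstacle'' and defer it to unspecified completeness and normed-structure results; but that deferred step is precisely the hard content --- a topological Mittag--Leffler argument for Fr\'echet-type spaces, i.e.\ a non-compact version of Kiehl's argument --- so as written the proof is incomplete.

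The gap is easily repaired, and the repair is what the paper does: Theorem~\ref{thm:FXGY} is not restricted to discs. The affine line $\A^{1,\an}_k$ is separated, countable at infinity and finite-dimensional, and $\Oc_{\A^{1,\an}_k}$ is universally acyclic (over every analytic extension it is W-exhausted by closed discs with dense restriction maps, so Kiehl's Theorem~\ref{Thm:SpacesExhaustedByWeierstrassAreStein} applies). Hence Theorem~\ref{thm:FXGY} yields directly that $p^\ast F$ is universally acyclic on $X \times_k \A^{1,\an}_k$, and the same theorem (via the completed-tensor-product description of the global sections) gives the density of $F(X)[t]$ there; the rest of your argument then goes through unchanged. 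Note that Lemma~\ref{Lemma:BaseChangeWithDisc}, the only tool you permitted yourself, is itself proved in the paper by citing Theorem~\ref{thm:FXGY}: you restricted yourself to a special case of the very result whose general case you needed, and then had to try to rebuild that general case by hand.
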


\begin{proof} Since $X$ is separated, $U$ is identified with the closed subspace of $X \times \A^{1, \an}_k$ associated with the sheaf of ideals $I$ generated by $t f - 1$, where $t$ is a coordinate function on $\A^{1, \an}_k$. Let $p \colon X \times \A^{1, \an}_k \to X$ be the first projection. For a coherent sheaf $F$ on $X$, the short exact sequence of coherent sheaves,
$$ 0 \too I p^\ast F \too p^\ast F \too F_{\rvert U} \too 0,$$
induces a long exact sequence
%induces, for all $q \ge 0$, an exact sequence
$$ \cdots \too \H^q(X \times_k \A^{1, \an}_k, p^\ast F) \too \H^q(U, F) \too \H^{q+1}(X \times_k \A^{1, \an}_k, I p^\ast F) \too \cdots .$$
The function $tf - 1$ is not a zero-divisor on $X \times \A^{1, \an}_k$, thus $I p^\ast F$ is isomorphic to $p^\ast F$ as a coherent sheaf. According to Theorem~\ref{thm:FXGY}, for all $q \ge 1$, we have
$$ \H^q(X \times_k \A^{1, \an}_k, p^\ast F) = \H^q(X, F) = 0.$$
In particular $F_{\rvert U}$ is acyclic, and even universally acyclic since the assumptions of the statement are stable under extension of scalars. Moreover, the restriction map $ p^\ast F(X \times \A^{1, \an}_k) \to F(U)$ is surjective. Since $F(X)[t]$ is dense in $p^\ast F(X \times \A^{1, \an}_k)$ by Theorem \ref{thm:FXGY}, the map $F(X)[t] \to F(U) $ sending $t$ to $f^{-1}$ has dense image, whence the statement.
\end{proof}

\begin{proposition} \label{prop:FunctionsOnRationalDomains} Let $X$ be a $k$-analytic space that is separated and countable at infinity and on which $\Oc_X$ is universally acyclic. Let $f_0, \dots, f_n \in \Oc_X(X)$ without common zeros,  $r_1, \dots, r_n > 0$ and
$$ X' = \{ x \in X : |f_i(x)| \le r_i |f_0(x)| \textup{ for all }i = 1, \dots, n\}.$$
Then,
\begin{enumerate}
\item if $F$ is a universally acyclic coherent sheaf on $X$, then $F_{\rvert X'}$ is universally acyclic on $X'$;
\item the image of $F(X)[f_0^{-1}]$ is dense in $F(X')$.
\end{enumerate}

\end{proposition}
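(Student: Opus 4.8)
The plan is to reduce the rational domain to the two situations already handled: inverting a single function (Lemma~\ref{lem:PrincipalOpenSubsetSSpace}) and taking a product with a polydisc (Lemma~\ref{Lemma:BaseChangeWithDisc}). First I would observe that, since $f_0, \dots, f_n$ have no common zero, the inequalities defining~$X'$ force $f_0$ to be invertible on~$X'$: a point where $f_0$ vanishes would satisfy $f_i = 0$ for all $i$, which is excluded. Hence $X'$ is contained in the Zariski-open subset $U = \{x \in X : f_0(x) \neq 0\}$, on which the functions $g_i := f_i/f_0$ are holomorphic and
$$ X' = \{ x \in U : |g_i(x)| \le r_i,\ i = 1, \dots, n\}, $$
a Weierstrass-type domain of~$U$. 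By Lemma~\ref{lem:PrincipalOpenSubsetSSpace}, $F_{\rvert U}$ is universally acyclic on~$U$ and the image of $F(X)[f_0^{-1}]$ is dense in $F(U)$. Realising $U$ as the closed subspace of $X \times_k \A^{1,\an}_k$ cut out by $tf_0 - 1$, as in the proof of that lemma, also shows that $U$ is again separated and countable at infinity, so that Lemma~\ref{Lemma:BaseChangeWithDisc} applies to it.

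Next I would realise $X'$ as a graph. Let $\D = \D(r_1, \dots, r_n)$ with coordinates $t_1, \dots, t_n$ and $p \colon U \times_k \D \to U$ the first projection. The map $x \mapsto (x, g_1(x), \dots, g_n(x))$ identifies $X'$ with the closed analytic subspace $\Gamma \subset U \times_k \D$ defined by the ideal $(t_1 - g_1, \dots, t_n - g_n)$, and under this identification $F_{\rvert X'}$ corresponds to the restriction of $p^\ast F$ to~$\Gamma$. Applying Lemma~\ref{Lemma:BaseChangeWithDisc} to $F_{\rvert U}$, the sheaf $p^\ast F$ is universally acyclic on $U \times_k \D$ and $F(U)[t_1, \dots, t_n]$ is dense in $(p^\ast F)(U \times_k \D)$. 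Since the common zero locus of the $t_i - g_i$ is precisely the graph~$\Gamma$, these functions form a regular sequence on~$p^\ast F$, so I would cut them out one at a time, exactly as the single non-zero-divisor $tf_0 - 1$ was handled before. At each step the short exact sequence $0 \to \mathcal{G} \to \mathcal{G} \to \mathcal{G}/(t_j - g_j)\mathcal{G} \to 0$ (the first nontrivial arrow being multiplication by $t_j - g_j$) and its long exact sequence in cohomology show both that the higher cohomology of the quotient vanishes and that the restriction map on global sections is surjective, the latter because the relevant $\H^1$ vanishes. As the whole construction commutes with extension of scalars, this proves~(1).

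For~(2) I would chain the density statements. Combining the surjectivity just obtained with the density of $F(U)[t_1, \dots, t_n]$ in $(p^\ast F)(U \times_k \D)$, the image of $F(U)[t_1, \dots, t_n]$ under the substitution $t_i \mapsto g_i$ is dense in $F(X')$. Every element of that image is a finite sum $\sum_\alpha a_\alpha\, g^\alpha$ with $a_\alpha \in F(U)$ and $g^\alpha = \prod_i g_i^{\alpha_i}$. Approximating each coefficient~$a_\alpha$ by elements of $F(X)[f_0^{-1}]$, which is dense in $F(U)$ by Lemma~\ref{lem:PrincipalOpenSubsetSSpace}, and using continuity of the restriction $F(U) \to F(X')$ and of multiplication by $g^\alpha$, one reduces to showing that such approximants already lie in the image of $F(X)[f_0^{-1}]$. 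This holds because $g^\alpha = f^\alpha f_0^{-|\alpha|} \in \Oc(X)[f_0^{-1}]$, so multiplication by~$g^\alpha$ carries $F(X)[f_0^{-1}]$ into itself; hence the image of $F(X)[f_0^{-1}]$ is dense in~$F(X')$.

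The main obstacle is the verification that $t_1 - g_1, \dots, t_n - g_n$ is a regular sequence on~$p^\ast F$, i.e. that each $t_j - g_j$ is a non-zero-divisor after the previous ones have been cut out. This is where one uses that $p^\ast F$ is pulled back along~$p$, so that its associated primes are \emph{vertical} (constant in the disc directions), whereas $t_j - g_j$ restricts to a nonzero function on every fibre $\{x\} \times \D$; equivalently, the zero locus~$\Gamma$ is a graph and therefore meets no irreducible component of $U \times_k \D$ in full dimension. Everything else is a formal manipulation of long exact sequences together with the continuity of restriction maps for the natural topologies on global sections.
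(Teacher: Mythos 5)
Your proof is correct and follows essentially the same route as the paper: reduce to the Zariski-open set $U=\{f_0\neq 0\}$ via Lemma~\ref{lem:PrincipalOpenSubsetSSpace}, then realise the Weierstrass conditions by a graph embedding into a product with a (poly)disc, and run the non-zero-divisor short exact sequence together with Lemma~\ref{Lemma:BaseChangeWithDisc} and a chaining of density statements. The only difference is organisational: the paper reduces by induction to a single inequality and cuts out one disc coordinate per step, whereas you embed into the full polydisc at once and cut the graph out by the regular sequence $t_1-g_1,\dots,t_n-g_n$ --- which is the same argument unrolled.
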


\begin{proof} Since the functions $f_0, \dots, f_n$ do not have common zeros, the subspace $X'$ is contained in the open subset $U = \{ x \in X : f_0(x) \neq 0 \}$. According to Lemma \ref{lem:PrincipalOpenSubsetSSpace}, the open subset $U$ is separated and $\Oc_U$ is universally acyclic. By replacing $X$ by $U$, the function $f_0$ can be assumed to be invertible and, up to replacing $f_i$ by $f_i / f_0$, equal to $1$. When $f_0 = 1$, arguing by induction, one reduces to the case $n  =1$. In this situation write $f = f_1$ and $r = r_1$. The statement can then be proved by reasoning as for Lemma \ref{lem:PrincipalOpenSubsetSSpace} replacing the analytic affine line~$\A^{1, \an}_k$ by the closed disc with center~0 and radius~$r$. 
\end{proof}

\begin{lemma} \label{prop:RationalNeighbourhoodSSpace} Let $X$ be a holomorphically separable $k$-analytic space. For $x_0 \in X$ and  a compact analytic neighbourhood~$D$ of~$x_{0}$, there are $f_0, \dots, f_n \in \Oc_{X}(X)$ without common zeros and $r_1, \dots, r_n > 0$ such that the domain
$$X' = \{ x \in X : |f_i(x)| \le r_i |f_0(x)|, i = 1, \dots, n \},$$
satisfies the following properties:
\begin{enumerate}
\item $D' := X' \cap D$ is contained in the topological interior of $D$ in $X$;
\item $x_0$ belongs to $D'$;
\item $D'$ is a finite union of connected components of $X'$.
\end{enumerate}
\end{lemma}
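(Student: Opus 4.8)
The plan is to produce the rational domain $X'$ by separating $x_0$ from the topological boundary of $D$, and then to read off property~(3) from a purely topological argument. Throughout, write $D^\circ$ for the topological interior of $D$ in $X$ and $\partial D := D \smallsetminus D^\circ$ for its boundary. Since $D$ is a neighbourhood of $x_0$ we have $x_0 \in D^\circ$, hence $x_0 \notin \partial D$; moreover $\partial D$ is closed in the compact set $D$, so it is itself compact. The key observation is that properties~(1) and~(2) will both hold as soon as $X'$ is a rational domain of $X$ with $x_0 \in X'$ and $X' \cap \partial D = \emptyset$: indeed, in that case $D' = X' \cap D = X' \cap D^\circ \subset D^\circ$, and $x_0 \in X' \cap D$.

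First I would separate $x_0$ from each point of $\partial D$. Fix $y \in \partial D$; as $y \neq x_0$ and $X$ is holomorphically separable, there is $g_y \in \Oc(X)$ with $|g_y(x_0)| \neq |g_y(y)|$. If $|g_y(x_0)| < |g_y(y)|$, choose $r_y$ strictly between the two values and set $R_y := \{ x \in X : |g_y(x)| \le r_y \}$; if instead $|g_y(x_0)| > |g_y(y)|$, then $g_y(x_0) \neq 0$, and choosing $s_y$ strictly between $|g_y(y)|$ and $|g_y(x_0)|$ I set $R_y := \{ x \in X : |g_y(x)| \ge s_y \}$. In either case $R_y$ is a rational domain of $X$ (its defining data involve the unit function $1$, which has no zeros), it contains $x_0$, and it excludes $y$. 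Since $x \mapsto |g_y(x)|$ is continuous, $X \smallsetminus R_y$ is an open neighbourhood of $y$. As $\partial D$ is compact, finitely many of these neighbourhoods, say those attached to $y_1, \dots, y_m$, already cover $\partial D$. I then set
$$ X' := R_{y_1} \cap \dots \cap R_{y_m}. $$
Finite intersections of rational domains are again rational domains, so $X'$ is a rational domain of $X$, given by functions $f_0, \dots, f_n$ without common zeros and radii $r_1, \dots, r_n > 0$; by construction $x_0 \in X'$, while every point of $\partial D$ is excluded from at least one $R_{y_j}$, whence $X' \cap \partial D = \emptyset$. This secures~(1) and~(2).

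It remains to establish~(3). Being defined by non-strict inequalities between continuous functions, $X'$ is closed in $X$, so $D' = X' \cap D$ is a closed subset of the compact set $D$ and is therefore compact. On the other hand, since $X' \cap \partial D = \emptyset$ we have $D' = X' \cap D = X' \cap D^\circ$, which exhibits $D'$ simultaneously as a closed and as an open subset of $X'$. Now $k$-analytic spaces are locally arcwise connected (Berkovich), hence locally connected, so the connected components of $X'$ are open. A connected component of $X'$ meets the clopen set $D'$ only if it is entirely contained in it; thus $D'$ is a disjoint union of connected components of $X'$, each of which is open. As these open components cover the compact set $D'$, there are only finitely many of them, which is precisely~(3).

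The step I expect to require the most care is the passage from the finite intersection $R_{y_1} \cap \cdots \cap R_{y_m}$ to an \emph{honest} rational domain, that is, the verification that the resulting defining functions can be chosen without common zeros; this is the standard stability of the class of rational domains under finite intersection, which I would invoke rather than reprove. The only other inputs beyond holomorphic separability are the continuity of the functions $x \mapsto |g(x)|$ and the local connectedness of Berkovich spaces, both of which are available off the shelf.
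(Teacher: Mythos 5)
Your proof is correct and follows essentially the same route as the paper's: separate $x_0$ from the compact boundary $\partial D$ using holomorphic separability and compactness, cut $X'$ out by finitely many non-strict inequalities on global functions, and deduce (3) from the fact that $D'$ is a compact clopen subset of the locally connected space $X'$. The only cosmetic difference is that the paper exhibits $X'$ as a single rational domain by explicitly clearing denominators (setting $f_0 = h_1 \cdots h_m$, etc.), whereas you package the same computation as the standard stability of rational domains under finite intersection.
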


\begin{proof} Since the space $X$ is holomorphically separable, for every $y$ in the topological boundary $\partial D$ of $D$ in $X$, there is a function $f \in \Oc_{X}(X)$ such that $|f(x)| \neq |f(y)|$. By compactness of $\partial D$, there are functions $g_1, \dots, g_\ell, h_1, \dots, h_m \in \Oc_{X}(X)$ and real numbers $\alpha_1, \dots, \alpha_\ell, \beta_1, \dots, \beta_m > 0$ such that
\begin{align*}
|g_i(x_0)| \le \alpha_i, \quad i = 1, \dots, \ell, && |h_j(x_0)| \ge \beta_j, \quad j = 1, \dots, m,
\end{align*}
and, for all $x \in \partial D$, we have either $|g_{i}(x)|>\alpha_{i}$ for some $i \in \{1,\dotsc,\ell\}$ or $|h_{j}(x)| < \beta_{j}$ for some $j \in \{1,\dotsc,m\}$.

Consider the subspace $X'$ of $X$ made of the points $x \in X$ such that
\begin{align*}
|g_i(x)| \le \alpha_i, \quad i = 1, \dots, \ell, && |h_j(x)| \ge \beta_j, \quad j = 1, \dots, m,
\end{align*}
Then $D' := X' \cap D$ contains $x_0$ and is contained in the topological interior of $D$. It follows that $D'$ is a finite union of connected components of $X'$. Moreover $X'$ can be put in the form in the statement by setting
\begin{align*}
f_0 &= h_1 \cdots h_m, \\
f_i &= g_i f_0, &r_i& = \alpha_i & (i = 1, \dots, \ell), \\
f_{\ell +i} &= h_1 \cdots h_{i -1} h_{i + 1} \cdots h_m, &r_{\ell+i}& = \beta_i^{-1} & (i = 1, \dots, m), \\
f_{\ell + m + 1} &= 1, &r_{\ell+m+1}& = (\beta_1 \cdots \beta_m)^{-1}.
\end{align*} 
This finishes the proof.
\end{proof}

\begin{definition}  \label{def:Definitionk_r} Let $r = (r_1, \dots, r_N) \in \R_{>0}^n$.
\begin{enumerate}
\item  The field $k_r$ is the completion of the field of rational fractions $k(t_1, \dots, t_N)$ with respect to the absolute value 
\[\sum_{\ell \in \N^N} a_{\ell} t^\ell \in k[t_{1},\dotsc,t_{N}] \mapsto \max_{\ell\in \N^N} \{|a_\ell | r^\ell\} \in \R_{>0},\] 
where $t^\ell := t_1^{\ell_1} \dotsb t_N^{\ell_N}$ and $r^\ell := r_1^{\ell_1} \dotsb r_N^{\ell_N}$.
\item The real numbers $r_1, \dots, r_N$ are said to be \emph{free over $|k^\times|$} if their images in the $\Q$-vector space $\R_{>0} /(|k^\times|\otimes_{\Z} \Q)$ are free.
\end{enumerate}
\end{definition}

Assume that the real numbers $r_1, \dots, r_N$ are free over $|k^\times|$. Then the field $k_r$ can be obtained as the affinoid $k$-algebra
\[ k \{ r_1^{-1} t_1, \dots, r_N^{-1}t_N, r_1 u_1, \dots, r_N u_N\} / (t_i u_i - 1, i = 1, \dotsc, N). \]
In more concrete terms, it may be described as the field of power series of the form
\[ f = \sum_{\ell \in \Z^N} a_{\ell} t^\ell, \]
where $a_{\ell} \in k$ and the family $(|a_{\ell}| r^\ell)_{\ell\in \Z^N}$ is summable. It is endowed with the absolute value~$\va_{r}$ for which $\|f\|_{r} = \max_{\ell\in\Z^N} \{|a_{\ell}| r^\ell\}$.

More generally, if $(A,\nm)$ is a Banach $k$-algebra, then $A\hat{\otimes}_{k} k_{r}$ is a Banach $k_{r}$-algebra that may be explicitly described as the set of power series of the form
\[ f = \sum_{\ell \in \Z^N} a_{\ell} t^\ell, \]
where $a_{\ell} \in A$ and the family $(\|a_{\ell}\| r^\ell)_{\ell\in \Z^N}$ is summable. It is endowed with the norm~$\nm_{r}$ for which $\|f\|_{r} = \max_{\ell\in\Z^N} \{\|a_{\ell}\| r^\ell\}$.

\begin{proposition} \label{prop:LiuGerritzenGrauertNonStrict} Let~$X$ be a Liu space.
Then every affinoid domain of $X$ is a finite union of rational domains of $X$. In particular, $X$ is a finite union of rational affinoid domains. 
\end{proposition}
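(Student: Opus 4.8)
The plan is to reduce to the strict situation handled by Theorem~\ref{thm:LiuGerritzenGrauert} by base changing to a field~$k_r$, apply Liu's Gerritzen--Grauert theorem there, and then descend the resulting decomposition back to~$k$ by evaluating at the Gauss point of each fibre. First I would fix an affinoid domain~$D$ of~$X$. As $X$ is compact it admits a finite affinoid $G$-cover, and each member of this cover, as well as~$D$, is a quotient of a $k$-affinoid algebra involving only finitely many radii. Collecting these finitely many radii, I would choose $r = (r_1, \dots, r_N)$ free over~$|k^\times|$ (enlarging it if necessary so that $k_r$ is non-trivially valued) in such a way that all of them lie in the group $|k^\times|\cdot r_1^{\Z}\cdots r_N^{\Z}$. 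After extension of scalars every chart becomes strictly $k_r$-affinoid, so that $X_{k_r} := X \hat\otimes_k k_r$ is strictly $k_r$-analytic and $D_{k_r}$ is strictly $k_r$-affinoid.

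Next I would verify that $X_{k_r}$ satisfies the hypotheses of Theorem~\ref{thm:LiuGerritzenGrauert}. Separatedness and compactness are preserved by base change, and $\Oc_{X_{k_r}}$ is acyclic precisely because $\Oc_X$ is universally acyclic. The only point requiring an argument is rig-holomorphic separability; I would in fact establish holomorphic separability and conclude by Lemma~\ref{lem:RigHolSepImpliesHolSep}. Writing $\pi\colon X_{k_r}\to X$ for the projection, two points of $X_{k_r}$ lying over distinct points of~$X$ are separated by the pullback of a function furnished by holomorphic separability of~$X$, while two points in the same fibre $\pi^{-1}(x)=\M(\khat(x)\hat\otimes_k k_r)$ are separated because the image of $\Oc(X_{k_r})=\Oc(X)\hat\otimes_k k_r$ (the identification coming from universal acyclicity, cf.\ Lemma~\ref{Lemma:BaseChangeWithDisc}) is dense in $\khat(x)\hat\otimes_k k_r$, and a dense subalgebra separates the points of a Berkovich spectrum. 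Theorem~\ref{thm:LiuGerritzenGrauert} together with Corollary~\ref{Cor:StrictLiuSpacesAreHolSeparable} then yields a decomposition $D_{k_r}=R_1\cup\dots\cup R_m$ into rational domains of~$X_{k_r}$.

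The heart of the matter is the descent. For $x\in X$ let $\gamma(x)\in\pi^{-1}(x)$ be the Gauss point of the fibre, that is, the seminorm $\sum_\ell a_\ell t^\ell \mapsto \max_\ell |a_\ell|\,r^\ell$ on $\khat(x)\hat\otimes_k k_r$. Since $D_{k_r}=\pi^{-1}(D)$ and $\pi(\gamma(x))=x$, one has $x\in D$ if and only if $\gamma(x)\in D_{k_r}$, so that
$$ D = \bigcup_{i=1}^m \{\, x\in X : \gamma(x)\in R_i \,\}. $$
It thus suffices to show each set on the right is a finite union of rational domains of~$X$. Writing $R_i=\{z:|g_{i,j}(z)|\le \rho_{i,j}|g_{i,0}(z)|,\ j=1,\dots,n_i\}$ with $g_{i,j}\in\Oc(X_{k_r})$ without common zeros and $\rho_{i,j}\in|k_r^\times|$, I would approximate each $g_{i,j}$ by a Laurent polynomial $\sum_\ell a_{i,j,\ell}t^\ell$ with coefficients $a_{i,j,\ell}\in\Oc(X)$, using the density of $\Oc(X)[t,t^{-1}]$ in $\Oc(X)\hat\otimes_k k_r$ guaranteed by the explicit description of the latter. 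Evaluating at $\gamma(x)$ turns $|g_{i,j}(\gamma(x))|$ into $\max_\ell |a_{i,j,\ell}(x)|\,r^\ell$, a quantity depending only on~$x$, so that $\{x:\gamma(x)\in R_i\}$ is cut out by inequalities of the shape $\max_\ell |a_{i,j,\ell}(x)|\,r^\ell \le \rho_{i,j}\max_{\ell'}|a_{i,0,\ell'}(x)|\,r^{\ell'}$. Partitioning~$X$ according to which monomial realises the maximum on the right (itself a partition into rational domains) reduces these to finitely many conditions of the form $|a|\le c\,|b|$, hence to a finite union of rational domains of~$X$.

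The hard part is this last step: one must check that the approximation does not alter the sets $\{x:\gamma(x)\in R_i\}$ and that the no-common-zeros condition is preserved. Both are controlled by the compactness of~$X$: the absence of common zeros of $g_{i,0},\dots,g_{i,n_i}$ on~$X_{k_r}$ forces $\max_{j,\ell}|a_{i,j,\ell}(x)|\,r^\ell$ to be bounded below by a positive constant, and the invertibility of $g_{i,0}$ on~$R_i$ keeps the denominator $\max_{\ell'}|a_{i,0,\ell'}(x)|\,r^{\ell'}$ positive on $\{x:\gamma(x)\in R_i\}$; a sufficiently fine approximation together with a harmless perturbation of the radii~$\rho_{i,j}$ then leaves the domains unchanged. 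This proves that every affinoid domain of~$X$ is a finite union of rational domains of~$X$. For the final assertion I would cover the compact space~$X$ by finitely many affinoid domains $D_1,\dots,D_p$; by what precedes each~$D_s$ is a finite union of rational domains $R_{s,t}$ of~$X$, and since each $R_{s,t}\subset D_s$ is a rational domain of the affinoid~$D_s$ it is itself affinoid, so that $X=\bigcup_{s,t}R_{s,t}$ exhibits~$X$ as a finite union of rational affinoid domains.
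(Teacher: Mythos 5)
Your overall strategy coincides with the paper's: extend scalars to~$k_r$ so that $X_{k_r}$ becomes strict, apply Liu's theorem (Theorem~\ref{thm:LiuGerritzenGrauert}) to $D_{k_r}$, and descend the resulting rational domains along the Gauss-type section $x\mapsto\gamma(x)$ (the paper's~$\sigma$) using Laurent expansions of elements of $\Oc(X)\hotimes_{k}k_r$. However, there is a genuine gap in your verification of the hypotheses of Theorem~\ref{thm:LiuGerritzenGrauert}: to separate two points of $X_{k_r}$ lying in the same fibre $\pi^{-1}(x)=\M(\khat(x)\hotimes_{k}k_r)$, you assert that the image of $\Oc(X_{k_r})=\Oc(X)\hotimes_{k}k_r$ is dense in $\khat(x)\hotimes_{k}k_r$. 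This is false in general, already for the simplest Liu space. Take $X=\M(k\{T\})$ the closed unit disc (affinoid, hence a Liu space) and let $x$ be its Gauss point. Then $T^{-1}\in\khat(x)$, and for every $f\in k\{T\}$ one has
\[ |T^{-1}-f|_x \;=\; |1-Tf|_x \;\ge\; 1, \]
since $1-Tf$ has constant term~$1$; so the image of $\Oc(X)$ is not dense in $\khat(x)$, and, coefficientwise in the Laurent-series description of $\khat(x)\hotimes_{k}k_r$, the element $T^{-1}\otimes 1$ stays at distance~$\ge 1$ from the image of $\Oc(X)\hotimes_{k}k_r$. So your fibrewise separation argument collapses. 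The statement you actually need---that a Liu space remains (rig-)holomorphically separable after extension of scalars---is Proposition~\ref{Prop:SSpaceStableScalarExt} of the paper, but its proof invokes Proposition~\ref{prop:LiuGerritzenGrauertNonStrict}, the very statement being proved, so citing it here would be circular. (To be fair, the paper's own write-up applies Theorem~\ref{thm:LiuGerritzenGrauert} to $X_{k_r}$ without commenting on rig-holomorphic separability; but an unstated verification is not the same as an incorrect one, and as written your proof rests on a false density lemma.)

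The descent step, by contrast, is essentially the paper's argument and is sound in outline, though your execution is looser than necessary. The paper does not approximate: every element of $\Oc(X)\hotimes_{k}k_r$ \emph{is} a series $\sum_{\ell\in\Z^N}a_\ell t^\ell$ with $a_\ell\in\Oc(X)$ and $(\|a_\ell\|r^\ell)_\ell$ summable, and one truncates to a finite set~$L$ of exponents chosen so that all discarded terms have sup-norm below $\epsilon$ (resp.\ $s_i\epsilon$), where $\epsilon$ is the lower bound for $|f_0|$ on the rational domain; this yields an \emph{exact} description of $\sigma^{-1}(Y)$ by the threshold inequality $\max_{\ell\in L}|f_{0,\ell}(x)|r^\ell\ge\epsilon$ together with the max-comparisons over~$L$, then split into rational domains according to which term realises the maximum of the denominator (the presence of the constant function~$1$ among the numerators guarantees the no-common-zeros condition). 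Two cautions about your version: the threshold inequality must figure among the defining conditions of the descended domains (without it the truncated inequalities cut out a strictly larger set than $\gamma^{-1}(R_i)$, since points with tiny truncated denominator slip in); and the ``harmless perturbation of the radii $\rho_{i,j}$'' is not harmless---changing the $\rho_{i,j}$ changes the $R_i$ and hence their preimages---nor is it needed, since the ultrametric inequality plus the bound $|g_{i,0}|\ge\epsilon$ make a sufficiently fine approximation exact on the relevant sets. Your argument for the final assertion (covering $X$ by finitely many affinoid domains and noting that a rational domain of~$X$ contained in an affinoid domain is affinoid) is correct.
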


\begin{proof} We follow Ducros's argument to draw the Gerritzen-Grauert theorem for affinoid spaces from the corresponding result in the strictly affinoid case (\emph{cf.} \cite[Lemme 2.4]{DucrosSemiAlgebrique}). We may assume that~$X$ is reduced. We will endow $\Oc(X)$ with the spectral norm, and similarly for the algebras of the affinoid domains of~$X$ that will appear. Since $X$ is compact, there are positive real numbers $r_1, \dots, r_N$, linearly independent in the $\Q$-vector space $\R_{>0} / ( |k^\times| \otimes \Q )$, such that the $k_r$-analytic space $X_{k_r}$ is strict. Note that we have $\Oc(X_{k_{r}}) = \Oc(X) \hat{\otimes}_{k} k_{r}$.

The natural projection $\pr \colon X_{k_r} \to X$ has a (continuous) section~$\sigma$, defined as follows: given an affinoid domain $D = \M(A)$ of~$X$ and a point $x\in D$, $\sigma(x)$ is the point associated to the multiplicative seminorm 
\[f = \sum_{\ell\in \Z^n} a_\ell t^\ell \in A \hat{\otimes}_{k} k_{r} \mapsto \max_{\ell\in \Z^n} \{|a_\ell(x)| r^\ell\} \in \R_{>0}.\]

According to Theorem \ref{thm:LiuGerritzenGrauert} the affinoid domain $D_{k_r}$ is a finite union of rational domains of $X_{k_r}$. Therefore it suffices to prove the following: given a rational domain~$Y$ of $X_{k_r}$, $\sigma^{-1}(Y)$ is a finite union of rational domains of $X$. Let $f_0, \dots, f_n \in \Oc(X_{k_r})$ be without common zeroes, $s_1, \dots, s_n > 0$ and consider the rational domain
$$ Y = \{ x \in X_{k_r} : |f_i(x)| \le s_i |f_0(x)|, i = 1, \dots, n\}.$$

For each $i\in \{1,\dotsc,n\}$, write $f_i = \sum_{\ell \in \Z^N} f_{i \ell} t^\ell$ with $f_{i \ell} \in \Oc(X)$. Since the functions $f_0, \dots, f_n$ do not have a common zero, the function $f_0$ is necessarily invertible on $Y$ and, because of the compactness of the latter, there is a positive real number $\epsilon > 0$ such that $|f_0(y)| \ge \epsilon$ for all $y \in Y$. Let $L \subset \Z^N$ be a finite subset such that, for $\ell \not \in L$ and $i\in \{1,\dotsc,n\}$, the inequalities $\| f_{0,\ell} \|r^\ell < \epsilon$ and $\| f_{i,\ell} \|r^\ell < s_i \epsilon$ hold.

A point $x \in X$ then belongs to $\sigma^{-1}(Y)$ if and only if the following conditions are satisfied:
\begin{itemize}
\item there is $\ell \in L$ such that $|f_{0, \ell}(x)| r^\ell \ge \epsilon$;
\item $\displaystyle   \max_{\ell \in L} \{ |f_{i, \ell}(x)| r^\ell \} \le s_i  \max_{\ell \in L} \{ |f_{0, \ell}(x)| r^\ell \} $ for $i = 1, \dots, n$.
\end{itemize}
Fix $\ell \in L$. Let $Z_\ell$ be the rational domain of $X$ made of the points $x \in X$ such that
\begin{align*} 
|f_{0, \ell}(x)| r^\ell &\ge \epsilon, \\
 |f_{0, \ell}(x)| r^\ell &\ge |f_{0, m}(x)| r^m, &(m \in L \smallsetminus \{ \ell\}),  \\
s_i |f_{0, \ell}(x)| r^\ell  &\ge |f_{i, m}(x)| r^m, &(m \in L , i = 1, \dots, n).
\end{align*}
By construction $\sigma^{-1}(Y)$ is the union of the rational domains $Z_{\ell}$ for $\ell \in L$, which concludes the proof.
\end{proof}

Let~$X$ be a Liu space and write it as a finite union of rational affinoid domains as in the proposition. By Proposition~\ref{prop:FunctionsOnRationalDomains}, the algebra of each of those affinoid domains may be topologically generated by global functions. Putting all those functions together, we define a morphism from a Tate algebra to~$\Oc_{X}(X)$. The precise outcome is the following result. Its proof is similar to that of~\cite[Proposition~3.3]{LiuTohoku} and we omit it.

\begin{proposition}
Let~$X$ be a $k$-analytic Liu space. Then, there exists a $k$-affinoid space~$Y$ and a locally closed immersion $\varphi \colon X \to Y$. More precisely, there exists a finite covering $\{Y_{i}\}_{i}$ by rational domains of~$Y$ such that, for each~$i$, the morphism $\varphi^{-1}(Y_{i}) \to Y_{i}$ is a Runge immersion, \emph{i.e.} the composition of a closed immersion and a Weierstrass domain embedding.
\qed 
\end{proposition}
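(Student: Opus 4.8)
The plan is to produce~$Y$ as a closed polydisc and to build~$\varphi$ out of finitely many global functions on~$X$, following Liu's strategy. First, by Proposition~\ref{prop:LiuGerritzenGrauertNonStrict}, I would write $X = \bigcup_{i=1}^{m} X_{i}$ as a finite union of rational affinoid domains, say $X_{i} = \{x \in X : |f_{i,\mu}(x)| \le r_{i,\mu}\,|f_{i,0}(x)|,\ \mu = 1, \dots, n_{i}\}$ with $f_{i,\mu} \in \Oc(X)$ having no common zero. On each~$X_{i}$ the function~$f_{i,0}$ is then invertible, and by compactness there is $\epsilon_{i} > 0$ with $|f_{i,0}| \ge \epsilon_{i}$ on~$X_{i}$; in particular adjoining the inequality $|f_{i,0}| \ge \epsilon_{i}$ does not change~$X_{i}$. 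Proposition~\ref{prop:FunctionsOnRationalDomains}(2), applied with $F = \Oc_{X}$, tells us that the image of $\Oc(X)[f_{i,0}^{-1}]$ is dense in the affinoid algebra~$\Oc(X_{i})$. Since a small perturbation of a topological system of generators of an affinoid algebra remains such a system, I can choose a finite topological generating family of~$\Oc(X_{i})$ lying in that dense subalgebra, hence consisting of polynomials in~$f_{i,0}^{-1}$ with coefficients that are restrictions of global functions on~$X$.

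Collecting the finitely many functions so obtained, together with all the~$f_{i,\mu}$, into a list $g_{1}, \dots, g_{M} \in \Oc(X)$ and choosing $a_{s} \ge \|g_{s}\|_{X}$ (finite as~$X$ is compact), I would set $Y := \D(a_{1}, \dots, a_{M})$ with coordinates $T_{1}, \dots, T_{M}$ and let $\varphi \colon X \to Y$ be defined by $T_{s} \mapsto g_{s}$. Writing $T_{i,\mu}$ for the coordinate of~$Y$ corresponding to~$f_{i,\mu}$, let $Y_{i} \subset Y$ be cut out by the inequalities $|T_{i,\mu}| \le r_{i,\mu}\,|T_{i,0}|$ for $\mu = 1,\dots,n_{i}$, \emph{together with} $|T_{i,0}| \ge \epsilon_{i}$, the latter rewritten as $1 \le \epsilon_{i}^{-1}\,|T_{i,0}|$. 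Because the constant~$1$ never vanishes, these defining functions have no common zero and~$Y_{i}$ is a genuine rational domain of~$Y$. By construction $\varphi^{-1}(Y_{i}) = X_{i}$, so the~$Y_{i}$ pull back to the given cover of~$X$; after adding finitely many rational domains covering $Y \smallsetminus \bigcup_{i} Y_{i}$ (on which~$\varphi$ has empty preimage, as $\bigcup_i X_i = X$) one obtains a finite rational covering of the whole of~$Y$.

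It then remains to check that $\varphi^{-1}(Y_{i}) = X_{i} \to Y_{i}$ is a Runge immersion, for which I would invoke the characterisation that a morphism of affinoid spaces is a Runge immersion precisely when the associated homomorphism of affinoid algebras has dense image. On~$Y_{i}$ the coordinate~$T_{i,0}$ is invertible, so~$\Oc(Y_{i})$ contains~$T_{i,0}^{-1}$; the homomorphism $\varphi^{\#} \colon \Oc(Y_{i}) \to \Oc(X_{i})$ therefore hits every restriction $g_{s}|_{X_{i}}$ as well as~$f_{i,0}^{-1}$, hence contains the entire $k$-subalgebra generated by the chosen topological generators of~$\Oc(X_{i})$. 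Thus $\varphi^{\#}$ has dense image and $X_{i} \to Y_{i}$ is a Runge immersion, say $X_{i} \hookrightarrow W_{i} \hookrightarrow Y_{i}$ with $W_{i}$ a Weierstrass domain of~$Y_{i}$. Gluing over the cover~$\{Y_{i}\}$, the map~$\varphi$ realises~$X$ as a closed analytic subspace of the analytic domain $\bigcup_{i} W_{i} \subseteq Y$, which is exactly the statement that~$\varphi$ is a locally closed immersion.

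The step requiring the most care is the globalisation: turning the purely local information that each~$\Oc(X_{i})$ is topologically generated by functions extending to~$X$ into a single ambient affinoid~$Y$ equipped with a rational covering whose members pull back to the~$X_{i}$. Two points are delicate, namely arranging the generators to lie in $\Oc(X)[f_{i,0}^{-1}]$ while controlling the extra denominator~$f_{i,0}^{-1}$, and ensuring that the transported inequalities define \emph{honest} rational domains of~$Y$ despite the common zero of the coordinates at the origin; both are dealt with by the lower bound $|f_{i,0}| \ge \epsilon_{i}$ and the use of the constant function~$1$. Once these are settled, the identification of Runge immersions with dense-image maps of affinoid algebras makes the final verification immediate.
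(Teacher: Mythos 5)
Up to its final step, your argument is correct and is exactly the proof the paper intends (and omits, referring to Liu): decompose $X$ into rational domains via Proposition~\ref{prop:LiuGerritzenGrauertNonStrict}, use Proposition~\ref{prop:FunctionsOnRationalDomains} to generate each $\Oc(X_{i})$ topologically by restrictions of global functions together with $f_{i,0}^{-1}$, send the resulting global functions to the coordinates of a polydisc~$Y$, and verify the Runge property on the transported domains~$Y_{i}$ through the dense-image criterion. The genuine gap is the covering step at the end, where you assert that $Y \smallsetminus \bigcup_{i} Y_{i}$ can be covered by finitely many rational domains \emph{on which $\varphi$ has empty preimage}, ``as $\bigcup_{i} X_{i} = X$''. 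Knowing $\varphi(X) \subseteq \bigcup_{i} Y_{i}$ does not give this: rational domains are compact, hence closed in~$Y$, so any finite family of them covering $Y \smallsetminus \bigcup_{i} Y_{i}$ must contain the \emph{closure} of that set, and this closure can meet $\varphi(X)$.

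This failure actually occurs with your construction. Take $X$ to be the annulus $\{1/2 \le |T| \le 1\}$ (affinoid, hence a Liu space), presented as the rational domain $X_{1} = \{|1| \le 2|T|\}$ of itself, so $f_{1,0} = T$ and $\epsilon_{1} = 1/2$; your recipe produces (discarding redundant coordinates) $Y = \D(1)$ with coordinate $S \mapsto T$ and $Y_{1} = \{|S| \ge 1/2\}$. The closure of $Y \smallsetminus Y_{1} = \{|S| < 1/2\}$ contains the point of~$Y$ given by the sup-norm on the circle $\{|S| = 1/2\}$, and that point lies in $\varphi(X)$; hence \emph{every} finite family of rational domains covering $Y \smallsetminus Y_{1}$ has a member~$Z$ with $\varphi^{-1}(Z) \neq \emptyset$. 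Nor can this be shrugged off by checking the Runge property on such a~$Z$ directly: for the natural choice $Z = \{|S| \le 1/2\}$ one gets $\varphi^{-1}(Z) = \{|T| = 1/2\}$, and the image of $\Oc(Z) \to \Oc(\varphi^{-1}(Z))$ is the closure of $k[T]$, which does not contain $T^{-1}$; so $\varphi^{-1}(Z) \to Z$ is not a Runge immersion. The missing idea is to separate $\varphi(X)$ from the complement \emph{before} covering it: run your construction with strictly enlarged radii $r'_{i,\mu} > r_{i,\mu}$ and strictly smaller bounds $\epsilon'_{i} < \epsilon_{i}$, choosing the topological generators for the enlarged domains $X'_{i} = \{|f_{i,\mu}| \le r'_{i,\mu}|f_{i,0}|\}$, so that the resulting $Y'_{i}$ still satisfy $\varphi^{-1}(Y'_{i}) = X'_{i}$ with the Runge property, and moreover $\varphi(X_{i}) \subseteq \mathrm{Int}(Y'_{i})$ because $X_{i}$ satisfies the corresponding strict inequalities. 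Then $C := Y \smallsetminus \bigcup_{i} \mathrm{Int}(Y'_{i})$ is compact and disjoint from the compact set $\varphi(X)$, so each point of~$C$ has a Laurent-domain neighbourhood disjoint from $\varphi(X)$ (Laurent domains are rational --- write $\{|h_{j}| \le \alpha_{j},\, |q_{\ell}| \ge \beta_{\ell}\}$ in rational form using the function~$1$ and products of the $q_{\ell}$, as you did for the condition $|T_{i,0}| \ge \epsilon_{i}$ --- and they form bases of neighbourhoods in affinoid spaces); finitely many of these, together with the $Y'_{i}$, give the required covering of~$Y$, the added pieces now honestly having empty preimage.
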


\subsubsection{Extension of scalars}

\begin{proposition} \label{Prop:SSpaceStableScalarExt} Let $X$ be a separated, countable at infinity, holomorphically separable $k$-analytic space on which $\Oc_X$ is universally acyclic. For every analytic extension $k'$ of~$k$, the $k'$-analytic space $X_{k'}$ is holomorphically separable and $\Oc_{X_{k'}}$ is universally acyclic.
\end{proposition}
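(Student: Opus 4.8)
The plan is to establish the two assertions separately, since the universal acyclicity of~$\Oc_{X_{k'}}$ is formal whereas the holomorphic separability requires real work. For the former, I would simply invoke transitivity of base change: for any analytic extension~$k''$ of~$k'$ one has $(X_{k'})_{k''} = X_{k''}$ compatibly with structure sheaves, and~$k''$ is in particular an analytic extension of~$k$. As $\Oc_X$ is universally acyclic, $\Oc_{X_{k''}}$ is acyclic, whence $\Oc_{X_{k'}}$ is universally acyclic. (I would also note in passing that $X_{k'}$ is separated and countable at infinity, these properties being inherited from~$X$ because the fibres of $X_{k'} \to X$ are compact.) For separability, I would fix two distinct points $x, x' \in X_{k'}$ and write $\pi \colon X_{k'} \to X$. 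When $\pi(x) \neq \pi(x')$, holomorphic separability of~$X$ gives $f \in \Oc(X)$ with $|f(\pi(x))| \neq |f(\pi(x'))|$, and then $\pi^\ast f$ separates $x$ and~$x'$ since $|\pi^\ast f(z)| = |f(\pi(z))|$. This reduces everything to the case $\pi(x) = \pi(x') =: y$.

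To treat this remaining case I would first produce a good compact neighbourhood of~$y$. Choosing a compact analytic neighbourhood~$D$ of~$y$ and applying Lemma~\ref{prop:RationalNeighbourhoodSSpace}, I obtain a rational domain $X' = \{\,|f_i| \le r_i |f_0|\,\}$ of~$X$ (with $f_0, \dots, f_n \in \Oc(X)$ having no common zero) such that $D' := X' \cap D$ contains~$y$ and is a finite union of connected components of~$X'$. By Proposition~\ref{prop:FunctionsOnRationalDomains}(1), $\Oc_{X'}$ is universally acyclic, hence so is $\Oc_{D'}$ (as $D'$ is open and closed in~$X'$); being moreover separated, compact and holomorphically separable as a subspace of~$X$, the space~$D'$ is a Liu space. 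I would then use the embedding result for Liu spaces (a locally closed immersion $\varphi \colon D' \to Y$ into a $k$-affinoid space~$Y$): after base change, $\varphi_{k'} \colon D'_{k'} \to Y_{k'}$ is still a locally closed immersion and $Y_{k'}$ is $k'$-affinoid, hence holomorphically separable. Since $y \in D'$, both $x$ and~$x'$ lie in~$D'_{k'}$; as $\varphi_{k'}$ is injective on points and isometric on residue fields, $\varphi_{k'}(x) \neq \varphi_{k'}(x')$, and pulling back a separating function $g \in \Oc(Y_{k'})$ yields $f := \varphi_{k'}^\ast g \in \Oc(D'_{k'})$ with $|f(x)| \neq |f(x')|$.

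The final step, and the one I expect to be the main obstacle, is to upgrade the \emph{local} function~$f$ to a \emph{global} one on~$X_{k'}$. Since $\Oc_{X_{k'}}$ is universally acyclic and $X'_{k'}$ is a rational domain of~$X_{k'}$ (the $f_i$ still having no common zero after base change), Proposition~\ref{prop:FunctionsOnRationalDomains}(2) shows that the image of $\Oc(X_{k'})[f_0^{-1}]$ is dense in~$\Oc(X'_{k'})$; restricting to the open and closed subspace~$D'_{k'}$, it is dense in~$\Oc(D'_{k'})$. As point evaluations are continuous, I can choose $h = g/f_0^N \in \Oc(X_{k'})[f_0^{-1}]$ (with $g \in \Oc(X_{k'})$ and $N \ge 0$) so close to~$f$ on~$D'_{k'}$ that $|h(x)| \neq |h(x')|$. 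The difficulty is that~$h$ is only defined where $f_0 \neq 0$, not on all of~$X_{k'}$; I would resolve this by a dichotomy. On~$X'_{k'}$ the function~$f_0$ is invertible, so $|f_0(x)|, |f_0(x')| > 0$. If $|f_0(x)| \neq |f_0(x')|$, then $f_0 \in \Oc(X_{k'})$ already separates $x$ and~$x'$; otherwise the equality $|f_0(x)| = |f_0(x')|$ together with $|h(x)| = |g(x)|\,|f_0(x)|^{-N} \neq |g(x')|\,|f_0(x')|^{-N} = |h(x')|$ forces $|g(x)| \neq |g(x')|$, so the global function $g \in \Oc(X_{k'})$ separates them. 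In either case $X_{k'}$ is holomorphically separable, which completes the argument.
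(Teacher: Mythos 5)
Your proof is correct and follows essentially the same route as the paper's: treat the case of distinct projections trivially, and for two points over the same $y \in X$ use Lemma~\ref{prop:RationalNeighbourhoodSSpace} together with Proposition~\ref{prop:FunctionsOnRationalDomains} to produce a compact Liu domain $D'$ containing~$y$, separate $x,x'$ in $D'_{k'}$ by affinoid methods, and globalize via density of localized global functions plus clearing denominators. The only minor variations are that you get universal acyclicity of $\Oc_{X_{k'}}$ by transitivity of base change rather than by citing Theorem~\ref{thm:extensioncohomology}, you separate points after base change through the locally closed immersion of the Liu space~$D'$ into an affinoid rather than by choosing a rational affinoid domain via Proposition~\ref{prop:LiuGerritzenGrauertNonStrict}, and you apply the density statement directly over~$k'$ instead of tensoring a density over~$k$; note also that the first branch of your final dichotomy, $|f_0(x)| \neq |f_0(x')|$, is in fact vacuous, since $f_0$ is pulled back from $\Oc(X)$ and $x,x'$ lie over the same point.
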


\begin{proof} The sheaf $\Oc_{X_{k'}}$ is universally acyclic by Theorem~\ref{thm:extensioncohomology}. It remains to prove that $X_{k'}$ is holomorphically separable. If the projection of two points $x, x' \in X_{k'}$ in $X$ are distinct, by hypothesis there is $f \in \Oc(X)$ such that $|f(x)| \neq |f(x')|$. Otherwise call $y$ the projection of $x, x'$ in $X$. 

Suppose first that $X$ is compact and let $D$ be affinoid domain of $X$ containing~$y$. By Proposition \ref{prop:LiuGerritzenGrauertNonStrict}, we can suppose that $D$ is rational in~$X$. In particular there is $f \in \Oc(X)$ such that $\Oc(X)[f^{-1}]$ is dense in $\Oc(D)$ and $\Oc(X)[f^{-1}] \otimes k'$ is dense in $\Oc(D_{k'}) = \Oc(D)\hotimes k'$. Since $D_{k'}$ is affinoid, there exists $g \in \Oc(D_{k'})$ such that $|g(x)| \neq |g(x')|$. By density such a $g$ can be taken in $\Oc(X)[f^{-1}] \otimes k'$.  By clearing denominators one produces a holomorphic function $h \in \Oc(X)$ such that $|h(x)| \neq |h(x')|$.

In the case where $X$ is not necessarily compact, according to Lemma \ref{prop:RationalNeighbourhoodSSpace} and Proposition \ref{prop:FunctionsOnRationalDomains}, there is a compact analytic domain~$D'$ of~$X$ containing~$y$ such that $\Oc_{D'}$~is universally acyclic and there is a function $f \in \Oc(X)$ with no poles on~$D'$ such that $\Oc(X)[f^{-1}]$ is dense in $\Oc(D')$. This leads us back to the compact case.
\end{proof}

\begin{corollary} Let~$X$ be a $k$-analytic Liu space. For every analytic extension~$k'$ of~$k$, $X_{k'}$ is a Liu space.
\qed
\end{corollary}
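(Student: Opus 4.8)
The plan is to prove the Corollary directly from Proposition~\ref{Prop:SSpaceStableScalarExt}, since a Liu space is by definition a $k$-analytic space which is separated, holomorphically separable, compact, and on which $\Oc_X$ is universally acyclic. I would verify that each of these four defining properties is inherited by $X_{k'}$ under an analytic extension $k'$ of $k$. Three of the four are immediate: separatedness and compactness are preserved under any base change of the ground field (these are topological/geometric properties stable under extension of scalars in Berkovich's framework), so $X_{k'}$ is again separated and compact.

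The two properties requiring genuine input are holomorphic separability and universal acyclicity of the structure sheaf, and these are precisely the content of Proposition~\ref{Prop:SSpaceStableScalarExt}. That proposition assumes $X$ is separated, countable at infinity, holomorphically separable, with $\Oc_X$ universally acyclic, and concludes that $X_{k'}$ is holomorphically separable and $\Oc_{X_{k'}}$ is universally acyclic. The only hypothesis of that proposition not literally contained in the definition of a Liu space is \emph{countable at infinity}, so the first thing I would check is that a compact $k$-analytic space is automatically countable at infinity (indeed, a compact space is covered by a single compact set, hence trivially $\sigma$-compact). With this observed, a Liu space satisfies all the hypotheses of Proposition~\ref{Prop:SSpaceStableScalarExt}.

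Thus the proof is essentially a citation: apply Proposition~\ref{Prop:SSpaceStableScalarExt} to conclude that $X_{k'}$ is holomorphically separable and that $\Oc_{X_{k'}}$ is universally acyclic, and combine this with the stability of separatedness and compactness under extension of scalars to conclude that $X_{k'}$ is a Liu space. I do not anticipate any real obstacle here, as all the analytic work has been absorbed into Proposition~\ref{Prop:SSpaceStableScalarExt}; the only point demanding a moment's attention is confirming that the \emph{countable at infinity} hypothesis of that proposition is harmlessly satisfied by any compact space, and that the remaining geometric properties behave well under base change, both of which are standard facts in Berkovich geometry.

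\begin{proof}
A Liu space~$X$ is separated, holomorphically separable, compact, and such that~$\Oc_X$ is universally acyclic. Since~$X$ is compact, it is in particular countable at infinity, so that~$X$ satisfies all the hypotheses of Proposition~\ref{Prop:SSpaceStableScalarExt}. Let~$k'$ be an analytic extension of~$k$. Separatedness and compactness are preserved under extension of scalars, hence $X_{k'}$ is again separated and compact. By Proposition~\ref{Prop:SSpaceStableScalarExt}, the space $X_{k'}$ is holomorphically separable and $\Oc_{X_{k'}}$ is universally acyclic. Therefore $X_{k'}$ is a Liu space.
\end{proof}
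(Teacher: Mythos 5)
Your proof is correct and follows exactly the route the paper intends: the corollary is stated with no written proof precisely because it is an immediate consequence of Proposition~\ref{Prop:SSpaceStableScalarExt}, once one notes (as you do) that a compact space is countable at infinity and that separatedness and compactness are stable under extension of scalars.
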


\begin{corollary} \label{Cor:LiuSpaceCohomStein} Liu spaces are cohomologically Stein. \end{corollary}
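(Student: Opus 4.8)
The plan is to reduce to the strictly analytic case over a non-trivially valued field, where Liu's criterion (Theorem~\ref{Thm:LiuTohoku}) is available, and then to descend the resulting vanishing by faithfully flat base change. First I would choose a tuple $r = (r_1, \dots, r_N)$ of positive real numbers, free over $|k^\times|$, such that the $k_r$-analytic space $X_{k_r}$ is strict; this is possible precisely because $X$ is compact, exactly as in the proof of Proposition~\ref{prop:LiuGerritzenGrauertNonStrict}. Enlarging the tuple by one extra free radius if necessary, I may moreover assume $N \ge 1$, so that the valuation of $k_r$ is non-trivial even in the case where $k$ itself is trivially valued.

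Next I would verify that $X_{k_r}$ meets the hypotheses of Theorem~\ref{Thm:LiuTohoku}. By Proposition~\ref{Prop:SSpaceStableScalarExt}, $X_{k_r}$ is again holomorphically separable and $\Oc_{X_{k_r}}$ is universally acyclic, in particular acyclic; separatedness and compactness are preserved by base change, so $X_{k_r}$ is again a Liu space. Being holomorphically separable, it is rig-holomorphically separable by Lemma~\ref{lem:RigHolSepImpliesHolSep}, and it is strict by construction. Since $k_r$ is non-trivially valued, Theorem~\ref{Thm:LiuTohoku} applies and shows that $X_{k_r}$ is cohomologically Stein; that is, $\H^q(X_{k_r}, G) = 0$ for every coherent sheaf $G$ on $X_{k_r}$ and every $q \ge 1$.

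Finally I would descend this vanishing to $X$. Fix a coherent sheaf $F$ on $X$ and an integer $q \ge 1$. Applying the previous step to $G = F_{k_r}$ yields $\H^q(X_{k_r}, F_{k_r}) = 0$. Since $X$ is compact, hence countable at infinity, Theorem~\ref{thm:extensioncohomology} identifies $\H^q(X_{k_r}, F_{k_r})$ with $\H^q(X, F) \hotimes_k k_r$; as $k \to k_r$ is an extension of valued fields, it is faithful, so the vanishing of $\H^q(X, F) \hotimes_k k_r$ forces $\H^q(X, F) = 0$. The step requiring the most care is this descent: one must invoke Theorem~\ref{thm:extensioncohomology} for an \emph{arbitrary} coherent sheaf $F$, not merely for $\Oc_X$, and use that completed extension of scalars along a valued field extension detects the nonvanishing of the cohomology groups. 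Everything else amounts to unwinding the definition of a Liu space and checking the hypotheses of Liu's theorem over $k_r$.
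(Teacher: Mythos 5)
Your proof is correct and follows essentially the same route as the paper's: extend scalars to a non-trivially valued field over which the space becomes strict, note that it remains a Liu space (Proposition~\ref{Prop:SSpaceStableScalarExt}), apply Liu's criterion (Theorem~\ref{Thm:LiuTohoku}), and descend the vanishing via Theorem~\ref{thm:extensioncohomology}. The only difference is that you instantiate the extension explicitly as some~$k_r$ and spell out the verification of Liu's hypotheses (including the trivially valued case), which the paper leaves implicit.
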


\begin{proof} Given a $k$-analytic Liu space $X$, pick a non-trivially valued extension $k'$ of~$k$ such that $X_{k'}$ is strict. Since $X_{k'}$ is again a Liu space, Liu's theorem (Theorem \ref{Thm:LiuTohoku}) states that $X_{k'}$ is cohomologically Stein. By applying Theorem \ref{thm:extensioncohomology} one concludes that $X$ is cohomologically Stein.
\end{proof}

\begin{corollary} \label{cor:SpectrumLiuSpace} For a Liu space $X$ the natural map $X \to \M(\Oc(X))$ is a homeomorphism.
\end{corollary}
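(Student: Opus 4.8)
The plan is to reduce the claim to the surjectivity of the natural map by a soft topological argument, and then to obtain surjectivity by upgrading a rational covering of~$X$ to a covering of the Berkovich spectrum~$\M(\Oc(X))$. Throughout I write $|f|_{\chi}$ for the value at $f \in \Oc(X)$ of a bounded multiplicative seminorm~$\chi$ on~$\Oc(X)$.

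First I would observe that, $X$ being compact, $\Oc(X)$ carries a natural structure of Banach $k$-algebra — it is a closed subalgebra of a finite product of affinoid algebras (\emph{cf.}~Appendix~\ref{sec:Banachoid}) — so that $\M(\Oc(X))$ is a compact Hausdorff space. The natural map $\sigma\colon X \to \M(\Oc(X))$, sending a point~$x$ to the seminorm $f \mapsto |f(x)|$, is continuous by the very definition of the spectral topology, and it is injective because~$X$ is holomorphically separable: two distinct points are separated by the absolute value of some global function. Since~$X$ is compact and the target is Hausdorff, $\sigma$ is then a homeomorphism onto its (closed) image, and the whole statement reduces to proving that~$\sigma$ is surjective.

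For surjectivity I would first use Proposition~\ref{prop:LiuGerritzenGrauertNonStrict} to write~$X$ as a finite union of rational affinoid domains, and then refine this covering by a rational covering of maximum-dominance type: there should exist $g_0, \dots, g_s \in \Oc(X)$ without common zeros such that
$$ X = \bigcup_{i=0}^s X_i, \qquad X_i = \{x \in X : |g_j(x)| \le |g_i(x)| \text{ for } j = 0, \dots, s \}, $$
with each~$X_i$ contained in one of the rational affinoid domains above, hence itself a rational affinoid domain of~$X$ on which~$g_i$ is invertible (radii falling outside~$|k^\times|$ being harmlessly incorporated as fixed positive weights in the inequalities). The key algebraic input is that the~$g_j$ generate the unit ideal of~$\Oc(X)$: since~$X$ is cohomologically Stein (Corollary~\ref{Cor:LiuSpaceCohomStein}) and the~$g_j$ have no common zero, the sheaf map $\Oc_X^{s+1} \to \Oc_X$, $(a_j)_j \mapsto \sum_j a_j g_j$, is surjective, and the vanishing of the first cohomology of its kernel makes it surjective already on global sections. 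Granting this, surjectivity follows at once: for $\chi \in \M(\Oc(X))$, writing $1 = \sum_j a_j g_j$ forces $\max_j |g_j|_\chi > 0$, so choosing~$i$ realising the maximum gives $|g_j|_\chi \le |g_i|_\chi$ for all~$j$ and $|g_i|_\chi > 0$; sending $g_j/g_i$ to $|g_j|_\chi/|g_i|_\chi \le 1$ extends~$\chi$ to a bounded multiplicative seminorm on $\Oc(X_i) = \Oc(X)\langle g_0/g_i, \dots, g_s/g_i\rangle$, which, $X_i$ being affinoid, corresponds to a point $x \in X_i \subseteq X$ with $\sigma(x) = \chi$.

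The step I expect to be the main obstacle is precisely the passage from~$X$ to~$\M(\Oc(X))$: the covering property of the~$X_i$ is available to me only at the points of~$X$, yet it must be promoted to every bounded multiplicative seminorm on~$\Oc(X)$. Choosing the covering to be of maximum-dominance type makes this promotion automatic, but the crucial and non-formal ingredient is that functions without common zeros generate the unit ideal — which holds here only because~$X$ is cohomologically Stein, and not for a general compact analytic space.
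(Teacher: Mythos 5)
Your soft reduction to surjectivity is correct, and your key algebraic input is both correct and well chosen: on a compact cohomologically Stein space, global functions without common zeros generate the unit ideal of $\Oc(X)$, since the kernel of the induced surjection $\Oc_X^{s+1} \to \Oc_X$ is coherent and has vanishing $\H^1$. This is genuinely different from the paper's route, which extends scalars to the completed residue field of~$\chi$ (using Theorem~\ref{thm:extensioncohomology} to identify $\Oc(X)\hotimes_k k'$ with $\Oc(X_{k'})$) so that the seminorm becomes rational, and then quotes Liu's theorem that maximal ideals of the global algebra of a strict Liu space are exactly its rigid points; your unit-ideal argument is in fact the engine inside that cited theorem. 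The weighted maximum-dominance refinement you postulate does exist: it is the classical refinement argument for rational coverings (\emph{cf.}~\cite[8.2.2]{BGR}), which is purely pointwise and valid on any analytic space, provided one keeps only those products $f_{1\alpha_1}\dotsb f_{r\alpha_r}$ in which at least one factor is a denominator $f_{i0}$ --- your ``there should exist'' hides this, but it is fixable.

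The genuine gap is the final step: the identification $\Oc(X_i) = \Oc(X)\langle g_0/g_i, \dots, g_s/g_i\rangle$. For affinoid~$X$ this is the definition of a rational subdomain; for a Liu space it is an unproved assertion, essentially equivalent to the corollary you are proving. What the paper gives you (Proposition~\ref{prop:FunctionsOnRationalDomains}) is only that $\Oc(X)[g_i^{-1}]$ has \emph{dense image} in $\Oc(X_i)$. That yields a bounded morphism with dense image from the abstract rational localization $A_i := \Oc(X)\langle \dots \rangle$ to $\Oc(X_i)$, but this goes the wrong way for your purposes: your extension of~$\chi$ lives on~$A_i$, and to produce a point of~$X_i$ you must transport it across $A_i \to \Oc(X_i)$, which requires either that this map induce a bijection on Berkovich spectra or the estimate $|f|_\chi \le |g_i|_\chi^{n}\, \sup_{x \in X_i} |f(x)|/|g_i(x)|^{n}$ for all $f \in \Oc(X)$ and $n \ge 0$; a map with dense image does not allow points of~$\M$ to be pulled backwards, and the estimate is precisely the assertion that seminorms satisfying the dominance inequalities at~$i$ come from~$X_i$, i.e.\ the statement being proved, restricted to~$X_i$. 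This is not a pedantic worry: for a general compact space (e.g.\ a non-archimedean Hartogs figure, where $\Oc(X)$ coincides with the algebra of the ambient bidisc) both the unit-ideal property and this identification fail, and they fail together; you correctly flagged the first as requiring Steinness, but the second requires an argument as well. The cheapest repair is the paper's move: after extending scalars to the completed residue field of~$\chi$, the kernel of the extended character is a maximal ideal, and your unit-ideal argument together with noetherianity of the global algebra (Proposition~\ref{Prop:GlobalSectionsCoherentSheafOnLiuSpaces}) shows every maximal ideal is of the form $\m_x$ --- which replaces the rational-covering step entirely.
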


\begin{proof} Since the map is continuous and the source and the target are compact and Hausdorff topological spaces, it suffices to prove that the map is bijective. By definition, a Liu space is holomorphically separated, thus the map is injective. In order to prove that it is surjective, let $x$ be a multiplicative seminorm on $\Oc(X)$. 

Let $k'$ be the completion of the fraction field of the integral domain $\Oc(X) / \ker x$. By definition the homomorphism $\Oc(X) \to k'$ of Banach $k$-algebras is bounded and thus extends to a continuous homomorphism of Banach $k'$-algebras $\Oc(X) \hotimes_k k' \to k'$. The Banach $k'$-algebra $\Oc(X) \hotimes_k k'$ is identified with $\Oc(X_{k'})$ thanks to Theorem~\ref{thm:extensioncohomology}. It follows that $x$ extends to a bounded multiplicative seminorm $x'$ on~$\Oc(X_{k'})$. If the seminorm $x'$ comes from a point of $X_{k'}$, then $x$ comes from a point of $X$. 

Therefore, up to extending scalars, one may assume $k' = k$. Moreover, up to extending further $k$, one may assume that it is non-trivially valued and that the space $X$ is strict. Liu proved that rigid points of a strict Liu space $Y$ are in bijection with maximal ideals of $\Oc(Y)$ (see \cite[Proposition 2.4 or 3.2]{LiuTohoku}). This concludes the proof.
\end{proof}

\section{Descent of holomorphic separability and convexity} \label{sec:HolomorphicConvexity} 

\subsection{Some normed algebra}The proofs of Propositions  \ref{Prop:CompatibilityHolConvexhullsExtScalars} and \ref{Prop:CompatibilityHolConvexhullsProducts} rely on some results of normed algebra that we review in this section.

\begin{definition}\
\begin{enumerate}

\item A \emph{normed $k$-vector space} is a $k$-vector space $V$ equipped with a non-archimedean norm $\| \cdot \|_V$ such that $\| \lambda v\|_V = |\lambda| \| v \|_V$ for all $\lambda \in k$ and $v \in V$. 

\item Let $\alpha > 1$ and $V$ be a normed $k$-vector space. A family of vectors $\{ v_i\}_{i \in I}$ of $V$ is \emph{$\alpha$-cartesian} if, for every finite subset $J \subset I$, $\lambda_i \in k$, $v_i \in V$ for $i \in J$,
$$ \alpha^{-1}\max_{i \in J} \{ |\lambda_i| \| v_i\|_V \} \le \| v \|_V \le \max_{i \in J} \{ |\lambda_i| \| v_i\|_V \},$$
where $v = \sum_{i \in J} \lambda_i v_i$.

\item Let $V, W$ be normed $k$-vector spaces. The tensor product $V \otimes_k W$ is endowed with the tensor norm $\| \cdot \|_{V \otimes W}$ defined, for $t \in V \otimes_k W$, as
$$ \| t \|_{V \otimes W} = \inf \max_{i = 1, \dots, n} \{ \| v_i\|_V \| w_i \|_W \},$$
the infimum ranging on the set of possible ways of writing $t$ as  $\sum_{i = 1}^N v_i \otimes w_i$ with $N \in \N$ and $v_i \in V$, $w_i \in W$ for all $i = 1 \dots, N$.
\end{enumerate}
\end{definition}

\begin{proposition} \label{prop:ExistenceAlphaCartesianBasis} Let $V$ be a finite-dimensional normed $k$-vector space and $\alpha > 1$. Then there exists an $\alpha$-cartesian basis of $V$.
\end{proposition}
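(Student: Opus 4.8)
The plan is to prove the existence of an $\alpha$-cartesian basis by induction on the dimension $n = \dim_k V$, where the base case $n = 1$ is immediate (a single nonzero vector forms a $1$-cartesian, hence $\alpha$-cartesian, basis). The main difficulty, and the reason one cannot simply take an arbitrary basis, is that over a non-archimedean field the norm need not be "diagonalizable" on the nose: the quantity $\inf_{v \neq 0} \|v\|_V / |\lambda|$ over representations may not be attained, so one has to allow the slack factor $\alpha > 1$ and argue approximately. The inductive step is where this slack gets used.

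For the inductive step I would proceed as follows. Fix $\alpha > 1$ and choose an auxiliary constant $\beta > 1$ with $\beta^2 \le \alpha$ (so that the error introduced at this stage, compounded with the inductive one, stays under control). First I would pick a single vector $v_n \in V$ that is close to minimizing the quotient norm in a suitable sense: choose any nonzero linear functional, or equivalently a codimension-one subspace $W \subset V$, and select $v_n \notin W$ such that
$$ \dist(v_n, W) := \inf_{w \in W} \|v_n - w\|_V \ge \beta^{-1} \|v_n\|_V. $$
Such a $v_n$ exists because one can rescale and translate a near-optimal representative of the quotient $V/W$. By the inductive hypothesis applied to the $(n-1)$-dimensional space $W$, there is a $\beta$-cartesian basis $v_1, \dots, v_{n-1}$ of $W$. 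I then claim that $v_1, \dots, v_n$ is $\alpha$-cartesian.

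To verify the claim, take $v = \sum_{i=1}^n \lambda_i v_i$ with $\lambda_i \in k$. The upper bound $\|v\|_V \le \max_i \{|\lambda_i| \|v_i\|_V\}$ is automatic from the ultrametric inequality. For the lower bound, write $v = \lambda_n v_n + w$ with $w = \sum_{i<n} \lambda_i v_i \in W$. Using the near-minimality of $v_n$, I would estimate $\|v\|_V \ge |\lambda_n| \dist(v_n, W) \ge \beta^{-1} |\lambda_n| \|v_n\|_V$, which controls the $v_n$-component. Combining this with the ultrametric bound $\|v\|_V \ge \|w\|_V - |\lambda_n|\|v_n\|_V$ is delicate, so instead I would split into two cases according to which of $|\lambda_n|\|v_n\|_V$ or $\max_{i<n}\{|\lambda_i|\|v_i\|_V\}$ dominates, in each case extracting the desired bound $\|v\|_V \ge \beta^{-2} \max_i\{|\lambda_i|\|v_i\|_V\} \ge \alpha^{-1}\max_i\{|\lambda_i|\|v_i\|_V\}$. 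The case where $\|w\|_V$ dominates uses that the projection $v \mapsto w$ along $v_n$ is norm-nonincreasing up to the factor coming from $\dist(v_n, W)$, together with the $\beta$-cartesian property of $v_1, \dots, v_{n-1}$; the case where the $v_n$-term dominates uses the $\dist$ estimate directly.

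The step I expect to be the genuine obstacle is the bookkeeping of the constants in this case analysis: one must check that the factor $\beta^{-1}$ lost in choosing $v_n$ and the factor $\beta^{-1}$ inherited from the inductive $\beta$-cartesian basis of $W$ multiply to at most $\alpha^{-1}$, and that the "mixed" term $\|v\|_V \ge \|w\|_V - |\lambda_n|\|v_n\|_V$ does not degrade this when the two contributions are comparable. Choosing $\beta = \alpha^{1/2}$ from the outset and handling the comparable-size regime by the strict ultrametric inequality (where the maximum of two distinct-valued terms equals the larger) should make the constants close exactly; if the two terms have equal norm one uses that the norm of the sum is still bounded below by $\beta^{-1}$ times each, which suffices after absorbing into $\alpha$.
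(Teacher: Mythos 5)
Your proposal is correct and follows essentially the same route as the paper, which sketches the Goldman--Iwahori argument: induct on the dimension, choose a vector $v_1$ nearly maximizing $|\phi(v)|/\|v\|_V$ for a linear form $\phi$ (a condition equivalent to your requirement $\dist(v_1,\ker\phi)\ge \beta^{-1}\|v_1\|_V$), and apply the inductive hypothesis to the hyperplane $W=\ker\phi$. Your choice of slack $\beta=\alpha^{1/2}$ and the two-case verification of the lower bound correctly supply the bookkeeping that the paper's sketch leaves implicit.
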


\begin{proof} If $k$ is non-trivially valued, this is \cite[2.6.2 Proposition 3]{BGR}\footnote{If $k$ is maximally complete the statement holds also with $\alpha = 1$ (\cite[Proposition 1.1]{GoldmanIwahori}, \cite[2.4.4 Proposition 2]{BGR}).}. 
The proof of \cite[Proposition 1.1]{GoldmanIwahori} works also in the trivially valued case. Let us sketch it here. One argues by induction on the dimension $d$ of $V$. If $d = 0$ the statement is clearly true. Suppose $d \ge 1$ and consider a linear form $\phi \colon V \to k$. Then there exists $v_1 \in V$ non-zero such that, for all non-zero $v \in V$,
$$ \frac{|\phi(v_1)|}{\| v_1\|_V} \ge \alpha^{-1} \frac{|\phi(v)|}{\| v\|_V}. $$

One concludes the proof applying the inductive hypothesis to $W = \ker \phi$.
\end{proof}

\begin{proposition} \label{Prop:CompatibiliyAlphaCartesianBasisTensorProduct} Let $V$, $W$ be normed $k$-vector spaces. Let $\alpha>1$ and $v_1, \dots, v_n$ be a linearly independent $\alpha$-cartesian family of $V$. Then, for $w_1, \dots, w_n \in W$,
$$ \alpha^{-1}\max_{i= 1, \dots, n} \{ \| v_i\|_V \| w_i\|_W  \} \le \| t \|_{V \otimes W} \le \max_{i= 1, \dots, n} \{ \| v_i\|_V \| w_i \|_W \},$$
where $t = \sum_{i = 1}^n v_i \otimes w_i$. 
\end{proposition}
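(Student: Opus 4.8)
The plan is to prove the two inequalities separately, with the upper bound being essentially formal and the lower bound requiring the $\alpha$-cartesian hypothesis on $v_1,\dots,v_n$.

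For the upper bound, I would simply observe that $t = \sum_{i=1}^n v_i \otimes w_i$ is one of the admissible decompositions entering the infimum defining the tensor norm $\| \cdot \|_{V \otimes W}$. Hence, directly from the definition,
\[ \| t \|_{V \otimes W} \le \max_{i = 1, \dots, n} \{ \| v_i \|_V \| w_i \|_W \}. \]
This step needs no hypothesis on the family.

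The lower bound is the substantive part. Here I would exploit the $\alpha$-cartesian property to produce, for each index, a bounded linear functional that isolates the corresponding coefficient. Since $v_1,\dots,v_n$ is linearly independent and $\alpha$-cartesian, I would use Proposition~\ref{prop:ExistenceAlphaCartesianBasis} (or extend the family to an $\alpha$-cartesian basis of a finite-dimensional subspace containing the $v_i$) to obtain linear forms $\phi_j \colon V \to k$ that are dual to the $v_j$, in the sense that $\phi_j(v_i) = \delta_{ij}$, and whose operator norms are controlled: the $\alpha$-cartesian estimate gives $\| \phi_j \| \le \alpha \, \| v_j \|_V^{-1}$. Applying the functional $\phi_j \otimes \id_W \colon V \otimes_k W \to W$ to $t$ extracts $w_j$, and since the tensor norm is compatible with the operator norms of $\phi_j$ and $\id_W$, one gets $\| w_j \|_W \le \| \phi_j \otimes \id_W \| \cdot \| t \|_{V \otimes W} \le \alpha \, \| v_j \|_V^{-1} \| t \|_{V \otimes W}$, which rearranges to $\| v_j \|_V \| w_j \|_W \le \alpha \, \| t \|_{V \otimes W}$. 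Taking the maximum over $j$ yields
\[ \alpha^{-1} \max_{i = 1, \dots, n} \{ \| v_i \|_V \| w_i \|_W \} \le \| t \|_{V \otimes W}. \]

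The main obstacle I anticipate is verifying that the map $\phi_j \otimes \id_W$ is indeed bounded on the tensor norm with the expected operator norm, i.e.\ that $\| (\phi \otimes \id_W)(t) \|_W \le \| \phi \| \cdot \| t \|_{V \otimes W}$ for a bounded functional $\phi$. This is the standard submultiplicativity of the projective tensor norm under applying a bounded operator to one factor, and I would check it on a representative decomposition $t = \sum_i v_i' \otimes w_i'$ and then pass to the infimum. The only genuinely delicate point is controlling $\| \phi_j \|$ by $\alpha \, \| v_j \|_V^{-1}$, which is exactly what the $\alpha$-cartesian inequality furnishes: writing any $v = \sum_i \lambda_i v_i$, the lower bound $\alpha^{-1} \max_i \{ |\lambda_i| \| v_i \|_V \} \le \| v \|_V$ forces $|\lambda_j| = |\phi_j(v)|$ to satisfy $|\phi_j(v)| \| v_j \|_V \le \alpha \| v \|_V$, as required.
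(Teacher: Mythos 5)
Your upper bound is fine, and the functional-extraction strategy for the lower bound is sound in principle, but as written it has a genuine gap: the functionals $\phi_j$ are only defined --- and only norm-controlled --- on the span $V_0$ of $v_1,\dots,v_n$, whereas you apply $\phi_j \otimes \id_W$ to arbitrary decompositions $t = \sum_i v_i' \otimes w_i'$ ranging over the infimum that defines $\| t \|_{V \otimes W}$, and those $v_i'$ need not lie in $V_0$. Your own verification of the bound $\| \phi_j \| \le \alpha \, \| v_j \|_V^{-1}$ betrays this: you write ``any $v = \sum_i \lambda_i v_i$'', which is meaningful only for $v \in V_0$. To make $\phi_j \otimes \id_W$ act boundedly on $V \otimes_k W$ you must extend $\phi_j$ to all of $V$ (or at least to the span of each near-optimal decomposition) with essentially the same norm. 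This is a non-archimedean Hahn--Banach problem and it is not automatic: by Ingleton's theorem, the norm-preserving extension property characterizes spherically complete base fields, and over a non-spherically complete $k$ there exist normed spaces with trivial dual, so a bounded extension to $V$ may fail to exist at all. Extending the $v_i$ to an $\alpha$-cartesian basis of a larger finite-dimensional subspace, as you suggest parenthetically, does not resolve this, since the decomposition vectors can escape any fixed finite-dimensional subspace.

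The paper deals with exactly this point before doing anything else: it invokes \cite[Lemme~3.1]{PoineauAngeliques}, which states that the inclusion $V_0 \otimes_k W \to V \otimes_k W$ is an isometry, and thereby reduces to the case $V = V_0$, where $v_1, \dots, v_n$ is a basis; it then argues directly on near-optimal decompositions (rewriting each $v_i'$ in the basis, using the $\alpha$-cartesian inequality, and identifying the resulting coefficients with the $w_j$ by linear independence), with no dual functionals at all. Your argument becomes correct if you insert this isometry lemma at the start: after the reduction, the $\phi_j$ are defined on all of $V$ and everything you wrote goes through. Alternatively, your route can be repaired without that lemma by proving an approximate extension statement: for finite-dimensional $V_1 \supset V_0$ and $\epsilon > 0$, any functional $\phi$ on $V_0$ extends to $V_1$ with norm at most $(1+\epsilon)\|\phi\|$ (proceed one dimension at a time, assigning to the new vector $y$ the value $\phi(u_0)$ where $u_0 \in V_0$ almost realizes $\dist(y, V_0)$); applying this to $V_0$ plus the span of each near-optimal decomposition and letting $\epsilon \to 0$ recovers the desired inequality. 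Either way, an ingredient is missing as the proof stands.
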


\begin{proof} Let $V_0$ be the sub-$k$-vector space of~$V$ generated by $v_{1},\dots, v_n$. According to \cite[Lemme~3.1]{PoineauAngeliques} the inclusion $V_0 \otimes_{k} W \to V \otimes_{k} W$ is an isometry. Therefore, up to replacing~$V$ by~$V_{0}$, one may assume that $V$ is finite dimensional and $v_1, \dots, v_n$ is a basis of $V$.

Let $w_1, \dots, w_n \in W$ and $t = \sum_{i = 1}^n v_i \otimes w_i$. By definition of the tensor norm, for $\epsilon > 0$, there exist an integer $n' \ge 0$, $v'_{1},\dots,v'_{n'} \in V$ and $w'_{1},\dots,w'_{n'} \in W$ such that $t = \sum_{i=1}^{n'} v'_{i} \otimes w'_{i}$ and 
$$ \| t \|_{V \otimes W} \ge \max_{i =1, \dots, n'} \{ \| v'_i\|_V \| w'_i \|_W \} - \epsilon.$$
For $i = 1, \dots, n'$, write $ v'_i = \sum_{j = 1}^n \lambda_{ij} v_i$, with $\lambda_{ij} \in k$. Since the basis $v_1, \dots, v_n$ is $\alpha$-cartesian,
\begin{align*} 
\| t \|_{V \otimes W} &\ge \alpha^{-1} \max_{i = 1, \dots, n'} \left\{  \max_{j = 1, \dots, n} \{  |\lambda_{ij}| \| v_i \|_V \} \| w_i' \|_W \right\} - \epsilon \\
&= \alpha^{-1} \max_{j = 1, \dots, n}\left\{  \max_{i = 1, \dots, n'} \{  |\lambda_{ij}| \| w_i' \|_W \} \| v_i \|_V \right\} - \epsilon \\
&\ge \alpha^{-1} \max_{j = 1, \dots, n}\left\{   \| w_j'' \|_W \| v_i \|_V \right\} - \epsilon,
\end{align*}
where $w_j'' = \sum_{i = 1}^n \lambda_{ij} w'_i$. Since $v_1, \dots, v_n$ is a basis and
$$ \sum_{j = 1}^n w_j'' \otimes v_j = t= \sum_{j = 1}^n w_j \otimes v_j,$$
one has $w_j'' = w_j$ for all $j = 1, \dots, n$. The result follows letting $\epsilon$ tend to $0$.
\end{proof}

\subsection{Holomorphic convex hull and extension of scalars}

We prove here that the formation of the holomorphic convex hull is compatible with extension of scalars:

\begin{proposition} \label{Prop:CompatibilityHolConvexhullsExtScalars} Let $X$ be a $k$-analytic space that is countable at infinity and $K$~be a compact subset of $X$. Let~$k'$ be an analytic extension of $k$, $X' = X_{k'}$ and $K' = \pr_{k'/k}^{-1}(K)$. Then,
$$ \hat{K}'_{X'} = \pr_{k'/k}^{-1}(\hat{K}_X).$$
\end{proposition}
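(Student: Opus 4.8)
The plan is to prove the two inclusions separately, the inclusion $\hat{K}'_{X'}\subseteq\pr_{k'/k}^{-1}(\hat{K}_X)$ being elementary and the reverse one requiring the normed-algebra input of Propositions~\ref{prop:ExistenceAlphaCartesianBasis} and~\ref{Prop:CompatibiliyAlphaCartesianBasisTensorProduct}. Throughout I write $\pr=\pr_{k'/k}\colon X'\to X$, I use the identification $\Oc(X')=\Oc(X)\hotimes_k k'$ (Theorem~\ref{thm:extensioncohomology}), and I record that $\pr$ restricts to a surjection $K'\to K$ with $K'$ compact. For the easy inclusion, let $x'\in\hat{K}'_{X'}$ and $f\in\Oc(X)$; pulling back along $\pr$ gives $|f(\pr(x'))|=|(\pr^\ast f)(x')|\le\|\pr^\ast f\|_{K'}=\|f\|_K$, the last equality because $\pr(K')=K$. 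Hence $\pr(x')\in\hat{K}_X$, as desired.

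For the reverse inclusion, I fix $x'\in X'$ lying over $x:=\pr(x')\in\hat{K}_X$ and must show $|g(x')|\le\|g\|_{K'}$ for every $g\in\Oc(X')$. Since $\Oc(X)\otimes_k k'$ is dense in $\Oc(X')$ and both $g\mapsto|g(x')|$ and $g\mapsto\|g\|_{K'}$ are continuous, I may approximate $g$ to within $\eps$ by a finite sum $g_0=\sum_{i=1}^n f_i\otimes\lambda_i$ with $f_i\in\Oc(X)$; re-expressing the coefficients in an $\alpha$-cartesian basis of their $k$-span (Proposition~\ref{prop:ExistenceAlphaCartesianBasis}), I may assume that $\lambda_1,\dots,\lambda_n$ are $k$-linearly independent and $\alpha$-cartesian in $k'$, where $\alpha>1$ is fixed in advance. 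The upper bound at $x'$ is then immediate: the point $x'$ is a bounded multiplicative seminorm on $\khat(x)\hotimes_k k'$, so evaluating $g_0$ through $\Oc(X)\hotimes_k k'\to\khat(x)\hotimes_k k'$ and using the defining inequality of the tensor norm gives
\[ |g_0(x')|\le\Big\|\sum_i f_i(x)\otimes\lambda_i\Big\|_{\khat(x)\hotimes_k k'}\le\max_i|f_i(x)|\,|\lambda_i|\le\max_i\|f_i\|_K\,|\lambda_i|, \]
the last step because $x\in\hat{K}_X$. Note this upper bound needs no cartesianness.

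It therefore remains to bound $\|g_0\|_{K'}$ from below by $\alpha^{-1}\max_i\|f_i\|_K|\lambda_i|$; combining the two estimates and letting first $\eps\to0$ and then $\alpha\to1$ will yield $|g(x')|\le\|g\|_{K'}$. To produce the lower bound, let $i_0$ realise $\max_i\|f_i\|_K|\lambda_i|$, choose $y\in K$ with $|f_{i_0}(y)|$ arbitrarily close to $\|f_{i_0}\|_K$, and set $c_i=f_i(y)\in\khat(y)$. Applying Proposition~\ref{Prop:CompatibiliyAlphaCartesianBasisTensorProduct} to the $\alpha$-cartesian family $\lambda_1,\dots,\lambda_n$ in $k'$ and the vectors $c_i\in\khat(y)$, the element $t=\sum_i\lambda_i\otimes c_i$ satisfies $\|t\|_{k'\hotimes_k\khat(y)}\ge\alpha^{-1}\max_i|\lambda_i|\,|c_i|\ge\alpha^{-1}|\lambda_{i_0}|\,|f_{i_0}(y)|$, which is within a factor $(1-\eps)$ of $\alpha^{-1}\max_i\|f_i\|_K|\lambda_i|$.

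I expect the last step to be the main obstacle: passing from this lower bound on the \emph{norm} of $t$ to a lower bound on $|g_0(y')|$ for an actual point $y'\in K'$ lying over $y$, i.e. controlling the spectral seminorm of $\khat(y)\hotimes_k k'$ on $t$ from below by its tensor norm, up to the factor $\alpha$. This is exactly where $\alpha$-cartesianness is indispensable: the systematic cancellations that could make the spectral radius strictly smaller than the norm are ruled out precisely when the $\lambda_i$ remain far from the $k$-span of one another, which is the content of the cartesian condition. Concretely, I would construct a multiplicative seminorm on $\khat(y)\hotimes_k k'$ restricting to the valuations on both factors under which the images of the $\lambda_i$ stay $\alpha$-cartesian — a sufficiently generic amalgamation of the valued fields $\khat(y)$ and $k'$ over $k$, arranged so that no cancellation occurs among the dominant terms of $t$ — and apply Proposition~\ref{Prop:CompatibiliyAlphaCartesianBasisTensorProduct} once more to this seminorm. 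Evaluating $g_0$ at the resulting point $y'\in K'$ then gives $|g_0(y')|\ge\alpha^{-1}(1-\eps)\max_i\|f_i\|_K|\lambda_i|$, which completes the lower bound and hence the proof.
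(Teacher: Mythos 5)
Your first three paragraphs follow the paper's proof almost verbatim: the easy inclusion, the reduction to showing $|g(x')|\le\|g\|_{K'}$ for $x'$ above $x\in\hat{K}_X$, the density of $\Oc(X)\otimes_k k'$, and the upper bound $|g_0(x')|\le\max_i\|f_i\|_K|\lambda_i|$ via an $\alpha$-cartesian basis are all exactly the paper's steps. The gap is the step you yourself flag as the main obstacle, and it is not a technical difficulty but a false statement: there is in general \emph{no} point $y'\in K'$ with $|g_0(y')|\ge\alpha^{-1}\max_i\|f_i\|_K|\lambda_i|$, and no ``generic amalgamation'' can produce one. The obstruction is that the tensor norm on $\khat(y)\hotimes_k k'$ need not be power-multiplicative, so its spectral seminorm --- which by Berkovich's theorem \cite[Theorem~1.3.1]{Berkovich90} equals $\sup_{\pr^{-1}(y)}|\wc|$, i.e.\ the largest value \emph{any} multiplicative seminorm can take --- may be strictly smaller than the tensor norm, with unbounded ratio. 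Concretely, let $k=\F_p((s))$ and $\khat(y)=k'=\F_p((s^{1/p}))$, e.g.\ $X=K=\M(\F_p((s^{1/p})))$ a single point, so that $\hat{K}_X=K$. The element $t=s^{1/p}\otimes 1-1\otimes s^{1/p}$ is of your form $\sum_i\lambda_i\otimes c_i$ with $\lambda_1=s^{1/p}$, $\lambda_2=1$ a $1$-cartesian family in $k'$ over $k$ and $c_1=1$, $c_2=-s^{1/p}$, so $\max_i|\lambda_i|\,|c_i|=|s|^{1/p}>0$; yet $t^p=s\otimes 1-1\otimes s=0$, so $t$ is nilpotent in $\khat(y)\otimes_k k'$ and $|t(y')|=0$ for \emph{every} point $y'$ of the fiber. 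Every multiplicative seminorm kills $t$, so the cancellation you hope to rule out is unavoidable; your chain $\max_i\|f_i\|_K|\lambda_i|\le\alpha\|g_0\|_{K'}$ reads $|s|^{1/p}\le\alpha\cdot 0$ here. (The Proposition itself survives because in this example $|g_0(x')|=0$ as well: the quantity $\max_i\|f_i\|_K|\lambda_i|$ overestimates \emph{both} sides, which is precisely why it cannot be used as an intermediate bound.)

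The paper sidesteps this by only ever using the comparison in the direction that is true. It proves exactly your upper bound, but for all powers of $g$: with $A$ the ring of germs along $K$ equipped with $\|\wc\|_K$, one has $\sup_{\pr^{-1}(x)}|g^n|\le\alpha\,\|g^n\|_{A,k'}$ for every $n\ge 1$. Taking $n$-th roots and letting $n\to\infty$ kills the constant $\alpha$ and replaces the tensor norm by its spectral seminorm $\rho_{A,k'}$, and Proposition~\ref{prop:SpectralNormExtensionScalars} (proved by Berkovich's spectral-radius theorem on affinoid domains, then finite unions, then a limit over compact analytic neighbourhoods of $K$) identifies $\rho_{A,k'}(g)$ with $\sup_{K'}|g|$. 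This yields $\sup_{\pr^{-1}(x)}|g|\le\sup_{K'}|g|$ directly: the spectral seminorm is always $\le$ the tensor norm, never controllably $\ge$, and the power trick is what makes the one-sided inequality suffice. If you replace your last paragraph by this limiting argument (and quote Proposition~\ref{prop:SpectralNormExtensionScalars}), the rest of your proof coincides with the paper's.
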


As an immediate consequence, we have:

\begin{corollary} Let $X$ be a $k$-analytic space, countable at infinity. Then, $X$ is holomorphically convex if and only if $X_{k'}$ is holomorphically convex for some analytic extension $k'$ of $k$.
\end{corollary}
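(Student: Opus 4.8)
The plan is to prove the set equality $\hat{K}'_{X'} = \pr_{k'/k}^{-1}(\hat{K}_X)$ by establishing the two inclusions separately, with the nontrivial direction requiring the $\alpha$-cartesian technology from Section~\ref{sec:HolomorphicConvexity}.

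First I would dispose of the \emph{easy} inclusion $\pr_{k'/k}^{-1}(\hat{K}_X) \subseteq \hat{K}'_{X'}$. Write $\pr = \pr_{k'/k}$. The point is that every global function $f \in \Oc(X)$ pulls back to a global function $\pr^\ast f \in \Oc(X')$ and satisfies $|(\pr^\ast f)(x')| = |f(\pr(x'))|$ at every $x' \in X'$; moreover $\|\pr^\ast f\|_{K'} = \|f\|_K$ since $K' = \pr^{-1}(K)$ surjects onto $K$. Thus if $x = \pr(x')$ lies in $\hat{K}_X$, then for every $g = \pr^\ast f$ pulled back from $X$ we get $|g(x')| = |f(x)| \le \|f\|_K = \|g\|_{K'}$, which already shows $x'$ satisfies the defining inequality of $\hat{K}'_{X'}$ for all such $g$. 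But the hull is defined using \emph{all} of $\Oc(X')$, not merely functions pulled back from~$X$, so this naive argument only bounds the pullback functions. To conclude I would need to approximate an arbitrary $h \in \Oc(X')$ by elements of $\Oc(X) \otimes_k k'$, or rather control $|h(x')|$ relative to $\|h\|_{K'}$ using the finer information that $\Oc(X') = \Oc(X)\hotimes_k k'$.

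The \emph{hard} inclusion, and the main obstacle, is $\hat{K}'_{X'} \subseteq \pr^{-1}(\hat{K}_X)$: I must show that if $x' \in X'$ projects to $x = \pr(x') \notin \hat{K}_X$, then $x' \notin \hat{K}'_{X'}$. By hypothesis there is $f \in \Oc(X)$ with $|f(x)| > \|f\|_K$; pulling back, $\pr^\ast f$ witnesses $x' \notin \hat{K}'_{X'}$ immediately --- so in fact one direction of the hard inclusion is also easy once one realizes that a separating function on~$X$ already separates after base change. The genuinely delicate content is therefore the \emph{reverse} of the easy inclusion above: proving that a point $x'$ lying over $x \in \hat{K}_X$ actually lies in $\hat{K}'_{X'}$, i.e.\ that \emph{no} function in the larger algebra $\Oc(X)\hotimes_k k'$ can separate $x'$ from $K'$ when $x$ cannot be separated from $K$ inside~$X$. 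This is where the normed-algebra results come in. The strategy is: given $h = \sum_{\ell} a_\ell \otimes c_\ell \in \Oc(X)\hotimes_k k'$ with $a_\ell \in \Oc(X)$, $c_\ell \in k'$, I would expand the value $|h(x')|$ and the norm $\|h\|_{K'}$ in terms of the coefficient functions $a_\ell$. Choosing an $\alpha$-cartesian family (Proposition~\ref{prop:ExistenceAlphaCartesianBasis}) for the finite-dimensional $k$-subspace of $k'$ spanned by the relevant $c_\ell$, and invoking the compatibility of such families with the tensor norm (Proposition~\ref{Prop:CompatibiliyAlphaCartesianBasisTensorProduct}), I can bound $|h(x')|$ by (up to a factor $\alpha$) the maximum over $\ell$ of $|a_\ell(x)|\,\|c_\ell\|$, and bound $\|h\|_{K'}$ below by (up to $\alpha$) the maximum of $\|a_\ell\|_K\,\|c_\ell\|$. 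Since each $a_\ell$ satisfies $|a_\ell(x)| \le \|a_\ell\|_K$ (because $x \in \hat{K}_X$), the term-by-term comparison yields $|h(x')| \le \alpha^{2}\|h\|_{K'}$, and letting $\alpha \to 1$ gives $|h(x')| \le \|h\|_{K'}$.

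The remaining care is purely analytic: $h$ is a general element of the completed tensor product, not a finite sum, so I would first approximate $h$ by a finite partial sum, apply the $\alpha$-cartesian estimate to that finite sum, and pass to the limit, using that both $x'\mapsto|h(x')|$ and $h \mapsto \|h\|_{K'}$ are continuous for the Banach norm on $\Oc(X')$. The hypothesis that $X$ is countable at infinity enters through the identification $\Oc(X_{k'}) = \Oc(X)\hotimes_k k'$ (Theorem~\ref{thm:extensioncohomology}), which is what licenses the coefficientwise expansion in the first place. Once both inclusions are in hand the proposition follows, and the stated corollary is immediate: holomorphic convexity of~$X$ transfers to $X_{k'}$ because compact hulls pull back to compact hulls, and conversely a compact $\hat{K}'_{X'}$ forces $\hat{K}_X = \pr(\hat{K}'_{X'})$ to be compact as the continuous image of a compact set.
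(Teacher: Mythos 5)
Your overall route is the same as the paper's: reduce the corollary to the hull identity $\hat{K}'_{X'} = \pr_{k'/k}^{-1}(\hat{K}_X)$ (Proposition~\ref{Prop:CompatibilityHolConvexhullsExtScalars}), identify correctly that the delicate inclusion is $\pr_{k'/k}^{-1}(\hat{K}_X) \subseteq \hat{K}'_{X'}$, and attack it with $\alpha$-cartesian bases plus the identification $\Oc(X') = \Oc(X)\hotimes_k k'$ of Theorem~\ref{thm:extensioncohomology}. The gap is in your key estimate. Propositions~\ref{prop:ExistenceAlphaCartesianBasis} and~\ref{Prop:CompatibiliyAlphaCartesianBasisTensorProduct} give a lower bound for the \emph{tensor} norm: if $\lambda_1,\dots,\lambda_n$ is an $\alpha$-cartesian basis of the relevant subspace $V \subseteq k'$ and $h = \sum_\ell a_\ell \otimes \lambda_\ell$, then $\|h\|_{A\otimes k'} \ge \alpha^{-1}\max_\ell \|a_\ell\|_K\,|\lambda_\ell|$. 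You then use this as a lower bound on $\|h\|_{K'} = \sup_{K'}|h|$, but the comparison between these two norms goes the wrong way: evaluating any representation of~$h$ at a point of~$K'$ and taking the infimum over representations gives $\sup_{K'}|h| \le \|h\|_{A\otimes k'}$, and the discrepancy can be a fixed factor that does not tend to~$1$ as $\alpha \to 1$. Concretely, take $k = \Q_p$, $\pi = p^{1/p}$, $X = K = \M(\Q_p(\pi))$, $k' = \Q_p(\pi)$, and $h = \pi\otimes 1 - 1\otimes \pi$. The basis $1, \pi, \dots, \pi^{p-1}$ of~$k'$ is $1$-cartesian and $\max_\ell \|a_\ell\|_K |\lambda_\ell| = p^{-1/p}$; but the points of $K' = \M\bigl(\Q_p(\pi)\hotimes_k k'\bigr)$ send $1\otimes\pi$ to $\zeta_p^j \pi$, so $\sup_{K'}|h| = |\pi|\,\max_j |1-\zeta_p^j| = p^{-1/p}\,p^{-1/(p-1)}$. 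Hence your claimed inequality $\|h\|_{K'} \ge \alpha^{-1}\max_\ell \|a_\ell\|_K|\lambda_\ell|$ fails for every $\alpha < p^{1/(p-1)}$, and the chain $|h(x')| \le \alpha^2\|h\|_{K'}$ collapses.

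What is missing is exactly the ingredient the paper adds to make this work. The correct form of your estimate is $\sup_{\pr^{-1}(x)}|h| \le \alpha\,\|h\|_{A,k'}$ with the tensor norm on the right; one then applies it to the powers~$h^n$, takes $n$-th roots and lets $n \to \infty$. This kills the constant~$\alpha$, and it converts the tensor norm into its spectral seminorm $\lim_n \|h^n\|_{A,k'}^{1/n}$, which Proposition~\ref{prop:SpectralNormExtensionScalars} identifies with $\sup_{K'}|h|$. That identification -- proved first for affinoid~$K$, then for finite unions of affinoid domains via an isometric embedding into a product, then in general by a limit over compact analytic neighbourhoods -- is a substantive part of the argument and cannot be replaced by the term-by-term comparison; it is precisely what bridges the two norms your proposal conflates. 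Your remaining steps (the easy inclusion, the reduction from general $h \in \Oc(X')$ to finite sums by density, and the deduction of the corollary from the hull identity using topological properness and surjectivity of $\pr_{k'/k}$) are sound as stated.
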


The compatibility in Proposition \ref{Prop:CompatibilityHolConvexhullsExtScalars} relies on a compatibility of the spectral norm with extension of scalars. To state it let us define:

\begin{definition} \label{def:RingGermsAlongCompact} Let $X$ be a $k$-analytic space and $K$ a compact subset of $X$. Let $\cal{D}_K$ be the set of compact analytic domains containing $K$.
 The \emph{ring of germs along $K$} is the $k$-algebra
$$ A_K := \varinjlim_{D \in \cal{D}_K} \Oc(D).$$
The $k$-algebra $A_K$ is equipped with the sup norm $\| \cdot \|_{A_K}$ on the compact $K$.
\end{definition}

\begin{proposition} \label{prop:SpectralNormExtensionScalars} Let $X$ be a $k$-analytic space, $K \subset X$ a compact subset and $A$ the ring of germs along $K$. Let $k'$ be an analytic extension of $k$, $\pr\colon X' = X \times_k k' \to X$ be the morphism induced by extension on scalars and $K' := \pr^{-1}(K)$. Denote by $\| \cdot \|_{A, k'}$ the tensor norm on $A \otimes_k k'$ and $\rho_{A,k'}$ the associated spectral norm. Then, for every $f \in A \otimes_k k'$,
$$ \rho_{A, k'}(f) := \lim_{n \to \infty} \| f \|_{A, k'}^{1/n} = \sup_{\pr^{-1}(K)} |f|. $$
\end{proposition}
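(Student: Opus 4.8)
My plan is to prove the two inequalities $\sup_{K'}|\wc|\le\rho_{A,k'}\le\sup_{K'}|\wc|$ separately, the first being elementary and the second the real content.

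For $\sup_{K'}|f|\le\rho_{A,k'}(f)$ I would first remark that $f\mapsto\sup_{K'}|f|$ is a power-multiplicative seminorm on $A\otimes_k k'$, since $|f^n(x')|=|f(x')|^n$ at every point $x'\in K'$. It then suffices to check that it is dominated by the tensor norm, because the spectral seminorm $\rho_{A,k'}$ is the largest power-multiplicative seminorm bounded above by $\|\wc\|_{A,k'}$ (any power-multiplicative $\sigma\le\|\wc\|_{A,k'}$ satisfies $\sigma(f)=\sigma(f^n)^{1/n}\le\|f^n\|_{A,k'}^{1/n}\to\rho_{A,k'}(f)$). Writing $f=\sum_i a_i\otimes\lambda_i$ with $a_i\in A$ and $\lambda_i\in k'$, and taking $x'\in K'$ over $x=\pr(x')\in K$, the ultrametric inequality gives $|f(x')|\le\max_i|a_i(x)|\,|\lambda_i|\le\max_i\|a_i\|_A\,|\lambda_i|$; taking the infimum over all such representations yields $\sup_{K'}|f|\le\|f\|_{A,k'}$, as wanted.

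For the reverse inequality $\rho_{A,k'}(f)\le\sup_{K'}|f|$ I would reduce to the affinoid case and then let the domain shrink to $K$. Since $A=\varinjlim_{D\in\mathcal{D}_K}\Oc(D)$ and $f$ is a finite sum, all the coefficients $a_i$ descend to a single compact analytic domain $D_0\in\mathcal{D}_K$, so $f$ is the image of some $g\in\Oc(D_0)\otimes_k k'$. For each $D\in\mathcal{D}_K$ with $D\subset D_0$, let $g_D\in\Oc(D)\otimes_k k'$ be the image of $g$; the germ homomorphism $\Oc(D)\to A$ is a contraction for the sup-seminorms (as $K\subset D$) sending $g_D$ to $f$, so after tensoring with $k'$ and using monotonicity of the spectral seminorm under contractive homomorphisms one gets $\rho_{A,k'}(f)\le\rho_{\Oc(D),k'}(g_D)$. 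The completed tensor product $\Oc(D)\hotimes_k k'$ is the $k'$-affinoid algebra $\Oc(D_{k'})$, and the spectral radius is independent of the chosen defining Banach norm; Berkovich's spectral radius theorem therefore yields $\rho_{\Oc(D),k'}(g_D)=\sup_{D_{k'}}|g|$. Thus $\rho_{A,k'}(f)\le\sup_{D_{k'}}|g|$ for every such $D$.

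It remains to pass to the limit, and this is the step I expect to be the main obstacle. Using that the compact analytic domains in $\mathcal{D}_K$ form a neighbourhood basis of $K$, one has $\bigcap_D D=K$, whence $\bigcap_D D_{k'}=\pr^{-1}(K)=K'$; and the supremum of the continuous function $|g|$ over the decreasing family of compacts $D_{k'}$ converges to its supremum over the intersection $K'$ (a finite-intersection-property argument on the closed sets $\{|g|\ge s\}\cap D_{k'}$). Combining, $\rho_{A,k'}(f)\le\inf_D\sup_{D_{k'}}|g|=\sup_{K'}|g|=\sup_{K'}|f|$, which closes the proof. The delicate points to secure are precisely that compact analytic neighbourhoods of $K$ are cofinal, so that the intersection is \emph{exactly} $K$, and that Berkovich's spectral radius theorem applies verbatim after the non-archimedean base change to $k'$; a harmless reduction to the reduced case disposes of the fact that $\sup_D$ is only a seminorm when nilpotents are present.
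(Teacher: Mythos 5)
Your proof of the inequality $\sup_{K'}|\wc| \le \rho_{A,k'}$ is correct, and is in fact a bit more economical than the paper's (which argues fibre by fibre, applying \cite[Theorem~1.3.1]{Berkovich90} to the Banach rings $\khat(x)\hotimes_k k'$ for $x\in K$): your observation that $\sup_{K'}|\wc|$ is a power-multiplicative seminorm dominated by the tensor norm, hence dominated by the spectral seminorm, is valid. The genuine gap is in the reverse inequality, at the sentence ``the completed tensor product $\Oc(D)\hotimes_k k'$ is the $k'$-affinoid algebra $\Oc(D_{k'})$ \ldots{} Berkovich's spectral radius theorem therefore yields $\rho_{\Oc(D),k'}(g_D)=\sup_{D_{k'}}|g|$''. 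The members of $\cal{D}_K$ are \emph{compact analytic domains}, that is, finite unions of affinoid domains, and such a domain need not be affinoid; this failure is precisely the phenomenon (Liu's counterexamples) around which the whole paper is built. For a non-affinoid compact $D$, the Banach algebra $\Oc(D)\hotimes_k k'$ is not affinoid, and although \cite[Theorem~1.3.1]{Berkovich90} still identifies the spectral radius of the tensor norm with the supremum over $\M\bigl(\Oc(D)\hotimes_k k'\bigr)$, you have no identification of that spectrum with $D_{k'}$: a priori it could be strictly larger, and bounding its supremum by $\sup_{D_{k'}}|g|$ is exactly the point at issue. Nor can you evade the problem by restricting to affinoid $D$: over a non-good space the compact $K$ may admit no affinoid neighbourhood at all, so the affinoid members of $\cal{D}_K$ need not be cofinal (cofinality is what your descent of $f$ to $D_0$ and your shrinking argument require) and may not even exist.

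This is exactly the gap that the paper's intermediate case fills. Writing $D = D_1 \cup \dots \cup D_n$ with $D_i = \M(A_i)$ affinoid, the restriction map $\Oc(D) \to A_1 \times \dots \times A_n$ is an isometry for the spectral (semi)norms, and \cite[Lemme~3.1]{PoineauAngeliques} guarantees that it remains isometric after applying $\otimes_k k'$ with the tensor norms; since the tensor norm on a finite product is the maximum of the tensor norms of the factors, this reduces the computation of $\rho_{\Oc(D),k'}$ to the affinoid case, where your argument (reducedness caveat included) is sound. Alternatively, you could repair your proof by invoking Theorem~\ref{thm:extensioncohomology} (a compact analytic domain is countable at infinity) to obtain a bi-bounded isomorphism $\Oc(D)\hotimes_k k' \simeq \Oc(D_{k'})$, and then noting that spectral radii are invariant under bi-bounded isomorphisms and that the natural sup-type norm on $\Oc(D_{k'})$ is power-multiplicative with value $\sup_{D_{k'}}|\wc|$. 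Either way, an ingredient beyond ``Berkovich for affinoids'' is indispensable; the remaining steps of your argument (contractivity of $\Oc(D)\otimes_k k' \to A\otimes_k k'$, monotonicity of spectral radii under contractions, and the passage to the limit as $D$ shrinks to $K$) coincide with the paper's and are correct.
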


\begin{proof} We prove first the inequality $ \rho_{A, k'}(f) \ge \sup_{\pr^{-1}(K)} |f|$.
Let $x \in K$ be a point. By definition the evaluation map $\ev_x \colon A \to \khat(x)$ is norm-decreasing, therefore the map induced on tensor products,
$$ \ev_x \otimes \id \colon A \otimes_k k' \too \khat(x) \otimes_k k',$$
is norm decreasing too. According to \cite[Theorem 1.3.1]{Berkovich90}, for $f \in A \otimes_k k'$,
$$ \rho_{A, k'}(f) \ge \rho_{\khat(x), k'}(f) = \sup_{\cal{M}(S)} |f|,$$
where $S$ is the completion of $\khat(x) \otimes_k k'$ with respect the tensor norm $\| \cdot \|_{\khat(x), k'}$ and $\rho_{\khat(x), k'}$ is the spectral norm associated to $\| \cdot \|_{\khat(x), k'}$. The spectrum $\cal{M}(S)$ is naturally identified with the fibre $\pr^{-1}(x)$ whence the wanted inequality.

On the other hand, suppose first that $K$ is an affinoid domain in $X$. Then $K = \cal{M}(A)$, $K' = \cal{M}(A \hotimes_k k')$ and, for $f \in A \otimes_k k'$,
$$ \rho_A(f) = \sup_{\pr^{-1}(K)} |f|.$$

Suppose then that $K$ is a finite union of affinoid domain $D_1, \dots, D_n$ in $X$ with $D_i = \cal{M}(A_i)$. Then, for $f \in A$,
$$ \rho_A(f) = \max \{ \rho_{A_1}(f), \dots, \rho_{A_n}(f) \},$$
where, for $i = 1, \dots, n$,  $\rho_{A_i}$ is the sup norm on $D_i$. In other words, endowing $A$ and $A_i$ with the corresponding spectral norms, the map
\begin{eqnarray*}
\Phi \colon A & \too & A_1 \times \cdots \times A_n \\
f & \longmapsto & (f_{\rvert D_1}, \dots, f_{\rvert D_n}),
\end{eqnarray*}
is isometric. It follows from \cite[Lemme 3.1]{PoineauAngeliques} that the induced map on the tensor products  $$\Phi \otimes_k k' \colon A \otimes_k k' \too (A_1 \otimes_k k') \times \cdots \times (A_n \otimes_k k'), $$
is isometric too. For $f \in A \otimes_k k'$ the equality $ \rho_{A, k'}(f) = \sup_{\pr^{-1}(K)} |f|$  follows from the preceding case. 

Suppose $K$ arbitrary. For $D \in \cal{D}_K$ the restriction map $\Oc(D) \to A$ is norm decreasing, thus the induced map $\Oc(D) \otimes_k k' \to A \otimes_k k'$ is also norm decreasing. It follows from the preceding case, for $ f \in \Oc(D)$,
$$ \rho_{A, k'}(f) \le \rho_{\Oc(D), k'}(f) = \sup_{\pr^{-1}(D)} |f|. $$

Fix $f \in A \otimes_k k'$ and let $D_0 \in \cal{D}_K$ be such that $f$ is the restriction of an element in $\Oc(D_0) \otimes_k k'$ that we denote again by $f$. Then,
$$ \rho_{A, k'}(f) \le \inf_{\substack{D \in \cal{D}_K \\ D \subset D_0}} \sup_{\pr^{-1}(D)} |f| = \sup_{K} |f|,$$
which concludes the proof.
\end{proof}

\begin{proof}[{Proof of Proposition \ref{Prop:CompatibilityHolConvexhullsExtScalars}}] Let us abbreviate $\pr_{k'/k}$ by $\pr$. The inclusion $\hat{K}'_{X'} \subset \pr^{-1}(\hat{K}_{X})$ is rather obvious. The converse inclusion is equivalent to the following statement:
for all $x \in \hat{K}_X$ and all $f \in \Oc(X')$,
$$  \| f \|_{\pr^{-1}(x)} := \sup_{\pr^{-1}(x)} |f| \le \| f \|_{K'} := \sup_{K'} |f|. $$
In order to prove it, consider the ring of germs along $K$ as defined in Definition \ref{def:RingGermsAlongCompact},
$$ A := \varinjlim_{D \in \cal{D}_K} \Oc(D)$$
endowed with the sup norm $\| \cdot \|_K$ on the compact $K$ ($\cal{D}_K$ is the family of analytic domains $D \subset X$ containing $K$).

\begin{lemma} Let $\alpha > 1$ and $g \in \Oc(X) \otimes_{k} k'$. Then,
$$
\sup_{\pr^{-1}(x)} |g| \le \alpha \| g \|_{A, k'}, 
$$
where $\| \cdot \|_{A, k'}$ denotes the tensor norm on $A \otimes_k k'$.
\end{lemma}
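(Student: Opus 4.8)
The plan is to decompose~$g$ along an $\alpha$-cartesian basis of the finite-dimensional $k$-subspace of~$k'$ carrying it, and then to read off the two relevant sizes of~$g$ from that same basis: the sup norm over the fibre~$\pr^{-1}(x)$ on the one hand, and the tensor norm $\|g\|_{A,k'}$ on the other. Concretely, I would first write $g = \sum_{j=1}^{m} a_{j}\otimes b_{j}$ with $a_{j}\in\Oc(X)$ and $b_{1},\dots,b_{m}\in k'$ linearly independent over~$k$. Applying Proposition~\ref{prop:ExistenceAlphaCartesianBasis} to the $k$-span of the coefficients of~$g$ in~$k'$, I may further assume that $b_{1},\dots,b_{m}$ is an $\alpha$-cartesian family of~$k'$, viewed as a normed $k$-vector space. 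This single family will serve in both tensor products below, since being $\alpha$-cartesian is a property of the vectors~$b_{j}$ alone.

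Next I identify the left-hand side with a spectral radius. The fibre~$\pr^{-1}(x)$ is the spectrum of the completion~$S$ of $\khat(x)\otimes_{k}k'$ for the tensor norm, and the image $g_{x}:=(\ev_{x}\otimes\id)(g) = \sum_{j} a_{j}(x)\otimes b_{j}$ lies in $\khat(x)\otimes_{k}k'$. As in the proof of Proposition~\ref{prop:SpectralNormExtensionScalars}, \cite[Theorem~1.3.1]{Berkovich90} gives
$$ \sup_{\pr^{-1}(x)}|g| = \rho_{\khat(x),k'}(g_{x}) \le \|g_{x}\|_{\khat(x),k'}, $$
the inequality holding because the spectral radius is bounded above by the (submultiplicative) tensor norm. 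Applying the upper bound in Proposition~\ref{Prop:CompatibiliyAlphaCartesianBasisTensorProduct} to the $\alpha$-cartesian family $b_{1},\dots,b_{m}$ and the vectors $a_{j}(x)\in\khat(x)$ (the tensor norm being symmetric) yields $\|g_{x}\|_{\khat(x),k'}\le \max_{j}\{|b_{j}|\,|a_{j}(x)|\}$. Since $x\in\hat{K}_{X}$, the defining inequality $|a_{j}(x)|\le\|a_{j}\|_{K}$ holds for each~$j$, whence
$$ \sup_{\pr^{-1}(x)}|g| \le \max_{j}\{|b_{j}|\,\|a_{j}\|_{K}\}. $$

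Finally I would bound this maximum by $\alpha\|g\|_{A,k'}$. The image of~$a_{j}$ in~$A$ has norm exactly $\|a_{j}\|_{K}$, so applying the \emph{lower} bound in Proposition~\ref{Prop:CompatibiliyAlphaCartesianBasisTensorProduct} to the same family $b_{1},\dots,b_{m}$ and the vectors $a_{j}\in A$ gives $\alpha^{-1}\max_{j}\{|b_{j}|\,\|a_{j}\|_{K}\}\le\|g\|_{A,k'}$, which is precisely the desired estimate. The factor~$\alpha$ is lost only at this last step, the comparison on the $\khat(x)$-side being exact.

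The main obstacle — and the reason the hypothesis $x\in\hat{K}_{X}$, rather than $x\in K$, is what makes the argument work — is that~$x$ need not lie in the compact analytic domains defining~$A$, so $\ev_{x}$ does not factor through~$A$ and one cannot simply invoke that $A\to\khat(x)$ is norm-decreasing. I circumvent this by carrying the coefficients~$a_{j}$ as genuine global sections of~$\Oc(X)$ throughout: holomorphic convexity is exactly what transfers the estimate from~$\khat(x)$, where $\ev_{x}$ is defined, to the sup norm on~$K$ that computes the norm of~$A$.
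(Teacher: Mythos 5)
Your proof is correct and follows essentially the same route as the paper's: decompose $g$ along an $\alpha$-cartesian basis of a finite-dimensional $k$-subspace of $k'$ (Proposition~\ref{prop:ExistenceAlphaCartesianBasis}), use $x \in \hat{K}_X$ coefficient-wise, and conclude with the lower bound of Proposition~\ref{Prop:CompatibiliyAlphaCartesianBasisTensorProduct} applied to the tensor norm on $A \otimes_k k'$. The only difference is cosmetic: you justify the estimate $\sup_{\pr^{-1}(x)}|g| \le \max_j\{|a_j(x)|\,|b_j|\}$ by passing through the spectral radius of $\khat(x)\hotimes_k k'$ and \cite[Theorem~1.3.1]{Berkovich90}, whereas the paper obtains the same bound directly from the ultrametric triangle inequality evaluated at points of the fibre.
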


\begin{proof}[Proof of the Lemma] Let $V$ be a finite dimension $k$-vector space of $k'$ such that $g$ lies in $\Oc(X) \otimes_k V$. Since $V$ is finite dimensional there exists a $\alpha$-cartesian basis $\lambda_1, \dots, \lambda_n$ of $V$ (Proposition \ref{prop:ExistenceAlphaCartesianBasis}).  Write $ g = \sum_{i = 1}^n g_i \otimes \lambda_i$ with $g_1, \dots, g_n \in \Oc(X)$.

According to \cite[Lemme 3.1]{PoineauAngeliques} the inclusion $A \otimes_k V \subset A \otimes_k k'$ is isometric and Proposition \ref{Prop:CompatibiliyAlphaCartesianBasisTensorProduct} yields
$$ \| g\|_{A, k'} \ge \alpha^{-1} \max_{i = 1, \dots, n} \{ \| g_i\|_K |\lambda_i|\},$$
By hypothesis the point $x$ belongs to the holomorphically convex hull of $K$ in $X$, that is, $|h(x)| \le \| h\|_K$ for all holomorphic function $h$ on $X$. In particular,
$$
\sup_{\pr^{-1}(x)} |g| \le \max_{i = 1, \dots, n} \{ |g_i(x)| |\lambda_i|\} \le \max_{i = 1, \dots, n} \{ \| g_i \|_K |\lambda_i|\} \le \alpha \| g \|_{A, k'},
$$
which concludes the proof of the Lemma.
\end{proof}

The previous Lemma permits to prove the statement when $f \in \Oc(X) \otimes_k k'$. Indeed, applying it to every positive power of $f$,
$$ \sup_{\pr^{-1}(x)} |f| \le \lim_{n \to \infty} \alpha^{1/n} \| f^n \|^{1/n}_{A, k'} = \sup_{K'} |f|,$$
where the last equality comes from Proposition \ref{prop:SpectralNormExtensionScalars}.

Suppose $f \in \Oc(X')$. Let $\cal{D}$ be a cover of $X$ for the G-topology. For an affinoid domain $D$ of the family $\cal{D}$ denote by $\| \cdot \|_D$ the norm on the affinoid $k$-algebra $A = \Oc(D)$. The family of norms $\{ \| \cdot \|_D \}_{D \in \cal{D}}$ makes the $k$-vector space $\Oc(X)$ a $\cal{D}$-Banachoid space (see Definition \ref{def:MBanachoidSpace}). Similarly the family of sup norms $\{ \| \cdot \|_{D \times k'}\}_{D \in \cal{D}}$ on the $k'$-vector space $\Oc(X')$ makes the latter a $\cal{D}$-Banachoid space. The natural map $\Oc(X) \otimes_k k' \to \Oc(X')$ is injective and induces a bi-bounded isomorphism
$$ \Oc(X) \hotimes_k k' \stackrel{\sim}{\too} \Oc(X'),$$
(see Theorem \ref{thm:extensioncohomology}). Let $D_1, \dots, D_r \in \cal{D}$ be affinoid domains covering $K$. Then for every $\epsilon > 0$ there exists $g_\epsilon \in \Oc(X) \otimes k'$ such that $\| f - g_{\epsilon}\|_{D_i \times k'} < \epsilon$  for $i = 1, \dots, r$. In particular
$$ \| f - g_\epsilon \|_{K'} \le \max_{i = 1, \dots, n} \{\sup_{D_i \times k'} |f - g_\epsilon|\} < \epsilon. $$
 Then, according to the preceding case, there exists $y \in K$ such that
\begin{align*}
\| f \|_{\pr^{-1}(x)} &\le \| g_{\epsilon} \|_{\pr^{-1}(x)} + \epsilon \le \| g_{\epsilon} \|_{K'} + \epsilon \le \| f \|_{K'} + 2\epsilon,
\end{align*}
and we conclude letting $\epsilon$ tend to $0$.
\end{proof}

\subsection{Holomorphic convex hulls and products} For the sake of completeness, we sketch here the proof of the compatibility of the construction of the holomorphic convex hulls with taking products, namely the following statement:

\begin{proposition} \label{Prop:CompatibilityHolConvexhullsProducts} For $i = 1, 2$ let $X_i$ be a $k$-analytic space and $K_i$ be a compact subset of $X_i$. Assume that there exists $i\in \{1,2\}$ and a non-trivially valued analytic extension~$k'$ of~$k$ such that $X_{i}$ is separated and countable at infinity and that $H^1(X_{i}\times_{k}k',\Oc_{X_{i}\times_{k}k'})$ is finite-dimensional over~$k'$. Set $X := X_1 \times_k X_2$ and
$$ K:= \pr^{-1}_1(K_1) \cap \pr_2^{-1}(K_2),$$
where for $i = 1, 2$, $\pr_i \colon X \to X_i$ is the projection on the $i$-th factor. Then,
\begin{equation*} \pushQED{\qed} 
\hat{K}_X = \pr_1^{-1}(\hat{K}_{1, X_1}) \cap \pr_2^{-1}(\hat{K}_{2, X_2}). 
\qedhere
\popQED
\end{equation*}
\end{proposition}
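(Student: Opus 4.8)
The plan is to prove the two inclusions separately, reducing the problem to the compatibility of the holomorphic convex hull with extension of scalars (Proposition \ref{Prop:CompatibilityHolConvexhullsExtScalars}) and to a density statement for functions on a product. The inclusion $\hat{K}_X \subset \pr_1^{-1}(\hat{K}_{1, X_1}) \cap \pr_2^{-1}(\hat{K}_{2, X_2})$ is the easy one: if $x \in \hat{K}_X$ and $f_1 \in \Oc(X_1)$, then $g := \pr_1^\ast f_1$ is a global holomorphic function on $X$ whose supremum over $K$ is bounded by $\|f_1\|_{K_1}$ (since $\pr_1(K) \subset K_1$), so $|f_1(\pr_1(x))| = |g(x)| \le \|g\|_K \le \|f_1\|_{K_1}$, showing $\pr_1(x) \in \hat{K}_{1, X_1}$; the same argument with $\pr_2$ gives the other containment.

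For the reverse inclusion, let $x \in \pr_1^{-1}(\hat{K}_{1, X_1}) \cap \pr_2^{-1}(\hat{K}_{2, X_2})$, write $x_i := \pr_i(x) \in \hat{K}_{i, X_i}$, and let $f \in \Oc(X)$; I must show $|f(x)| \le \|f\|_K$. The key is to approximate $f$ on a suitable compact neighbourhood of $K$ by elements of the algebraic tensor product $\Oc(X_1) \otimes_k \Oc(X_2)$, for which the estimate is immediate: for a simple tensor $g_1 \otimes g_2$ one has $|g_1(x_1) g_2(x_2)| \le \|g_1\|_{K_1} \|g_2\|_{K_2} = \|g_1 \otimes g_2\|_K$ using that $x_i \in \hat{K}_{i, X_i}$, and this passes to finite sums after choosing an $\alpha$-cartesian representation (Propositions \ref{prop:ExistenceAlphaCartesianBasis} and \ref{Prop:CompatibiliyAlphaCartesianBasisTensorProduct}) so that the norm of the sum controls the individual terms up to a factor $\alpha$ that tends to $1$. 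The finiteness hypothesis on $H^1(X_i \times_k k', \Oc)$ together with the results of Theorem \ref{thm:FXGY} and Lemma \ref{Lemma:BaseChangeWithDisc} is what guarantees that $\Oc(X_1)\, \hat{\otimes}_k\, \Oc(X_2)$ surjects onto $\Oc(X)$ with a controlled norm, so that the density needed for the approximation holds; the passage to a non-trivially valued $k'$ and back is handled by Proposition \ref{Prop:CompatibilityHolConvexhullsExtScalars}.

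The main obstacle will be establishing the density of $\Oc(X_1) \otimes_k \Oc(X_2)$ in $\Oc(X)$ with respect to the relevant family of sup-seminorms, since the completed tensor product of sections over a non-compact product need not obviously surject onto global sections of the product. This is precisely where the finiteness of the first cohomology of the structure sheaf over~$k'$ enters: it allows one to control the failure of exactness of the Künneth-type comparison map and to reduce, after a base change to a non-trivially valued field via Proposition \ref{Prop:CompatibilityHolConvexhullsExtScalars}, to a situation where Kiehl-type density results apply. Once the approximation is in place, letting the approximation error and the cartesian constant $\alpha$ tend to their limiting values yields $|f(x)| \le \|f\|_K$, completing the reverse inclusion and hence the proof.
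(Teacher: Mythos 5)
Your skeleton is the same as the paper's (easy inclusion by pulling back functions; for the reverse inclusion, approximation of $f \in \Oc(X)$ by elements of the algebraic tensor product $\Oc(X_1) \otimes_k \Oc(X_2)$, with $\alpha$-cartesian families controlling the estimate), but there is a genuine gap at the key estimate. Write $x_i := \pr_i(x)$ and let $A_i$ be the ring of germs along $K_i$, with $\|\cdot\|_{A_1,A_2}$ the tensor norm on $A_1 \otimes_k A_2$. For $f = \sum_j g_j \otimes h_j$ with $\{h_j\}$ an $\alpha$-cartesian family, what Proposition \ref{Prop:CompatibiliyAlphaCartesianBasisTensorProduct} controls is the \emph{tensor} norm: $\|f\|_{A_1,A_2} \ge \alpha^{-1}\max_j \|g_j\|_{K_1}\|h_j\|_{K_2}$. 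Combined with $|f(x)| \le \max_j |g_j(x_1)|\,|h_j(x_2)| \le \max_j \|g_j\|_{K_1}\|h_j\|_{K_2}$, this yields $|f(x)| \le \alpha\, \|f\|_{A_1,A_2}$ --- a bound against the tensor norm, not against $\|f\|_K = \sup_K |f|$, which is what you need. These are genuinely different quantities: the sup seminorm over $K$ is only the \emph{spectral} seminorm attached to the tensor norm, and the discrepancy between them is independent of $\alpha$, so ``letting $\alpha$ tend to $1$'' cannot close it. Likewise, the equality $\|g_1 \otimes g_2\|_K = \|g_1\|_{K_1}\|g_2\|_{K_2}$ for simple tensors does not ``pass to finite sums'': for sums, cancellation on $K$ is precisely the issue, and the cartesian property by itself only bounds the tensor norm from below, not the sup over $K$.

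The missing ingredient is exactly the one the paper singles out as the new input: Proposition \ref{prop:SpectralNormProduct}, the product analogue of Proposition \ref{prop:SpectralNormExtensionScalars}, asserting $\lim_{n}\|f^n\|_{A_1,A_2}^{1/n} = \sup_K |f|$. With it one applies the tensor-norm estimate to every power $f^n$ and takes $n$-th roots, $|f(x)| \le \lim_n \alpha^{1/n}\|f^n\|_{A_1,A_2}^{1/n} = \|f\|_K$; it is the passage to powers, not $\alpha \to 1$, that eliminates both the constant $\alpha$ and the tensor/sup discrepancy. A secondary issue: for the density of $\Oc(X_1)\otimes_k\Oc(X_2)$ in $\Oc(X)$ you invoke Theorem \ref{thm:FXGY} and Lemma \ref{Lemma:BaseChangeWithDisc}, but the former assumes universal acyclicity of the sheaves involved (not among the present hypotheses) and the latter concerns products with discs; the density should instead rest on the K\"unneth-type results of \cite{PoineauPulitaBanachoid} behind Theorem \ref{thm:extensioncohomology}, which is where the hypothesis that $\H^1(X_i\times_k k',\Oc)$ be finite-dimensional over a non-trivially valued~$k'$ enters --- this is also what the paper's own (sketched) proof indicates.
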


The argument is analogous to the one of Proposition \ref{Prop:CompatibilityHolConvexhullsExtScalars} once one took care to replace the use Theorem \ref{thm:FXGY} by that of Theorem \ref{thm:extensioncohomology}, and Proposition \ref{prop:SpectralNormExtensionScalars} by the following statement, proved along the same lines:

\begin{proposition} \label{prop:SpectralNormProduct} For $i = 1, 2$ let $X_i$ be a $k$-analytic space, $K_i \subset X_i$ a compact subset and $A_i$ the ring of germs along $K$. Let $X := X_1 \times_k X_2$ and
$$ K := \pr^{-1}_1(K_1) \cap \pr^{-1}_2(K_2),$$
where, for $i =1, 2$, $\pr_i \colon X \to X_i$ is the projection of the $i$-th coordinate. Denote by $\| \cdot \|_{A_1, A_2}$ the tensor norm on $A_1 \otimes_k A_2$ and $\rho_{A_1,A_2}$ the associated spectral norm. Then, for every $f \in A_1 \otimes_k A_2$,
$$ \pushQED{\qed} 
\rho_{A_1, A_2}(f) := \lim_{n \to \infty} \| f \|_{A_1, A_2}^{1/n} = \sup_{K} |f|. 
\qedhere
\popQED$$
\qed
\end{proposition}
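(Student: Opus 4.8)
The plan is to mirror the proof of Proposition~\ref{prop:SpectralNormExtensionScalars}, treating the second ring of germs $A_2$ in the role previously played by the field extension $k'$ and exploiting the symmetry between the two variables. As there, I would establish separately the two inequalities $\rho_{A_1,A_2}(f) \ge \sup_K |f|$ and $\rho_{A_1,A_2}(f) \le \sup_K |f|$, the inputs being \cite[Theorem~1.3.1]{Berkovich90} for the lower bound and \cite[Lemme~3.1]{PoineauAngeliques} for the upper one.

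For the lower bound, fix a point $z \in K$ and set $x_i := \pr_i(z) \in K_i$. The evaluation maps $\ev_{x_i} \colon A_i \to \khat(x_i)$ are norm-decreasing for the sup norms on $K_i$, hence so is the induced map $\ev_{x_1} \otimes \ev_{x_2} \colon A_1 \otimes_k A_2 \to \khat(x_1) \otimes_k \khat(x_2)$. By \cite[Theorem~1.3.1]{Berkovich90} this gives $\rho_{A_1, A_2}(f) \ge \rho_{\khat(x_1), \khat(x_2)}(f) = \sup_{\cal{M}(S)} |f|$, where $S$ is the completion of $\khat(x_1) \otimes_k \khat(x_2)$ for the tensor norm. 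The spectrum $\cal{M}(S)$ is naturally identified with the fibre $\pr_1^{-1}(x_1) \cap \pr_2^{-1}(x_2)$, which contains $z$; since $K$ is precisely the union of these fibres as $(x_1, x_2)$ ranges over $K_1 \times K_2$, taking the supremum over $z \in K$ yields $\rho_{A_1,A_2}(f) \ge \sup_K |f|$.

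For the upper bound I would proceed in three stages. If $K_1, K_2$ are affinoid domains, then $K_i = \cal{M}(A_i)$ and $K = \cal{M}(A_1 \hotimes_k A_2)$, so that $\rho_{A_1, A_2}(f) = \sup_K |f|$ is the identity between spectral norm and sup norm on the affinoid algebra $A_1 \hotimes_k A_2$, the passage to the completion leaving the spectral radius of $f$ unchanged. If each $K_i = \bigcup_j D_{i,j}$ is a finite union of affinoid domains, the restriction map from $A_i$ to $\prod_j \Oc(D_{i,j})$ is isometric for the spectral norms; applying \cite[Lemme~3.1]{PoineauAngeliques} once in each variable shows that $A_1 \otimes_k A_2 \to \prod_{j, \ell}(\Oc(D_{1,j}) \otimes_k \Oc(D_{2,\ell}))$ is isometric as well, whence the equality follows from the affinoid case. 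As compact analytic domains are finite unions of affinoid domains, this covers the case where $K_1, K_2$ are compact analytic domains. For arbitrary compacts, any $f$ comes from $\Oc(D_1^0) \otimes_k \Oc(D_2^0)$ for some $D_i^0 \in \cal{D}_{K_i}$, the germ rings being filtered colimits; for $D_i \in \cal{D}_{K_i}$ contained in $D_i^0$ the restriction $\Oc(D_1) \otimes_k \Oc(D_2) \to A_1 \otimes_k A_2$ is norm-decreasing, so $\rho_{A_1,A_2}(f) \le \rho_{\Oc(D_1),\Oc(D_2)}(f) = \sup_{\pr_1^{-1}(D_1) \cap \pr_2^{-1}(D_2)} |f|$, and taking the infimum over $D_1, D_2$ together with compactness of $K$ gives $\rho_{A_1,A_2}(f) \le \sup_K |f|$.

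I expect the main obstacle to be the isometry statement in the finite-union stage: one must verify that \cite[Lemme~3.1]{PoineauAngeliques}, applied once per factor, is compatible with the identification of $\bigl(\prod_j \Oc(D_{1,j})\bigr) \otimes_k \bigl(\prod_\ell \Oc(D_{2,\ell})\bigr)$ with $\prod_{j,\ell}(\Oc(D_{1,j}) \otimes_k \Oc(D_{2,\ell}))$ endowed with the maximum of the pairwise tensor norms. The identification of $\cal{M}(S)$ with the product fibre and the fact that these fibres cover $K$ are the other points requiring care, though they are formal.
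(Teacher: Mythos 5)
Your proof is correct and takes essentially the same approach as the paper, which simply states that this proposition is ``proved along the same lines'' as Proposition~\ref{prop:SpectralNormExtensionScalars}: you perform exactly that adaptation, with the same three-stage reduction (affinoid domains, finite unions of affinoid domains, arbitrary compacts via the filtered colimit), the evaluation/fibre argument with \cite[Theorem~1.3.1]{Berkovich90} for the lower bound, and \cite[Lemme~3.1]{PoineauAngeliques} for the isometry step. The point you flag as the main obstacle does hold: the natural map from $\bigl(\prod_j \Oc(D_{1,j})\bigr) \otimes_k \bigl(\prod_\ell \Oc(D_{2,\ell})\bigr)$ to $\prod_{j,\ell}\bigl(\Oc(D_{1,j}) \otimes_k \Oc(D_{2,\ell})\bigr)$, the target carrying the maximum of the pairwise tensor norms, is isometric, since it is norm-decreasing by projecting representations coordinatewise, and its inverse is norm-decreasing because the coordinate inclusions into the max-normed products are isometric.
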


\subsection{Descent of holomorphic separability} Let us first recall the notion of peaked peaked (\emph{cf}. \cite[Definition~5.2.1]{Berkovich90}), or universally multiplicative (\emph{cf}. \cite[D\'efinition~3.2]{PoineauAngeliques}), analytic extension.

\begin{definition}\label{def:universallymultiplicative}
An analytic extension~$k'$ of~$k$ is said to be \emph{universally multiplicative} if, for each analytic extension~$K$ of~$k$, the tensor product norm on the algebra $K \hotimes_k k'$ is multiplicative.
\end{definition} 

For instance, if  $r$ is an element of $\R_{>0}^N$ whose coordinates are free over $|k^\times|$, then the extension~$k_{r}/k$ (\emph{cf}. Definition~\ref{def:Definitionk_r}) is universally multiplicative.

\begin{proposition} \label{prop:DescendingHolomorphicSeparability} Let $X$ be a $k$-analytic space that is countable at infinity. Let~$k'$ be a universally multiplicative analytic extension of~$k$. If~$X_{k'}$ is holomorphically separable, then so is~$X$.
\end{proposition}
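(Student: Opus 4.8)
The plan is to separate two given distinct points of $X$ by a global function, by lifting them canonically to $X_{k'}$, invoking holomorphic separability there, and pulling the result back with an $\alpha$-cartesian basis argument. Write $\pr \colon X_{k'} \to X$ for the projection. For $x \in X$ the fibre $\pr^{-1}(x)$ is the Berkovich spectrum $\M(\khat(x) \hotimes_k k')$, and since $k'$ is universally multiplicative (Definition~\ref{def:universallymultiplicative}) the tensor product seminorm on $\khat(x) \hotimes_k k'$ is multiplicative; it therefore defines a point $\tilde{x} \in \pr^{-1}(x)$. By construction, for $g \in \Oc(X) \otimes_k k'$ written as $g = \sum_i f_i \otimes \lambda_i$ with $f_i \in \Oc(X)$ and $\lambda_i \in k'$, the value $|g(\tilde{x})|$ equals the tensor norm $\| \sum_i f_i(x) \otimes \lambda_i \|$ on $\khat(x) \hotimes_k k'$, evaluation at $\tilde{x}$ being evaluation at $x$ followed by the (multiplicative) tensor seminorm. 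Note that $\pr(\tilde{x}) = x$, so distinct points of $X$ have distinct lifts.

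The heart of the argument is the following claim: if $x, x' \in X$ satisfy $|f(x)| = |f(x')|$ for every $f \in \Oc(X)$, then $|g(\tilde{x})| = |g(\tilde{x}')|$ for every $g \in \Oc(X) \otimes_k k'$. To prove it I would write $g = \sum_i f_i \otimes \lambda_i$, let $V \subset k'$ be the $k$-span of the $\lambda_i$, fix $\alpha > 1$, and choose an $\alpha$-cartesian basis $\mu_1, \dots, \mu_m$ of $V$ (Proposition~\ref{prop:ExistenceAlphaCartesianBasis}); rewriting $g = \sum_j h_j \otimes \mu_j$ with $h_j \in \Oc(X)$, Proposition~\ref{Prop:CompatibiliyAlphaCartesianBasisTensorProduct} gives
$$ \alpha^{-1} M \le |g(\tilde{x})| \le M \quad\text{and}\quad \alpha^{-1} M' \le |g(\tilde{x}')| \le M', $$
where $M = \max_j |h_j(x)| |\mu_j|$ and $M' = \max_j |h_j(x')| |\mu_j|$. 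Since each $h_j$ lies in $\Oc(X)$, the hypothesis forces $M = M'$, so the ratio $|g(\tilde{x})| / |g(\tilde{x}')|$ lies in $[\alpha^{-2}, \alpha^2]$ (both values vanishing simultaneously when $M = 0$). As $\alpha > 1$ is arbitrary while the ratio is fixed, one concludes $|g(\tilde{x})| = |g(\tilde{x}')|$.

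Finally I would upgrade the claim from $\Oc(X) \otimes_k k'$ to all of $\Oc(X_{k'})$ and conclude by contraposition. The seminorms $g \mapsto |g(\tilde{x})|$ and $g \mapsto |g(\tilde{x}')|$ are continuous on $\Oc(X_{k'})$ for the natural Banachoid topology: each $\tilde{x}$ lies in the affinoid domain $\pr^{-1}(D) = D_{k'}$ for any affinoid domain $D \ni x$ of $X$, so evaluation there is dominated by the sup norm on $D_{k'}$. By Theorem~\ref{thm:extensioncohomology} the subspace $\Oc(X) \otimes_k k'$ is dense in $\Oc(X_{k'})$ (this is where countability at infinity enters), so the two continuous seminorms, equal on a dense subspace by the claim, agree everywhere. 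Given distinct $x, x' \in X$, the lifts $\tilde{x}, \tilde{x}'$ are then distinct, and holomorphic separability of $X_{k'}$ yields $g \in \Oc(X_{k'})$ with $|g(\tilde{x})| \ne |g(\tilde{x}')|$; this rules out $|f(x)| = |f(x')|$ holding for all $f \in \Oc(X)$, so some $f \in \Oc(X)$ separates $x$ and $x'$.

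I expect the main obstacle to be this last step: pinning down the precise density of $\Oc(X) \otimes_k k'$ in $\Oc(X_{k'})$ together with the continuity of evaluation at the lifts $\tilde{x}$, which must be extracted from the extension-of-scalars results (Theorem~\ref{thm:extensioncohomology}) and the Banachoid formalism of Appendix~\ref{sec:Banachoid}. By contrast, the $\alpha$-cartesian estimate, although it is the computational core, is routine once Propositions~\ref{prop:ExistenceAlphaCartesianBasis} and~\ref{Prop:CompatibiliyAlphaCartesianBasisTensorProduct} are in hand.
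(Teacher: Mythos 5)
Your proof is correct and follows essentially the same route as the paper's: both rest on the canonical lifts into $X_{k'}$ furnished by universal multiplicativity, the density of $\Oc(X)\otimes_k k'$ in $\Oc(X_{k'})$ from Theorem~\ref{thm:extensioncohomology} (together with continuity of evaluation at the lifted points), and the $\alpha$-cartesian estimates of Propositions~\ref{prop:ExistenceAlphaCartesianBasis} and~\ref{Prop:CompatibiliyAlphaCartesianBasisTensorProduct}. The only difference is organizational: the paper argues directly, approximating a function separating the lifts by an element of $\Oc(X)\otimes_k k'$ and descending it with a single $\beta$-cartesian basis ($\beta$ chosen against the gap $|f(\sigma(x_1))|<|f(\sigma(x_2))|$), whereas you run the same estimates contrapositively with $\alpha \to 1$; the two formulations are interchangeable.
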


\begin{proof} Let $\pr_{k'/k} \colon X_{k'} \to X$ be the natural projection morphism. Let $x_1, x_2 \in X$. By assumption, for $\ell=1,2$, the tensor product norm on $\khat(x_{\ell}) \hotimes_k k'$ is multiplicative, hence defines a point $\sigma(x_{\ell})$ in $\pr_{k'/k}^{-1}(x_{\ell}) \subseteq X_{k'}$. 

Since $X_{k'}$ is holomorphically separable by hypothesis, up to permuting $x_1$ and $x_2$, there is a function $f \in \Oc(X_{k'})$ such that $|f(\sigma(x_1))| < |f(\sigma(x_2))|$. By Theorem~\ref{thm:extensioncohomology}, $\Oc(X) \otimes k'$ is dense in $\Oc(X_{k'})$, hence the function~$f$ can be taken in $\Oc(X) \otimes k'$.

Let $V$ be a finite dimensional $k$-vector subspace of $k'$ such that $f$ lies in $\Oc(X) \otimes V$. Let $\beta\in(1,+\infty)$ such that $\beta |f(\sigma(x_1))| < |f(\sigma(x_2))|$. Then Proposition \ref{prop:ExistenceAlphaCartesianBasis} yields a $\beta$-cartesian basis $\lambda_1, \dots, \lambda_n$ of $V$. Write $f= \lambda_1 f_1 + \cdots + \lambda_n  f_n $ with $f_1, \dots, f_n \in \Oc(X)$.

For $\ell = 1, 2$, by definition of~$\sigma(x_{\ell})$, we have $\| f \|_{\khat(x_\ell) \hotimes k'} = |f(\sigma(x_\ell))|$, hence 
$$ \beta^{-1} \max_{i = 1, \dots, n} \{ |\lambda_i| |f_i(x_\ell)|\} \le |f(\sigma(x_\ell)) | \le \max_{i = 1, \dots, n} \{ |\lambda_i| |f_i(x_\ell)|\}$$
(according to \cite[Lemme 3.1]{PoineauAngeliques} the inclusion $\khat(x_\ell) \otimes V \subset \khat(x_\ell) \hotimes k'$ is isometric).  It follows that
\[  \max_{i = 1, \dots, n} \{ |\lambda_i| |f_i(x_1)|\} \le \beta |f(\sigma(x_1))| < |f(\sigma(x_2))| \le  \max_{i = 1, \dots, n} \{ |\lambda_i| |f_i(x_2)|\}, \]
hence there exists $i\in \{1,\dotsc,n\}$ such that $|f_{i}(x_1)| < |f_{i}(x_2)|$.
\end{proof}

\begin{remark}
Let $X$ be a $k$-analytic space and assume that~$k$ is algebraically closed. Then, \cite[Corollaire 3.14]{PoineauAngeliques} ensures that each point of $x$ is \emph{universal}, that is to say the extension $\khat(x)/k$ is universally multiplicative. 

Assume that $X$~is countable at infinity and let $k'$ be an arbitrary analytic extension of~$k$. Using virtually the same proof as above, one can show that, if~$X_{k'}$ is holomorphically separable, then so is~$X$.
\end{remark}

\section{Characterization when the boundary is not necessarily empty}\label{sec:boundary}

\begin{proposition} \label{Prop:ExhaustedByCompactSteinImpliesCohomStein} Let $X$ be a $k$-analytic space W-exhausted by Liu domains $\{ D_i\}_{i \in \N}$ and $F$ a coherent sheaf on $X$. Then,
\begin{enumerate}
\item $F(X)$ is dense in $F(D_i)$ for all $i \in \N$;
\item $\H^q(X, F) = 0$ for all $i \in \N$.
\end{enumerate}
\end{proposition}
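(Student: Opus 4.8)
The plan is to follow Kiehl's proof of Theorem~\ref{Thm:SpacesExhaustedByWeierstrassAreStein} (\cite[Satz~2.4]{KiehlStein}), replacing affinoid domains by Liu domains throughout and feeding in the structure theory of Sections~\ref{sec:CohomologicallyStein}--\ref{sec:Liu}. Each $D_i$, being a Liu space, is compact and, by Corollary~\ref{Cor:LiuSpaceCohomStein}, cohomologically Stein, so I may apply Proposition~\ref{Prop:GlobalSectionsCoherentSheafOnLiuSpaces} to it. To invoke that proposition (and Cartan's Theorem~A) I first reduce to $k$ non-trivially valued: for $r$ with coordinates free over~$|k^\times|$, the extension $k_r/k$ is non-trivially valued, $X_{k_r}$ is again W-exhausted by Liu domains (Liu spaces and density of transition maps being stable under $\hotimes_k k_r$), and Theorem~\ref{thm:extensioncohomology} transfers the vanishing of cohomology — and, via the bi-bounded isomorphism $\Oc(X)\hotimes_k k_r\cong\Oc(X_{k_r})$, the density statement — back to~$X$.

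For~(1), fix $i$. By Proposition~\ref{Prop:GlobalSectionsCoherentSheafOnLiuSpaces}(4) the $\Oc(D_i)$-module $M_i := F(D_i)$ is finitely generated and $\tilde{M}_i \cong F_{\vert D_i}$; endowed with the quotient norm from a finite presentation over the Banach algebra $\Oc(D_i)$ it is a Banach module, in particular complete. I first show that $F(D_{i+1}) \to F(D_i)$ has dense image. A finite generating family of $M_{i+1}$ yields a surjection of coherent sheaves $\Oc_{D_{i+1}}^n \to F_{\vert D_{i+1}}$; restricting to $D_i$ and passing to global sections — which preserves surjectivity since $D_i$ is cohomologically Stein — shows that $F(D_i)$ is generated as an $\Oc(D_i)$-module by the restrictions of finitely many $s_1, \dots, s_n \in F(D_{i+1})$. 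Given $t = \sum_\ell a_\ell\, s_\ell|_{D_i}$, I approximate each $a_\ell \in \Oc(D_i)$ by the restriction of some $b_\ell \in \Oc(D_{i+1})$ (density of $\Oc(D_{i+1}) \to \Oc(D_i)$, part of the definition of W-exhaustion); then $\sum_\ell b_\ell s_\ell \in F(D_{i+1})$ restricts to an approximation of~$t$. Density of $F(X) \to F(D_i)$ then follows by the usual diagonal argument: from $t$ one builds $u_m \in F(D_m)$ with $\|u_{m+1}|_{D_m} - u_m\|_{D_m}$ decaying geometrically, and completeness of the $F(D_m)$ produces compatible limits $s_m := \lim_j u_j|_{D_m}$ defining $s \in F(X) = \varprojlim_m F(D_m)$ with $s|_{D_i}$ arbitrarily close to~$t$.

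For~(2), I choose an affinoid $G$-cover adapted to the exhaustion. Each $D_i$ is a finite union of rational affinoid domains by Proposition~\ref{prop:LiuGerritzenGrauertNonStrict}; writing $D_i = \bigcup_{\lambda \in \Lambda_i} U_\lambda$ with $\Lambda_i$ finite and $\Lambda_i \subset \Lambda_{i+1}$, the family $\cal{U} = \{U_\lambda\}_{\lambda \in \Lambda}$, $\Lambda := \bigcup_i \Lambda_i$, is an affinoid cover of~$X$; its finite intersections lie in some $D_j$, hence are affinoid (the $D_j$ being separated) and $F$-acyclic, so $\CC^\bullet(\cal{U}, F)$ computes $\H^\bullet(X, F)$. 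Setting $\CC^\bullet_i := \CC^\bullet(\{U_\lambda\}_{\lambda \in \Lambda_i}, F)$, the transition maps $\CC^\bullet_{i+1} \to \CC^\bullet_i$ are the coordinate projections, hence surjective; moreover $\varprojlim_i \CC^\bullet_i = \CC^\bullet(\cal{U}, F)$ and $\H^q(\CC^\bullet_i) = \H^q(D_i, F) = 0$ for $q \ge 1$ since $D_i$ is cohomologically Stein. For $q \ge 2$, hypothesis~(2) of Lemma~\ref{lem:ProjectiveSystemCochainComplex} holds and gives $\H^q(X, F) = \varprojlim_i \H^q(D_i, F) = 0$.

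The main obstacle is the vanishing of $\H^1(X, F)$. The cocycle-level input demanded by Lemma~\ref{lem:ProjectiveSystemCochainComplex}(1) is surjectivity of $\ZZ^0_{i+1} = F(D_{i+1}) \to \ZZ^0_i = F(D_i)$, which I do \emph{not} have: part~(1) gives only dense image. The resolution is that what the proof of that lemma genuinely uses is the Mittag-Leffler property of the system $\{F(D_i)\}$, namely the vanishing of $\varprojlim^1_i F(D_i)$, and this does hold, since a projective system of Banach (hence Fréchet) spaces with dense transition maps has vanishing $\varprojlim^1$. Combined with $\varprojlim_i \H^1(D_i, F) = 0$, this yields $\H^1(X, F) = 0$ and completes the argument, the trivially-valued case being recovered by the base-change reduction above.
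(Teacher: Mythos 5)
Your proof is correct, and its skeleton is the same as the paper's, which reproduces Kiehl's argument: part (1) via finite generation of $F(D_i)$ and density of $\Oc(D_{i+1}) \to \Oc(D_i)$, followed by a telescoping/completeness argument, and part (2) via \v{C}ech cohomology of an acyclic cover, with Lemma~\ref{lem:ProjectiveSystemCochainComplex}(2) disposing of $q \ge 2$ and a separate treatment of $q=1$. Three differences in packaging are worth recording. First, the paper takes the acyclic cover to be the nested family $\{D_i\}$ itself --- legitimate because Liu domains are cohomologically Stein and $D_i \cap D_j = D_{\min(i,j)}$ --- so it never refines to affinoids; your refinement via Proposition~\ref{prop:LiuGerritzenGrauertNonStrict} is harmless but unnecessary, and it is what forces your (correct) side remark that finite intersections lie in some separated $D_j$. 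Second, for $\H^1$ the paper proves precisely the statement you outsource: given a cocycle $(t_i)$, it uses part (1) to correct it to one with $\|t_i'\|_i \le 2^{-i}$ and then sums the telescoping series in the Banach spaces $F(D_i)$; this is exactly a hands-on proof that $\varprojlim^1_i F(D_i)=0$, i.e.\ the topological Mittag-Leffler theorem you cite (via the Milnor sequence), so your identification of $\varprojlim^1\H^0$ as the only obstruction is the same mathematics, cited rather than proved --- note only that calling this the ``Mittag-Leffler property'' is a slight misnomer, since the images need not stabilize; what you use, and what is true, is the vanishing of $\varprojlim^1$ for towers of Banach spaces with dense transition maps. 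Third, your preliminary reduction to non-trivially valued $k$ is absent from the paper, which invokes Proposition~\ref{Prop:GlobalSectionsCoherentSheafOnLiuSpaces} (stated under the hypothesis that $k$ is non-trivially valued, and whose chain of dependencies, like the norm-equivalence assertion, genuinely needs it) without comment; this extra step is a gain in care. One caveat on it: transferring the vanishing of $\H^q$, $q\ge 1$, back down through Theorem~\ref{thm:extensioncohomology} uses that $X$ is separated, which is not a stated hypothesis of Proposition~\ref{Prop:ExhaustedByCompactSteinImpliesCohomStein}; it does hold automatically for spaces W-exhausted by Liu domains (the diagonal over $D_i\times_k D_j$ is the restriction of the diagonal of $D_{\max(i,j)}$), or one can bypass Theorem~\ref{thm:extensioncohomology} entirely by applying the bounded retraction $k_r \to k$ termwise to the \v{C}ech complex --- the same mechanism that makes your density descent work.
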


\begin{proof} We reproduce Kiehl's argument. For all $i \in \N$, the set of global sections $F(D_i)$ forms a finitely generated $\Oc_X(D_i)$-module (Proposition \ref{Prop:GlobalSectionsCoherentSheafOnLiuSpaces} (4)). The norms on $F(D_i)$ obtained as quotient norms through surjections $ \Oc_X(D_i)^n \to F(D_i)$ are all equivalent. Recursively pick a norm $\| \cdot \|_i$ in this equivalence class normalized so that $ \| s\|_{i-1} \le \| s \|_{i}$ for all $s \in F(D_i)$.

(1) Let $i\in \N$, $s \in F(D_i)$ and $\epsilon > 0$. Set $t_0 = s$ and for $n \ge 1$ define recursively a sequence of sections $t_n \in F(D_{i+n})$ such that 
$$ \| t_{n} - t_{n - 1}\|_{i + n - 1} \le \epsilon / n.$$
For all $j \ge i$ the sequence $\{ t_{n}\}_{n \ge j - i}$ converges on $D_j$. We thus obtain a global section $t \in F(X)$. On the other hand, for all $n \ge 1$,
$$ \| t_n - s\|_{i} \le \max_{\ell = 0 , \dots, n-1} \| t_{\ell + 1} - t_\ell \|_{i} \le \epsilon,$$
thus $\| t - s\|_{i} \le \epsilon$.

(2) Since Liu spaces are cohomologically Stein (Corollary \ref{Cor:LiuSpaceCohomStein}), the cover for the $G$-topology $\cal{D} = \{ D_i\}_{i \in \N}$ is acyclic. Therefore the coherent cohomology on $X$ can be computed as the \v{C}ech cohomology with respect to $\cal{D}$: for all $q \ge 0$ and all coherent sheaf $F$ on $X$,
$$ \H^q(X, F) = \check{\H}^q(\cal{D}, F).$$

We argue by induction on $q \ge 1$. In order to show $\H^1(X, F) = 0$, it suffices to show that, given a sequence of sections $t_i \in F(D_i)$, there exists a sequence $s_i \in F(D_i)$ such that, for all $i \in \N$, $ t_i = s_i - s_{i +1}$. Using~(1), we may construct a sequence $s'_i \in F(D_i)$ such that, for all $i \in \N$, $ \| t_i - (s'_i - s'_{i+1})\|_{i} \le 2^{-i}$. For $i\in \N$, write $t_i' := t_i - (s'_i - s'_{i+1})$. For each $i\in \N$, the series  $ \sum_{n = 0}^\infty t'_{i +n}$
converges on~$D_i$ to an element $s_i'' \in F(D_i)$. For each $i\in \N$, we then have $ t_i' = s_i'' - s_{i+1}''$ and $ t_i = (s_i' + s_i'') - (s_{i+1}' + s_{i+1}'')$.

Let $q \ge 2$ and suppose that $(q-1)$-th cohomology group $\H^{q-1}(X, F)$ vanishes. For $i \in \N$, let $\cal{D}_i = \{  D_0, \dots, D_i \}$ and $\CC^\bullet_{i}$ be the \v{C}ech complex $\CC^\bullet(\cal{D}_i, F)$. The complexes $\CC^\bullet_i$ form a projective system through the maps $\phi_i \colon \CC^\bullet_{i+1} \to \CC^\bullet_i$ induced by projections. The projective limit
$$\CC^\bullet := \varprojlim_{i \in \N} \CC^\bullet_i, $$
is identified with the \v{C}ech complex $\CC^\bullet(\cal{D}, F)$. With these notations, for all $p \in \N$,
$$ \H^p(X, F) = \H^p(\CC^\bullet).$$
If the hypotheses Lemma \ref{lem:ProjectiveSystemCochainComplex} are satisfied for $q-1$, then 
$$ \H^q(X, F) = \H^q(\CC^\bullet) = \varprojlim_{i \in \N} \H^q(\CC^\bullet_i) = \varprojlim_{i \in \N} \H^q(D_i, F) = 0,$$
which concludes the proof.

For an integer $p \ge 0$ and $\ell = (\ell_0, \dots, \ell_p) \in \{ 0, \dots, i\}^{p+1}$,
$$ D_{\ell_0} \cap \cdots \cap D_{\ell_p} = D_{\min \ell},$$
where $\min \ell = \min \{ \ell_0, \dots, \ell_p\}$. In particular,
$$ \CC^p_{i} = \prod_{\ell \in \{ 0, \dots, i\}^{p+1}} F(D_{\min \ell}),$$
and the map $\phi_i^p \colon \CC_{i+1}^p \to \CC_{i}^p$ is surjective.  Let us also show that hypothesis (2) of Lemma \ref{lem:ProjectiveSystemCochainComplex} is satisfied. Since $\cal{D}_i$ is an acyclic cover, we have  $ \H^p(\CC^\bullet_{i}) = \H^p(D_i, F)$ for all $i \in \N$. In particular $\H^{q -1}(\CC^\bullet_{i})$ vanishes by inductive hypothesis.
\end{proof}

\begin{theorem} \label{Thm:EquivalenceBoundary} Let $X$ be a separated, countable at infinity $k$-analytic space. The following are equivalent:

\begin{enumerate}
\item for every analytic extension $k'$ of $k$ the $k'$-analytic space $X_{k'}$ is cohomologically Stein;
\item $X$ is holomorphically convex, $\Oc_X$ is universally acyclic and one of the following conditions is verified:
\begin{enumerate}
\item $X$ is holomorphically separable;
\item for every analytic extension $k'$ of $k$, $X_{k'}$ is rig-holomorphically separable;
\item there is a non-trivially valued analytic extension~$k'$ of $k$ such that $X_{k'}$~is strict and rig-holomorphically separable;
\end{enumerate}
\item $X$ is W-exhausted by Liu domains;
\item $\Oc_X$ is universally acyclic and, for every analytic extension $k'$ of $k$ and every coherent sheaf of ideals $I$ on $X_{k'}$ such that $\Oc_{X_{k'}} / I$ is supported at a discrete set of points, the cohomology group $\H^1(X_{k'}, I)$ vanishes.
\end{enumerate} 
Moreover, if $k$ is non-trivially valued, the preceding conditions are equivalent to $X$~being holomorphically convex and rig-holomorphically separable, and $\Oc_X$ being acyclic.
\end{theorem}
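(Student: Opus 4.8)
The plan is to prove the cycle of implications $(3)\Rightarrow(1)\Rightarrow(4)\Rightarrow(2)\Rightarrow(3)$, reconciling the three separability variants (a)--(c) en route, and then to read off the non-trivially valued refinement. The two easy arrows come first. For $(3)\Rightarrow(1)$: if $\{D_i\}$ W-exhausts $X$ by Liu domains, then for any analytic extension $k'$ the domains $\{(D_i)_{k'}\}$ W-exhaust $X_{k'}$, since each $(D_i)_{k'}$ is again a Liu space by Proposition~\ref{Prop:SSpaceStableScalarExt} and density of the restriction maps is preserved under $\hotimes_k k'$ by Theorem~\ref{thm:extensioncohomology}; Proposition~\ref{Prop:ExhaustedByCompactSteinImpliesCohomStein}(2) then yields the vanishing of higher cohomology on $X_{k'}$. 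The arrow $(1)\Rightarrow(4)$ is immediate: taking $F=\Oc_X$ gives universal acyclicity, and $\H^1(X_{k'},I)=0$ holds for \emph{every} coherent ideal sheaf $I$, with no hypothesis on the support of $\Oc_{X_{k'}}/I$, simply because $X_{k'}$ is cohomologically Stein.

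The implication $(4)\Rightarrow(2)$ is Cartan's interpolation argument. I would first obtain rig-holomorphic separability of every $X_{k'}$ (variant (b)) by applying the $\H^1$-vanishing of $(4)$ to the ideal of a reduced pair of distinct rigid points $\{x,x'\}$: the resulting surjection $\Oc(X_{k'})\to\khat(x)\times\khat(x')$ produces $f$ with $f(x)=0\neq f(x')$. For holomorphic convexity I would argue by contradiction, working—via Proposition~\ref{Prop:CompatibilityHolConvexhullsExtScalars}, which reduces convexity to that over a single convenient extension—over a model with enough rigid points: a non-compact hull $\hat K_X$ would contain a closed discrete sequence $\{x_n\}$, and the $\H^1$-vanishing of $(4)$ for the ideal of this discrete set lets one interpolate $g\in\Oc(X)$ with $|g(x_n)|$ unbounded, contradicting $|g(x_n)|\le\|g\|_K$. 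The separability variants are then reconciled by the extension/descent machinery: $(a)\Rightarrow(b)$ by Proposition~\ref{Prop:SSpaceStableScalarExt} and Lemma~\ref{lem:RigHolSepImpliesHolSep}; $(b)\Leftrightarrow(c)$ by passing to a strict model $k'=k_r$; and $(c)\Rightarrow(a)$ by Liu's Gerritzen--Grauert theorem (Corollary~\ref{Cor:StrictLiuSpacesAreHolSeparable}) together with descent of separability along the universally multiplicative extension $k_r$ (Proposition~\ref{prop:DescendingHolomorphicSeparability}).

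The arrow $(2)\Rightarrow(3)$ is the Cartan exhaustion proper, and the step I expect to be the main obstacle. Since $X$ is countable at infinity I would exhaust it by compacts $L_n$ and, using holomorphic convexity, replace these by their hulls $K_n:=\hat{(L_n)}_X$, obtaining a compact exhaustion with $K_n\subset\operatorname{int}(K_{n+1})$. Between consecutive hulls, holomorphic separability and Lemma~\ref{prop:RationalNeighbourhoodSSpace} produce around each point of $K_n$ a rational domain; finitely many of them cover $K_n$ and their union is a compact, separated, holomorphically separable analytic domain $D_n$ with $K_n\subset D_n\subset\operatorname{int}(K_{n+1})$. The delicate point is to verify that $D_n$ is genuinely a \emph{Liu} space, i.e. that $\Oc_{D_n}$ is universally acyclic: each rational piece is universally acyclic by Proposition~\ref{prop:FunctionsOnRationalDomains}, as is every finite intersection (again rational), so a Mayer--Vietoris/\v{C}ech computation over this finite cover reduces the claim to surjectivity of the restriction maps on overlaps—the non-archimedean analogue of Cartan's lemma—which I would control through the density statements of Proposition~\ref{prop:FunctionsOnRationalDomains} and the compatibility of hulls with scalar extension (Proposition~\ref{Prop:CompatibilityHolConvexhullsExtScalars}), the latter ensuring that acyclicity is inherited over every $k'$. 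Finally, the density of $\Oc_X(D_{n+1})\to\Oc_X(D_n)$ follows from $K_n$ being its own holomorphic hull, via the Runge-type approximation packaged in Proposition~\ref{prop:FunctionsOnRationalDomains}(2).

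For the non-trivially valued refinement one drops ``universally acyclic'' to ``acyclic'' and ``holomorphically separable'' to ``rig-holomorphically separable'' in $(2)$: when $k$ is non-trivially valued and $X$ is separated and countable at infinity, acyclicity of $\Oc_X$ upgrades to universal acyclicity by Theorem~\ref{thm:extensioncohomology}, while rig-holomorphic separability over $k$ matches variant (c) after passing to a strict non-trivially valued model. The equivalence with $(1)$--$(4)$ then follows from the cycle established above.
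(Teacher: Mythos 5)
Your easy arrows ((3)$\Rightarrow$(1), (1)$\Rightarrow$(4), the rig-separability half of (4)$\Rightarrow$(2), and (a)$\Rightarrow$(b)) match the paper, but the two hard steps have genuine gaps, and in both cases the paper does something structurally different from what you propose. The main one is (2)$\Rightarrow$(3). You build each exhausting domain $D_n$ as a \emph{union of finitely many} rational domains produced from holomorphic \emph{separability} (Lemma~\ref{prop:RationalNeighbourhoodSSpace}) and then try to prove universal acyclicity of $\Oc_{D_n}$ by a Mayer--Vietoris/\v{C}ech argument over this cover. That reduction does not close: acyclicity of the pieces and of their intersections does not make the \v{C}ech complex of the cover exact in positive degrees (compare $\P^1$ covered by two affinoid discs), and what is left is exactly a Cousin-type attaching problem --- surjectivity of the restriction-difference maps on overlaps --- which is the hard content, not a formality. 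The density statement of Proposition~\ref{prop:FunctionsOnRationalDomains} concerns a \emph{single} rational domain cut out by global functions and says nothing about overlaps of distinct pieces; for the same reason, density of $\Oc(X)$ in $\Oc(D_n)$ cannot be obtained by approximating separately on each piece, since the approximants do not glue. The paper sidesteps the gluing problem entirely by using holomorphic \emph{convexity}, not separability, to construct the exhaustion: Lemma~\ref{lemma:AffinoidDomainsEnvelopingCompacts} and Claim~\ref{claim:Di} envelope each compact hull inside $D_i = \{x\in U_i : |f_{ij}(x)|\le r_{ij}\}$ with all $f_{ij}\in\Oc(X)$ \emph{global}, so that $D_i$ is a finite union of connected components of the globally defined domain $D_i' = \{x\in X : |f_{ij}(x)|\le r_{ij}\}$; then Proposition~\ref{prop:FunctionsOnRationalDomains} applied once to $X$ gives both universal acyclicity and density, because $\Oc(D_i)$ is a direct factor of $\Oc(D_i')$. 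Separability enters only afterwards, to show each \emph{compact} $D_i$ is a Liu space (strictify by a universally multiplicative extension $k_r$, apply Corollary~\ref{Cor:StrictLiuSpacesAreHolSeparable}, descend by Proposition~\ref{prop:DescendingHolomorphicSeparability}). This also exposes the flaw in your reconciliation arrow (c)$\Rightarrow$(a): Corollary~\ref{Cor:StrictLiuSpacesAreHolSeparable} is Liu's theorem about \emph{compact} spaces and cannot be applied to the non-compact $X_{k'}$; the upgrade from rig-separability to separability must be performed on the compact exhausting domains, i.e.\ inside the exhaustion argument, not before it.

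The second gap is in the convexity half of (4)$\Rightarrow$(2). Your closed discrete sequence $\{x_n\}\subset \hat{K}_X$ consists of arbitrary points, not rigid ones; for a non-rigid point, the sheaf of functions vanishing at it cuts out its Zariski closure, which can be positive-dimensional (even all of $X$), so ``the ideal of this discrete set'' is not a coherent ideal whose quotient is supported at a discrete set of points, hypothesis (4) does not apply to it, and one cannot prescribe values $|g(x_n)|\ge n$ through $\H^0$ of the quotient. Fixing ``a model with enough rigid points'' \emph{in advance} does not repair this: over any fixed extension, a discrete sequence in the hull still need not consist of rigid points. The paper's order of quantifiers is the opposite, and it is the whole reason (4) is stated for every extension: first pick the sequence $\{x_i\}$ in $\hat{K}_X$ downstairs, \emph{then} choose $k'$ admitting isometric embeddings of all the fields $\khat(x_i)$, so that the canonical lifts $x_i'\in X_{k'}$ are $k'$-rational (hence Zariski-closed), discrete, and lie in the hull of $K'$ by Proposition~\ref{Prop:CompatibilityHolConvexhullsExtScalars}; the interpolation is then carried out upstairs in $\Oc(X_{k'})$ using (4) over $k'$. (Your reduction of non-compactness to the existence of a closed discrete sequence, via local compactness and countability at infinity, is a valid substitute for the paper's appeal to angelicity; the rigidity issue is the real obstruction.)
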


\begin{proof}[{Proof of Theorem \ref{Thm:EquivalenceBoundary}}] We first show the following implications:
$$
(3) \Longrightarrow (1) \Longrightarrow (4) \Longrightarrow (\textup{2b}) \Longrightarrow (3).
$$

(3) $\Rightarrow$ (1) The property of being $W$-exhausted by Liu domains is stable under scalar extension. Therefore one reduces to the case $k ' = k$ and applies Proposition~\ref{Prop:ExhaustedByCompactSteinImpliesCohomStein}.

\medskip

(1) $\Rightarrow$ (4) Clear.

\medskip
(4) $\Rightarrow$ (2b) Suppose that for every analytic extension $k'$ of $k$ and every coherent sheaf of ideals $I$ on $X_{k'}$ such that $\Oc_{X_{k'}} / I$ is supported at a discrete set of points, $$\H^1(X_{k'}, I) = 0.$$

The argument for the holomorphic separation is the classical one: let $k'$ be an analytic extension of $k$, $x, x'$ rigid points of $X' :=X_{k'}$ and $I$ the ideal sheaf defining the closed analytic subspace $\{ x \} \cup \{ x'\}$ in $X'$. By hypothesis the cohomology group $\H^1(X', I)$ vanishes, thus the sequence
$$ 0 \too I(X') \too \Oc_{X'}(X') \too \H^0(\{ x \} \cup \{ x'\}, \Oc_{X'}) = k(x) \times k(x') \too 0,$$
is exact. Therefore there exists $f \in \Oc(X')$ such that $f(x) = 0$ and $f(x') = 1$. 

For the holomorphic convexity, let $K$ be a compact subset of $X$. According to \cite[Corollaire 5.12]{PoineauAngeliques} it suffices to show that the holomorphically convex hull~$\hat{K}_X$ is sequentially compact. 

Let $S = \{ x_i \}_{i \in \N}$ be a sequence of points in $\hat{K}_X$. Let $k'$ be an analytic extension of~$k$ endowed for every $i \in \N$ with an isometric embedding $\epsilon_i \colon \khat(x_i) \to k'$. For $i \in \N$, let~$x'_i$ be the $k'$-rational point of $X' := X_{k'}$ associated to $x_i$. According to Proposition \ref{Prop:CompatibilityHolConvexhullsExtScalars} the points $x'_i$ belong to the holomorphically convex hull of $K' = \pr_{k'/k}^{-1}(K)$ in~$X'$, that is $ |f(x'_i)| \le \| f \|_{K'}$ for all $i \in \N$ and all $f \in \Oc(X')$.

Suppose by contradiction that the sequence $S$ is discrete in $X$. Then the sequence $S' = \{ x'_i \}_{i \in \N}$ is discrete too. Let $I$ be the coherent sheaf of $\Oc_{X'}$-ideals made of holomorphic functions vanishing identically on $S'$. The cohomology group $\H^1(X', I)$ vanishes by hypothesis, thus the short exact sequence of $\Oc_{X'}$-modules,
$$ 0 \too I \too  \Oc_{X'} \too \Oc_{X'}/I \too 0, $$
gives a short exact sequence of $k'$-vector spaces,
$$ 0 \too I(X') \too \Oc_{X'}(X') \stackrel{\pi}{\too} \H^0(X', \Oc_{X'}/I) \too 0, $$
where  $ \pi(f) = (f(x'_i))_{i \in \N}$. In particular there exists a holomorphic function $f$ on~$X'$ such that  $|f(x'_i)| \ge i$ for all $i \in \N$. This implies $\| f \|_{K'} = +\infty$ contradicting the compactness of $K'$.

\medskip 

(2b) $\Rightarrow$ (3) We need here the following enveloping argument, which we state and prove separately for later use:

\begin{lemma} \label{lemma:AffinoidDomainsEnvelopingCompacts} Let $X$ be a  holomorphically convex $k$-analytic space and $K$ a compact subset of~$X$. 

Then, there exist a non-negative integer $n$, positive real numbers $r_1, \dots, r_n$, a morphism of $k$-analytic spaces $f \colon X \to \A^{n, \an}_k$ and open neighbourhoods $U$ of $\hat{K}_X$ and~$V$ of~$f(U)$ such that:
\begin{enumerate}
\item the induced morphism $f_{\rvert U} : U \to V$ is topologically proper;
\item the closed disc $\D := \D(r_1, \dots, r_n)$ of radii $r_1, \dots, r_n$ is contained in $V$;
\item the holomorphically convex hull $\hat{K}_X$ is contained in the interior of  $f^{-1}(\D) \cap U$ in $X$.
\end{enumerate}
\end{lemma}

\begin{proof} Since $X$ is holomorphically convex, $\hat{K}_X$ is compact. Thus one may suppose $K = \hat{K}_X$. For every point $x \in X \smallsetminus K$ there exists a holomorphic function $f \in \Oc(X)$ such that $ \| f \|_K < |f(x)|$.

Let $W$ be a relatively compact open neighbourhood of $K$. The topological border $\partial W$ of $W$ in $X$ is compact and does not meet $K$, thus there exist finitely many holomorphic functions $f_1, \dots, f_n \in \Oc(X)$ such that
$$ \max \left\{ \frac{|f_1(x)|}{\| f_1 \|_K}, \dots, \frac{|f_n(x)|}{\| f_n \|_K} \right\} > 1,$$
for all $x \in \partial W$. Let $f = (f_1, \dots, f_n) \colon X \to \A_k^{n, \an}$ be the induced morphism and, for $i = 1, \dots, n$, $r_i := \| f_i \|_K$. The image of a point $x \in X$ belongs to the disc $\D := \D(r_1, \dots, r_n)$ if and only if $|f_i(x)| \le \| f_i \|_K$ for all $i = 1, \dots, n$. In particular, $K$ is contained in $f^{-1}(\D)$ and $\partial W \cap f^{-1}(\D) = \emptyset$.

Set $V := \A^{n, \an}_k \smallsetminus f(\partial W)$. Since $\partial W$ is compact and $f(\partial W)$ does not meet $\D$, $V$~is an open neighbourhood of~$\D$. The subset $U := W \smallsetminus f^{-1}(f(\partial W))$ is open in~$X$ and the map $f_{\rvert U} \colon U \to V$ is topologically proper.\footnote{If $f \colon X \to Y$ is a continuous map between locally compact topological spaces and $W$ is a relatively compact open subset of $X$, then the induced map $W \smallsetminus f^{-1}(f(\partial W)) \to Y \smallsetminus f(\partial W)$ is topologically proper.} 
\end{proof}

Suppose that~$X$ is holomorphically convex. Since $X$~is countable at infinity there is an increasing sequence of compact subsets $\{ K_i \}_{i \ge 0}$ such that $K_i$ is contained in the interior of $K_{i + 1}$ and which cover $X$.

\begin{claim}\label{claim:Di} There is an non-decreasing sequence $\{ D_i \}_{i \ge 0}$ of compact analytic domains of~$X$ such that, for all $i \ge 0$, $D_i$ contains $K_i$ and there exist functions $f_{i1}, \dots, f_{i n_i} \in \Oc(X)$, positive real numbers $r_{i 1}, \dots, r_{in_i}$ and an open subset $U_i$ of~$X$ such that
$$ D_i := \{ x \in U_i : |f_{ij}(x)| \le r_{ij} \textup{ for all } j = 1, \dots, n_i\}.$$

Moreover, we may choose the $r_{ij}$'s in any dense sub-$\Q$-vector space of~$\R_{>0}$.
\end{claim}

\begin{proof}[Proof of the Claim] In order to define the sequence, it is convenient to start the induction at $-1$ and set $D_{-1} = \emptyset$. Then, for all $i \ge 0$ and supposing $D_{i -1}$ defined, apply Lemma \ref{lemma:AffinoidDomainsEnvelopingCompacts}
 with $K = K_i \cup D_{i - 1}$: let $f_i \colon X \to \A_k^{n_i, \an}$, $D$, $U$, $V$ be respectively the morphism, the disc, the open neighbourhood of $\hat{K}_X$ and the neighbourhood of~$D$ given by the lemma. If~$R$ is a dense sub-$\Q$-vector space of~$\R_{>0}$, then we may take $D$ to have radii in~$R$ while the conclusions of Lemma \ref{lemma:AffinoidDomainsEnvelopingCompacts} are still fulfilled. The analytic domain $D_{i} := f_i^{-1}(D) \cap U$ satisfies the required properties.
\end{proof}

Suppose, moreover, that $\Oc_{X}$ is universally acyclic and that, for every analytic extension $k'$ of $k$,  $X_{k'}$ is rig-holomorphically separable. Then, the sequence $\{ D_i \}_{i \ge 0}$ is a W-exhaustion of~$X$ by Liu domains. 

Indeed, for $i\ge 0$, $D_{i}$ is a union of connected components of 
$$ D'_i = \{ x \in X : |f_{ij}(x)| \le r_{ij} \textup{ for all } j = 1, \dots, n_i\} $$
and $\Oc_{D'_{i}}$ is universally acyclic by Proposition~\ref{prop:FunctionsOnRationalDomains}. 
Let~$\ell_{i}$ be an analytic extension of~$k$ that is universally multiplicative (\emph{cf}. Definition~\ref{def:universallymultiplicative}) and such that~$D_{i,\ell_{i}}$ is strict. This can always be achieved by choosing~$\ell_{i}$ of the form~$k_{r}$ for some $r \in \R_{>0}^N$ whose coordinates are free over~$|k^\times|$ (\emph{cf}. Definition~\ref{def:Definitionk_r}). By construction, $D_{i, \ell_{i}}$ is strict, compact, separated and $\Oc_{D_{i, \ell_{i}}}$ is universally acyclic. By hypothesis $D_{i, \ell_{i}}$ is rig-holomorphically separable and therefore Corollary \ref{Cor:StrictLiuSpacesAreHolSeparable} implies that $D_{i, \ell_{i}}$ is holomorphically separable. Proposition \ref{prop:DescendingHolomorphicSeparability} yields in turn that~$D_{i}$ is holomorphically separable, hence a Liu space.

Similarly, $D_{i}$~is a union of connected components of 
$$ D''_i = \{ x \in D_{i+1} : |f_{ij}(x)| \le r_{ij} \textup{ for all } j = 1, \dots, n_i\}, $$
and the restriction map $\Oc(D_{i+1}) \to \Oc(D''_i)$ has dense image by Proposition~\ref{prop:FunctionsOnRationalDomains}.

\medskip

In order to finish the proof of the theorem, it suffices to prove the following implications:
$$ (3) \Longrightarrow (\textup{2a}) \Longrightarrow (\textup{2b}) \Longrightarrow (\textup{2c}) \Longrightarrow (1).$$

(3) $\Rightarrow$ (2a) \emph{Holomorphic separation}. Let $\cal{D} = \{ D_i\}_{i \in \N}$ be a G-cover of $X$ by Liu domains such that the restriction map $\Oc(D_{i+1}) \to \Oc(D_i)$ has dense image. Two distincts $x, x' \in X$ belong to $D_i \subset X$ for $i$ big enough. Since $\Oc(X)$ is dense in $\Oc(D_i)$ it suffices to find $f \in \Oc(D_i)$ such that $|f(x)| \neq |f(x')|$. This exists as $D_i$ is by definition holomorphically separated.

\emph{Holomorphic convexity}. A compact subset $K \subset X$ is contained in $D_i$ for $i$ big enough. By density of $\Oc(X)$ in $\Oc(D_i)$, if a point $x \in X$ verifies $|f(x)| \le \| f\|_K$ for all $f \in \Oc(X)$, then it verifies the same inequality for all $f \in \Oc(D_i)$. By hypothesis $ \| f\|_K \le \| f\|_{D_i}$ for all holomorphic function $f \in \Oc(D_i)$, thus
$$ \hat{K}_X \subset \cal{M}(\Oc(D_i)) = D_i,$$
as subsets of $X$ (Corollary \ref{cor:SpectrumLiuSpace}). In particular $\hat{K}_X$ is compact.

\medskip 

(2a) $\Rightarrow$ (2b) According to Proposition \ref{Prop:SSpaceStableScalarExt}, being a Liu space is stable under extending scalars. Thus one reduces to the case $k ' = k$ where the result is Lemma~\ref{lem:RigHolSepImpliesHolSep}.

\medskip

(2b) $\Rightarrow$ (2c) Clear.

\medskip

(2c) $\Rightarrow$ (1) Suppose that~$X$ is holomorphically convex, that $\Oc_{X}$ is universally acyclic and that there exists a non-trivially valued analytic extension~$k'$ of~$k$ such that~$X_{k'}$ is strict and rig-holomorphically separable. 

Let us consider a sequence $\{ D_i \}_{i \ge 0}$ of analytic domains of~$X$ as in Claim~\ref{claim:Di} with all the $r_{i,j}$'s in $\sqrt{|k'^\times|}$. Then, for each $i\ge 0$, $D_{i,k'}$ is strict and it follows from Proposition~\ref{prop:FunctionsOnRationalDomains} and Corollary \ref{Cor:StrictLiuSpacesAreHolSeparable} that $\{ D_{i,k'} \}_{i \ge 0}$ is a W-exhaustion of~$X_{k'}$ by Liu domains (by the same arguments as in the last part of the proof of (2b) $\Rightarrow$ (3)). From (3) $\Rightarrow$ (1), we know that, for every analytic extension~$k''$ of~$k'$, $X_{k''}$ is cohomologically Stein. The result now follows from Theorem~\ref{thm:extensioncohomology}.
\end{proof}

\section{Characterization when the boundary is empty} \label{sec:ProofOfMainTheorem}

\begin{theorem}[Remmert reduction theorem]\label{thm:Remmert} Let $X$ be a $k$-analytic space without boundary, holomorphically convex and countable at infinity. 

Then there exists a $k$-analytic space $S$, the \emph{Remmert factorization} of $X$, without boundary and W-exhausted by affinoid domains, together with a morphism $\pi \colon X \to S$ satisfying the following properties:
\begin{enumerate}
\item $\pi$ is proper, surjective and with geometrically connected fibers;
\item $\pi^\sharp \colon \Oc_S \to \pi_\ast \Oc_X$ is an isomorphism;
\item given a $k$-analytic space $S'$ and holomorphic map $f \colon X \to Y$ constant on the fibers of~$\pi$, there exists a unique holomorphic map $\tilde{f} \colon S \to Y$ such that $f = \tilde{f} \circ \pi$.
\end{enumerate}
In particular, given a holomorphically separable $k$-analytic space~$Y$ and a morphism $f \colon X \to Y$, there exists a unique morphism $\tilde{f} \colon S \to Y$ such that $f = \tilde{f} \circ \pi$.
\end{theorem}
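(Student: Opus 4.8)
The plan is to construct $S$ as a \emph{Stein factorization at infinity} of $X$: I assemble the local Stein factorizations of the proper maps to polydiscs produced by the enveloping Lemma~\ref{lemma:AffinoidDomainsEnvelopingCompacts}, glue them along the exhaustion, and read off the universal property from that of each local factorization.

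First I fix an exhaustion. Since $X$ is holomorphically convex and countable at infinity, Claim~\ref{claim:Di} furnishes a non-decreasing sequence of compact analytic domains $\{D_i\}$, where $D_i$ is a union of connected components of $\{x : |f_{ij}(x)| \le r_{ij},\ j = 1, \dots, n_i\}$ for suitable $f_{ij} \in \Oc(X)$ and radii $r_{ij}$, such that each $K_i$ lies in the topological interior of $D_i$ and $D_i \subset D_{i+1}$; arranging $D_i$ inside the interior of $D_{i+1}$ is harmless. Writing $f_i = (f_{i1}, \dots, f_{in_i}) \colon X \to \A^{n_i,\an}_k$, the map $f_i|_{D_i}\colon D_i \to \D_i := \D(r_{i1}, \dots, r_{in_i})$ is proper: it is topologically proper by Lemma~\ref{lemma:AffinoidDomainsEnvelopingCompacts} (indeed its source is compact), and it is boundaryless because $X$ has no boundary over $k$ and a union of connected components of a preimage of an analytic domain under a boundaryless morphism is again boundaryless. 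For this proper morphism with affinoid target, Kiehl's proper mapping theorem (\cite[Proposition 3.3.5]{Berkovich90}) shows that $(f_i)_\ast \Oc_{D_i}$ is a coherent sheaf of $\Oc_{\D_i}$-algebras; hence $B_i := \Oc(D_i)$ is a finite $\Oc(\D_i)$-algebra, and in particular $k$-affinoid. Setting $S_i := \M(B_i)$, the morphism $f_i|_{D_i}$ factors as $D_i \xrightarrow{\pi_i} S_i \to \D_i$ with $S_i \to \D_i$ finite and $\pi_i$ proper, surjective, with geometrically connected fibres and $\pi_i^\sharp \colon \Oc_{S_i} \xrightarrow{\sim} (\pi_i)_\ast \Oc_{D_i}$ an isomorphism.

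The heart of the argument — and the step I expect to be the main obstacle — is to glue the $S_i$ into a single $k$-analytic space $S$. The restriction $\Oc(D_{i+1}) \to \Oc(D_i)$ induces a morphism $S_i \to S_{i+1}$, and I would identify its image with an affinoid domain $T_i \subset S_{i+1}$: the functions $f_{ij}$ viewed in $\Oc(S_{i+1}) = \Oc(D_{i+1})$ cut out a rational domain of $S_{i+1}$ whose preimage under $\pi_{i+1}$ meets $D_{i+1}$ exactly in a union of connected components containing $D_i$, and base change of the Stein factorization to that domain gives $\Oc(T_i) = \Oc(D_i) = B_i$, whence $S_i \cong T_i$. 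One must then verify that $T_i$ lies in the topological interior of $S_{i+1}$ (which I deduce from $K_i \subset \mathrm{int}(D_i)$ together with properness and surjectivity of $\pi_{i+1}$) and that $\Oc(S_{i+1}) \to \Oc(S_i)$ has dense image (this is precisely the density of $\Oc(D_{i+1}) \to \Oc(D_i)$ supplied by Proposition~\ref{prop:FunctionsOnRationalDomains} and the construction in Claim~\ref{claim:Di}). Gluing the affinoid domains $T_i \subset S_{i+1}$ then yields a separated $k$-analytic space $S = \bigcup_i S_i$ that is W-exhausted by the affinoid domains $S_i$, together with a morphism $\pi \colon X \to S$ restricting to $\pi_i$ on $D_i$ and satisfying $\pi^{-1}(S_i) = D_i$. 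Checking this last equality, and that the geometrically connected fibres remain connected in the limit, is where the bookkeeping is most delicate.

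Finally I harvest the stated properties. Properness, surjectivity and geometric connectedness of the fibres of $\pi$ follow from the same properties of each $\pi_i$ together with $\pi^{-1}(S_i) = D_i$; the isomorphism $\pi^\sharp \colon \Oc_S \to \pi_\ast \Oc_X$ holds because it does so on each $S_i$, where $\Oc_{S_i} = (\pi_i)_\ast \Oc_{D_i}$. That $S$ has no boundary follows from $X$ having no boundary over $k$, the properness and surjectivity of $\pi$, and the behaviour of the relative boundary under composition. For the universal property, a morphism $f \colon X \to Y$ constant on the fibres of $\pi$ restricts on each $D_i$ to a morphism constant on the fibres of $\pi_i$, hence factors uniquely through $S_i$ by the universal property of the Stein factorization $\Oc_{S_i} = (\pi_i)_\ast \Oc_{D_i}$; these factorizations agree on overlaps by uniqueness and glue to $\tilde f \colon S \to Y$. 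The concluding assertion is the special case where $Y$ is holomorphically separable: since $\pi^\sharp$ is an isomorphism, every global function of $X$ is pulled back from $S$ and is therefore constant along each (connected) fibre of $\pi$, so any $f \colon X \to Y$ into a holomorphically separable target is automatically constant on fibres and the universal property applies.
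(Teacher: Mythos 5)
Your construction follows essentially the same route as the paper's proof: exhaust $X$ by compact analytic domains produced by the enveloping lemma, apply the Stein factorization to the proper maps onto polydiscs to get affinoid spaces $S_i = \M(\Oc(D_i))$, realize $S_i$ inside $S_{i+1}$ through a Weierstrass domain, and glue. The treatment of properness, of the boundarylessness of $S$ (your argument via surjectivity of $\pi$ and the composition formula for interiors is fine, and slightly slicker than the paper's), and of the universal property all match the paper. Two points in the gluing step need repair, however.

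First, the assertion ``base change of the Stein factorization to that domain gives $\Oc(T_i) = \Oc(D_i) = B_i$, whence $S_i \cong T_i$'' is not correct as stated. Setting $T_i = \{ s \in S_{i+1} : |f_{ij}(s)| \le r_{ij},\ j = 1,\dots,n_i \}$, one has $\pi_{i+1}^{-1}(T_i) = \{x \in D_{i+1} : |f_{ij}(x)| \le r_{ij}\} = D'_i \cap D_{i+1}$, which contains $D_i$ as a clopen subset but may well have further connected components. Consequently $\Oc(T_i) = \Oc(D_i) \times \Oc(Y')$ for some complement $Y'$, and $S_i$ is only a union of connected components of $T_i$, not $T_i$ itself. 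This is precisely the point the paper's proof handles by writing its domain $Y$ as $Y' \sqcup X_i$ and passing to Banach spectra. The conclusion you need (that $\epsilon_i$ identifies $S_i$ with an affinoid domain of $S_{i+1}$) survives, but the isomorphism claim must be weakened to ``$S_i$ is clopen in the Weierstrass domain $T_i$''.

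Second, and more seriously, you justify the density of $\Oc(S_{i+1}) \to \Oc(S_i)$ by invoking Proposition~\ref{prop:FunctionsOnRationalDomains}. That proposition requires $\Oc_X$ to be \emph{universally acyclic}, which is not among the hypotheses of Theorem~\ref{thm:Remmert}: here $X$ is only assumed boundaryless, holomorphically convex and countable at infinity, and indeed one of the points of the theorem is that no acyclicity assumption is needed. So this citation is illegitimate. The density must instead be extracted from the affinoid-level structure your construction already provides: $\Oc(S_{i+1}) \to \Oc(T_i)$ has dense image because $T_i$ is a Weierstrass domain of the affinoid space $S_{i+1}$ (its algebra is $\Oc(S_{i+1})\{r_{i1}^{-1}f_{i1}, \dots, r_{in_i}^{-1}f_{in_i}\}$, in which $\Oc(S_{i+1})[f_{i1},\dots,f_{in_i}]$ is dense), and $\Oc(T_i) \to \Oc(S_i)$ is surjective because $S_i$ is clopen in $T_i$; the composite therefore has dense image. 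This is exactly how the paper obtains the W-exhaustion, the nesting being made transparent there by choosing the defining functions cumulatively, so that $F_{i+1}$ extends $F_i$ and $S_i$ sits inside the Weierstrass domain $\tilde{F}_{i+1}^{-1}(\D'_i)$ of $S_{i+1}$. With these two corrections your argument closes up and coincides with the paper's.
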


\begin{remark} What might a putative Remmert reduction theorem be when the boundary is no longer empty? According to Theorem \ref{Thm:EquivalenceBoundary} it may seem reasonable to ask whether there exists, for a $k$-analytic space $X$ countable at infinity, holomorphically convex and such that $\Oc_X$ is universally acylic, a $k$-analytic space $W$-exhausted by Liu domains $S$ and a proper holomorphic map $\pi \colon X \to S$ satisfying the same formal properties as the Remmert factorization. The following example shows that the properness of the map $\pi$ has to be dropped  from the requirements.  

Suppose $k$ of characteristic $0$ and let $A$ be  an abelian scheme over the ring of integers of $k$. Let $\EE(A)$ be the universal vector extension of $A$ and consider Raynaud's generic fibre $\EE(A)_\eta$ of the formal scheme obtained by taking the formal completion of $\EE(A)$ along its special fibre. 

The $k$-analytic space $X= \EE(A)_\eta$ is compact and has non empty boundary: it is a disc bundle over the abelian variety $A^\an$. In a forthcoming paper we will show that every function on $X$ is constant and $\Oc_X$ is acyclic. Then every holomorphic map $X \to S$ with target a holomorphically separable space is constant: in this case, the Remmert factorization of $X$ is simply $X \to \M(k)$.
 \end{remark}

\begin{proof} Let $\{ K_i \}_{i \ge 0}$ be an exhaustion of $X$ by compact subsets such that, for each~$i$, $K_{i}$ is the closure of its interior and $K_i$ is contained in the interior of $K_{i + 1}$.  

Set $n_{-1} = 0$ and $X_{-1}$, $U_{-1}$, $V_{-1}$ to be the empty set. For $i \ge 0$, we construct inductively an integer $n_i \ge 0$, a relatively compact open subset $U_i$ of $X$, a compact analytic domain~$X_i$ of $U_i$, an open subset $V_i$ of $\A^{n_i, \an}_k$ and holomorphic functions $f_{n_{i-1} + 1},  \dots, f_{n_i}$ on~$X$ such that:
\begin{enumerate}
\item the map $F_i = (f_1, \dots, f_{n_i}) \colon U_i \to V_i$  is proper;
\item $\D_i := \D(\| f_1\|_{K_i \cup \overline{U}_{i-1} }, \dots, \| f_{n_i}\|_{K_i \cup \overline{U}_{i-1}})$ is contained in $V_i$;
\item $X_i := F_i^{-1}(\D_i) \cap U_i$ contains the holomorphically convex hull of the compact subset $K_i \cup \overline{U}_{i-1}$.
\end{enumerate}

Set $K'_i := K_i \cup \overline{U}_{i - 1}$ and consider a relatively compact open subset $W_i$ of $X$ containing the holomorphically convex hull of $K'_i$ in $X$ (which is compact since~$X$ is holomorphically convex). Since $\partial W_i$ is compact, there are holomorphic functions $f_{n_{i -1} + 1}, \dots, f_{n_{i}}$ on $X$ such that, for all $x \in \partial W_i$,
$$\max_{j = 1, \dots, n_i} \frac{|f_j(x)|}{\| f_j \|_{K'_i}} > 1.$$
Set $V_i := \A^{n_i, \an}_k \smallsetminus F_i(\partial W_i)$ and $U_i = W_i \smallsetminus F_i^{-1}(F_i(\partial W_i))$. As in the proof of Lemma~\ref{lemma:AffinoidDomainsEnvelopingCompacts}, the induced map $F_i \colon U_i \to V_i$ is topologically proper. Since~$X$ is without boundary, so is~$U_{i}$, hence $F_{i}$~is proper.

The map $F_i \colon X_i \to \D_i$ being proper, by the ``Stein factorization theorem'' (\cite[Proposition 3.3.7]{Berkovich90}), there exists a $k$-analytic space $S_i$ together with morphisms $\pi_i \colon X_i \to S_i$, $\tilde{F}_i \colon S_i \to \D_i$ such that
\begin{enumerate}
\item $\pi_i$ is proper and surjective, $\tilde{F}_i$ is finite and $F_i = \tilde{F}_i \circ \pi_i$;
\item $\Oc_{S_i} \to \pi_{i \ast } \Oc_{X_i}$ is an isomorphism and $\pi_i$ has connected fibres;
\item given a $k$-analytic space $T$ and a map $g \colon X_i \to T$ constant on the fibers of~$\pi_i$, there exists a unique map $\tilde{g} \colon S_i \to T$ such that $g_i = \tilde{g}_i \circ \pi_i$.
\end{enumerate}
In particular $S_i$ is affinoid with $k$-affinoid algebra $\Oc(X_i)$. The restriction map $\Oc(X_{i+1}) \to \Oc(X_{i})$ is a bounded homomorphism of $k$-affinoid algebras and it induces a morphism of $k$-affinoid spaces $\epsilon_i \colon S_i \to S_{i+1}$. 

\begin{claim} The map $\epsilon_i$ identifies $S_i$ with a Weierstrass domain of $S_{i+1}$. \end{claim}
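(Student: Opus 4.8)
The plan is to realise the image of $\epsilon_i$ explicitly as a Weierstrass domain cut out by the coordinate functions of the finite map $\tilde F_{i+1}$, and to compute its algebra of functions by transporting the Stein factorization isomorphism along $\epsilon_i$. Write $\tilde F_{i+1} = (\bar f_1, \dots, \bar f_{n_{i+1}}) \colon S_{i+1} \to \D_{i+1}$ for the coordinate functions, so that $\pi_{i+1}^\sharp \bar f_j = f_{j\rvert X_{i+1}}$, abbreviate $K'_i := K_i \cup \overline{U}_{i-1}$, and let $a_j := \| f_j\|_{K'_i}$ be the radii of $\D_i$. I would introduce the Weierstrass domain
$$ T := \{ y \in S_{i+1} : |\bar f_j(y)| \le a_j, \ j = 1, \dots, n_i \} $$
of $S_{i+1}$, and observe that, setting $F_i = (f_1, \dots, f_{n_i}) \colon X \to \A^{n_i, \an}_k$ and $Y := F_i^{-1}(\D_i)$, one has $\pi_{i+1}^{-1}(T) = X_{i+1} \cap Y$.

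First I would check that $X_i$ is a union of connected components of $\pi_{i+1}^{-1}(T)$. Since the $f_j$ ($j \le n_i$) were chosen so that $\max_j |f_j(x)|/a_j > 1$ for $x \in \partial W_i$, the analytic subset $Y$ avoids $\partial W_i$; hence $Y \cap W_i$ and $Y \smallsetminus \overline{W}_i$ are disjoint open subsets partitioning $Y$. As $\D_i \cap F_i(\partial W_i) = \emptyset$, removing $F_i^{-1}(F_i(\partial W_i))$ from $W_i$ does not affect points of $Y$, so $X_i = F_i^{-1}(\D_i) \cap U_i = Y \cap W_i$, which is clopen in $Y$. On the other hand $X_i \subseteq \overline{U}_i \subseteq K_{i+1} \cup \overline{U}_i \subseteq X_{i+1}$ by property~(3) at index $i+1$, so
$$ \pi_{i+1}^{-1}(T) = X_{i+1} \cap Y = X_i \ \sqcup \ \big( X_{i+1} \cap (Y \smallsetminus \overline{W}_i) \big) $$
is a decomposition into clopen pieces, $X_i$ being an analytic domain of $\pi_{i+1}^{-1}(T)$.

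Next I would compute $\Oc(T)$ by evaluating the Stein factorization isomorphism $\pi_{i+1}^\sharp \colon \Oc_{S_{i+1}} \xrightarrow{\sim} \pi_{i+1 \ast} \Oc_{X_{i+1}}$ on the analytic domain $T$, which gives
$$ \Oc(T) = (\pi_{i+1 \ast} \Oc_{X_{i+1}})(T) = \Oc(\pi_{i+1}^{-1}(T)) = \Oc(X_i) \times \Oc\big( X_{i+1} \cap (Y \smallsetminus \overline{W}_i) \big). $$
Let $e \in \Oc(T)$ be the idempotent equal to $1$ on $X_i$ and to $0$ on the other factor. Then the clopen piece $\{ e = 1 \}$ is the Weierstrass domain $\{ |e - 1| \le 1/2 \}$ of $T$, with algebra $\Oc(T)/(1-e)\Oc(T) = \Oc(X_i) = \Oc(S_i)$. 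Tracing through the identifications, the composite $\Oc(S_{i+1}) \to \Oc(T) \to \Oc(X_i)$ of the two restriction maps is exactly the restriction $\Oc(X_{i+1}) \to \Oc(X_i)$ defining $\epsilon_i$; since a Weierstrass domain of a Weierstrass domain is again one, $\epsilon_i$ identifies $S_i$ with a Weierstrass domain of $S_{i+1}$.

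I expect the main obstacle to be the clopen identification of the second paragraph: one must use precisely the strict inequalities satisfied by the $f_j$ on $\partial W_i$ to separate $X_i$ from the spurious part $X_{i+1} \cap (Y \smallsetminus \overline{W}_i)$, which need not be empty, so that $S_i$ is in general only a connected-component (hence Weierstrass) piece of $T$ rather than all of $T$. Once this is in place, the passage to function algebras is a direct application of the isomorphism furnished by the Stein factorization, and the transitivity of Weierstrass embeddings finishes the argument.
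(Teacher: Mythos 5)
Your proposal is correct and essentially reproduces the paper's own argument: both proofs realize $S_i$ as a union of connected components (cut out by an idempotent, hence a Weierstrass piece) of the Weierstrass domain $T \subset S_{i+1}$ given by the inequalities $|\bar f_j| \le \|f_j\|_{X_i}$ for $j = 1, \dots, n_i$, whose $\pi_{i+1}$-preimage is $X_{i+1} \cap F_i^{-1}(\D_i)$, with $X_i$ clopen in that preimage, and then conclude by transitivity of Weierstrass embeddings. The only, immaterial, difference is bookkeeping: the paper obtains $T$ and its algebra by descending the coherent sheaf $F_{i+1 \ast}\Oc_{X_{i+1}}$ along the Weierstrass domain $\D'_i \subset \D_{i+1}$, whereas you cut $T$ out directly inside $S_{i+1}$ and compute $\Oc(T) = \Oc(\pi_{i+1}^{-1}(T))$ from the Stein factorization isomorphism $\Oc_{S_{i+1}} \simeq \pi_{i+1\ast}\Oc_{X_{i+1}}$.
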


\begin{proof}[Proof of the Claim] Consider the following disc in $\A^{n_{i+1}, \an}_k$:
$$ \D'_i := \D_i \times \D(\| f_{n_i + 1}\|_{K_{i+1}'}, \dots, \| f_{n_{i + 1}}\|_{K_{i+1}'} ).$$

With this notation we have $ X_i = U_{i} \cap F_{i+1}^{-1}(\D'_i)$: indeed $U_{i} \cap F_{i+1}^{-1}(\D'_i)$  is contained in $X_i$ by definition; on the other hand, $\| f_j\|_{X_i} \le \| f_j\|_{K'_{i+1}}$ for all $j$, whence the converse inclusion.

Consider the analytic domain $Y := U_{i+1} \cap F_{i+1}^{-1}(\D'_i) $ in $X_{i+1}$. Let $T = \M(\Oc(Y))$ be the Stein factorization of the proper morphism $F_{i + 1} \colon Y \to \D'_i$. Since $\D'_i$ is an affinoid domain in $\D_{i+1}$, it follows that $T$ is an affinoid domain in $S_{i+1}$; moreover, the affinoid domain $T$ is a Weierstrass domain, as $\D'_{i}$ is a Weierstrass domain in $\D_{i+1}$. More explicitly, the map $F_{i+1} \colon X_{i+1} \to \D_{i+1}$ is proper, thus the sheaf  $E := F_{i+1 \ast} \Oc_{X_{i+1}}$ is coherent on $\D_{i+1}$. The disc $\D_i'$ is the Weierstrass domain in $\D_{i+1}$ given by the inequalities $|t_j| \le \| f_j\|_{X_i}$ for $j = 1, \dots, n_i$. It follows that the global sections of $E$ on $\D_i'$, $ E(\D_i') = \Oc(Y)$,  can be identified with the affinoid algebra
$$ E(\D_{i+1}) \left\{ \frac{f_1}{\| f_1\|_{X_i}}, \dots, \frac{f_{n_i}}{\| f_{n_i}\|_{X_i}}\right\} = \Oc(X_{i+1}) \left\{ \frac{f_1}{\| f_1\|_{X_i}}, \dots, \frac{f_{n_i}}{\| f_{n_i}\|_{X_i}}\right\}. $$
In particular, $T = \M(\Oc(Y))$ is the Weierstrass domain in $S_{i + 1}$ given by the inequalities $|t_j| \le \| f_j\|_{X_i}$ for $j = 1, \dots, n_i$.

In order to conclude that $S_i$ is a Weierstrass domain in $S_{i + 1}$, remark that $X_i$ is contained in $Y$ because $U_i$ is contained in $U_{i+1}$. Furthermore, $X_i$ is clopen in $Y$: it is open because of the equality $X_i = Y \cap U_i$ and closed because of its compactness. Therefore $X_i$ is a finite union of connected of components of $Y$. By writing $Y = Y' \sqcup X_i$, it follows that $\Oc(Y)$ is isomorphic to the product ring $\Oc(X_i) \times \Oc(Y')$. By passing to the Banach spectrum of these affinoid algebras, one has
$$ T = \M(\Oc(Y)) = \M(\Oc(X_i)) \sqcup \M(\Oc(Y')) = S_i \sqcup \M(\Oc(Y')),$$
that is, $S_i$ is a union of connected components of~$T$. In particular $S_i$ is a Weierstrass domain of $S_{i+1}$.
\end{proof}

The $k$-analytic space $S = \bigcup_{i \in \N} S_i$ is by definition W-exhausted by affinoid domains. The map $\pi \colon X \to S$ is proper as $\pi_i \colon X_i \to S_i$ is proper for every $i \ge 1$. Since the space $X$ is without boundary, a given analytic domain $X_i$ is contained in the interior of some analytic domain $X_j$ for $j \ge i$ big enough. By properness of $\pi$, it follows that $S_i$ is contained in the interior of $S_j$ and, in particular, $S$ is without boundary. The properties of the map $\pi$ are deduced from the ones of $\pi_i$ as $\pi^{-1}(S_i)  = X_i$ for all $i \ge 0$.
\end{proof}

\begin{theorem} \label{Thm:EquivalenceWithoutBoundary} Let $X$ be a $k$-analytic space without boundary and countable at infinity. The following are equivalent:

\begin{enumerate}
\item for every analytic extension $k'$ of $k$, the $k'$-analytic space $X_{k'}$ is cohomologically Stein;
\item $X$ is holomorphically convex and one of the following conditions is verified:
\begin{enumerate}
\item $X$ is holomorphically separable;
\item for every analytic extension $k'$ of $k$, $X_{k'}$ is rig-holomorphically separable;
\item there is a non-trivially valued analytic extension~$k'$ of $k$ such that $X_{k'}$ is rig-holomorphically separable;
\end{enumerate}
\item $X$ is W-exhausted by affinoid domains;
\item for every analytic extension $k'$ of $k$ and every coherent sheaf of ideals $I$ on $X_{k'}$ such that $\Oc_{X_{k'}} / I$ is supported at a discrete set of points, the cohomology group $\H^1(X_{k'}, I)$ vanishes.

\end{enumerate} 
Moreover, if $k$ is non-trivially valued, the preceding conditions are equivalent to $X$~being holomorphically convex and rig-holomorphically separable.
\end{theorem}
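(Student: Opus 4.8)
The plan is to deduce the statement from the boundary case (Theorem~\ref{Thm:EquivalenceBoundary}) together with the Remmert reduction (Theorem~\ref{thm:Remmert}), the decisive new feature being that the latter requires only holomorphic convexity and \emph{no} hypothesis on the acyclicity of~$\Oc_X$; this is exactly why the assumption ``$\Oc_X$ universally acyclic'' present in the bounded case can be dropped here. Concretely I would establish the cycle
\[ (3) \Rightarrow (1) \Rightarrow (4) \Rightarrow (\mathrm{2b}) \Rightarrow (\mathrm{2c}) \Rightarrow (3), \]
together with $(3) \Rightarrow (\mathrm{2a})$ and $(\mathrm{2a}) \Rightarrow (3)$, so that the three separability clauses all become equivalent to the remaining conditions (holomorphic convexity being supplied by condition~(2) in each case).

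Several links are immediate or borrowed from the bounded case. For $(3)\Rightarrow(1)$, an affinoid domain is compact, separated, holomorphically separable and has universally acyclic structure sheaf (Tate's theorem plus stability of affinoids under base change), hence is a Liu space; thus a space W-exhausted by affinoid domains is W-exhausted by Liu domains, and Theorem~\ref{Thm:EquivalenceBoundary} yields~(1). The implication $(1)\Rightarrow(4)$ is trivial, and $(\mathrm{2b})\Rightarrow(\mathrm{2c})$ holds because non-trivially valued extensions always exist. For $(4)\Rightarrow(\mathrm{2b})$ I would reproduce the argument of Theorem~\ref{Thm:EquivalenceBoundary} verbatim: rig-holomorphic separation of $X_{k'}$ follows from the vanishing of $\H^1$ of the ideal sheaf of a pair of rigid points, and holomorphic convexity from the sequential-compactness criterion applied to the ideal sheaf of a hypothetical discrete sequence, via Proposition~\ref{Prop:CompatibilityHolConvexhullsExtScalars}; crucially, neither step invokes acyclicity of~$\Oc_X$. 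Finally $(3)\Rightarrow(\mathrm{2a})$ is again the bounded-case argument: the global functions are dense in $\Oc(D_i)$ for each exhausting affinoid~$D_i$, and functions on an affinoid separate points and control the convex hull.

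The heart of the matter is to show that (2a) and (2c) each force the Remmert factorization $\pi\colon X\to S$ to be an isomorphism, whence $X\cong S$ is W-exhausted by affinoid domains, that is~(3). Recall that $\pi$ is proper and surjective with geometrically connected fibres and that $\pi^\sharp\colon\Oc_S\to\pi_\ast\Oc_X$ is an isomorphism; in particular every global function on~$X$ is pulled back from~$S$, hence constant on the connected fibres of~$\pi$. Under~(2a), two distinct points in one fibre could then not be separated by any function, contradicting holomorphic separability; so the fibres are singletons, $\pi$ is quasi-finite, and being proper with $\pi^\sharp$ an isomorphism it is an isomorphism. Under~(2c) I would base change to the non-trivially valued extension $k'$ for which $X_{k'}$ is rig-holomorphically separable: a fibre of~$\pi_{k'}$ of positive dimension would be a proper, geometrically connected, positive-dimensional $k'$-analytic space and would therefore contain at least two distinct rigid points of~$X_{k'}$ (rigid points being dense over a non-trivially valued field); as a global function restricted to this connected proper fibre is constant, these two rigid points could not be separated, contradicting rig-holomorphic separability. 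Hence every fibre of~$\pi_{k'}$, and therefore of~$\pi$ (fibre dimension being insensitive to the base field), is zero-dimensional, so $\pi$ is finite, and with $\pi^\sharp$ an isomorphism it is an isomorphism. I expect this quasi-finiteness argument — resting on the density of rigid points in fibres and on the constancy of functions on proper connected fibres, and exploiting that Remmert reduction needs no acyclicity input — to be the main obstacle.

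The final assertion follows by taking $k'=k$ in~(2c): when $k$ is non-trivially valued, holomorphic convexity together with rig-holomorphic separability of $X=X_k$ is precisely condition~(2c), hence implies (1)--(4); conversely (1)--(4) give~(2a), and holomorphic separability implies rig-holomorphic separability by Lemma~\ref{lem:RigHolSepImpliesHolSep}.
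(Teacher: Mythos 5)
Your overall strategy coincides with the paper's: the same cycle of implications, the same verbatim reuse of (4) $\Rightarrow$ (2b) from Theorem~\ref{Thm:EquivalenceBoundary}, the same density argument for (3) $\Rightarrow$ (2a), and the same use of the Remmert reduction to prove (2a) $\Rightarrow$ (3) and (2c) $\Rightarrow$ (3) by showing that $\pi \colon X \to S$ is finite. Your treatment of case (2a) is in fact slightly cleaner than the paper's: since $\pi^\sharp$ is an isomorphism, $|f|$ is constant on every fibre for every $f \in \Oc(X)$, so fibres are singletons; the paper instead invokes properness of a positive-dimensional fibre to forbid non-constant functions. Both variants rely, as does the paper implicitly, on the fact that a proper morphism with finite fibres is finite. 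Your detour for (3) $\Rightarrow$ (1) through Theorem~\ref{Thm:EquivalenceBoundary} (affinoid domains are Liu spaces) is also legitimate, though it requires knowing that $X$ is separated -- which is not a hypothesis of the present theorem and must be extracted from~(3); the paper sidesteps this by quoting Kiehl's Theorem~\ref{Thm:SpacesExhaustedByWeierstrassAreStein} directly.

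There is, however, a genuine gap in your case (2c), which you yourself identify as the heart of the matter. You assert that any positive-dimensional fibre of $\pi_{k'}$ ``would contain at least two distinct rigid points of $X_{k'}$.'' This is only true for fibres over \emph{rigid} points of $S_{k'}$: a rigid point of $X_{k'}$ has residue field finite over $k'$, hence maps to a rigid point of $S_{k'}$, so the fibre over a non-rigid point $s \in S_{k'}$ contains \emph{no} rigid point of $X_{k'}$ whatsoever -- its dense set of $\khat(s)$-rigid points is invisible to rig-holomorphic separability over~$k'$. Consequently your argument only shows that fibres over rigid points of $S_{k'}$ are singletons, and the conclusion ``every fibre of $\pi_{k'}$ is zero-dimensional, so $\pi_{k'}$ is finite'' does not follow as stated. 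This is exactly the hole the paper plugs: since $X$ is without boundary and $k'$ is non-trivially valued, $X_{k'}$ and $S_{k'}$ are strict, and then \cite[Proposition 3.3.2]{Berkovich90} together with \cite[9.6.3 Corollary 6]{BGR} reduce finiteness of the proper morphism $\pi_{k'}$ to finiteness of its fibres over rigid points. Alternatively, one could invoke upper semi-continuity of fibre dimension: the locus of points of $X_{k'}$ lying on a positive-dimensional fibre is Zariski-closed, hence, if non-empty, contains a rigid point (by strictness and density of rigid points), whose image in $S_{k'}$ is rigid -- contradicting what you proved. Either way, an additional input of this kind is needed to complete your proof.
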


\begin{proof} Implications (1) $\Rightarrow$ (4) and (2b) $\Rightarrow$ (2c) are clear.

\medskip

(3) $\Rightarrow$ (2a) \emph{Holomorphic separability}. Let $x, x' \in X$ be distinct points and let $D \subset X$ be a Weierstrass domain containing them. Since $\Oc(X)$ is dense in $\Oc(D)$ it suffices to find $f \in \Oc(D)$ such that $|f(x)| \neq |f(x')|$. This clearly exists as $D$ coincides with $\M(\Oc(D))$.

\emph{Holomorphic convexity}. Let $K \subset X$ be a compact subset and $D \subset X$ be a Weierstrass domain containing it. By density of $\Oc(X)$ in $\Oc(D)$, if a point $x \in X$ verifies $|f(x)| \le \| f\|_K$ for all $f \in \Oc(X)$, then it verifies the same inequality for all $f \in \Oc(D)$. By hypothesis $ \| f\|_K \le \| f\|_D$ for all holomorphic function $f \in \Oc(D)$, thus $ \hat{K}_X \subset \cal{M}(\Oc(D)) = D$
as subsets of $X$. In particular $\hat{K}_X$ is compact.

\medskip 

(3) $\Rightarrow$ (2b) Statement (3) is stable under extension of scalars and we already prove that it implies holomorphic separability. We conclude because the property of holomorphic separability implies that the rigid points are separable by holomorphic functions (Lemma \ref{lem:RigHolSepImpliesHolSep}).

\medskip

(3) $\Rightarrow$ (1) Being W-exhausted by affinoid domains is stable under extension of scalars. Therefore it suffices to prove for the result for $k' = k$. In this case it is Theorem \ref{Thm:SpacesExhaustedByWeierstrassAreStein}.

\medskip 

The proof of (4) $\Rightarrow$ (2b) is exactly as (4) $\Rightarrow$ (2b) in the proof of Theorem \ref{Thm:EquivalenceBoundary}. 

\medskip

For the proof of (2a) $\Rightarrow$ (3) and (2c) $\Rightarrow$ (3) consider the Remmert reduction $S$ of $X$, which is possible because $X$ is holomorphically convex and without boundary by hypothesis. In order to prove that $X$ is W-exhausted by affinoid domains it suffices to show that the canonical map $\pi \colon X \to S$ is finite. Since it is already proper, it is the matter of proving that the fibers of $\pi$ are finite. 

Assuming (2c), it is sufficient to prove that $\pi_{k'} \colon X_{k'} \to S_{k'}$ is finite. Thanks to \cite[Proposition 3.3.2]{Berkovich90} and \cite[9.6.3 Corollary 6]{BGR}, it suffices to prove that, for every $y \in X_{k', \rig}$ the fiber $\pi_{k'}^{-1}(y)$ is finite (the $k'$-analytic spaces $X_{k'}$, $S_{k'}$ are strict). Since $\pi_{k'}$ is proper, for a rigid point $y \in S_{k'}$ the fiber $Z := \pi_{k'}^{-1}(y)$ is a closed analytic subspace of $X_{k'}$ which is proper over $k$. Suppose $Z$ is of positive dimension and take two distinct points $z, z' \in Z$: by hypothesis there is a holomorphic function $f$ on $X_{k'}$ such that $f(z) = 0$ and $f(z') = 1$. In particular $f_{\rvert Z}$ is non constant, contradicting the properness of $Z$.

Assuming (2a), the argument is similar. For a point $s \in S$ the fiber $Z := \pi^{-1}(s)$ is naturally endowed with the structure of a $\khat(s)$-analytic space, strict and proper over $\khat(s)$. If it is of positive dimension, pick two distinct points $z, z' \in Z$. By hypothesis there is a function $f$ such that $|f(z)| \neq |f(z')|$. In particular the restriction of~$f$ to~$Z$ cannot be constant, contradicting the properness of $Z$.
\end{proof}

\appendix

\section{Banachoid spaces} \label{sec:Banachoid}

By definition, the ring of global sections of an affinoid space is a Banach algebra. From the algebraic point of view, this is very convenient since the theory of Banach algebras is well documented and many results are available.

However in this article we are interested in spaces with no boundary, and their rings of global sections are no longer Banach algebras but only Fr\'echet algebras at best. For similar reasons related to de Rham cohomology, Andrea Pulita and the second named author have developed a theory of normoid Fr\'echet spaces (\textit{i.e.} Fr\'echet spaces with a distinguished family of seminorms as part of the data) in~\cite{PoineauPulitaBanachoid}, inspired by Gruson's work~\cite{Gruson} in the setting of Banach spaces. We recall here the basic definitions and results for the convenience of the reader.

\begin{definition}
Let~$M$ be a non-empty set. An \emph{$M$-normoid space} is a $k$-vector space~$U$ endowed with a family of seminorms $u = (u_{m})_{m\in M}$.
\end{definition} 

We endow~$U$ with the uniform structure and the topology induced by~$u$. In more concrete terms, this means for instance that a sequence $(x_{n})_{n\ge 0}$ in~$U$ tends to~0 if, and only if, for each $m\in M$, the sequence $(u_{m}(x_{n}))_{n\ge 0}$ tends to~0. 

\begin{definition} \label{def:MBanachoidSpace}
Let~$M$ be a non-empty set. An \emph{$M$-Banachoid space} is an $M$-normoid space that is Hausdorff and complete.
\end{definition}

Obviously, any Banach space $(A,\nm)$ gives rise to a Banachoid space $(A,u_{\nm})$, where~$u_{\nm}$ is the family containing the single element~$\nm$.

The main example we have in mind in the following. Let~$X$ be a $k$-analytic space and let~$\cal{D}$ be an affinoid covering of~$X$ for the $G$-topology. For each $D \in \cal{D}$, denote by~$u_{D}$ the seminorm obtained by composing the norm on the $k$-affinoid algebra~$A_{D}$ associated to~$D$ with the restriction map $\Oc(X)\to A_{D}$. The space~$\Oc(X)$ endowed with $u=(u_{D})_{D\in \cal{D}}$ is a Banachoid space.

If~$\Fc$ is a coherent sheaf on~$X$, a similar construction can be used to put a Banachoid structure on the space~$F(X)$, by first endowing each $F(D)$ with the norm coming from a surjection $\Oc(D)^{n_{D}} \to F(D)$. 

Note that, for each complete valued extension~$k'$ of~$k$, the set $\{D_{k'} \mid D \in \cal{D}\}$ is an affinoid covering of~$X_{k'}$ for the $G$-topology, hence we get an induced Banachoid structure on $F(X_{k'})$.

We refer to \cite[Definition~1.9]{PoineauPulitaBanachoid} for more details about those constructions.

\begin{definition} 
Let $(U,(u_{m})_{m\in M})$ and~$(V,(v_{n})_{n\in N})$ be Banachoid spaces. We say that a $k$-linear map $f : U \to V$ is \emph{bounded} if, for each $n\in N$, there exists a real number~$C_{n}$ and a finite subset~$M_{n}$ of~$M$ such that 
\[\forall x \in U,\ v_{n}(f(x)) \le C_{n} \max_{m\in M_{n}} \{u_{m}(x)\}.\]
\end{definition}

In the situation of the examples above, it is not difficult to check that, if one changes the affinoid covering, the identity map between the two Banachoid spaces is bounded (see \cite[Lemma~1.10]{PoineauPulitaBanachoid}). Similarly, a morphism of coherent sheaves $\Fc \to \Gc$ gives rise to a bounded morphism of Banachoid spaces $F(X) \to G(X)$ (see \cite[Lemma~1.19]{PoineauPulitaBanachoid}).

\medbreak

We now come to the definition of completed tensor product. Let $(U,(u_{m})_{m\in M})$ and~$(V,(v_{n})_{n\in N})$ be Banachoid spaces. For each~$m\in M$,  $n\in N$ and $z \in U \otimes_{k} V$, set 
\begin{equation*}
u_{m}\otimes v_{n}(z) \;:=\; 
\inf\Bigl\{\max_{1\le i \le r} \{u_{m}(x_{i}) \cdot v_{n}(y_{i})\}\;\textrm{ such that } z = \sum_{i=1}^r x_{i}\otimes y_{i}\Bigr\}.
\end{equation*}
The map $u_{m} \otimes v_{n}$ is a seminorm on~$U\otimes_{k} V$. We denote by $U \hotimes_{k} V$ the Hausdorff completion of~$U\otimes_{k} V$. It is naturally a Banachoid space.

As one can expect, it is also possible to define a notion of bounded bilinear map and the space $U \hotimes_{k} V$ can then be proven to satisfy the expected universal property (see \cite[Proposition~3.1]{PoineauPulitaBanachoid}).

\medbreak

In the following, we will be interested specifically in the case where we tensor by a complete valued extension~$k'$ of~$k$ (seen as a Banachoid space with a single norm). Unlike in the usual case, exact sequences of Banachoid spaces may fail to remain exact after completed tensor product. To fix this, one needs to consider exact sequences of strict maps, where the image and coimage inherit the same Banachoid structure. It is very useful to know conditions where strictness is automatic. For Banach or Fr\'echet spaces over non-trivially valued fields, this is the case for surjective maps, and even maps with finite-dimensional cokernels, by Banach's open mapping theorem. To be able to use those results in the setting of Banachoid spaces, we introduce the following definition (see \cite[Definition~3.13]{PoineauPulitaBanachoid}).

\begin{definition} \label{def:FrechedNormoidSpace}
Let~$M$ be a non-empty set. An \emph{$M$-normoid Fr\'echet space} is an $M$-Banachoid space that admits a bi-bounded isomorphism into an $N$-Banachoid space with~$N$ countable.
\end{definition}

In the examples above, the spaces $\Oc(X)$ and $F(X)$ are normoid Fr\'echet if the covering~$\cal{D}$ is countable. Note that we can always choose a cover satisfying this property if~$X$ is paracompact (for instance if it is a curve or the analytification of an algebraic variety) and connected. 

By applying the completed tensor product to a suitable \v Cech complex, we obtain the following result (see \cite[Corollaire~3.21]{PoineauPulitaBanachoid}). Note that using the  \v Cech complex (for separated spaces) also enables to endow the spaces $\H^q(X,\Fc)$ for $q\ge 1$ with normoid Fr\'echet structures.

\begin{theorem}\label{thm:extensioncohomology}
Let~$X$ be a $k$-analytic space that is countable at infinity, let~$\Fc$ be a coherent sheaf on~$X$ and let~$k'$ be a complete valued extension of~$k$. Then, we have a canonical bi-bounded isomorphism
\begin{equation*}
F(X) \hotimes_{k} k' \stackrel{\sim}{\too} F_{k'}(X_{k'}).
\end{equation*}

Let~$q\ge 1$. Assume that~$X$ is separated and that there exists $\ell\in \{k,k'\}$ such that~$\ell$ is non-trivially valued and $\H^q(X_{\ell},\Fc_{\ell})$ is finite-dimensional. Then, we have a canonical bi-bounded isomorphism
\begin{equation*}
\H^q(X,\Fc) \hotimes_{k} k' \stackrel{\sim}{\too} \H^q(X_{k'},\Fc_{k'}).
\end{equation*}
\end{theorem}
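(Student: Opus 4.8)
The plan is to reduce everything to a purely homological assertion about the completed tensor product of \v{C}ech complexes of Banachoid spaces, which is precisely the content of \cite[Corollaire~3.21]{PoineauPulitaBanachoid}. Since~$X$ is countable at infinity, I would first fix a \emph{countable} affinoid covering $\cal{D} = \{D_{i}\}_{i\in\N}$ of~$X$ for the $G$-topology; the induced covering $\cal{D}_{k'} = \{D_{i,k'}\}_{i\in\N}$ of~$X_{k'}$ is again affinoid and countable. Endow the \v{C}ech terms $\CC^{p}(\cal{D},\Fc) = \prod_{i_{0},\dots,i_{p}} F(D_{i_{0}}\cap\cdots\cap D_{i_{p}})$ with their natural Banachoid structures. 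The first task is the termwise base change: for an affinoid domain $D = \M(A)$ the finite $A$-module $F(D)$ satisfies $F(D)\hotimes_{k}k' \cong F(D)\otimes_{A}(A\hotimes_{k}k') = F_{k'}(D_{k'})$, since the affinoid algebra of~$D_{k'}$ is $A\hotimes_{k}k'$. Because the completed tensor product commutes with the countable products defining the \v{C}ech terms, this yields a canonical bi-bounded isomorphism of complexes of Banachoid spaces
\begin{equation*}
\CC^{\bullet}(\cal{D},\Fc)\hotimes_{k}k' \;\stackrel{\sim}{\too}\; \CC^{\bullet}(\cal{D}_{k'},\Fc_{k'}).
\end{equation*}

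For the global sections ($q=0$) neither separation nor finiteness is needed: $F(X)$ is the kernel of $d^{0}\colon\CC^{0}\to\CC^{1}$, and I would invoke the fact that $-\hotimes_{k}k'$ \emph{preserves kernels} of bounded maps of normoid Fr\'echet spaces. Concretely this rests on the existence of $\alpha$-cartesian bases of finite-dimensional subspaces of~$k'$ (Proposition~\ref{prop:ExistenceAlphaCartesianBasis}) together with the isometry $\CC^{0}\otimes_{k}V \hookrightarrow \CC^{0}\hotimes_{k}k'$ for every finite-dimensional $V\subseteq k'$ (\cite[Lemme~3.1]{PoineauAngeliques}): an element of $\CC^{0}\hotimes_{k}k'$ annihilated by $d^{0}\hotimes k'$ can, after approximation by an $\alpha$-cartesian basis, be recognized to lie in $(\ker d^{0})\hotimes_{k}k'$. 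Combined with the isomorphism of complexes above this gives $F(X)\hotimes_{k}k' \cong F_{k'}(X_{k'})$.

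For $q\ge 1$ the hypothesis that~$X$ be separated ensures, on both sides, that \v{C}ech cohomology against the affinoid covering computes sheaf cohomology, so that $\H^{q}(X,\Fc)=\H^{q}(\CC^{\bullet}(\cal{D},\Fc))$ and likewise over~$k'$. The problem becomes: \emph{when does $-\hotimes_{k}k'$ commute with the $q$-th cohomology of a Banachoid complex?} This is where I expect the main obstacle to lie. Exactness of the completed tensor product holds only for \emph{strict} short exact sequences, so one must know that the coboundaries $\BB^{q}=\im d^{q-1}$ are closed in the cocycles $\ZZ^{q}=\ker d^{q}$ and that the subspace and quotient topologies on them agree. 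This strictness is not automatic; it follows from the open mapping theorem for normoid Fr\'echet spaces, which applies exactly when the quotient $\H^{q}=\ZZ^{q}/\BB^{q}$ is finite-dimensional over a \emph{non-trivially} valued ground field, a bounded map of normoid Fr\'echet spaces with finite-dimensional cokernel being then automatically strict. This is precisely the assumption that some $\ell\in\{k,k'\}$ be non-trivially valued with $\H^{q}(X_{\ell},\Fc_{\ell})$ finite-dimensional.

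Granting strictness of $d^{q-1}$ and $d^{q}$, I would apply the exact functor $-\hotimes_{k}k'$ to the strict short exact sequences
\begin{equation*}
0 \too \ZZ^{q} \too \CC^{q} \too \BB^{q+1} \too 0, \qquad 0 \too \BB^{q} \too \ZZ^{q} \too \H^{q} \too 0,
\end{equation*}
to obtain $\H^{q}(\CC^{\bullet})\hotimes_{k}k' \cong \H^{q}(\CC^{\bullet}\hotimes_{k}k')$, and the isomorphism of complexes of the first paragraph then finishes the proof, bi-boundedness of the resulting map being guaranteed by the open mapping theorem. When the finiteness is available over~$k'$ rather than over~$k$, the differentials of $\CC^{\bullet}(\cal{D},\Fc)\hotimes_{k}k'$ are strict by the same reasoning over~$k'$, and one concludes by a descent argument along the faithfully flat extension~$k'/k$, the strictness and the required isomorphism descending to~$\CC^{\bullet}(\cal{D},\Fc)$.
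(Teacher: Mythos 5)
The first thing to note is that the paper contains no proof of this statement: Theorem~\ref{thm:extensioncohomology} is imported from \cite[Corollaire~3.21]{PoineauPulitaBanachoid} (a paper in preparation), with only the one-line indication that it is obtained ``by applying the completed tensor product to a suitable \v{C}ech complex''. Your skeleton is exactly that strategy, and you locate the hypotheses correctly: separation so that \v{C}ech cohomology on an affinoid cover computes sheaf cohomology, and non-trivial valuation plus finite-dimensionality so that the open mapping theorem makes the relevant differentials strict. The identification $\CC^\bullet(\cal{D},\Fc)\hotimes_k k' \cong \CC^\bullet(\cal{D}_{k'},\Fc_{k'})$ (termwise base change for affinoids plus compatibility of $\hotimes$ with the countable normoid products) is also sound, modulo the caveat that for the $q=0$ statement $X$ is not assumed separated, so the pairwise intersections $D_i \cap D_j$ need not be affinoid and the termwise identification requires an extra step there.

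The genuine gap is in your $q=0$ argument. You claim that an element $x \in \CC^0 \hotimes_k k'$ killed by $d^0 \hotimes k'$ can be ``recognized'', after approximation by $x_n = \sum_i c_i \otimes \lambda_i$ with $(\lambda_i)$ an $\alpha$-cartesian basis of a finite-dimensional subspace of $k'$, to lie in $(\ker d^0)\hotimes_k k'$. What the $\alpha$-cartesian inequality gives is only that each $\|d^0(c_i)\|\,|\lambda_i|$ is small; to conclude, you must replace each $c_i$ by a nearby element of $\ker d^0$, i.e.\ you need ``$\|d^0(c)\|$ small $\Rightarrow$ $c$ close to $\ker d^0$''. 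That is precisely strictness of $d^0$, which is not available; it is exactly the failure that forces the finiteness hypothesis for $q\ge 1$, so the sketch is circular. Kernel preservation under $\hotimes_k k'$ is true, but by a different mechanism: tensor the \emph{strict} exact sequence $0 \to \ker d^0 \to \CC^0 \to \CC^0/\ker d^0 \to 0$ (strict sequences are preserved, in the spirit of \cite{Gruson}), and then prove that the bounded \emph{injection} $\CC^0/\ker d^0 \to \CC^1$ remains injective after $\hotimes_k k'$. This is a statement about $k'$ as a Banach $k$-space (flatness of the extension, obtained e.g.\ by reducing to a subspace of countable type and using infinite $\alpha$-orthogonal Schauder bases); finite-dimensional $\alpha$-cartesian bases plus approximation do not yield it.

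A second, smaller gap is your closing descent argument. When the finiteness hypothesis holds over $k'$ only, the field $k$ may be trivially valued, so there is no open mapping theorem over $k$, and there is in general no bounded $k$-linear retraction $k' \to k$: neither strictness of the differentials of $\CC^\bullet(\cal{D},\Fc)$ nor the isomorphism can simply be ``descended''. Instead one should argue entirely over $k'$: strictness of $d^{q-1}_{k'}$ shows that $\BB^q_{k'}$ is closed and is the closure of the image of $\BB^q \otimes_k k'$, and combining this with the unconditional identification $\ZZ^q \hotimes_k k' \cong \ZZ^q_{k'}$ gives the comparison map and its bijectivity without ever invoking strictness over $k$.
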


We can also obtain results for global sections of sheaves on product of varieties (see \cite[Corollary~3.24]{PoineauPulitaBanachoid}).

\begin{theorem}\label{thm:FXGY}
Let~$X$ and~$Y$ be $k$-analytic spaces that are separated and countable at infinity. Assume that~$X$ or~$Y$ is finite-dimensional. Denote by~$\pr_{X}$ and~$\pr_{Y}$ the canonical projections from~$X\times_{k} Y$ to~$X$ and~$Y$ respectively. 

If~$\Fc$ and~$\Gc$ are universally acyclic coherent sheaves on~$X$ and~$Y$ respectively, then the coherent sheaf $\pr_{X}^\ast\Fc \otimes \pr_{Y}^\ast\Gc$ on~$X\times_{k} Y$ is universally acyclic too. 
\end{theorem}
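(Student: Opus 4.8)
The plan is to deduce the universal acyclicity from a single acyclicity statement, by exploiting the stability of the hypotheses under extension of scalars, and then to prove that acyclicity by a Künneth-type argument built on the completed tensor product of \v Cech complexes.

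First I would reduce to showing that $H := \pr_{X}^\ast \Fc \otimes \pr_{Y}^\ast \Gc$ is merely \emph{acyclic} on $X \times_k Y$. Indeed, for any analytic extension $k''$ of $k$ one has $(X\times_k Y)_{k''} = X_{k''} \times_{k''} Y_{k''}$ and $H_{k''} = \pr_{X_{k''}}^\ast \Fc_{k''} \otimes \pr_{Y_{k''}}^\ast \Gc_{k''}$; moreover $\Fc_{k''}$ (resp.\ $\Gc_{k''}$) is again universally acyclic on $X_{k''}$ (resp.\ $Y_{k''}$), while separatedness, countability at infinity and finite-dimensionality of one factor are all preserved. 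Hence, once acyclicity is established for arbitrary such data over an \emph{arbitrary} base field, applying it over $k''$ yields acyclicity of $H_{k''}$, that is, universal acyclicity of $H$. So it suffices to prove $\H^q(X\times_k Y, H) = 0$ for $q \ge 1$.

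Next I would fix countable affinoid $G$-covers $\mathcal U = \{U_i\}$ of $X$ and $\mathcal V = \{V_j\}$ of $Y$ (possible since both are countable at infinity); then $\mathcal W = \{U_i \times_k V_j\}$ is a countable affinoid $G$-cover of the separated space $X \times_k Y$, so by Tate's acyclicity theorem the cohomology of $H$ is computed by the \v Cech complex $\CC^\bullet(\mathcal W, H)$. Over an affinoid product one checks that $H(U\times_k V) = \Fc(U)\hotimes_k \Gc(V)$, and, keeping track of the restriction maps, I would identify $\CC^\bullet(\mathcal W, H)$ --- up to a bounded homotopy equivalence of Eilenberg--Zilber type --- with the total complex of the completed bicomplex $\CC^\bullet(\mathcal U, \Fc)\hotimes_k \CC^\bullet(\mathcal V, \Gc)$. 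The acyclicity of $\Fc$ and $\Gc$ then says that the Banachoid complexes $\CC^\bullet(\mathcal U, \Fc)$ and $\CC^\bullet(\mathcal V, \Gc)$ are exact in positive degrees, with $\H^0$ equal to $\Fc(X)$ and $\Gc(Y)$ respectively. Because one factor, say $X$, is finite-dimensional, $\CC^\bullet(\mathcal U, \Fc)$ has bounded length, so the bicomplex is bounded in one direction and no convergence problem arises; the countability of the covers makes all these spaces normoid Fr\'echet, so the open mapping theorem guarantees that exact complexes with vanishing cohomology are automatically \emph{strict}. Invoking the exactness of $\hotimes_k$ on strict exact sequences of normoid Fr\'echet spaces (the key input of \cite{PoineauPulitaBanachoid}, of which Theorem~\ref{thm:extensioncohomology} is the special case $Y = \M(k')$), I would conclude that the total complex is exact in positive degrees with $\H^0 = \Fc(X)\hotimes_k \Gc(Y)$, whence the desired vanishing. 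The degree-zero identification additionally recovers the density statements used elsewhere in the paper.

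The hard part will be the topological Künneth step: the completed tensor product is not exact in general, so one must certify strictness of the \v Cech differentials and the commutation of completion with passage to cohomology. This is exactly where finite-dimensionality of one factor (boundedness of one \v Cech complex, avoiding infinite products) and countability at infinity (the normoid Fr\'echet structure, hence the open mapping theorem) become indispensable, and where the comparison between the product-cover \v Cech complex and the completed tensor product of the individual \v Cech complexes must be made \emph{bounded} rather than merely algebraic. An alternative route would fiber over $Y$ via $\pr_Y$ and compute $R\pr_{Y\ast} H$ using a relative version of Theorem~\ref{thm:extensioncohomology} over affinoid bases, but this shifts --- rather than removes --- the same analytic difficulty.
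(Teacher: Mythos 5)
A preliminary remark on the comparison itself: the paper does \emph{not} prove Theorem~\ref{thm:FXGY}. It imports it from \cite{PoineauPulitaBanachoid} (a paper in preparation, cited as Corollary~3.24 there), and Appendix~\ref{sec:Banachoid} only recalls the Banachoid formalism together with the indication that such results are obtained ``by applying the completed tensor product to a suitable \v Cech complex''. Your proposal is therefore a reconstruction of a proof that is absent from the paper; in spirit it follows exactly the indicated philosophy (countable affinoid $G$-covers, identification of sections over affinoid products with completed tensor products, a K\"unneth comparison, strictness plus Gruson-type flatness of $\hotimes_k$). The architecture is the expected one, but two load-bearing steps have genuine gaps.

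First, your strictness argument is unavailable precisely in the case that makes the theorem non-trivial. Your reduction deliberately reduces universal acyclicity to plain acyclicity ``over an arbitrary base field'', so the main argument must run in particular when that field is trivially valued --- a case the paper explicitly includes, and the only case in which ``universally acyclic'' is genuinely stronger than ``acyclic'' (see the remark following the definition of universal acyclicity). But the open mapping theorem, which you invoke to upgrade vanishing of \v Cech cohomology to strict exactness, is available, as the appendix itself stresses, only ``for Banach or Fr\'echet spaces over non-trivially valued fields''. The missing maneuver is to use universal acyclicity a second time: over a trivially valued base, extend scalars to a field $k_r$ with $r$ free over $|k^\times|$ (so that $k_r$ is non-trivially valued), run the argument there --- universal acyclicity of $\Fc$ and $\Gc$ guarantees acyclicity survives this extension --- and then descend the vanishing via Theorem~\ref{thm:extensioncohomology} (applicable because the cohomology over $k_r$ vanishes, hence is finite-dimensional, and $k_r$ is non-trivially valued), together with the injectivity of $V \to V \hotimes_k k_r$. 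As written, your proof covers only non-trivially valued ground fields.

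Second, the role you assign to the finite-dimensionality hypothesis is incorrect, and this masks the fact that the genuinely hard input is asserted rather than proved. A countable affinoid $G$-cover of a finite-dimensional space need not have a \v Cech complex of bounded length: cover $\A^{1,\an}_k$ by the nested closed discs of radius $n$, $n \ge 1$; every finite subfamily has nonempty intersection, yet the space is one-dimensional. Moreover the bicomplex in your argument is concentrated in the first quadrant, so there is no convergence problem to avoid in any case. Consequently your sketch never actually uses the hypothesis --- a warning sign, since the cited statement carries it. The place where such a hypothesis (and serious work) plausibly intervenes is exactly what you defer as ``the hard part'': exactness of $\hotimes_k$ against arbitrary normoid Fr\'echet spaces (you must tensor strict exact complexes with the \v Cech cochain spaces and with $\Gc(Y)$, which are countable products of Banach spaces, not fields), as well as the commutation of $\hotimes_k$ with those countable normoid products implicit in your Eilenberg--Zilber identification. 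The paper provides exactness of $\hotimes_k$ only for tensoring with a complete valued \emph{field} (Theorem~\ref{thm:extensioncohomology}); the general Fr\'echet flatness you invoke is a guess about the content of the unavailable reference \cite{PoineauPulitaBanachoid}, not something established by your argument or citable from this paper.
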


\section{Zariski-trivial analytic spaces}\label{sec:Zariski-trivial}

Let $k$ be a complete non-archimedean field. Recall the notation~$k_{r}$ from Definition~\ref{def:Definitionk_r}. The aim of this section is to prove the following result:

\begin{proposition} \label{Prop:ZariskiTrivialSpaces}Let $X$ be a non-empty $k$-analytic space whose Zariski topology is the trivial topology. If $k$ is non-trivially valued, there are real numbers $r_1, \dots, r_n$ free over $|k^\times|$ such that and a finite local $k_r$-algebra $A$ (in particular, Artinian) such that $X = \M(A)$.
\end{proposition}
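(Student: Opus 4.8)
The plan is to reduce to the reduced case, to prove that $X$ is a single point by showing that it is zero-dimensional, and finally to invoke the structure of zero-dimensional $k$-affinoid fields. \textbf{Reduction to the reduced case.} The reduction $X_{\red}$ has the same underlying topological space and the same closed analytic subsets (with their reduced structure) as $X$, so its Zariski topology is again trivial. If I can show that $X_{\red} = \M(L)$ for a field $L$ finite over some $k_{r}$, then $X$ is a single point $x$, and $A := \Oc(X)$ is a $k$-affinoid algebra that is local (one point) with $A_{\red} = L$. As $A$ is noetherian with nilradical $N$ satisfying $N^{m} = 0$, each $N^{i}/N^{i+1}$ is a finitely generated $A/N = L$-module, hence finite-dimensional over the field $L$; thus $A$ is finite over $L$, and therefore finite over $k_{r}$, which is the desired finite local $k_{r}$-algebra. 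As a sanity check one notes that on a reduced Zariski-trivial space every nonzero $f \in \Oc(X)$ is nowhere vanishing (its zero locus is closed analytic, hence empty or all of $X$, the latter forcing $f = 0$ by reducedness), hence invertible (on each affinoid domain $\inf|f| > 0$ by compactness and the local inverses glue); since there are no zero divisors either, $\Oc(X)$ is in fact a field.

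\textbf{Main step: $\dim X_{\red} = 0$.} Suppose not, and set $d = \dim X_{\red} \ge 1$. Choose an affinoid domain $V = \M(B) \subseteq X_{\red}$ with $\dim B = d$. Since $k$ is non-trivially valued, $V$ possesses a \emph{closed point} $x$, that is, a point whose complete residue field $\khat(x)$ is finite over $k_{r}$ for some tuple $r$ free over $|k^\times|$ (the non-strict analogue of a rigid point); such points exist by Noether normalisation, mapping $V$ finitely and surjectively onto a $d$-dimensional polydisc and pulling back a suitable point of the base. Such a point is Zariski-closed with $0$-dimensional Zariski-closure: its closure $\overline{\{x\}}$ in $X_{\red}$ is then an irreducible closed analytic subset of dimension $0$, hence a proper nonempty closed analytic subset of the $d$-dimensional space $X_{\red}$, contradicting the triviality of the Zariski topology.

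\textbf{Conclusion.} A zero-dimensional $k$-analytic space is discrete, each point being clopen (hence closed analytic) and affinoid; triviality of the Zariski topology, which in particular forces connectedness, then forces $X_{\red}$ to consist of a single point $x$. Being reduced and zero-dimensional, $X_{\red} = \M(\khat(x))$ with $\khat(x) = \Oc(X_{\red})$ a $k$-affinoid algebra that is a field. Using again that $k$ is non-trivially valued, such a field is a finite extension of $k_{r}$ for a tuple $r$ whose coordinates are free over $|k^\times|$ — this is precisely the structure theory underlying Definition~\ref{def:Definitionk_r} of $k_{r}$. Setting $L = \khat(x)$ concludes the reduced case, and the nilpotent bookkeeping of the first step completes the proof.

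\textbf{The main obstacle.} The delicate point is the existence, in an arbitrary (possibly non-strict) positive-dimensional $k$-affinoid space, of a point with $0$-dimensional Zariski-closure, together with the fact that this closure is a genuine \emph{global} proper closed analytic subset. Over a non-trivially valued field, strict affinoids have rigid points, for which the statement is classical; but $X$ may contain no strict affinoid domain at all (equivalently, no rigid point over $k$: if $\dim X \ge 1$ such a point would already provide a proper closed analytic subset), so one must genuinely work with $k_{r}$-rigid points and control that their Zariski-closures remain zero-dimensional. This is where I expect the real work to lie, and where the field $k_{r}$ and the non-triviality of the valuation of $k$ enter decisively.
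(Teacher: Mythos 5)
Your proposal has a genuine gap at its central step, and it is exactly the step you defer at the end as ``where I expect the real work to lie'': that deferred work is not a refinement of your argument but the entire content of the proposition. You claim that a point $x$ with $\khat(x)$ finite over some $k_r$ ``is Zariski-closed with $0$-dimensional Zariski-closure'' in $X_{\red}$. This is false as a property of such points. Take any $k$ with $\sqrt{|k^\times|} \neq \R_{>0}$ (say $k = \Q_p$), choose $0 < r < 1$ free over $|k^\times|$, and let $x = \eta_r$ be the Gauss point of radius $r$ in the unit disc $\M(k\{T\})$. Then $\khat(x) = k_r$, yet the character $k\{T\} \to k_r$ is injective, so the only closed analytic subset of the disc containing $x$ is the disc itself: the Zariski closure of $\{x\}$ is one-dimensional. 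Whether $\{x\}$ is closed analytic cannot be read off from $\khat(x)$: it requires the kernel of $\Oc(W) \to \khat(x)$ to be maximal for \emph{every} affinoid domain $W$ containing $x$, and this can hold in a small affinoid while failing in a larger one --- the same point $\eta_r$ \emph{is} the affinoid domain $\{|T| = r\} = \M(k_r)$ of the disc, in which it is trivially Zariski-closed, and its closure nevertheless explodes in any larger disc. (Note that $\M(k_r)$ is itself one of the spaces the proposition classifies, so any correct proof must confront this phenomenon head-on.) The paper's proof, due to Ducros, is built precisely to overcome it: inside an affinoid neighbourhood $D$ of a point, the graded reduction and the graded Nullstellensatz produce a closed point $\xi$ of $\tilde{D}$ whose tube $U = \red^{-1}(\xi)$ is \emph{open} in $D$ and carries a $k_r$-analytic structure (the lifted functions $f_i$ have constant absolute value $r_i$ along $U$), which is moreover boundaryless over $k_r$ by a theorem of Temkin; a nonempty open subset of such a space contains a $k_r$-rigid point $y$, whose ideal sheaf is coherent on the intersection of $U$ with the topological interior of $D$ in $X$, so that $\{y\}$ is closed analytic there and hence, gluing with the open set $X \smallsetminus \{y\}$, closed analytic in all of $X$; Zariski-triviality then forces $X = \{y\}$. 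Nothing in your argument produces a point together with an \emph{open} neighbourhood of it in which it is Zariski-closed, and without that the contradiction you aim for never materializes.

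There are two secondary problems as well. The existence statement you invoke is also unavailable in the non-strict setting: Noether normalisation in the form ``a finite surjection of $V$ onto a $d$-dimensional polydisc over $k$'' fails for non-strict affinoids --- already $k_r$, of Krull dimension $0$, is not finite over $k$; the correct normalisation for general affinoid algebras is over polydisc algebras with coefficients in suitable fields $k_r$, not over $k$. (Points with $\khat(x)$ finite over some $k_r$ do exist in any nonempty affinoid $\M(B)$, for instance the points attached to maximal ideals of $B$, by the structure theory of affinoid fields; but, as explained above, this is not the property you need.) Finally, in your endgame, making $A = \Oc(X)$ a finite $k_r$-algebra requires lifting the embedding $k_r \hookrightarrow L$ through the nilpotent thickening $A \to L$, which is a formal-smoothness issue and not automatic; in the paper's proof the $k_r$-algebra structure on $A$ comes for free, since the single remaining point is rigid inside a $k_r$-analytic space.
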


\begin{lemma} Let $X$ be a $k$-analytic space. The subset of $X$ made of points $x \in X$ having an affinoid neighbourhood, is open and dense.
\end{lemma}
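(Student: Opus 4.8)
The plan is to describe the locus $U$ of points admitting an affinoid neighbourhood as a union of topological interiors, which yields openness immediately, and then to prove density by a Baire category argument carried out inside small Hausdorff neighbourhoods.

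First I would observe that $U = \bigcup_{V} \mathrm{int}_X(V)$, the union running over all affinoid domains $V \subseteq X$ and $\mathrm{int}_X(V)$ denoting the topological interior of $V$ in $X$: a point $x$ has an affinoid neighbourhood precisely when some affinoid domain $V$ satisfies $x \in \mathrm{int}_X(V)$. Since this exhibits $U$ as a union of open sets, $U$ is open, and the first assertion is settled with no further work.

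For density, I would fix a nonempty open $\Omega \subseteq X$ and a point $x \in \Omega$, and aim to produce a point of $U$ inside $\Omega$. The quasi-net axiom in the definition of a $k$-analytic space furnishes finitely many affinoid domains $V_1, \dots, V_n$, all containing $x$, whose union $\bigcup_i V_i$ is a neighbourhood of $x$. As $X$ is locally Hausdorff, after shrinking I may assume that $\Omega$ is Hausdorff and contained in $\mathrm{int}_X(\bigcup_i V_i)$, so that $\Omega = \bigcup_{i=1}^n (\Omega \cap V_i)$. Being an open subset of the locally compact space $X$, the space $\Omega$ is locally compact Hausdorff, hence a Baire space; and each $\Omega \cap V_i$ is closed in $\Omega$, since affinoid domains are closed subsets of a $k$-analytic space. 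The Baire category theorem then forces at least one $\Omega \cap V_i$ to have nonempty interior $W$ in $\Omega$. Such a $W$ is a nonempty open subset of $X$ contained in $V_i$, whence $W \subseteq \mathrm{int}_X(V_i) \subseteq U$ and $\Omega \cap U \supseteq W \neq \emptyset$. As $\Omega$ was arbitrary, $U$ is dense.

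The heart of the argument is this density direction, and the one delicate point is arranging the hypotheses so that Baire's theorem applies: one needs $X$ to be locally compact and the affinoid domains $V_i$ to be closed, so that the finitely many pieces $\Omega \cap V_i$ form a finite closed cover of a Baire space. Both are standard features of the underlying topological space of a Berkovich $k$-analytic space, so the argument goes through; note that no separatedness hypothesis is required, since one may work throughout inside a Hausdorff neighbourhood of $x$.
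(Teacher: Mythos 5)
Your openness argument is fine, and the Baire strategy for density can be made to work, but as written there is a genuine gap at the step ``each $\Omega \cap V_i$ is closed in $\Omega$, since affinoid domains are closed subsets of a $k$-analytic space.'' That justification is false: the lemma assumes no separatedness, and in a non-Hausdorff $k$-analytic space an affinoid domain is compact but need not be closed. Concretely, let $X$ be obtained by gluing two copies $D_1, D_2$ of the closed unit disc along their open unit discs (gluing along \emph{open} analytic domains is always permitted and produces a non-Hausdorff space). Take $x$ to be the origin, which lies in both copies; the quasi-net axiom at $x$ may legitimately return $V_1 = D_1$, $V_2 = D_2$, whose union is all of $X$. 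Then $\Omega := D_2$ is a Hausdorff open subset of $X$ containing $x$ and contained in $\mathrm{int}_X(V_1 \cup V_2) = X$, so it satisfies every condition you impose after shrinking; yet $\Omega \cap V_1$ is the open unit disc sitting inside $D_2$, which is \emph{not} closed in $\Omega$ (its closure contains the Gauss point of $D_2$). So your cover of $\Omega$ is not a cover by closed sets and Baire's theorem does not apply; passing to closures does not help either, since a nonempty open subset of $\mathrm{cl}_\Omega(\Omega \cap V_i)$ need not meet $\mathrm{int}_X(V_i)$. (A smaller quibble of the same nature: ``open subset of the locally compact space $X$'' does not by itself give local compactness of $\Omega$ when $X$ is only locally Hausdorff; this too must be extracted from the analytic structure, e.g.\ from the existence of affinoid neighbourhood bases inside affinoid spaces.)

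The repair is to intersect with the Hausdorff open set \emph{before} invoking the quasi-net axiom, not after: $\Omega$ is itself a $k$-analytic space, so the quasi-net axiom applied \emph{inside} $\Omega$ yields affinoid domains $D_1, \dots, D_n$ \emph{contained in} $\Omega$, containing $x$, whose union is a neighbourhood of $x$ in $\Omega$. Being compact subsets of the Hausdorff space $\Omega$, these really are closed in $\Omega$, and your argument then goes through; in fact, for a finite closed cover Baire's theorem is unnecessary, since an elementary induction (discard sets one at a time) shows some member has nonempty interior. This is exactly how the paper proceeds: it first replaces $X$ by a Hausdorff open subspace, observes that affinoid domains are \emph{then} closed, chooses a family $D_1, \dots, D_n$ of affinoid domains whose union $D$ is a neighbourhood of $x$ and which is \emph{minimal} for this property, and notes that for an open neighbourhood $U \subseteq D$ of $x$ the set $U \smallsetminus (D_2 \cup \cdots \cup D_n)$ is open, nonempty by minimality, and contained in $D_1$, so that its points have the affinoid neighbourhood $D_1$.
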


\begin{proof} The openness is clear. For the density, the $k$-analytic space $X$ may be assumed non-empty and Hausdorff. In particular the affinoid domains of $X$ are closed. It suffices to prove that there is a point having an affinoid neighbourhood.

Let $x \in X$ and $D_1, \dots, D_n$ be affinoid domains such that $D = D_1 \cup \cdots \cup D_n$ is a neighbourhood of $x$. Suppose the family $D_1, \dots, D_n$ minimal for this property. Let $U$ be an open neighbourhood of $x$ contained in $V$. By minimality, $U$ is not contained in $D' := D_2 \cup \cdots \cup D_n$. The open subset $U' := U \smallsetminus D'$ is non-empty and $D_1$ is a neighbourhood of every point in $D_1$.
\end{proof}

\begin{proof}[Proof of Proposition \ref{Prop:ZariskiTrivialSpaces} (A. Ducros)] Let $x \in X$ be a point having an affinoid neighbourhood $D$. Let $\tilde{D}$ be the graded reduction of $D$ and $\tilde{x}$ be the image of $x$ in $D$ through the reduction map $\red \colon D \to \tilde{D}$. Let $\xi$ be a closed point in the closure of $\tilde{x}$ in $\tilde{D}$. By the ``Graded Nullstellensatz'' \cite[Corollaire 2.11]{PoineauAngeliques} there are real numbers $r_1, \dots, r_n$, free over $|k^\times|$, such that the residue field $\tilde{k}(\xi)$ at $\xi$ is a finite extension of the graded reduction $\tilde{k}_r$ of $k_r$.  By seeing $\tilde{k}_r$ as a graded sub-field of $\tilde{k}(\xi)$ we may consider functions $f_1, \dots, f_n \in \Oc(D)$ whose reductions evaluated at $\xi$ are equal to the variables $t_1, \dots, t_n$ of the field $\tilde{k}_r$.

On the tube $U := \red^{-1}(\xi)$, the absolute value of $f_i$ is identically equal to $r_i$. This permits to endow $U$ with a structure of $k_r$-analytic space. The $k_r$-analytic space $U$ is without boundary because $\tilde{k}(\xi)$ is finite over $\tilde{k}_r$ \cite[Proposition 3.4]{TemkinLocalII}. Moreover, since $\xi$ belongs to the closure of $\tilde{x}$, the open set $U$ contains $x$ in its closure. In particular $U$ meets the topological interior of $D'$ in $X$. 

As $D' \cap U$ is an open subset of the $k_r$-analytic space without boundary $U$, it contains a rigid point $y$ (here one uses that $k$ is non-trivially valued). 

The point $y$ is Zariski-closed in $X$: indeed it is Zariski closed in the open subsets $D' \cap U$, $X \smallsetminus \{ y\}$ of $X$, and this two open subsets cover $X$. Since the Zariski topology on $X$ is trivial, the singleton $\{ y\}$ must be (set-theoretically) the whole space.
\end{proof}

\small

\end{document}